\newcommand{\A}{{\mathcal{A}}}
\newcommand{\U}{{\mathcal{U}}}
\newcommand{\Bcal}{{\mathcal{B}}}
\newcommand{\Dcal}{{\mathcal{D}}}
\newcommand{\Fcal}{{\mathcal{F}}}
\newcommand{\ff}{{\mathcal{F}}}
\newcommand{\Gcal}{{\mathcal{G}}}
\newcommand{\Lcal}{{\mathcal{L}}}
\newcommand{\Ucal}{{\mathcal{U}}}
\newcommand{\CC}{\mathbb{C}}
\newcommand{\DD}{\mathbb{D}}
\newcommand{\PP}{\mathbb{P}}
\newcommand{\RR}{\mathbb{R}}
\renewcommand{\SS}{\mathbb{S}}
\newcommand{\ZZ}{\mathbb{Z}}
\newcommand{\frakg}{\mathfrak{g}}
\newcommand{\End}{\operatorname{End}}
\newcommand{\Diff}{{\operatorname{Diff}}}
\newcommand{\hor}{\operatorname{hor}}
\newcommand{\can}{{\operatorname{can}}}
\newcommand{\NS}{\mathbb{S}}
\newcommand{\set}[1]{\lbrace #1\rbrace}
\newcommand{\inp}[1]{\langle #1\rangle}
\newcommand{\rest}[2]{\left.{#1}\right|_{#2}}
\newcommand{\rr}{\mathbb{R}}
\newcommand{\abs}[1]{\left| #1 \right|}
\newcommand{\reg}{\operatorname{reg}}
\newcommand{\pis}{\pi^{\sharp}}
\newcommand{\pd}[2]{\frac{\partial #1}{\partial #2}}
\DeclareMathOperator{\Res}{Res}
\newcommand{\elli}{\mathcal{A}_{\abs{D}}}
\newcommand{\cc}{\mathbb{C}}
\newcommand{\pbr}[1]{\{ #1\}}
\newcommand{\pbre}{\{ \cdot, \cdot \}}
\renewcommand{\Vert}{{\operatorname{Vert}}}
\declaretheorem[style=definition,qed=$\diamondsuit$]{definition}
\declaretheorem[style=definition,qed=$\triangle$,sibling=definition]{example}
\declaretheorem[style=plain,sibling=definition]{theorem}
\declaretheorem[style=plain,sibling=definition]{lemma}
\declaretheorem[style=plain,sibling=definition]{proposition}
\declaretheorem[style=plain,sibling=definition]{corollary}
\declaretheorem[style=definition,qed=$\diamondsuit$,sibling=example]{claim}
\declaretheorem[style=definition,qed=$\diamondsuit$,sibling=claim]{remark}
\numberwithin{theorem}{section}
\numberwithin{equation}{section}
\numberwithin{definition}{section}
\numberwithin{theorem}{section}
\numberwithin{proposition}{section}
\numberwithin{lemma}{section}
\numberwithin{example}{section}
\numberwithin{remark}{section}
\numberwithin{corollary}{section}
\numberwithin{equation}{section}
\newtheoremstyle{named}{}{}{\itshape}{}{\bfseries}{.}{.5em}{\thmnote{#3}#1}
\theoremstyle{named}
\newtheorem*{namedtheorem}{}
\title{Regularisation of Lie algebroids and applications}
\author{\'Alvaro del Pino and Aldo Witte}
\begin{document}
\begin{abstract}
We describe a procedure, called regularisation, that allows us to study geometric structures on Lie algebroids via foliated geometric structures on a manifold of higher dimension. This procedure applies to various classes of Lie algebroids; namely, those whose singularities are of $b^k$, complex-log, or elliptic type, possibly with self-crossings.

One of our main applications is a proof of the Weinstein conjecture for overtwisted $b^k$-contact structures. This was proven in \cite{MO21} using a certain technical hypothesis. Our approach avoids this assumption by reducing the proof to the foliated setting. As a by-product, we also prove the Weinstein conjecture for other Lie algebroids.

Along the way we also introduce tangent distributions, i.e. subbundles of Lie algebroids, as interesting objects of study and present a number of constructions for them.
\end{abstract}
\maketitle


\setcounter{tocdepth}{1}
\tableofcontents

\section{Introduction}\label{sec:introduction}

Lie algebroids are the infinitesimal counterparts to Lie groupoids. They appeared first \cite{Prad} in the context of foliation theory and in the study of symmetry. Later on, they entered the world of geometric analysis in order to model various flavours of singularities. A classic example in this direction is the $b$-tangent bundle $\A_Z$ of Melrose \cite{M93}. It consists of those vector fields that are tangent to a given embedded hypersurface $Z$. We can think of $Z$ as a hypersurface at infinity, which is why $\A_Z$ plays a role in the study of the Atiyah-Patodi-Singer index theorem.

Some of the study of Lie algebroids has focused on particularly well-behaved families thereof. This is certainly the case for foliations (which are the Lie algebroids in which the anchor is a monomorphism) and $b$-tangent bundles (in which the anchor is an isomorphism almost everywhere, except along the hypersurface $Z$, where it vanishes transversely). Indeed, it is convenient to focus on Lie algebroids whose anchor is controlled in some way. In this article, we focus on Lie algebroids whose singularities are of $b^k$ \cite{S13}, elliptic \cite{CG17}, or self-crossing \cite{GLPR17,CKW20} type.

Our main goal in this article is to relate the aforementioned Lie algebroids to the foliated setting. The idea behind this is that foliations, being non-singular, are somehow easier to handle.
\begin{theorem} \label{thm:regularisationMain}
Let $M$ be a manifold endowed with a $b^k$-tangent bundle $\A_Z^k$ with underlying coorientable hypersurface $Z$. Then, there exists a codimension-$1$ foliation $\Fcal$ in $M\times \SS^1$, such that the projection to $M$ induces a Lie algebroid submersion $\Fcal \rightarrow \A_Z^k$.
\end{theorem}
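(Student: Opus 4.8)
The plan is to build $\Fcal$ by hand: far from $Z$ it is the product foliation by the slices $M\times\{\theta\}$, near $Z$ it is the kernel of an explicit $1$-form built from a $b^k$-defining function, and the two are interpolated on a collar. The fibre projection $p\colon M\times\SS^1\to M$ will then carry $T\Fcal$ fibrewise isomorphically onto $\A_Z^k$.

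\emph{The foliation.} Using the coorientation, fix a tubular neighbourhood $U\cong Z\times(-2,2)$ of $Z$ with normal coordinate $z$, adapted to the $b^k$-structure, so that near $Z$ the algebroid $\A_Z^k$ has a canonical generator $\tau$ with anchor $\rho(\tau)=z^k\partial_z$, complementing the $TZ$-directions. Pick $\chi,\beta\in C^\infty(-2,2)$ with $\chi\equiv 1$ on $[-1,1]$ and $\chi\equiv 0$ near $\pm 2$; with $\beta(z)=z^k$ on $[-1,1]$, $\beta$ of the same sign as $z^k$ throughout (so $\beta^{-1}(0)=\{0\}$) and $\beta\equiv\pm1$ near $\pm 2$; and $(\chi,\beta)$ nowhere vanishing. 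Define $\Fcal:=\ker\!\big(\chi(z)\,dz+\beta(z)\,d\theta\big)$ on $U\times\SS^1$ and $\Fcal:=\ker(d\theta)$ on $(M\setminus Z)\times\SS^1$; these agree where $\chi\equiv0$, $\beta\ne0$. Since the local defining form on $U\times\SS^1$ involves only $dz,d\theta$ with coefficients depending on $z$, one has $\omega\wedge d\omega=0$ identically, so $\Fcal$ is a codimension-$1$ foliation of $M\times\SS^1$.

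\emph{The comparison map.} Define $\phi\colon T\Fcal\to\A_Z^k$ over $p$ as follows. Off $Z$ the anchor $\rho$ is invertible and, as $\beta\ne0$ there, $\partial_\theta\notin T\Fcal$, so $dp|_{T\Fcal}$ is a fibrewise isomorphism onto $TM$; set $\phi:=\rho^{-1}\circ dp|_{T\Fcal}$. On $(Z\times(-1,1))\times\SS^1$ one has $T\Fcal=p^{*}TZ\oplus\RR\langle\xi\rangle$ with $\xi:=z^k\partial_z-\partial_\theta$; set $\phi=\id$ on $p^{*}TZ$ and $\phi(\xi):=\tau$. The two prescriptions agree on the punctured collar, since there $dp(\xi)=z^k\partial_z=\rho(\tau)$; so $\phi$ is a well-defined bundle map, fibrewise an isomorphism (it sends a local frame to a local frame), and compatible with anchors, since $\rho(\tau)=z^k\partial_z=dp(\xi)$ and $\rho$, $\phi$, $dp$ all act as the identity along the $TZ$-directions.

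\emph{Bracket compatibility and conclusion.} The one delicate point is that over $Z\times\SS^1$ the vertical direction $\partial_\theta$ becomes tangent to $\Fcal$, so $p$ does not restrict to a submersion on the leaves there; accordingly one verifies the Lie algebroid morphism condition in its correct form for a non-injective base map, not by naive push-forward of vector fields. This reduces to matching structure functions in the adapted frames: near $Z$, for a $\theta$-independent section $W$ of $p^{*}TZ$ lifting $w\in\Gamma(TZ)$ one computes $[\xi,W]=[z^k\partial_z-\partial_\theta,W]=0$, because $w$ is independent of $z$ and $z^k$ of the coordinates on $Z$, which matches $[\tau,w]_{\A_Z^k}=[z^k\partial_z,w]=0$; brackets within $p^{*}TZ$ reduce to those in $TZ$; and $[\xi,\xi]=0=[\tau,\tau]$. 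Hence $\phi$ is a Lie algebroid morphism near $Z$. Away from $Z$, $\Fcal$ is transverse to the circle fibres and $\phi$ is the tautological identification $T\Fcal\cong TM=\A_Z^k$, which is evidently a morphism, and the two descriptions glue on the overlap. Therefore $\phi\colon\Fcal\to\A_Z^k$ is a fibrewise-surjective Lie algebroid morphism covering the surjective submersion $p$ — a Lie algebroid submersion — which proves the statement. The only bookkeeping that needs care in the construction is the sign of $\beta$ for odd $k$, and this is precisely where the coorientability hypothesis enters: it provides a global coordinate $z$, hence a $\beta$ of definite sign on each side, so that $\beta$ vanishes exactly along $Z$.
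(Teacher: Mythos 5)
Your proof is correct and follows essentially the same route as the paper's (Lemmas \ref{lem:trivialRegularisation} and \ref{lem:trivialRegularisation2}): a cut-off interpolation between $\ker(\chi(z)\,dz + z^k\,d\theta)$ near $Z$ and the product foliation elsewhere, with the comparison map defined on the adapted frame by $z^k\partial_z - \partial_\theta \mapsto z^k\partial_z$ and the identity on the $TZ$-directions. The only differences are cosmetic: you modify the coefficient of $d\theta$ away from $Z$ and verify bracket compatibility explicitly on frames, whereas the paper keeps the coefficient $z^k$ throughout the collar and deduces the morphism property by density from the open set $M\setminus Z$.
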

The pair $(M\times \SS^1,\Fcal)$ is called the \textbf{regularisation} of $\A_Z^k$. In Section \ref{sec:regularisation} we state and prove various results with this flavour. They apply to each of the families of Lie algebroids we mentioned above.

\subsection{Lifting geometric structures}

The construction presented in Theorem \ref{thm:regularisationMain} was discovered already, in the $b$-setting, by Osorno-Torres \cite{OT15}. Using the language of $b$-symplectic structures, he proved:
\begin{proposition}
Let $(M,Z)$ be a manifold together with an embedded hypersurface, and let $\omega \in \Omega^2(\A_Z)$ be a $b$-symplectic form. Then, $M\times \SS^1$ admits a codimension-one symplectic foliation.
\end{proposition}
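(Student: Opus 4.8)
The plan is to obtain the symplectic foliation by pulling the $b$-symplectic form back along the Lie algebroid submersion supplied by Theorem~\ref{thm:regularisationMain}. First I would apply that theorem with $k=1$ to the pair $(M,Z)$, noting $\A_Z^1=\A_Z$ (this uses that $Z$ is coorientable, which is part of the usual hypotheses on a $b$-symplectic manifold and in any case is needed to invoke Theorem~\ref{thm:regularisationMain}; note also that $\dim M$ is even, so the leaves produced below are even-dimensional, as they must be). This yields a codimension-one foliation $\Fcal$ on $M\times\SS^1$ together with a morphism of Lie algebroids $\Phi\colon\Fcal\to\A_Z$ covering the projection $p\colon M\times\SS^1\to M$, which is a submersion in the Lie algebroid sense. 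The remark that drives everything is a dimension/rank count: as $\Fcal$ has codimension one in the $(\dim M+1)$-manifold $M\times\SS^1$, its leaves are $\dim M$-dimensional, so $\rk\Fcal=\dim M=\rk\A_Z$; since $\Phi$ is in particular fibrewise surjective (being a Lie algebroid submersion), the equality of ranks forces $\Phi$ to be a fibrewise isomorphism.

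Next I would set $\eta:=\Phi^\ast\omega\in\Omega^2(\Fcal)$, the pullback of $\omega$ along $\Phi$. Because a morphism of Lie algebroids induces a chain map between the associated de Rham complexes, $\Phi^\ast$ intertwines $d_{\A_Z}$ and $d_\Fcal$; hence $d_\Fcal\eta=\Phi^\ast(d_{\A_Z}\omega)=0$, so $\eta$ is leafwise closed. For nondegeneracy, note that the induced bundle map $\eta^\flat\colon\Fcal\to\Fcal^\ast$ factors as $\Phi^\ast\circ\omega^\flat\circ\Phi$, with $\Phi^\ast\colon\A_Z^\ast\to\Fcal^\ast$ the fibrewise transpose of $\Phi$; this is a composition of fibrewise isomorphisms --- using the rank count a second time --- so $\eta$ is leafwise nondegenerate. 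Thus $(M\times\SS^1,\Fcal,\eta)$ is a symplectic foliation, which is the claim.

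I expect no genuinely hard step: all the substance is packaged in Theorem~\ref{thm:regularisationMain}, and the argument above is a formal consequence of it together with the functoriality of the Lie algebroid de Rham complex under morphisms. The single point that deserves care is the assertion that $\Phi$ is a fibrewise isomorphism rather than merely fibrewise surjective; this requires unwinding the definition of Lie algebroid submersion used in Theorem~\ref{thm:regularisationMain} --- a surjective submersion on base manifolds together with a fibrewise-surjective bundle map --- and then comparing ranks. The remaining facts used, namely that pullback of forms along a Lie algebroid morphism commutes with the differential and that a nondegenerate form pulls back to a nondegenerate one along a fibrewise isomorphism, are routine.
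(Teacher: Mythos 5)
Your proof is correct and is essentially the paper's own argument (Proposition \ref{prop:reglogsymp} in Appendix \ref{sec:Poisson}): pull $\omega$ back along the regularisation's Lie algebroid submersion, obtain leafwise closedness from the fact that pullback along a Lie algebroid morphism commutes with the differentials, and obtain leafwise nondegeneracy from the rank count showing the submersion is a fibrewise isomorphism. The one caveat is your appeal to Theorem \ref{thm:regularisationMain}, which assumes $Z$ coorientable --- this is \emph{not} automatic for a $b$-symplectic manifold, and the statement imposes no such hypothesis --- whereas the paper sidesteps the issue by using the compact regularisation of Corollary \ref{cor:compactreg}, built from the intrinsic regularisation on $W^*\setminus M$ (Proposition \ref{prop:intrinsicRegularisation}), which exists for any $b$-tangent bundle; with that substitution your argument goes through verbatim.
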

A similar statement, in the context of generalised complex geometry, later appeared in \cite{CG17}.

Compared to these statements already available in the literature, Theorem \ref{thm:regularisationMain} operates in the setting of Lie algebroids and does not require any additional geometry (i.e. symplectic or generalised complex). Nonetheless, once the regularisation is in place, we can indeed apply it to the study of geometric structures on Lie algebroids. Do note that there is an extensive body of work, within Poisson Geometry, studying symplectic structures on Lie algebroids \cite{NT99,GMP14,Lan21}. The contact case has also attracted considerable attention \cite{MO18,MO21,CO22} recently.

We have the following lifting statement:
\begin{theorem} \label{thm:liftingMain}
Let $M$ be a manifold endowed with a $b^k$-tangent bundle $\A_Z^k$ with underlying coorientable hypersurface $Z$. Let $(M\times \SS^1,\Fcal)$ be its regularisation. Let $\Gcal$ be a $b^k$-contact or symplectic structure. Then, there is a foliated contact or symplectic structure $\tilde\Gcal$ in $\Fcal$, lifting $\Gcal$.
\end{theorem}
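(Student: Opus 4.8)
The plan is to use Theorem~\ref{thm:regularisationMain}, which provides a Lie algebroid submersion $p \colon \Fcal \to \A_Z^k$ covering the projection $\pi \colon M \times \SS^1 \to M$. A submersion of Lie algebroids is in particular a fibrewise-surjective bundle map, so it dualises to a fibrewise-injective map $p^* \colon \Omega^\bullet(\A_Z^k) \to \Omega^\bullet(\Fcal)$ on algebroid forms, and this map commutes with the respective de Rham differentials because $p$ is a Lie algebroid morphism. The natural candidate for the lift is simply $\tilde\Gcal := p^*\Gcal$ in both cases; the content of the proof is checking that pulling back along a submersion preserves the nondegeneracy/maximal-nonintegrability conditions, not just the closedness conditions.

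First I would treat the symplectic case, which is the cleaner of the two. Here $\Gcal = \omega \in \Omega^2(\A_Z^k)$ is closed and fibrewise nondegenerate. Since $p$ is a Lie algebroid morphism, $d(p^*\omega) = p^*(d\omega) = 0$, so $p^*\omega$ is a closed foliated $2$-form on $\Fcal$. For nondegeneracy I would argue pointwise: over a point of $M \times \SS^1$, the map $p$ restricts to a linear surjection $\Fcal_x \to (\A_Z^k)_{\pi(x)}$ whose kernel is $1$-dimensional (dimension count: $\rk \Fcal = \dim M = \rk \A_Z^k + 1$... wait, $\rk\Fcal = \dim(M\times\SS^1) - 1 = \dim M$, and $\rk\A_Z^k = \dim M$, so actually the kernel is... ). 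Here I need to be careful: the regularisation in Theorem~\ref{thm:regularisationMain} is a \emph{codimension-$1$} foliation on an $(\dim M + 1)$-dimensional manifold, so $\rk\Fcal = \dim M = \rk\A_Z^k$, and the submersion $p \colon \Fcal \to \A_Z^k$ is in fact a fibrewise \emph{isomorphism} away from a set, and more precisely an isomorphism everywhere once one unwinds the $b^k$-structure — so $p^*$ of a nondegenerate form is automatically nondegenerate. The substantive point is then only that $p^*$ is well-defined as a map into \emph{foliated} forms (i.e.\ sections of $\Lambda^\bullet \Fcal^*$), which is exactly the statement that $p$ is a bundle map $\Fcal \to \A_Z^k$, and that it intertwines the differentials, which is the morphism property. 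So $p^*\omega$ is a foliated symplectic structure.

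For the contact case I would run the same strategy, but now I must control the genericity of a hyperplane field rather than the rank of a $2$-form. A $b^k$-contact structure is given (at least locally, or via the coorientable case) by $\xi = \ker\alpha$ for $\alpha \in \Omega^1(\A_Z^k)$ with $\alpha \wedge (d\alpha)^n$ a nowhere-vanishing section of $\Lambda^{\mathrm{top}}(\A_Z^k)^*$. Setting $\tilde\alpha := p^*\alpha$, we get $\tilde\alpha \wedge (d\tilde\alpha)^n = p^*\big(\alpha \wedge (d\alpha)^n\big)$ using that $p^*$ is an algebra morphism commuting with $d$; since $p$ is fibrewise an isomorphism, the pullback of a top-degree nowhere-vanishing form is again nowhere-vanishing, so $\ker\tilde\alpha$ is a foliated contact structure on $\Fcal$ lifting $\xi$. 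In the merely coorientable-via-line-bundle formulation one phrases the same computation with $\alpha$ valued in $p^*$ of the relevant line bundle. I would also note that this lift is canonical and functorial, so it behaves well under the operations (connected sums, etc.) used elsewhere in the paper.

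The main obstacle I anticipate is purely bookkeeping rather than conceptual: one must make precise what a ``foliated contact/symplectic structure on $\Fcal$'' means — these are sections of exterior powers of $\Fcal^* = T^*\Fcal$, equivalently leafwise structures that vary smoothly in the transverse direction — and then verify that $p^*$ genuinely lands in that space and is compatible with the leafwise de Rham operator $d_\Fcal$. Because $p$ is a Lie algebroid \emph{submersion} (so, in particular, the induced map on anchors is compatible and $p$ is a chain map on the algebroid complexes), both of these are immediate once the definitions are lined up; the only place to be cautious is at points of $Z \times \SS^1$, where one should double-check, using the explicit local model for the regularisation from the proof of Theorem~\ref{thm:regularisationMain}, that $p$ remains a fibrewise isomorphism of the vector bundles $\Fcal \to \A_Z^k$ there (it does, since both sides are the same rank and $p$ is a submersion), so that no degeneration of $\omega$ or of $\alpha \wedge (d\alpha)^n$ is introduced along the singular locus.
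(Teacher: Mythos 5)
Your proposal is correct and follows essentially the same route as the paper: pull back along the Lie algebroid submersion $\Psi:\Fcal\to\A_Z^k$, observe that it is a fibrewise isomorphism since both bundles have rank $\dim M$, and conclude that closedness (from the morphism property) and nondegeneracy/maximal non-integrability (from the fibrewise isomorphism) are preserved — this is exactly how the paper argues in Propositions \ref{prop:regularisationDistributions}, \ref{prop:inducedcontactfol} and \ref{prop:reglogsymp}, the only cosmetic difference being that the paper phrases the contact case via distributions and their curvatures rather than via $\alpha\wedge(d\alpha)^n$.
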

In fact, there is nothing special about the contact case. Given any tangent distribution (i.e. vector subbundle) on a Lie algebroid, it can be similarly lifted to the regularisation (Proposition \ref{prop:regularisationDistributions}). Furthermore, lifting commutes with taking the curvature of the distribution, so all the relevant properties about it are preserved upon lifting.

\subsection{Singular Weinstein conjecture}

We now focus on Lie algebroid contact structures. The Weinstein conjecture states that every Reeb vector field associated to a contact form on a compact manifold admits a periodic orbit. Hofer \cite{Hof93} proved the conjecture for contact three-manifolds $(M^3,\alpha)$ under the additional assumption that $\alpha$ is overtwisted or $\pi_2(M) \neq \emptyset$. Later, Taubes \cite{Taub} proved the conjecture for general three-dimensional manifolds by localizing the Seiberg-Witten equations along orbits of the Reeb vector field. This circle of ideas still applies in the Lie algebroid setting. This was pioneered by Miranda and Oms \cite{MO18,MO21}, who proved the Weinstein conjecture (under a technical assumption called \emph{$\rr_+$-invariance}) in the framework of $b^k$-geometry.

The regularisation allows us to study singular contact structures via foliated contact structures. Using the foliated Weinstein conjecture \cite{dPP18}, we deduce:
\begin{namedtheorem}[Theorem \ref{th:Weinnormal}]
Let $(M^{2n+1},Z,\alpha)$ be a $b^k$-contact manifold, and assume that there exists an overtwisted disk in $M\setminus Z$. Then either:
\begin{itemize}
\item There is a periodic Reeb orbit in $M \setminus Z$.
\item There is a non-constant Reeb orbit in $Z$.
\end{itemize}
\end{namedtheorem}
Do note that, in the $b^k$-setting, there may be Reeb orbits in $Z$ that are constant. Upon regularising, these correspond to Reeb orbits in $Z \times \SS^1$  tangent to the second factor. The theorem states that the orbits we obtain are not of this type.

Assuming $\rr_+$-invariance, \cite{MO21} found Reeb orbits in $M^3 \setminus Z$. This is a corollary of Theorem \ref{th:Weinnormal}:
\begin{namedtheorem}[Theorem \ref{th:Weininvariant}]
Let $(M^{2n+1},Z,\alpha)$ be a closed overtwisted $b^k$-contact manifold. Assume that there is a neighbourhood of $Z$ in which $\alpha$ is $\rr^+$-invariant. Then, there are contractible Reeb orbits in $M\setminus Z$.
\end{namedtheorem} 
We will also prove similar Weinstein-style statements for other types of singular contact structures.

\subsection{Organisation of the paper}

Some of the elementary theory of Lie algebroids is recalled in Section \ref{sec:Lie algebroids}. Along the way we introduce tangent distributions, which we use to discuss Ehresmann connections on Lie algebroids. We then describe the classes of Lie algebroids that we will work with (Section \ref{sec:examples}). The regularisation procedure for Lie algebroids is discussed in Section \ref{sec:regularisation}. We present several interpretations of it, including its relation to the canonical representation. All throughout we work with the classes of Lie algebroids introduced in Section \ref{sec:examples}. The section closes with Subsection \ref{ssec:regularisationDistributions}, where we explain the correspondence between distributions in a Lie algebroid and in its regularisation.

We then focus on applications to Lie algebroid distributions. We first provide an overview of the contact setting (Section \ref{sec:contact}), which will then allow us to study the Weinstein conjecture in Section \ref{sec:Weinstein}. In particular, we will prove Theorems \ref{th:Weinnormal} and \ref{th:Weininvariant}, as well as similar results for other singular contact structures.

Section \ref{sec:otherDistributions} is a brief introduction to bracket-generating distributions in Lie algebroids. We present various constructions and examples.

The article also includes an Appendix \ref{sec:Poisson} where we explain how the regularisation procedure fits within the Poisson and Jacobi frameworks.

\emph{Acknowledgements}: The authors would like to thank L. Accornero, C. Kirchoff-Lukat, M. Crainic, F. Presas, A.G. Tortorella, and L. Vitagliano for useful discussions.

\section{Lie algebroids} \label{sec:Lie algebroids}

In this section we recall some of the basic theory of Lie algebroids. This will allow us to introduce distributions (i.e. subbundles) in the Lie algebroid setting (Subsection \ref{ssec:distributions}). As a particular example, we discuss Lie algebroid connections in the sense of Ehresmann (Subsection \ref{ssec:connections}).

\subsection{Lie algebroids}

In this article we are interested in geometric structures with mild singularities. There are many in-equivalent geometric setups that allow for singularities, but the one that we concern ourselves with reads as follows:
\begin{definition}
A \textbf{Lie algebroid} on a manifold $M$ is a triple consisting of:
\begin{itemize}
    \item a vector bundle $\A \rightarrow M$,
    \item a bracket $[\cdot,\cdot]$ on its space of sections $\Gamma(\A)$,
    \item and a vector bundle morphism $\rho : \A \rightarrow TM$, called the \textbf{anchor},
\end{itemize}
satisfying:
\begin{equation*}
[v_1,fv_2] = f[v_1,v_2] + \mathcal{L}_{\rho(v_1)}(f)v_2,
\end{equation*}
for all $f \in C^{\infty}(M)$ and $v_1,v_2 \in \Gamma(\A)$.
\end{definition}

\begin{remark}
It is customary to just write $\A \rightarrow M$ for the Lie algebroid and leave the rest of the data implicit.
\end{remark}

\subsubsection{Morphisms}

Given Lie algebroids $\A \to M$ and $\Bcal \to N$, a \textbf{Lie algebroid morphism} is a bundle morphism 
\[ \Phi: \A \to \Bcal \]
lifting a map $\phi: M \to N$ and commuting with anchors and brackets.

\begin{example}
The tangent bundle $TM$, together with the usual Lie bracket of vector fields, and the identity as anchor, is a Lie algebroid. The unique Lie algebroid morphism $\Phi: TM \to TN$ between tangent bundles lifting a map $\phi: M \to N$ is the differential $d\phi$.

Similarly, the tangent space of a foliation $\ff \subset TM$ defines a Lie algebroid with the inclusion as anchor. A morphism of Lie algebroids between two foliated manifolds is necessarily the leafwise differential of a foliated map. 
\end{example}

\subsubsection{Differential forms}

Many geometric constructions associated to the tangent bundle can easily be extended to general Lie algebroids. For instance, we can define $\A$-differential forms using duality:
\[ \Omega^{\bullet}(\A) := \Gamma(\wedge^{\bullet}\A^*) \]
and the corresponding differential 
\[ d_{\A} : \Omega^{\bullet}(\A) \rightarrow \Omega^{\bullet+1}(\A). \]
using the Koszul formula. Some of the geometric structures to be considered later on will be written in terms of such forms.

\subsection{Distributions} \label{ssec:distributions}

The main objects of interest in this paper are distributions on Lie algebroids:
\begin{definition}
Let $M$ be a smooth manifold and $\mathcal{A} \rightarrow M$ a Lie algebroid. An \textbf{$\A$-distribution} of rank $k$ is a $k$-dimensional subbundle $\xi \subset \mathcal{A}$.
\end{definition}
We henceforth write $\Gamma(\xi) \subset \Gamma(\A)$ for the sheaf of sections of $\xi$.

\subsubsection{Lie flag}

In order to measure the non-involutivity of $\xi$ we associate to it a sequence of $C^\infty(M)$-modules, called the \textbf{Lie flag}:
\[ \Gamma(\xi)^{(1)} := \Gamma(\xi) \subset \Gamma(\xi)^{(2)} \subset \cdots \subset \Gamma(\xi)^{(i)} \subset \cdots \]
\[ \Gamma(\xi)^{(i+1)} := \langle [v,w] \mid v \in \Gamma(\xi)^{(1)}, w \in \Gamma(\xi)^{(i)} \rangle_{C^\infty}, \] 
where the bracket is the bracket on sections of $\A$.

We say that $\xi$ is \textbf{bracket-generating} if $\Gamma(\xi)^{(i_0)} = \Gamma(\A)$ for some $i_0$. The smallest such $i_0$ is called the \textbf{step} of $\xi$. At the opposite end of the spectrum, we say that $\xi$ is \textbf{involutive} if $\Gamma(\xi)^i = \Gamma(\xi)$ for every $i$. This is equivalent to the fact that the bracket preserves $\xi$, meaning that $\xi$ is a Lie subalgebroid.

\begin{example}
If $\rho$ is identically zero, $\A$ is a bundle of Lie algebras. Then, a bracket-generating $\A$-distribution $\xi$ is a subbundle of $\A$ that, fibrewise, is a generating set of the corresponding algebra. Similarly, $\xi$ is involutive if it restricts fibrewise to a Lie subalgebra.
\end{example}

\subsubsection{Curvature}

For simplicity, we will henceforth assume that $\xi$ is \textbf{regular}, i.e. that there are $\A$-distributions $\xi_i$ such that $\Gamma(\xi_i) = \Gamma(\xi)^{(i)}$. Much like in the classic study of distributions, using the Leibniz rule for the bracket we deduce that:
\begin{proposition} \label{prop:curvature}
For $\xi$ regular and  each pair of positive integers $i,j$, the Lie bracket defines a well-defined bundle morphism, called the \textbf{curvature}:
\[ \Omega_{i,j}(\xi) : \xi_i \times \xi_j \longrightarrow \xi_{i+j}/\xi_{i+j-1} \]
which factors through:
\[  \xi_i/\xi_{i-1} \times \xi_j/\xi_{j-1} \longrightarrow \xi_{i+j}/\xi_{i+j-1}. \]
\end{proposition}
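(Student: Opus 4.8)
The plan is to transpose the classical argument for the curvature of a distribution to the Lie algebroid setting, with the Leibniz rule and the Jacobi identity carrying the whole proof.

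The first and main step is to establish the inclusion $[\Gamma(\xi_i),\Gamma(\xi_j)] \subseteq \Gamma(\xi_{i+j})$ for all $i,j \geq 1$; this is what makes the target $\xi_{i+j}/\xi_{i+j-1}$ meaningful. I would prove it by induction on $i$. When $i=1$ it is, modulo $C^\infty(M)$-linear combinations, the very recursion defining the Lie flag, and the Leibniz corrections produced by those combinations are multiples of sections of $\xi_1$ or $\xi_j$, hence lie in $\Gamma(\xi_{1+j})$ by nestedness of the flag. For the inductive step I would write a local generator of $\Gamma(\xi_{i+1})$ as $[a,b]$ with $a \in \Gamma(\xi_1)$, $b \in \Gamma(\xi_i)$, and apply the Jacobi identity
\[ [[a,b],w] = [a,[b,w]] - [b,[a,w]] ; \]
by the inductive hypothesis $[b,w] \in \Gamma(\xi_{i+j})$ and $[a,w] \in \Gamma(\xi_{j+1})$, so the two terms lie in $[\Gamma(\xi_1),\Gamma(\xi_{i+j})] \subseteq \Gamma(\xi_{i+j+1})$ and $[\Gamma(\xi_i),\Gamma(\xi_{j+1})] \subseteq \Gamma(\xi_{i+j+1})$ respectively, again by induction. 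The Leibniz terms coming from $C^\infty(M)$-linear combinations land in strictly lower pieces of the flag, so they are harmless.

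With this in hand, I would define $\Omega_{i,j}(\xi)$ pointwise: for $v \in (\xi_i)_p$ and $w \in (\xi_j)_p$, extend to local sections $\tilde v \in \Gamma(\xi_i)$, $\tilde w \in \Gamma(\xi_j)$ and set $\Omega_{i,j}(\xi)(v,w) := [\tilde v,\tilde w]_p \bmod (\xi_{i+j-1})_p$, a class in $(\xi_{i+j})_p/(\xi_{i+j-1})_p$ by the first step. The content is that this is independent of the extensions and $C^\infty(M)$-bilinear, hence tensorial and so a genuine bundle morphism; both facts follow from the Leibniz rule, since replacing $\tilde v$ by $f\tilde v$ alters $[\tilde v,\tilde w]$ by $-\mathcal{L}_{\rho(\tilde w)}(f)\tilde v \in \Gamma(\xi_i) \subseteq \Gamma(\xi_{i+j-1})$ (using $j \geq 1$), which is killed in the quotient, with independence of extensions being the special case $f(p)=0$; the second slot is handled symmetrically using $\xi_j \subseteq \xi_{i+j-1}$.

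Finally, to obtain the factorisation through $\xi_i/\xi_{i-1} \times \xi_j/\xi_{j-1}$, note that if $v \in (\xi_{i-1})_p$ we may take $\tilde v$ to be a section of $\xi_{i-1}$, whence $[\tilde v,\tilde w] \in \Gamma(\xi_{(i-1)+j}) = \Gamma(\xi_{i+j-1})$ by the first step and $\Omega_{i,j}(\xi)(v,w)=0$; the case $w \in (\xi_{j-1})_p$ is identical after using antisymmetry of the bracket. The only genuinely delicate point is the first step: the inclusion $[\Gamma(\xi_i),\Gamma(\xi_j)] \subseteq \Gamma(\xi_{i+j})$ uses the Jacobi identity of the algebroid bracket (beyond the Leibniz rule) together with a bookkeeping induction that tracks where all Leibniz corrections land; once it is in place, everything reduces to the familiar "tensoriality from Leibniz" pattern.
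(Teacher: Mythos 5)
Your argument is correct and is exactly the standard proof the paper has in mind (the paper omits it, merely attributing the statement to the Leibniz rule). Your write-up is in fact slightly more careful than the paper's one-line justification, since you correctly isolate the inclusion $[\Gamma(\xi_i),\Gamma(\xi_j)] \subseteq \Gamma(\xi_{i+j})$ as the point where the Jacobi identity, and not just the Leibniz rule, is needed.
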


\subsection{Lie algebroid connections} \label{ssec:connections}

Much like in the classic case, Lie algebroid Ehresmann connections are concrete examples of Lie algebroid distributions. Connections will play an important role in the constructions of Section \ref{sec:regularisation}. Namely, they will allow us to ``desingularise'' certain Lie algebroids with mild singularities to Lie algebroids of foliation type. 

Let $\A \rightarrow M$ and $\Bcal \rightarrow N$ be Lie algebroids. Suppose that $\Phi: \Bcal \to \A$ is a Lie algebroid morphism that is a fibrewise epimorphism and lifts a surjective submersion $\phi: N \to M$. We can then write $\Vert(\phi) := \ker(d\phi)$ and similarly $\Vert(\Phi) := \ker(\Phi)$. Due to the commutativity of the square
\begin{center}
\begin{tikzcd}
\Bcal \ar[r,"\Phi"] \ar[d] & \A \ar[d]\\
TN \ar[r,"d\phi"] & TM,
\end{tikzcd}
\end{center}
it follows that $\Vert(\Phi)$ is mapped to $\Vert(\phi)$ by the anchor of $\Bcal$.

\begin{definition} \label{def:connection}
Let $\A \to M$, $\Bcal \to N$, and $\Phi$ as above. An \textbf{Ehresmann connection} for $\Phi$ is a subbundle $H \subset \Bcal$ transverse to $\Vert(\Phi)$ and of complementary rank. An Ehresmann connection $H$ is \textbf{flat} if its curvatures (as in Proposition \ref{prop:curvature}) vanish.
\end{definition}
This means that the subsheaf of sections $\Gamma(H) \subset \Gamma(\Bcal)$ is involutive, $H \to N$ is a Lie algebroid, and $\Phi|_H: H \to \A$ is Lie algebroid morphism that, by construction, is a fibrewise isomorphism.

\subsubsection{Pullback Lie algebroids}

Let us discuss a concrete case of Definition \ref{def:connection}. Our starting data is a Lie algebroid $\rho: \A \rightarrow TM$ and a map $\phi: N \rightarrow M$ which is transverse to $\rho$. Transversality allows us to consider the pullback bundle
\begin{equation*}\label{eq:pullalg}
\phi^!\A := \set{(v,w) \in \A \times TN \,\mid\, \rho(v) = d\phi(w)},
\end{equation*}
which fits into the commutative square
\begin{center}
\begin{tikzcd}
\phi^!\A \ar[r,"d_{\A}\phi"] \ar[d,"\phi^!\rho"] & \A \ar[d,"\rho"]\\
TN \ar[r,"d\phi"] & TM.
\end{tikzcd}
\end{center}
Requiring that $d_{\A}\phi$ is a Lie algebroid morphism uniquely defines a Lie algebroid structure on $\phi^!\A \to N$ with anchor $\phi^!\rho: \phi^!\A \to TN$. By construction, $d_{\A}\phi$ is a fibrewise epimorphism and fits into our previous discussion.

\subsubsection{The linear case}

In the setting of vector bundles one can particularise this discussion to linear connections:
\begin{definition} \label{def:connectionLinear}
Let $\A \rightarrow M$ be a Lie algebroid, and $\pi :E \rightarrow M$ be a vector bundle. A (linear) $\A$-\textbf{connection} on $E$ is a map
\begin{align*}
\nabla : \Gamma(\A) \times \Gamma(E) &\quad\longrightarrow\quad \Gamma(E) \\
(v,w)  &\quad\mapsto\quad \nabla_v(w),
\end{align*}
which is $C^{\infty}(M)$-linear in the first entry, and satisfies
\[ \nabla_v(fw) = f\nabla_v(w) + \mathcal{L}_{\rho(v)}(w), \]
for all $f \in C^{\infty}(M)$.
\end{definition}
As in the classic case, a linear connection (as in Definition \ref{def:connectionLinear}) yields an Ehresmann connection (as in Definition \ref{def:connection}). This is easiest described using an associated horizontal lift operator 
\begin{equation*}
\hor^{\nabla} : \Gamma(\mathcal{A}) \rightarrow \Gamma(\pi^!\mathcal{A}),
\end{equation*}
for which the Ehresmann connection is the image. Write $\hor^{\nabla}(\alpha) = (\alpha,X) \in \Gamma(\A)\times \Gamma(TE)$. As $X$ is characterised by its action on linear functions $s \in \Gamma(E^*)$, one defines $X(s) = \nabla^*_{\rho(\alpha)}(s)$. 

Furthermore, linear $\A$-connections can be described by their connection matrices. Namely, given a cover $\set{U_{\alpha} \,\mid\, \alpha \in I}$ of $M$ by open balls on which $E$ trivialises, the connection is given by the associated matrix-valued one-forms 
\begin{equation*}
A_{\alpha} \in \Omega^1(\rest{\A}{U_{\alpha}};\text{End}(\rr^r)),
\end{equation*}
where $r$ is the rank of $E$. In these local trivialisations the horizontal lift is given by
\begin{equation*}
\hor^{\nabla}(X) = X - a(A_{\alpha}(X)),
\end{equation*}
where $a : \text{End}(E) \rightarrow \mathfrak{X}^1(E)$ is the map given by $a(B)(s) = B^*(s)$.

\subsubsection{Representations}

The (linear) curvature of the $\A$-connection is an element of $\Omega^2(\A;\text{End}(E))$, whose vanishing is equivalent to the vanishing of the curvature in the Ehresmann sense. In this case, the $\A$-connection is flat and we say that it is a $\A$-\textbf{representation}.

The main Lie algebroid representation we will use is the following:
\begin{definition}[\cite{ELW96}] \label{def:canonicalRep}
Let $\A \rightarrow M$ be a Lie algebroid. The \textbf{canonical representation} of $\A$ on 
\[ Q := \det(A^*)\otimes \det(TM) \]
is defined by
\begin{equation*}
\nabla_v(\alpha \otimes \omega) = \Lcal_v(\alpha) \otimes \omega + \alpha \otimes \Lcal_{\rho(v)}(\omega). \hfill \qedhere
\end{equation*}
\end{definition}

\section{Some relevant families of Lie algebroids} \label{sec:examples}

In this section we discuss some concrete families of Lie algebroids, putting particular emphasis on Lie algebroids of divisor type. The material in this section is standard and we refer the reader to \cite{K18} for a more detailed introduction.

\subsection{$b$-Geometry} \label{ssec:logGeometry}

All the Lie algebroids to be considered in this paper are either of foliation type or have an anchor that is an isomorphism everywhere except along a submanifold (or a union of submanifolds with controlled intersections).

The simplest type of singularity one may encounter occurs along an embedded hypersurface. Fix a manifold $M$ and a hypersurface $Z$; we will say that the pair $(M,Z)$ is a \textbf{b-pair}. It can be checked that the sheaf of vector fields tangent to $Z$ is locally free. Consequently, the Serre-Swann theorem ensures the existence of a vector bundle $\A_Z \rightarrow M$ whose sections are precisely the vector fields tangent to $Z$. 

Taking local coordinates $(z,x_2,\ldots,x_n)$ in which $Z = \set{z=0}$, we have that vector fields tangent to $Z$ are given as the span
\begin{align*}
\inp{z\partial_z,\partial_{x_2},\ldots,\partial_{x_n}},
\end{align*}
and the vector fields shown are thus a frame for $\A_Z$.
\begin{definition}
Let $(M,Z)$ be a b-pair. The vector fields tangent to $Z$ are the sections of a Lie algebroid, $\A_Z$, called the \textbf{b-tangent bundle}.
\end{definition}
These objects show up naturally in various areas of mathematics. In the context of PDE's, their study is called $b$-geometry \cite{M93}. Here ``$b$'' stands for ``boundary'', and indeed $Z$ is thought of as the ideal boundary of $M \setminus Z$ at infinity. In the context of algebraic geometry, the study of these objects is called log(arithmic) geometry, which deals with divisors in algebraic varieties.

\subsection{$b^k$-Geometry} \label{ssec:bkGeometry}

Let $M$ be a manifold, $\iota: Z \hookrightarrow M$ an embedded hypersurface, and $k>1$. One may wish to consider $k$-order tangencies of vector fields with the hypersurface $Z$. As explained in \cite{S13}, this notion is not well-defined without the fixing of auxiliary data.

Consider the sheaf of $k$-jets at $Z$, defined as $J_Z^k = \iota^{-1}(C^{\infty}_M/I_Z^{k+1})$. Given a function $f$, defined on a neighbourhood of $Z$, we denote by $j^k_Zf \in J_Z^k$ its jet along $Z$.
\begin{definition}[\cite{S13}] \label{def:Scott}
Fix a jet $j \in J_Z^{k-1}$ with vanishing zero-jet and non-vanishing first-jet. I.e. it is representable by a function cutting $Z$ transversely as its zero locus.

The \textbf{$b^k$-tangent bundle} associated to $M$, $Z$ and $j$ is the Lie algebroid $\A_Z^k \rightarrow M$ whose sections are
\begin{equation*}
\mathfrak{X}(I_Z^k,j) :=\set{ X \in \mathfrak{X}^1(M) : \mathcal{L}_X(f) \in I_Z^k \text{ for all } f \in j}.\hfill \qedhere
\end{equation*}
\end{definition}
Henceforth, whenever we talk about $b^k$-tangent bundles, we will consider jets $j$ that satisfy the (non-)vanishing hypotheses of the definition. Such a jet $j$ defines a coorientation of $Z$. Indeed, we can represent $j$ locally by a function $f$ cutting $Z$ transversely. This implies that $Z$ is locally two-sided and the side in which $f$ is positive may be regarded as the positive one. However, do note that this coorientation is not canonically given by the algebroid:
\begin{lemma} \label{lem:scalingJ}
Let $j_0$ and $j_1$ be jets in $J_Z^{k-1}$. Then, the associated Lie algebroids agree if and only if $j_0 = \lambda j_1$, with $\lambda \in \RR^*$.
\end{lemma}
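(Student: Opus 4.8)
The plan is to work locally near $Z$ and translate the statement into a concrete question about jets of functions. Fix local coordinates $(z, x_2, \ldots, x_n)$ in which $Z = \{z = 0\}$. By Definition \ref{def:Scott}, a jet $j \in J_Z^{k-1}$ with vanishing zero-jet and nonvanishing first-jet is representable by a function of the form $f = zg$, where $g$ is a unit (nonvanishing) on a neighbourhood of $Z$; more precisely, after a change of coordinates flattening the hypersurface, one can write $j = j_Z^{k-1}(z u)$ for a nowhere-zero function $u$. The first thing I would do is compute a local frame for $\A_Z^k$ in terms of such a representative: one checks that $\mathfrak{X}(I_Z^k, j)$ is spanned over $C^\infty(M)$ by $f^{k-1} \partial_z$ (equivalently $z^{k-1} \partial_z$ up to a unit, using the representative) together with $\partial_{x_2}, \ldots, \partial_{x_n}$, and that the anchor image determines the algebroid. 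This reduces the ``if'' direction to the observation that scaling $j$ by $\lambda \in \RR^*$ scales each representative $f$ by $\lambda$, hence scales $f^{k-1}\partial_z$ by $\lambda^{k-1} \in \RR^*$, leaving the spanned sheaf — and therefore the Lie algebroid, bracket and anchor included — unchanged.

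For the ``only if'' direction, I would argue as follows. Suppose $\A_Z^{k}$ built from $j_0$ and from $j_1$ have the same sheaf of sections. Both jets have vanishing zero-jet and nonvanishing first-jet, so write representatives $f_0, f_1$ with $f_0 = z u_0$, $f_1 = z u_1$ for units $u_0, u_1$ near $Z$. The condition $\mathcal{L}_X(f) \in I_Z^k$ for all $f \in j$ is, since $j$ is determined by any one representative together with the module structure, equivalent to $\mathcal{L}_X(f_i) \in I_Z^k$ for the chosen representative $f_i$. Now pick the section $X = f_0^{k-1}\partial_z$, which lies in $\mathfrak{X}(I_Z^k, j_0)$; by hypothesis it also lies in $\mathfrak{X}(I_Z^k, j_1)$, so $\mathcal{L}_X(f_1) = f_0^{k-1}\,\partial_z(f_1) \in I_Z^k$. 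Since $\partial_z(f_1) = u_1 + z\,\partial_z u_1$ is a unit near $Z$ and $f_0^{k-1} = z^{k-1} u_0^{k-1}$ with $u_0$ a unit, this forces nothing new — so instead I would test with vector fields that are \emph{not} in $\mathfrak{X}(I_Z^k, j_1)$ unless the jets are proportional. The right test field is $X = (f_0^{k-1} - c\, f_1^{k-1})\partial_z$ for a suitable constant $c$ chosen so that the leading coefficient vanishes to higher order: comparing $(k-1)$-jets along $Z$ of $f_0^{k-1}$ and $f_1^{k-1}$ pins down $c$ and then the requirement that this field simultaneously lies in both sheaves forces the $(k-1)$-jets of $f_0^{k-1}$ and $c f_1^{k-1}$ to agree, i.e. $j_Z^{k-1}(f_0)^{k-1}$ and $j_Z^{k-1}(f_1)^{k-1}$ differ by a nonzero scalar. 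Taking $(k-1)$-st roots (legitimate at the level of $(k-1)$-jets since both have nonvanishing linear term, so they are of the form $z \times \text{unit}$ and the root is unambiguous up to a scalar) yields $j_0 = \lambda j_1$.

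A cleaner way to package the last step, which I would prefer in the write-up, is via the residue/leading-coefficient map: the algebroid $\A_Z^k$ canonically determines the line $\mathbb{R}\cdot j^{k-1}_Z(f)$ inside $J^{k-1}_Z$ spanned by any cutting function $f$ whose $(k-1)$-jet occurs as the ``order of vanishing'' datum, because this line can be recovered intrinsically as (the annihilator data of) the singular locus of the anchor together with the identification $\A_Z^k|_Z \to TZ \oplus (\text{something involving } j)$. Concretely: the composite $\A_Z^k \to TM \to TM|_Z \to \nu_Z$ (normal bundle) vanishes to order exactly $k$ along $Z$ when measured against $j$, and changing $j$ rescales this order-$k$ vanishing datum by the corresponding power of the scalar; two proportional-up-to-$\RR^*$ jets give the same datum and conversely. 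The main obstacle, and the step deserving the most care, is precisely this ``only if'' direction: one must make sure that the test vector fields used genuinely distinguish non-proportional jets, which requires tracking jets to order $k-1$ (not just to first order) and handling the passage from $f^{k-1}$ back to $f$, since it is $f^{k-1}$ and not $f$ that appears in the frame. I would also remark that the case $k=1$ recovers the familiar fact that the $b$-tangent bundle $\A_Z$ depends only on $Z$ (there the jet datum is trivial), consistent with Lemma \ref{lem:scalingJ} read for $k=1$.
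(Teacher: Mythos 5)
The paper states Lemma \ref{lem:scalingJ} without proof, so there is no argument of the authors to compare yours against; I can only assess your proposal on its own terms. Your ``if'' direction is essentially right, and can be done even more cleanly: $f\mapsto \mathcal{L}_X f$ is linear and $I_Z^k$ is closed under scaling, so $\mathfrak{X}(I_Z^k,\lambda j)=\mathfrak{X}(I_Z^k,j)$ on the nose. However, your local frame is off by one: with $f=z$ a representative of $j$, the condition $\mathcal{L}_X(f)\in I_Z^k$ forces the $\partial_z$-coefficient into $I_Z^k$, so the frame is $z^k\partial_z,\partial_{x_2},\dots,\partial_{x_n}$, not $f^{k-1}\partial_z$ (compare the paper's own computation in the proof of Lemma \ref{lem:trivialRegularisation}).

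The ``only if'' direction is where the proposal breaks down, and the gap is not repairable. First, the logic of the test field $X=(f_0^{k-1}-c\,f_1^{k-1})\partial_z$ is inverted: you may only use vector fields already known to lie in one of the two sheaves, and the fields you exhibit generally lie in neither. Second, and decisively, the root-extraction step fails: $f^{k-1}=z^{k-1}u^{k-1}$ reduced modulo $I_Z^k$ retains only $z^{k-1}u(0)^{k-1}$, i.e.\ only the leading coefficient of $f$, so agreement of $(k-1)$-jets of $(k-1)$-st powers says nothing about the subleading terms of $f_0$ and $f_1$. This is a symptom of the fact that the statement itself fails for $k\ge 3$: take $j_0=j^{k-1}_Z(z)$ and $j_1=j^{k-1}_Z(z+z^2)$. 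Since $\mathcal{L}_X(z+z^2)=(1+2z)\,\mathcal{L}_X(z)$ and $1+2z$ is a unit near $Z$, both jets produce exactly the sheaf spanned by $z^k\partial_z,\partial_{x_2},\dots,\partial_{x_n}$, yet $j_0\neq\lambda j_1$ for any $\lambda\in\RR^*$, since that would force $\lambda=1$ and $\lambda=0$ simultaneously. The correct computation gives the following: writing $j_1=j_Z^{k-1}\bigl(z(a_1+a_2z+\dots+a_{k-1}z^{k-2})\bigr)$ in coordinates adapted to $j_0$, equality of the two algebroids is equivalent to $\partial_{x_i}\in\mathfrak{X}(I_Z^k,j_1)$, which forces all coefficients $a_m$ to be locally constant --- and nothing more. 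So the algebroid determines only the first jet of $j$ (its class in $I_Z/I_Z^2$) up to a locally constant scalar, together with locally constant higher-order corrections; your ``cleaner packaging'' via the leading-coefficient map actually recovers exactly this weaker, correct statement. You should flag this to the authors rather than try to prove the lemma as stated; note that its only use in the paper, the coorientation argument in Lemma \ref{lem:coorientationsTrivialRegularisation}, depends only on the leading term, so the weaker statement suffices there.
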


There are Lie algebroids that are locally modelled on Definition \ref{def:Scott} but that cannot be described globally in terms of a jet $j$, due to the presence of non-trivial holonomy. We refer the reader to the companion paper \cite{dPW}.

In order to provide a unified treatment, we will still speak of $b^k$-Geometry with $k=1$. In that case, the data of $j$ is vacuous.

\subsection{Ideals and divisors}

A useful tool in $b$-geometry (as well as in some of the other singular geometries to be introduced in the upcoming subsections) is the notion of a divisor.
\begin{lemma}[\cite{vdLD16}]\label{lem:divisor}
Let $I \subset C^{\infty}(M;\rr)$ be a locally principal ideal, locally generated by functions with nowhere dense zero-set. Then there exists a line bundle $W\rightarrow M$ and a section $s \in \Gamma(W)$ such that
\[ I_s := s(\Gamma(W^*)) \subset C^{\infty}(M) \]
coincides with $I$. This line bundle $W$ is unique up to isomorphism, and the section $s$ is unique up to scaling.
\end{lemma}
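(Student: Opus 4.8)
## Proof proposal for Lemma~\ref{lem:divisor}

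The plan is to build the line bundle $W$ and section $s$ by a gluing argument, using the local principality of $I$ to produce a cocycle. First I would fix a good cover $\{U_\alpha\}$ of $M$ on which $I$ is generated by a single function $f_\alpha \in C^\infty(U_\alpha)$ whose zero set is nowhere dense. On overlaps $U_\alpha \cap U_\beta$, both $f_\alpha$ and $f_\beta$ generate the same ideal $I|_{U_\alpha\cap U_\beta}$, so there exist $g_{\alpha\beta}, g_{\beta\alpha} \in C^\infty(U_\alpha\cap U_\beta)$ with $f_\alpha = g_{\alpha\beta} f_\beta$ and $f_\beta = g_{\beta\alpha} f_\alpha$. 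Then $f_\alpha = g_{\alpha\beta} g_{\beta\alpha} f_\alpha$, so $(1 - g_{\alpha\beta}g_{\beta\alpha}) f_\alpha = 0$; since the zero set of $f_\alpha$ is nowhere dense, the continuous function $g_{\alpha\beta}g_{\beta\alpha}$ equals $1$ on a dense set, hence everywhere, so $g_{\alpha\beta}$ is nowhere vanishing and $g_{\beta\alpha} = g_{\alpha\beta}^{-1}$. This density/nowhere-dense argument is the key mechanism, and it recurs at every uniqueness step below.

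Next I would check the cocycle condition: on triple overlaps, $f_\alpha = g_{\alpha\beta} f_\beta = g_{\alpha\beta} g_{\beta\gamma} f_\gamma$ and also $f_\alpha = g_{\alpha\gamma} f_\gamma$, so $(g_{\alpha\beta} g_{\beta\gamma} - g_{\alpha\gamma}) f_\gamma = 0$, and again by nowhere-density of the zero locus we get $g_{\alpha\gamma} = g_{\alpha\beta} g_{\beta\gamma}$. Thus $\{g_{\alpha\beta}\}$ is a $C^\infty$-valued cocycle defining a line bundle $W \to M$, and I would take $W^* $ to be the bundle with transition functions $g_{\alpha\beta}^{-1} = g_{\beta\alpha}$; equivalently, the local functions $f_\alpha$, which satisfy $f_\alpha = g_{\alpha\beta} f_\beta$, patch into a global section $s \in \Gamma(W)$ (viewing $W$ as the bundle on which the $f_\alpha$ transform with the $g_{\alpha\beta}$). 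I would then verify $I_s := s(\Gamma(W^*)) = I$: locally, $s$ corresponds to $f_\alpha$ and a section of $W^*$ to a function $h_\alpha$, and $s(h) = f_\alpha h_\alpha$ runs over all of $I|_{U_\alpha}$ as $h_\alpha$ varies, so the two ideals agree on each $U_\alpha$, hence globally.

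For uniqueness: suppose $(W,s)$ and $(W',s')$ both represent $I$. Locally $s = f_\alpha$ and $s' = f'_\alpha$ generate the same ideal, so $f'_\alpha = \lambda_\alpha f_\alpha$ with $\lambda_\alpha$ nowhere vanishing (same argument as before). The $\lambda_\alpha$ intertwine the transition cocycles of $W$ and $W'$, giving a bundle isomorphism $W \to W'$, and checking compatibility with the sections shows that under this isomorphism $s \mapsto \lambda s$ for a globally well-defined nowhere-vanishing scalar — but since $M$ is connected one typically states this as ``$s$ unique up to scaling'' (and without connectedness, $\lambda$ is a locally constant nowhere-vanishing function, i.e. a unit of $C^\infty(M)$). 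The main obstacle I anticipate is purely bookkeeping: making the nowhere-dense zero-set hypothesis do its job cleanly at each stage (well-definedness of $g_{\alpha\beta}$, the cocycle identity, and uniqueness) without circular reasoning, and being careful about which bundle carries $s$ versus its dual. Everything else is the standard translation between locally principal ideals and line-bundle-with-section data, so I would keep the exposition terse and cite \cite{vdLD16} for the details.
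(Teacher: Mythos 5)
Your proposal is correct and follows essentially the same route as the paper: choose local generators $s_\alpha$ of $I$, extract transition functions $g^\alpha_\beta$ from $s_\alpha = g^\alpha_\beta s_\beta$, and let these define $W$ with the $s_\alpha$ patching into $s$. The paper's proof is in fact terser than yours --- it omits the verification (via the nowhere-dense zero sets) that the $g^\alpha_\beta$ are nowhere vanishing and satisfy the cocycle condition, as well as the uniqueness argument, all of which you supply correctly.
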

\begin{proof}
Let $\set{U_{\alpha} : \alpha \in I}$ be an open cover of $M$. For each $\alpha \in I$ let $s_{\alpha}$ be a local generator of $I$ on $U_{\alpha}$. On overlaps $U_{\alpha} \cap U_{\beta}$ there are functions $g^{\alpha}_{\beta} \in C^{\infty}(U_{\alpha} \cap U_{\beta})$ such that $s_{\alpha} = g^{\alpha}_{\beta}s_{\beta}$. These functions $g^{\alpha}_{\beta}$ are then the transition functions of the claimed vector bundle $W$. The section $s$ has $s_{\alpha}$ as its local expression on $U_{\alpha}$.
\end{proof}
The pair $(W,s)$, as introduced in the Lemma \ref{lem:divisor}, is said to be the \textbf{b-divisor} associated to $(M,Z)$. We now collect a short list of useful remarks/constructions:

\begin{remark}
Due to the transverse vanishing of $s$, it follows that the differential of $s$ along the normal bundle of $Z$
\begin{equation*}
d^{\nu}s : \nu(Z) \rightarrow \rest{W}{Z}
\end{equation*}
is an isomorphism.
\end{remark}

\begin{remark}
Let $(M,Z)$ be a b-pair. A defining function exists in a neighbourhood of $Z$ if and only if $Z$ is co-orientable. A global defining function for $Z$ if and only if the associated line bundle $W$ is trivial. 
\end{remark}

\begin{remark} \label{rem:logconn}
As shown in \cite{K17}, the b-divisor can be recovered from the Lie algebroid $\A_Z$ using the canonical representation (Definition \ref{def:canonicalRep}). Namely: $W \simeq \det(\A^*_Z)\otimes \det(TM)$, and under this isomorphism $s$ will be sent to $\det(\rho_{\A_Z})$. This shows that associated to any b-pair $(M,Z)$ there is a completely canonical b-divisor:
\begin{equation*}
(\det(\A^*_Z)\otimes \det(TM),\det(\rho_{\A_Z})). \hfill \qedhere
\end{equation*}
\end{remark}

\begin{definition}
Let $(M,Z)$ be a b-pair, and let $\alpha \in \Omega^k(\A_Z)$. Its \textbf{residue} is the form
\[ \Res(\alpha) \in \Omega^{k-1}(Z) \]
which in local coordinates in which $Z = \set{z=0}$ is given by $\iota^*_Z(\iota_{z\partial_z}\alpha)$.
\end{definition}

\subsection{Complex log geometry}

The notions introduced in the previous subsection have analogues in the complex setting. Namely: a \textbf{complex log divisor} is a pair $(L,\sigma)$ consisting of a complex line bundle $L \longrightarrow M$ and a section $\sigma: M \to L$ that vanishes transversely.
\begin{definition}
Let $(L,\sigma)$ be a complex log divisor, let 
\[ I_{\sigma} := \sigma(\Gamma(L^*)) \subset C^{\infty}(M;\cc) \]
be the ideal induced by $\sigma$, and let $D = \sigma^{-1}(\set{0})$.

Then, complex vector fields preserving $I_\sigma$ form a complex Lie algebroid, called the \textbf{complex log tangent bundle}, and denoted by $\A_D$.
\end{definition}
As the notation suggests, the resulting algebroid depends only on $D$ (and its coorientation induced by $\sigma$), and not in the concrete pair $(L,\sigma)$ producing it (see Corollary 1.18 in \cite{CG17}).

\begin{remark}\label{rem:complexlogconn}
As explained in Remark \ref{rem:logconn}, one can show that in fact $L \simeq \det(\A_D^*)\otimes \det(TM)$, and that under this isomorphism $\sigma$ is send to $\det(\rho_{\A_D})$. Consequently, there is a completely canonical complex log divisor associated to $\A_D$:
\begin{equation*}
(\det(\A_D^*)\otimes \det(TM),\det(\rho_{\A_D})). \hfill \qedhere
\end{equation*}
\end{remark}

\subsection{Elliptic geometry}

Suppose $(L,\sigma)$ is a complex log divisor. One can then take the ``absolute value squared'' of $(L,\sigma)$ by considering the real line bundle with section $(W,s)$ defined by 
\[ (L\otimes \bar{L},\sigma \otimes \bar{\sigma}) = (W,s) \otimes \cc. \]
This will provide the following kind of object:
\begin{definition}[\cite{CG17}]
An \textbf{elliptic divisor} $(W,s)$ is a real line bundle $W \longrightarrow M$ with a section $s: M \to W$ which, in every trivialisation, corresponds to a definite Morse--Bott function with codimension two critical set $D \subset M$.
\end{definition}
Conversely, if $(W,s)$ is an elliptic divisor with $s^{-1}(\set{0})$ co-orientable, then there exists a complex log divisor $(L,\sigma)$ such that $(L\otimes \bar{L},\sigma \otimes \bar{\sigma}) = (W,s) \otimes \cc$, unique up to isomorphism \cite[Prop. 2.26]{CG17,CKW20}.

At the level of Lie algebroids we obtain:
\begin{definition}[\cite{CG17}]
Let $(W,s)$ be an elliptic divisor, let $I_s := s(\Gamma(W^*))$ be the ideal induced by $s$, and let $D$ be the critical set of $s$.

Then, the real vector fields preserving $I_s$ form a Lie algebroid, called the \textbf{elliptic tangent bundle}, which we denote by $\A_{\abs{D}}$.
\end{definition}

\subsection{Self-crossing geometries}

Products are not defined in the $b$-category or in the elliptic category, since a product manifold has a divisor (the union of preimages of each of the divisors coming from the factors) that intersects itself. This leads us to the following definitions:
\begin{definition}\label{def:logx}
Let $(M,Z)$ be a manifold together with an immersed hypersurface $Z$, which is the union $\cup_i Z_i$ of finitely many transversely\footnote{A collection of submanifolds $\set{Z_i}$ is said to intersect transversely at a point $x \in M$ if 
\begin{equation*}
\text{codim}(\bigcap_i T_xZ_i) = \sum_i \text{codim}(T_xZ_i).
\end{equation*}
} intersecting embedded hypersurfaces. Vector fields tangent to $Z$ form a Lie algebroid, called the \textbf{self-crossing $b$-tangent bundle}.
\end{definition}

\begin{proposition}\label{prop:logfib}
Let $Z := \cup_iZ_i$ the union of transversly intersecting embedded hypersurfaces, and let $\A_{Z_i}$ denote the b-tangent bundle associated to $Z_i$. Then:
\begin{equation*}
\A_Z = \A_{Z_1}~ {}_{\rho_1}\times_{\rho_2} \A_{Z_2}~ {}_{\rho_2}\times_{\rho_3} \cdots {}_{\rho_{k-1}}\times_{\rho_k} \A_{Z_k}.
\end{equation*}
\end{proposition}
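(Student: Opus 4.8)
The plan is to verify the claimed fibre-product description locally, and then observe that the identification is manifestly independent of the charts used and therefore globalises. The statement is really a pointwise/sheaf-theoretic one: both sides are vector bundles over $M$ equipped with anchors into $TM$ and brackets on sections, so it suffices to exhibit a canonical bundle isomorphism intertwining anchors and brackets. Since the iterated fibre product $\A_{Z_1}\,{}_{\rho_1}\!\times_{\rho_2}\A_{Z_2}\,{}_{\rho_2}\!\times_{\rho_3}\cdots$ over $TM$ is, by definition, the sub-bundle of $\A_{Z_1}\oplus\cdots\oplus\A_{Z_k}$ consisting of tuples $(v_1,\dots,v_k)$ with $\rho_1(v_1)=\cdots=\rho_k(v_k)$, the natural candidate map $\A_Z \to \A_{Z_1}\,{}_{\rho_1}\!\times_{\rho_2}\cdots$ is $X \mapsto (X,\dots,X)$, using that a vector field tangent to $Z=\cup_i Z_i$ is in particular tangent to each $Z_i$, hence defines a section of each $\A_{Z_i}$, and that all these sections have the same image $X$ under the respective anchors (each $\rho_i$ is, by construction of the $b$-tangent bundle, the inclusion of $\A_{Z_i}$-sections into $\mathfrak{X}^1(M)$).

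First I would fix a point $x\in M$ and choose coordinates $(x_1,\dots,x_n)$ adapted to the transverse intersection: if $x$ lies on $Z_{i_1},\dots,Z_{i_r}$ (and on no other $Z_i$), transversality lets us arrange $Z_{i_\ell}=\{x_\ell=0\}$ near $x$ for $\ell=1,\dots,r$. Then a frame for $\A_{Z_{i_\ell}}$ near $x$ is $\langle x_\ell\partial_{x_\ell}, \partial_{x_m}\ (m\neq \ell)\rangle$, while for $i\notin\{i_1,\dots,i_r\}$ the hypersurface $Z_i$ is away from $x$ and $\A_{Z_i}=TM$ locally. A vector field $X=\sum_m f_m\partial_{x_m}$ is tangent to $Z$ near $x$ iff it is tangent to each $Z_{i_\ell}$, i.e. iff $f_\ell$ is divisible by $x_\ell$ for each $\ell\le r$; equivalently $X=\sum_{\ell\le r} g_\ell (x_\ell\partial_{x_\ell}) + \sum_{m>r} f_m\partial_{x_m}$. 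This is exactly the condition for $X$ to lift, uniquely, to an element of $\big(\prod_i \A_{Z_i}\big)_x$ with all anchors equal to $X_x$; hence the map $X\mapsto(X,\dots,X)$ is a fibrewise isomorphism onto the fibre product. One should also record here that the fibre product is transverse, so that it is a smooth vector bundle: transversality of $\rho_1,\dots,\rho_k$ as bundle maps into $TM$ follows from the transversality of the $Z_i$ together with the fact that away from $Z_i$ the map $\rho_i$ is an isomorphism — in the chart above one sees the images span enough directions. Finally, the map intertwines brackets because on both sides the bracket is the restriction of the ordinary Lie bracket of vector fields (for the fibre product, the bracket is the one induced on sections mapping diagonally, which is again the vector-field bracket), so the morphism property is automatic.

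The main obstacle, and the only genuinely non-formal point, is the bookkeeping around transversality: one must check that the iterated fibre product is well-defined at \emph{every} point of $M$, including points lying on several of the $Z_i$ simultaneously, and that the resulting object is a vector bundle rather than merely a sheaf — this is where the transversality hypothesis in Definition \ref{def:logx} is used, and it should be spelled out that it is exactly the hypothesis making the anchors mutually transverse. Once that is in place, everything else is a local coordinate computation of the type already carried out in Subsection \ref{ssec:logGeometry}, and the statement follows. I would also remark that the associativity of the iterated fibre product (so that the ordering of the $Z_i$ is irrelevant) is immediate from its description as the equaliser of the $\rho_i$ inside $\bigoplus_i \A_{Z_i}$.
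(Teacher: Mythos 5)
Your argument is correct. The paper itself gives no proof of Proposition \ref{prop:logfib} (the material of Section \ref{sec:examples} is declared standard, with a pointer to the references), so there is nothing to compare against; your local-frame verification --- adapted coordinates $Z_{i_\ell}=\{x_\ell=0\}$ at a point of the deepest stratum, the frame $\langle x_\ell\partial_{x_\ell},\partial_{x_m}\rangle$ for each factor, the divisibility characterisation of tangency, and the transversality/rank count showing the equaliser of the anchors inside $\bigoplus_i\A_{Z_i}$ is a bundle of the correct rank --- is exactly the computation the paper leaves implicit, and it correctly identifies the only non-formal point (that transversality of the $Z_i$ is what makes the fibre product smooth at points lying on several hypersurfaces).
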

The manifold $M$ can then be stratified into submanifolds $Z[a]$, meaning the intersection locus of $a$ distinct hypersurfaces $Z_{i_1},\cdots,Z_{i_a}$. We use $Z[0]$ to denote $M \setminus Z$.

\begin{definition}\label{def:ellx}
Let $(W_i,s_i)$ be elliptic divisors, with transversely intersecting critical sets $D_i$ and let $I_s$ be the product of the ideals $s_i(\Gamma(W_i^*))$. Vector fields preserving $I_s$ form a Lie algebroid, called the \textbf{self-crossing elliptic tangent bundle} denote $\A_{\abs{D}}$.
\end{definition}
\begin{proposition}\label{prop:ellfib}
Let $(W_i,s_i)$ be elliptic divisors be as above, and let $\A_{\abs{D_i}}$ denote their elliptic tangent bundles. Then
\begin{equation*}
\A_{\abs{D}} = \A_{\abs{D_1}}~ {}_{\rho_1}\times_{\rho_2} \A_{\abs{D_2}}~ {}_{\rho_2}\times_{\rho_3} \cdots {}_{\rho_{k-1}}\times_{\rho_k} \A_{\abs{D_k}}.
\end{equation*}
\end{proposition}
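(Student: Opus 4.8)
The plan is to establish Proposition~\ref{prop:ellfib} by the same fibre-product argument used for Proposition~\ref{prop:logfib}, reducing the self-crossing case to the single-divisor case by an induction on the number $k$ of factors, and checking the base step directly from the definitions of the ideal and of the fibre product of Lie algebroids.

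First I would unwind the definitions. The self-crossing elliptic tangent bundle $\A_{\abs{D}}$ is defined as the Lie algebroid whose sections are the vector fields $X \in \mathfrak{X}^1(M)$ with $\mathcal{L}_X(I_s) \subset I_s$, where $I_s = \prod_i s_i(\Gamma(W_i^*))$ is the product ideal. The key pointwise observation, which I would isolate as a small lemma, is that on the open stratum $M \setminus D$ (where all the $s_i$ are units) the condition is vacuous, and near a point lying in $\bigcap_{\ell \in S} D_\ell$ for some index subset $S$ (and in no other $D_j$), transversality of the critical sets lets us choose local trivialisations and Morse--Bott normal coordinates so that $I_s$ is locally generated by $\prod_{\ell \in S} q_\ell$, where $q_\ell$ is a positive-definite quadratic form in two of the coordinates, in disjoint coordinate blocks. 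Then a vector field preserves the product ideal if and only if it preserves each factor ideal $(q_\ell)$ separately: the ``if'' direction is the Leibniz rule, and the ``only if'' direction follows because the $q_\ell$ involve disjoint sets of variables, so $\mathcal{L}_X(\prod q_\ell) \in (\prod q_\ell)$ forces, after dividing, $\mathcal{L}_X(q_\ell) \in (q_\ell)$ for each $\ell$ (one can test this by restricting to the coordinate plane where all other $q_m$ vanish to second order). This is essentially the same local computation that underlies Proposition~\ref{prop:logfib} in the $b$-setting, with $z_\ell$ replaced by $q_\ell$.

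Given this local statement, the identification with the iterated fibre product is formal. The fibre product $\A_{\abs{D_1}}\,{}_{\rho_1}\!\times_{\rho_2}\A_{\abs{D_2}}\,\cdots\,\times_{\rho_k}\A_{\abs{D_k}}$ over $M$ has, as its sections, tuples $(X_1,\dots,X_k)$ of sections $X_i \in \Gamma(\A_{\abs{D_i}})$ with $\rho_i(X_i) = \rho_j(X_j)$ for all $i,j$; since each $\rho_i$ is the inclusion of $\A_{\abs{D_i}}$ into $TM$, such a tuple is the same as a single vector field $X$ that lies in $\Gamma(\A_{\abs{D_i}})$ for every $i$, i.e. $\mathcal{L}_X(q_\ell) \in (q_\ell)$ for all $\ell$ locally. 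By the local lemma this is exactly the condition $\mathcal{L}_X(I_s) \subset I_s$, so the underlying sheaves of sections coincide; the anchor and bracket on the fibre product are by definition the restrictions of those on $TM$, and hence agree with the anchor and bracket of $\A_{\abs{D}}$. One should also remark that the fibre product is a smooth Lie algebroid at all — this is where transversality of the $D_i$ enters globally, guaranteeing clean intersection of the anchors — but this is the content already invoked in Proposition~\ref{prop:logfib} and can be cited as such. Finally, the associativity of the fibre-product construction makes the iterated expression unambiguous, so one can run the argument inductively on $k$ with Proposition~\ref{prop:logfib}'s single-divisor analogue (here, the defining property of $\A_{\abs{D_i}}$) as the base case.

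The main obstacle I anticipate is the ``only if'' direction of the local lemma: showing that preserving the product ideal forces preservation of each factor. For the $b$-case this is easy because the generators are coordinate functions $z_\ell$ and the quotients are transparent, but for elliptic divisors the generators are quadratic and the critical sets have codimension two, so one must be a little careful that the argument using disjointness of the coordinate blocks is watertight — in particular that the residual factors $q_m$ ($m \neq \ell$), while vanishing on $D_m$, do not interfere, which is exactly what transversality (independence of the coordinate blocks) ensures. Once that local statement is nailed down, the rest is bookkeeping parallel to the proof of Proposition~\ref{prop:logfib}.
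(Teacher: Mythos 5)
The paper states Proposition \ref{prop:ellfib} without proof (implicitly deferring to \cite{CKW20}), so there is no argument of the authors' to measure yours against. Your skeleton --- identify sections of the iterated fibre product with vector fields lying in every $\Gamma(\A_{\abs{D_i}})$, use transversality of the $D_i$ to see that the fibre product has constant rank, and reduce everything to the sheaf identity $\mathfrak{X}(I_{s_1}\cdots I_{s_k})=\bigcap_i\mathfrak{X}(I_{s_i})$ --- is the right one, and you have correctly located the crux in the ``only if'' half of that identity. But the justification you offer for that half does not work, and this is exactly the point where the elliptic case is \emph{not} ``the same local computation'' as Proposition \ref{prop:logfib}. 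Restricting the identity $\mathcal{L}_X\bigl(\prod_\ell q_\ell\bigr)=g\prod_\ell q_\ell$ to the plane $\set{x_m=y_m=0,\ m\neq\ell}$ makes both sides vanish identically (each summand of the Leibniz expansion contains either a factor $q_m$ or a factor $\mathcal{L}_X(q_m)=2x_mX(x_m)+2y_mX(y_m)$ with $m\neq\ell$), so it yields no information. Nor can you imitate the $b$-case by checking on the dense stratum $D_\ell\setminus\bigcup_{m\neq\ell}D_m$: there the other $q_m$ are units and one does get $\mathcal{L}_X(q_\ell)\in(q_\ell)$ locally, but membership in the principal ideal $(q_\ell)$ is a condition on the whole Taylor expansion along $D_\ell$ rather than a pointwise vanishing condition, so smoothness of $\mathcal{L}_X(q_\ell)/q_\ell$ on a dense open set does not extend over the deep strata $D_\ell\cap D_m$ by continuity. (In the $b$-case one only needs $X(z_\ell)$ to vanish on $Z_\ell$, which \emph{is} detectable on a dense subset; that is why the analogy breaks.)

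What is actually needed is a division statement: the Leibniz expansion gives $\bigl(\prod_{m\neq\ell}q_m\bigr)\mathcal{L}_X(q_\ell)\in(q_\ell)$, and one must know that $\prod_{m\neq\ell}q_m$ is a non-zero-divisor modulo $(q_\ell)$ in the smooth category. This is true but not formal. One route: at points of $D_\ell\cap D_m$ pass to Taylor series, where $(x_\ell^2+y_\ell^2)$ is a prime ideal of the formal power series ring because it is irreducible over $\RR$, and then use the closedness of ideals generated by real-analytic functions to transfer the conclusion back to $C^{\infty}$. Alternatively, cite \cite{CKW20}, where precisely this algebra of products of elliptic ideals with transversely intersecting zero loci is developed. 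With that lemma supplied, the rest of your argument goes through as written.
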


Geometric structures on the self-crossing $b$-tangent bundle are studied in \cite{GLPR17,MS18}, and geometric structures on the self-crossing elliptic tangent bundle in \cite{CKW20}.
\begin{remark}
The above notions are not the most general. One can ask that the structure only locally looks as a product of log/elliptic divisors, e.g. the figure eight in $\rr^2$. However, we will only consider examples which are globally products.
\end{remark}

\section{Regularisation} \label{sec:regularisation}

We now present the main idea of this article. Suppose that $\A \to M$ is one of the Lie algebroids introduced in Section \ref{sec:examples}. We can then produce a Lie algebroid $\Fcal \to W$ of foliation type that submerses onto $\A$. We think of this as a desingularisation process. This will allow us (Subsection \ref{ssec:regularisationDistributions}) to lift geometric structures on $\A$ to geometric structures on $\Fcal$; the latter are easier to work with. Concrete applications to singular contact structures are given in Section \ref{sec:Weinstein}.

\subsection{The trivial regularisation in $b^k$-Geometry}

We begin with the regularisation of $b^k$-tangent bundles. We are interested in the following concept:
\begin{definition} \label{def:regularisation}
Let $M$ be a manifold, $Z \subset M$ a hypersurface, and $j \in J_Z^{k-1}$ a jet, and let $\A^k_Z$ denote the associated $b^k$-tangent bundle. Let $I$ be either $\RR$ or $\NS^1$. We consider the manifold $M \times I$ endowed with the $I$-action given by the translations on its second component.

An $I$-invariant codimension-$1$ foliation $\Fcal$ in $M \times I$ is said to be a \textbf{regularisation} if the following properties are satisfied:
\begin{enumerate}
\item There is a Lie algebroid submersion
\begin{center}
\begin{tikzcd}
\Fcal \ar[r,"\Psi"] \ar[d] & \A_Z \ar[d,"\rho"] \\
T(M \times I) \ar[r,"d\pi"] & TM.
\end{tikzcd}
\end{center}
\item The connected components of $Z \times I$ are leaves.
\item All other leaves are transverse to the $I$-direction and diffeomorphic to a connected component of $M \setminus Z$.
\end{enumerate}
The components of $Z\times I$ are called the \textbf{central leaves} of $\Fcal$.
\end{definition}
Do note that $\Psi$ is necessarily a fibrewise isomorphism between $T\Fcal$ and $\A^k_Z$, for dimension reasons. It follows that a regularisation is an Ehresmann connection that additionally satisfies certain triviality properties in terms of its holonomy.

We distinguish between the cases $I = \RR$ and $I = \NS^1$ by referring to the former as the \textbf{trivial regularisation} and to the latter as the \textbf{compact regularisation}. The following is immediate:
\begin{lemma}
$\A^k_Z \rightarrow M$ admits a trivial regularisation if and only if it admits a compact regularisation.
\end{lemma}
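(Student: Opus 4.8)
The plan is to observe that the two notions of regularisation differ only by replacing the non-compact interval $\RR$ with the circle $\NS^1$, and that the $I$-invariance in Definition \ref{def:regularisation} is precisely what allows one to pass between the two by a covering/quotient argument. First I would prove the easy direction: suppose $\Fcal$ is a compact regularisation of $\A_Z^k$ on $M \times \NS^1$. Pulling back $\Fcal$ along the covering map $M \times \RR \to M \times \NS^1$, $(x,t) \mapsto (x, t \bmod 1)$, yields an $\RR$-invariant codimension-$1$ foliation on $M \times \RR$; the Lie algebroid submersion $\Psi$ composes with the (fibrewise iso) differential of the covering to give a Lie algebroid submersion onto $\A_Z^k$, condition (2) is preserved since each component of $Z \times \NS^1$ pulls back to a component of $Z \times \RR$, and condition (3) is preserved because leaves transverse to the $\NS^1$-direction pull back to leaves transverse to the $\RR$-direction, diffeomorphic to the same component of $M \setminus Z$ (the covering is trivial over each such leaf, which is simply $\{t = \text{const}\}$-graph-like). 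Hence a trivial regularisation exists.

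For the converse, suppose $\Fcal$ is a trivial regularisation on $M \times \RR$. The key point is that by $\RR$-invariance $\Fcal$ descends along \emph{any} quotient $\RR \to \RR/\lambda\ZZ$; I would simply take $\lambda = 1$, so that the quotient map $q\colon M \times \RR \to M \times \NS^1$ is an $\RR$-invariant (in the obvious sense) local diffeomorphism, and push $\Fcal$ forward to a codimension-$1$ foliation $\overline{\Fcal}$ on $M \times \NS^1$. Because $\Fcal$ is $\RR$-invariant the pushed-forward distribution is well-defined and smooth; it is $\NS^1$-invariant by construction. The submersion $\Psi$ descends because it is $\RR$-invariant (it factors through $d\pi$ to $TM$, which is $\RR$-invariant), giving the required Lie algebroid submersion $\overline{\Fcal} \to \A_Z^k$. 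Conditions (2) and (3): each component of $Z \times \RR$ maps onto a component of $Z \times \NS^1$, still a leaf; and a leaf $L$ of $\Fcal$ transverse to the $\RR$-direction maps to a leaf of $\overline{\Fcal}$ still transverse to the $\NS^1$-direction and still diffeomorphic to a component of $M \setminus Z$ — here one uses that transversality to the fibre direction means $L$ is locally a graph over $M \setminus Z$, so $q|_L$ is injective onto its image (two points of $L$ over the same point of $M \times \NS^1$ would differ by an integer translation, but $\RR$-invariance moves $L$ to a parallel leaf, and distinct parallel leaves are disjoint).

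The main obstacle, and the only point requiring genuine care rather than bookkeeping, is verifying in the converse direction that the pushforward foliation is \emph{globally} well-defined and that the leaves of type (3) remain embedded (not just immersed) and diffeomorphic — i.e. ruling out that the circle quotient wraps a non-compact leaf onto itself. This is exactly where $\RR$-invariance is essential: the flow of the $\RR$-action maps leaves to leaves, so the stabiliser of a leaf $L$ transverse to the $\RR$-direction is trivial (a nontrivial translation would fix $L$ setwise, but then $L$ would be invariant under a discrete translation subgroup, contradicting transversality since $L$ is a graph). Consequently $q$ restricts to a diffeomorphism on each such leaf, and on the central leaves it restricts to the standard covering $Z_{\text{comp}} \times \RR \to Z_{\text{comp}} \times \NS^1$ composed with quotient on the second factor only when the component is all of $Z$; in general it is again injective on components. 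Once this is settled, all three conditions of Definition \ref{def:regularisation} transfer routinely, completing the equivalence.
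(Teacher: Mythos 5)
Your proof is correct and is precisely the covering/quotient argument the paper has in mind when it declares this lemma ``immediate'' (no written proof is given there). The one point worth tightening is the descent of $\Psi$: rather than saying it ``factors through $d\pi$'', observe that $\rho\circ\Psi = d\pi$ forces $\Psi=\rho^{-1}\circ d\pi$ on the dense set $(M\setminus Z)\times\RR$, so $\Psi$ is unique and therefore automatically $\ZZ$-invariant --- the same density argument used in the proof of Lemma \ref{lem:trivialRegularisation}.
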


\subsubsection{Regularisations from defining functions}

Our goal now is to prove that regularisations exist. The key feature to be exploited is the local triviality provided by the jet $j \in J_Z^{k-1}$. Do note that, even if $k=1$, we work (for now) under the assumption that $Z$ is coorientable. In the next lemma we also work under assumptions of global triviality:
\begin{lemma} \label{lem:trivialRegularisation}
Let $M$ be a manifold. Let $Z \subset M$ be a hypersurface. Let $f: M \to \RR$ be a global defining function for $Z$. Let $\A^k_Z$ be the $b^k$-tangent bundle given by $j = j^{k-1}_Zf$. We write $s$ for the $\RR$-coordinate in $M \times \RR$.

The foliation $\Fcal_f := \ker(\theta_f)$ given by $\theta_f := df + f^k ds$ is a trivial regularisation.
\end{lemma}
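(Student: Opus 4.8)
The plan is to check first that $\Fcal_f$ is a genuine $\RR$-invariant codimension-$1$ foliation, and then to verify the three conditions of Definition \ref{def:regularisation} in turn, with the construction of the Lie algebroid submersion $\Psi$ being the only non-formal point.

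\emph{Step 1 (foliation).} I would observe that $\theta_f = df + f^k\,ds$ is nowhere zero: along $Z = \{f=0\}$ it restricts to $df$, which is non-zero because $f$ cuts $Z$ transversely; away from $Z$ the term $f^k\,ds$ is a non-zero multiple of $ds$ and cannot be cancelled by $df$, which has no $ds$-component. Integrability is immediate from $d\theta_f = k f^{k-1}\,df\wedge ds$, since then $\theta_f\wedge d\theta_f$ is a combination of $df\wedge df\wedge ds$ and $f^{2k-1}\,ds\wedge df\wedge ds$, both zero. Invariance is equally cheap: the coefficients of $\theta_f$ in coordinates $(x,s)$ on $M\times\RR$ are pulled back from $M$ and $ds$ is translation-invariant, so $\Lcal_{\partial_s}\theta_f = 0$ and hence $\Fcal_f = \ker\theta_f$ is $\RR$-invariant.

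\emph{Step 2 (central and non-central leaves).} On $Z\times\RR$ we have $f\equiv 0$, so for $v\in T(Z\times\RR) = TZ\oplus\RR\partial_s$ one gets $\theta_f(v) = df(v) = 0$; thus $Z\times\RR$ is tangent to $\Fcal_f$ and, being closed of codimension $1$, its connected components are leaves, which is (2). On $(M\setminus Z)\times\RR$ one has $\theta_f(\partial_s) = f^k \neq 0$, so the leaves there are transverse to the $\RR$-direction. Moreover on this open set $\theta_f = 0$ is equivalent to $d\big(s + h(f)\big) = 0$, where $h'(t) = t^{-k}$, i.e. $h(t) = \tfrac{t^{1-k}}{1-k}$ for $k>1$ and $h(t) = \log|t|$ for $k=1$. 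Hence each leaf is the graph $\{(x,\,c - h(f(x))) : x\in U\}$ of a smooth function over a connected component $U$ of $M\setminus Z$; since $h(f(x))\to\pm\infty$ as $x\to Z$, this graph is closed in $M\times\RR$, so it is a leaf and $\pi$ restricts to a diffeomorphism of it onto $U$, giving (3).

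\emph{Step 3 (the submersion $\Psi$).} I would define $\Psi\colon T\Fcal_f\to\A^k_Z$ as the unique bundle map with $\rho\circ\Psi = d\pi|_{T\Fcal_f}$. Over $(M\setminus Z)\times\RR$ the anchor $\rho$ is an isomorphism, so $\Psi$ exists, is uniquely determined, is a fibrewise isomorphism, and is a Lie algebroid morphism there (vector fields tangent to a foliation have $\pi$-related Lie brackets, and $\rho$ intertwines brackets where it is an isomorphism). The content is to extend $\Psi$ smoothly across $Z\times\RR$ as a fibrewise isomorphism; once that is in place, bracket-compatibility of the extension is automatic by density, as $\Psi([U_1,U_2])$ and $[\Psi U_1,\Psi U_2]$ are smooth sections of $\A^k_Z$ agreeing on a dense set. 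For the extension, work in coordinates $(z=f,x_2,\dots,x_n)$ near $Z$, so $\theta_f = dz + z^k\,ds$; then $\{z^k\partial_z - \partial_s,\ \partial_{x_2},\dots,\partial_{x_n}\}$ is a frame for $T\Fcal_f$ on the chart, and its image under $d\pi$ is the frame $\{z^k\partial_z,\ \partial_{x_2},\dots,\partial_{x_n}\}$ of $\A^k_Z$ determined by $j = j^{k-1}_Zf$. So $\Psi$ sends a frame to a frame and is a smooth bundle isomorphism over the chart; these local descriptions agree with the one away from $Z$, hence glue to a global Lie algebroid isomorphism $\Psi$ covering $\pi$, which is condition (1).

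The only step that uses the specific hypotheses rather than a formal manipulation is Step 3, namely the matching between the frame $\{z^k\partial_z - \partial_s,\partial_{x_i}\}$ of $\ker\theta_f$ and the frame $\{z^k\partial_z,\partial_{x_i}\}$ of $\A^k_Z$; this is precisely what forces the coefficient $f^k$ (rather than any other function vanishing along $Z$) in $\theta_f = df + f^k\,ds$. Everything else is routine.
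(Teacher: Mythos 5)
Your proof is correct and follows essentially the same route as the paper: verify integrability and $\RR$-invariance from $\theta_f$, identify the central leaves at $f=0$ and the graphical leaves via a primitive of $f^{-k}df$, and construct $\Psi$ by matching the local frame $\{z^k\partial_z-\partial_s,\partial_{x_i}\}$ of $\ker\theta_f$ with the frame $\{z^k\partial_z,\partial_{x_i}\}$ of $\A^k_Z$, extending by density. Your treatment is in fact slightly more careful than the paper's in two spots (you handle the primitive separately for $k=1$, where the paper's formula $d((k-1)^{-1}f^{-k+1})-ds$ degenerates, and you spell out why bracket-compatibility of the extension is automatic).
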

\begin{proof}
The $\RR$-invariance of $\Fcal_f$ is immediate from the definition of $\theta_f$. Involutivity follows from the computation $\theta_f \wedge d\theta_f = 0$. For the first property, consider a point $p \in Z \times \RR$. Then we have that $(\Fcal_f)_p = T_p(Z\times \rr)$, showing that the connected components of $Z \times \rr$ are leaves.

For the second property we observe that, in the complement of $Z \times \RR$, the foliation is defined by the one-form $d((k-1)^{-1}f^{-k+1})-ds$ and hence it leaves are graphical.

We now address the last property. Away from $Z$, $\Psi$ is necessarily given by $d\pi$. By density it follows that there is at most one extension of $\Psi$ to $Z \times \rr$. We prove that such an extension exists by working locally. On a tubular neighbourhood $U$ of a point $p \in Z$ we can find local coordinates $(z,x_2,\ldots,x_n)$ in which $f=z$. It follows that $\A_Z^k$ is spanned locally by 
\[ \langle z^k\partial_z, \partial_{x_2},\ldots,\partial_{x_n} \rangle. \]
Similarly, we have that
\begin{equation*}
\Fcal_f = \inp{z^k\partial_z - \partial_s, \partial_{x_2},\ldots,\partial_{x_n}}
\end{equation*}
in $U \times \rr$. We then set $\Psi$ to be given by $\Psi(z^k\partial_z - \partial_s) := z^k\partial_z$ and by $d\pi$ when applied to the rest of the frame.
\end{proof}

\subsubsection{Now without a global function}

More generally, we consider the setting in which $Z$ is coorientable but not necessarily given by a global function:
\begin{lemma} \label{lem:trivialRegularisation2}
Let $M$ be a manifold. Let $Z$ be a coorientable hypersurface. Fix a jet $j \in J_Z^{k-1}$ with associated $b^k$-tangent bundle $\A^k_Z$. Then, $\A^k_Z$ admits a trivial regularisation.
\end{lemma}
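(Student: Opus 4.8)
The plan is to reduce Lemma \ref{lem:trivialRegularisation2} to Lemma \ref{lem:trivialRegularisation} by a patching argument, using the local triviality of the $b^k$-tangent bundle provided by the jet $j$ and the assumed coorientability of $Z$. First I would cover a tubular neighbourhood $\Ucal$ of $Z$ by open sets $U_\alpha$ on each of which the jet $j$ is representable by a local defining function $f_\alpha$ cutting $Z$ transversely, with the sign of $f_\alpha$ determined by the fixed coorientation. By Lemma \ref{lem:scalingJ}, on overlaps $U_\alpha \cap U_\beta$ we have $j^{k-1}_Z f_\alpha = j^{k-1}_Z f_\beta$, so $f_\alpha - f_\beta \in I_Z^k$, i.e. $f_\alpha$ and $f_\beta$ agree to order $k$ along $Z$. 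On each $U_\alpha$ Lemma \ref{lem:trivialRegularisation} produces the local model $\Fcal_{f_\alpha} = \ker(\theta_{f_\alpha})$ with $\theta_{f_\alpha} = df_\alpha + f_\alpha^k\, ds$ on $U_\alpha \times \RR$; away from $Z$ this local foliation is the horizontal distribution of a connection-type splitting, and I want to check these local foliations glue.

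The key computation is to compare the defining one-forms $\theta_{f_\alpha}$ and $\theta_{f_\beta}$ on $(U_\alpha \cap U_\beta) \times \RR$ and show they define the same foliation there, at least after possibly modifying the $\RR$-coordinate by an $\alpha$-dependent shift. Writing $f_\beta = f_\alpha(1 + g_{\alpha\beta})$ with $g_{\alpha\beta}$ a smooth function (this uses that both are transverse defining functions for the same cooriented $Z$, hence differ by a positive unit, so $g_{\alpha\beta} > -1$), one computes $df_\beta + f_\beta^k\,ds$ and checks that, modulo the relation $f_\alpha - f_\beta \in I_Z^k$ which forces $g_{\alpha\beta} \in I_Z$, the two kernels agree on $U_\alpha \cap U_\beta$. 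Concretely: away from $Z$, $\Fcal_{f_\alpha}$ is cut out by $d\big((k-1)^{-1} f_\alpha^{-k+1}\big) - ds$ and $\Fcal_{f_\beta}$ by $d\big((k-1)^{-1} f_\beta^{-k+1}\big) - ds$, so the two foliations differ by the graphical shift $s \mapsto s + (k-1)^{-1}(f_\beta^{-k+1} - f_\alpha^{-k+1})$, and the point is that $f_\beta^{-k+1} - f_\alpha^{-k+1}$ extends smoothly across $Z$ precisely because $f_\alpha - f_\beta$ vanishes to order $k$ there. (When $k=1$ the shift is $\log(f_\beta/f_\alpha) = \log(1+g_{\alpha\beta})$, which is smooth since $g_{\alpha\beta} > -1$.) This identifies the local foliations on overlaps up to a cocycle of $\RR$-translations, which either is already consistent or can be killed by the standard partition-of-unity averaging, yielding a single $\RR$-invariant codimension-one foliation $\Fcal$ on $\Ucal \times \RR$.

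It then remains to extend $\Fcal$ from $\Ucal \times \RR$ to all of $M \times \RR$: on $M \setminus Z$ the anchor of $\A^k_Z$ is an isomorphism, so there one simply takes $\Fcal$ to be the graphical foliation with leaves $(M\setminus Z)\times\{s\}$, which matches the description in Lemma \ref{lem:trivialRegularisation} away from $Z$; a partition of unity subordinate to $\{\Ucal, M\setminus Z\}$ interpolates the one-forms. Finally I would verify the three defining properties of Definition \ref{def:regularisation}: involutivity is checked locally where it follows from Lemma \ref{lem:trivialRegularisation}; property (2) (components of $Z \times \RR$ are leaves) is again local near $Z$; property (3) (all other leaves transverse to the $\RR$-direction and diffeomorphic to a component of $M\setminus Z$) follows since the leaves are globally graphical over $M\setminus Z$; and the Lie algebroid submersion $\Psi: \Fcal \to \A^k_Z$ is constructed exactly as in the proof of Lemma \ref{lem:trivialRegularisation}, being forced to equal $d\pi$ away from $Z$ and extended over $Z$ by the local formula $\Psi(f_\alpha^k\partial_{f_\alpha} - \partial_s) := f_\alpha^k\partial_{f_\alpha}$, which the overlap computation shows is well defined. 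The main obstacle I anticipate is the overlap/cocycle bookkeeping in the middle step: making sure that the $\RR$-translation ambiguities in gluing the local models $\Fcal_{f_\alpha}$ genuinely cancel (or can be trivialised), which is where the hypothesis that $f_\alpha - f_\beta \in I_Z^k$ — i.e. the well-definedness of the jet $j$ — is doing the real work and where the $k=1$ case needs slightly different (logarithmic) handling.
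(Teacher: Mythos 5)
Your overall strategy (glue the local models of Lemma \ref{lem:trivialRegularisation} over a cover of a tubular neighbourhood, then extend by $\ker(ds)$) can be made to work, and your key computation --- that $f_\beta^{-k+1}-f_\alpha^{-k+1}$ extends smoothly across $Z$ because $f_\alpha-f_\beta\in I_Z^k$ --- is correct and is exactly where the jet hypothesis does its work. But there is a concrete gap in your final step: a partition of unity subordinate to $\{\Ucal, M\setminus Z\}$ does \emph{not} ``interpolate the one-forms'' in the sense you need, because a convex combination of integrable $1$-forms is in general not integrable. Explicitly, for $\theta=\chi\,\theta_f+(1-\chi)\,ds$ with $\theta_f=df+f^k\,ds$ one computes
\[
\theta\wedge d\theta=-\,df\wedge d\chi\wedge ds,
\]
which is nonzero wherever $d\chi\notin\operatorname{span}(df,ds)$, so in the transition region your plane field is not involutive. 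The interpolation only works if the cutoff is constant along the leaves there; this is why the paper's proof insists that $\chi$ ``only depends on $z$'' and, moreover, does not take a convex combination at all but rescales only the $dz$-coefficient, setting $\theta=\chi(z)\,dz+z^k\,ds$, for which $\theta\wedge d\theta=0$ identically. Alternatively you could interpolate the primitives rather than the forms, i.e.\ take $\ker\bigl(d(s-\chi\cdot(k-1)^{-1}f^{-k+1})\bigr)$ away from $Z$, which is closed and hence integrable. As written, though, this step fails.

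Separately, your \v{C}ech gluing is more work than the paper needs: since $Z$ is coorientable, the jet $j$ is represented by a \emph{single} semilocal defining function $z$ on a whole tubular neighbourhood $\Ucal\supset Z$ (local representatives differ by sections of the fine sheaf $I_Z^k$, so the obstruction to gluing them into one function vanishes --- the same fine-sheaf argument you invoke to trivialise your cocycle of $\RR$-translations, applied one step earlier). The paper starts from such a $z$ and thereby skips the overlap bookkeeping entirely; your cocycle argument is doing by hand, in the $s$-coordinate, what choosing a global representative of $j$ achieves at the source. Once the interpolation issue above is repaired, your route is a legitimate, if longer, alternative.
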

\begin{proof}
On a tubular neighbourhood $U \supset Z$, choose a semilocal defining function $z$ for $Z$; we require $j^{k-1}_Zu = j$ (note that, for $k=1$, this still allows us to choose $z$ compatible with any coorientation of $Z$). Let $\chi : \Ucal \rightarrow \rr$ be a function which is 1 near $Z$ and 0 near the boundary of $\Ucal$ and only depends on $z$. Let $s$ denote the $\rr$-coordinate on $M\times \rr$. Then we set
\begin{align*}
\Fcal := 
\begin{cases}
\ker(\chi(z)dz + z^kds) \quad \text{on }\Ucal \\
\ker(ds) \quad \text{ elsewhere}.
\end{cases}
\end{align*}
Involutivity and $\RR$-invariance are clear by construction. For the rest of the properties we can argue as in the proof of Lemma \ref{lem:trivialRegularisation}.
\end{proof}

\begin{figure}
\includegraphics[scale=0.7]{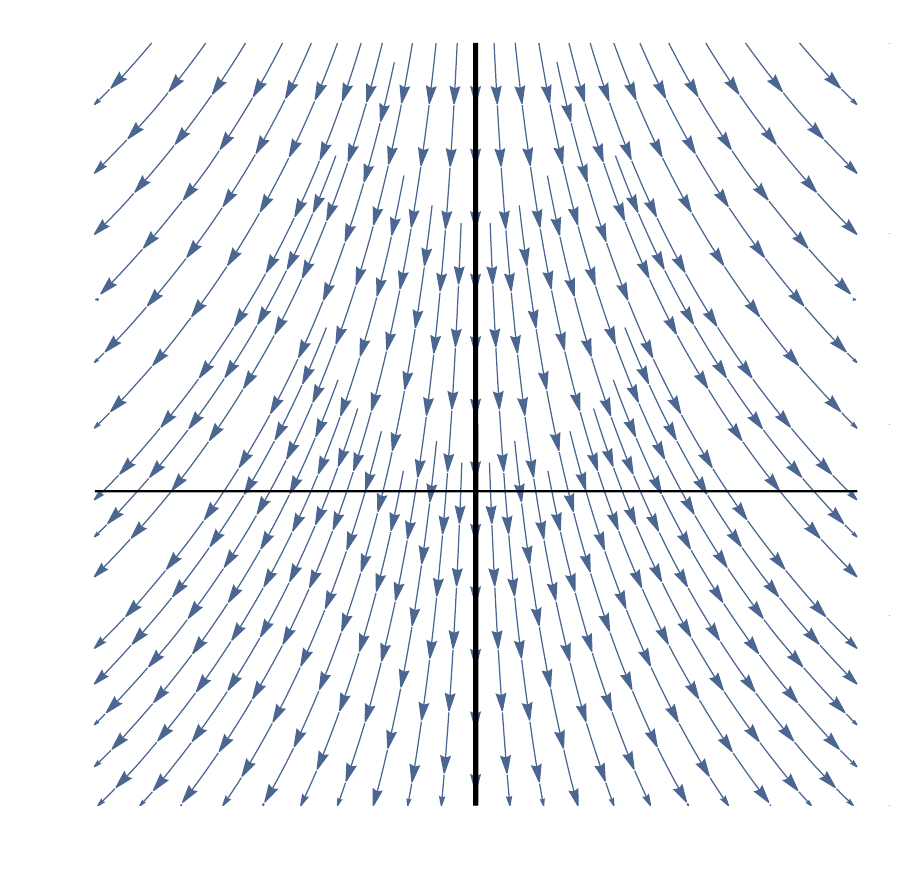}
\caption{When $M = \RR$, $Z = \set{0}$, $k=1$, and the defining function is the identity, the foliation on $\RR^2$ produced by Lemma \ref{lem:trivialRegularisation}, is the foliation defined by the vector field $z\partial_z - \partial_s$.}
\end{figure}

\subsubsection{Coorientations}

We remark:
\begin{lemma} \label{lem:coorientationsTrivialRegularisation}
Let $\A^k_Z \rightarrow M$ be the $b^k$-tangent bundle given by the hypersurface $Z \subset M$ and the jet $j \in J_Z^{k-1}$.
\begin{itemize}
\item If $k$ is even, a regularisation $\Fcal$ defines a coorientation $\tau_\Fcal$ of $Z$.
\item If $k$ is odd, a regularisation $\Fcal$ provides an orientation of the vertical bundle of $M \times \RR$.
\end{itemize}
\end{lemma}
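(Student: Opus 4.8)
The plan is to work semi-locally near $Z$, where a defining function $z$ with $j^{k-1}_Z z = j$ exists, and to track how the regularisation $\Fcal$ interacts with the standard model. Recall from the proof of Lemma \ref{lem:trivialRegularisation2} that on a tubular neighbourhood $\Ucal \supset Z$ the regularisation is cut out by a one-form of the shape $\chi(z)\,dz + z^k\,ds$, so along $Z$ itself the leaf $Z\times\RR$ is the kernel of $\chi(0)\,dz = dz$. The key point is that the ambiguity in all of this is exactly the $\RR^*$-scaling of the jet $j$ from Lemma \ref{lem:scalingJ}: replacing $z$ by $\lambda z$ rescales $dz$ by $\lambda$ and $z^k\,ds$ by $\lambda^k$. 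Thus the conormal line to the central leaf inside $T^*(M\times\RR)|_{Z\times\RR}$, together with the transverse behaviour of the nearby leaves, sees only the sign of $\lambda$ when $k$ is even and the sign of $\lambda^k = \lambda$ ... — this is where the parity of $k$ splits the two cases.

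For the even case, I would argue that a regularisation $\Fcal$ singles out a consistent side of $Z$ as follows. Away from $Z$ the leaves of $\Fcal$ are graphical over $M\setminus Z$ (property (3) of Definition \ref{def:regularisation}) and, in the model, are level sets of $(k-1)^{-1}z^{-k+1} - s$; since $k$ is even, $-k+1$ is odd, so $z^{-k+1}$ has opposite signs on the two sides of $Z$, and hence the two local sides are distinguished by whether the function $(k-1)^{-1}z^{-k+1}-s$ tends to $+\infty$ or $-\infty$ as one approaches the central leaf. This dichotomy is manifestly invariant under $z \mapsto \lambda z$ with $\lambda \in \RR^*$ when $k$ is even (the sign of $\lambda^{-k+1} = \lambda \cdot \lambda^{-k}$ is the sign of $\lambda$, and one checks the two contributions combine consistently), hence independent of the semilocal defining function, and it is visibly invariant under the $\RR$-translation used to define $\Fcal$. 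Gluing the local sides over a cover of $Z$ gives the global coorientation $\tau_\Fcal$.

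For the odd case, $-k+1$ is even, so $z^{-k+1}$ is positive on both sides and no coorientation of $Z$ emerges; instead, the graphical leaves approach the central leaf from one definite direction in the $s$-variable. Concretely, I would orient the vertical bundle $\Vert(\pi) = \ker(d\pi)$ of $M\times\RR\to M$ along $Z\times\RR$ by declaring $\partial_s$ positive in the direction in which $(k-1)^{-1}z^{-k+1}-s$ increases toward the central leaf — equivalently, by the sign of the $ds$-coefficient of the defining one-form of $\Fcal$ restricted to $Z\times\RR$, namely $z^k$, evaluated infinitesimally off $Z$; since $z^k>0$ for $z\neq 0$ when $k$ is odd only on one side, one instead reads the orientation off the transverse structure of $\Fcal$ near the leaf. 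Under $z\mapsto \lambda z$ this coefficient scales by $\lambda^k$, which for odd $k$ has the same sign as $\lambda$, but the compensating change in the leaf foliation means the resulting orientation of $\Vert(\pi)$ is unchanged; I would verify this by direct comparison of the two models. Since $\Vert(\pi)$ is a trivial (hence orientable) line bundle over all of $M\times\RR$, this local orientation along $Z\times\RR$ extends uniquely to the global orientation claimed.

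The main obstacle I anticipate is pinning down, cleanly and invariantly, exactly which piece of data of $\Fcal$ near the central leaf is being used — the conormal, the transverse orientation of the foliation, or the limiting behaviour of the graphical leaves — and checking that all of these agree and transform correctly under the $\RR^*$-rescaling of $j$ and under the $\RR$-action. The parity bookkeeping ($z^{-k+1}$ versus $z^k$, odd versus even exponents) is routine once the right invariant is fixed, but stating it in a coordinate-free way that makes the gluing over a cover of $Z$ transparent will require some care.
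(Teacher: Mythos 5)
Your geometric picture --- read the (co)orientation off the direction in which the graphical leaves spiral into the central leaf, with the parity of $k$ deciding whether the two sides of $Z$ spiral oppositely (even $k$, hence a coorientation) or identically (odd $k$, hence a vertical orientation) --- is exactly the ``equivalently'' remark in the paper's proof. But as written your argument has a genuine gap: it is carried out entirely in the explicit model $\ker(\chi(z)\,dz+z^k\,ds)$ from Lemma \ref{lem:trivialRegularisation2}, whereas the lemma concerns an \emph{arbitrary} regularisation in the sense of Definition \ref{def:regularisation}, which is not assumed to have that local form. The only piece of data tying a general $\Fcal$ to the $b^k$-structure (and hence to the parity of $k$) is the Lie algebroid submersion $\Psi:\Fcal\to\A^k_Z$, and the paper's proof runs through it: it takes the non-vanishing section $X=\Psi^{-1}(z^k\partial_z)$ of $\Fcal$, observes that along $Z\times\RR$ it must have non-zero vertical component, and compares the sign of that component with the coorientation $\tau_j$ determined by a local defining function compatible with $j$. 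You never invoke $\Psi$, so your argument only establishes the statement for the particular regularisations produced by Lemmas \ref{lem:trivialRegularisation} and \ref{lem:trivialRegularisation2}.

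The invariance discussion is also aimed at the wrong object. For a fixed $\Fcal$, ``the side along which the leaves spiral upwards'' involves no auxiliary choices, so there is nothing to check there; what does require checking, in the paper's formulation, is that the recipe ``compare the vertical sign of $\Psi^{-1}(z^k\partial_z)$ with $\tau_j$'' is independent of the representative $z$ and of the rescaling $j\mapsto\lambda j$ allowed by Lemma \ref{lem:scalingJ} --- the two sign changes cancel because $z^k\partial_z\mapsto\lambda^{k-1}z^k\partial_z$. Your check instead asks whether the behaviour of the \emph{constructed} foliation is invariant under $z\mapsto\lambda z$, and the claimed invariance is false for $\lambda<0$ and $k$ even: rescaling by a negative $\lambda$ yields a regularisation inducing the opposite coorientation, which is precisely what Corollary \ref{cor:trivialRegularisation} exploits. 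Finally, the odd case is deferred (``I would verify this by direct comparison of the two models''); in the paper it is the one-line observation that $\lambda^{k-1}>0$ for $k$ odd, so the vertical sign of $\Psi^{-1}(z^k\partial_z)$ is independent of all choices.
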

\begin{proof}
We work on each component $Z_0 \subset Z$ separately. Consider a point $p \in Z_0$ and a neighbourhood $U \ni p$. On $U$, we can consider a defining function $z$ for $Z_0$, compatible with $j$. Due to coorientability, $Z_0$ is two-sided and $z$ marks which of the two sides is the positive one. This yields a coorientation $\tau_j$; this is not yet the coorientation claimed in the statement.

Now, $\A_Z^k$ is locally spanned by 
\[ \langle z^k\partial_z, \partial_{x_2},\ldots,\partial_{x_n} \rangle. \]
Given the Lie algebroid submersion $\Psi: \Fcal \rightarrow \A_Z^k$, we can consider the vector field $X = \Psi^{-1}(z^k\partial_z)$. Since it is non-vanishing, it must have non-zero vertical component.

If $k$ is even, and the vertical component of $X$ is negative, we set $\tau_\Fcal$ around $Z_0$ to be given by $\tau_j$. Otherwise we set $\tau_\Fcal = -\tau_j$. Equivalently, $Z_0$ is two-sided, and we set the positive side to be the one along which the leaves of $\Fcal$ spiral upwards. We claim that this definition does not depend on choices. The first choice is the local defining function $z$. However, the space of (local) defining functions for $Z_0$ and compatible with $j$ is convex, so all of them yield the same coorientation. The other choice is $j$ itself. Recall that the only jets defining the same Lie algebroid as $j$ are those of the form $\lambda j$, with $\lambda \in \RR^*$. Observe that the function $\lambda z$ is compatible with $\lambda j$. If we carry out the above construction for $\lambda j$, the resulting vector field is $\lambda^{k-1}X$. If $\lambda$ is positive, the vertical components of $X$ and $\lambda^{k-1}X$ have the same sign and $\tau_{\lambda j} = \tau_j$ holds. If $\lambda$ is negative, the vertical components of $X$ and $\lambda^{k-1}X$ have opposite signs and we also have $\tau_{\lambda j} = -\tau_j$. In either case we obtain $\tau_\Fcal$.

If $k$ is odd, we orient the vertical using $-X$. Geometrically, this means that the holonomy of $\Fcal$ along $Z_0$ is attracting as we move positively along the vertical. To prove that this is well-defined we reason as above; when proving independence of $j$, we see that the vertical component of $\lambda^{k-1}X$ has the same sign as $X$, proving the claim.
\end{proof}

We can combine Lemmas \ref{lem:trivialRegularisation2} and \ref{lem:coorientationsTrivialRegularisation} to show:
\begin{corollary} \label{cor:trivialRegularisation}
Let $M$ be a manifold. Let $Z$ be a coorientable hypersurface. Fix a jet $j \in J_Z^{k-1}$ with associated $b^k$-tangent bundle $\A^k_Z$. Then, $\A^k_Z$ admits a trivial regularisation such that:
\begin{itemize}
\item The coorientation it induces on $Z$ can be prescribed, if $k$ is even.
\item The orientation it induces on the vertical, over $Z$, can be prescribed, if $k$ is odd.
\end{itemize}
\end{corollary}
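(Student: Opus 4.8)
The plan is to combine the two lemmas essentially verbatim, but to be slightly more careful about the local construction in Lemma~\ref{lem:trivialRegularisation2} so that the sign data produced by Lemma~\ref{lem:coorientationsTrivialRegularisation} can be read off and adjusted. First I would recall from Lemma~\ref{lem:trivialRegularisation2} that on a tubular neighbourhood $\Ucal \supset Z$ we have a semilocal defining function $z$ with $j^{k-1}_Z z = j$, a cutoff $\chi(z)$ which is $1$ near $Z$ and $0$ near $\partial\Ucal$, and the foliation $\Fcal = \ker(\chi(z)\,dz + z^k\,ds)$ on $\Ucal$, glued to $\ker(ds)$ elsewhere. The associated Lie algebroid submersion $\Psi$ sends $\Psi^{-1}(z^k\partial_z)$ to the vector field $X = z^k\partial_z - \chi(z)\,\partial_s$; near $Z$, where $\chi \equiv 1$, this is exactly $z^k\partial_z - \partial_s$, so its vertical component is the constant $-1$.

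Now I would invoke Lemma~\ref{lem:coorientationsTrivialRegularisation} to interpret this. When $k$ is even, the recipe there is: the coorientation $\tau_\Fcal$ equals $\tau_j$ (the coorientation in which $z>0$ is positive) precisely when the vertical component of $X$ is negative, and equals $-\tau_j$ otherwise. Since our $X$ has vertical component $-1$, this particular regularisation induces $\tau_\Fcal = \tau_j$. To realise an arbitrary prescribed coorientation of $Z_0$, I replace $z$ by $-z$ (equivalently $j$ by $-j$, which by Lemma~\ref{lem:scalingJ} defines the same $b^k$-tangent bundle when $k$ is odd, and when $k$ is even one checks $(-z)^k = z^k$ so the algebroid is literally the same) on the components where the opposite coorientation is desired; this flips $\tau_j$ and hence $\tau_\Fcal$, while leaving $\Fcal$ a bona fide regularisation. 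When $k$ is odd, the lemma instead produces an orientation of the vertical bundle $T(M\times\RR)/TM$, given by $-X$, i.e. by $+\partial_s$ near $Z$; to flip it on a chosen component one again replaces the local model by $\ker(\chi(z)\,dz - z^k\,ds)$, i.e. reverses the sign of the $ds$-term, which reverses the spiralling direction of the leaves along $Z_0$ without affecting involutivity or $\RR$-invariance.

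I would then note that these modifications are all performed component-by-component on the (disjoint) pieces $Z_0$ of $Z$, inside disjoint tubular neighbourhoods, so there is no compatibility issue between components, and the resulting foliation is still $\RR$-invariant, still has the components of $Z\times\RR$ as leaves, and still has graphical leaves elsewhere, by the same computations as in the proof of Lemma~\ref{lem:trivialRegularisation}. Hence $\Fcal$ is a trivial regularisation inducing the prescribed coorientation (resp.\ vertical orientation). The only mildly delicate point — and the step I expect to need the most care — is bookkeeping the sign of $\lambda$ in the $k$ even versus $k$ odd cases when one rescales $j$, to be sure that replacing $z \mapsto -z$ genuinely defines the same Lie algebroid (so that the modified $\Fcal$ really is a regularisation of the \emph{same} $\A^k_Z$) and genuinely flips the induced orientation datum; but this is exactly the computation already carried out inside the proof of Lemma~\ref{lem:coorientationsTrivialRegularisation}, so it can simply be cited.
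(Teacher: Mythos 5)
Your proposal is correct and follows essentially the same route as the paper: take the regularisation of Lemma \ref{lem:trivialRegularisation2}, observe via Lemma \ref{lem:coorientationsTrivialRegularisation} that it induces $\tau_\Fcal=\tau_j$ (for $k$ even) resp.\ the standard vertical orientation (for $k$ odd), and then flip the sign in the local defining one-form component by component. Your phrasing of the even case as $z\mapsto -z$ is only cosmetically different from the paper's sign change $\chi(z)dz - z^k ds$, since for $k$ even the two one-forms have the same kernel, and Lemma \ref{lem:scalingJ} (with $\lambda=-1$) already guarantees in both parities that the underlying algebroid is unchanged.
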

\begin{proof}
If $k$ is even, the regularisation $\Fcal$ given in Lemma \ref{lem:trivialRegularisation2} satisfies $\tau_\Fcal = \tau_j$. If $k$ is odd, the vertical orientation induced by $\Fcal$ agrees with the standard orientation of $\RR$. As a side remark, do observe that the same is true for the regularisation produced by Lemma \ref{lem:trivialRegularisation}.

In order to achieve other (co)orientations we can simply change the plus in the formula for the local $1$-form $\chi(z)dz + z^kds$ defining $\Fcal$. This is done for each component of $Z$ suitably.
\end{proof}

\subsubsection{Lack of uniqueness}

We will say that two trivial regularisations are \textbf{equivalent} if there is an $\RR$-equivariant diffeomorphism, fibered over $M$, taking one to the other. They are \textbf{isotopic} if the diffeomorphism may be assumed to be isotopic to the identity (through $\RR$-equivariant diffeomorphisms fibered over $M$). It follows from Corollary \ref{cor:trivialRegularisation} that:
\begin{corollary} \label{cor:nonUniqueness}
Let $M$ be a manifold. Let $Z$ be a coorientable hypersurface. Fix a jet $j \in J_Z^{k-1}$ and denote the corresponding $b^k$-tangent bundle by $\A$. Then, $\A$ admits two non-isotopic regularisations.
\end{corollary}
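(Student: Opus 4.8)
The plan is to exhibit two regularisations that are distinguished by a $\ZZ/2$-invariant, so that no $\RR$-equivariant diffeomorphism fibered over $M$ (let alone one isotopic to the identity) can carry one to the other. The invariant to use is precisely the one extracted in Lemma \ref{lem:coorientationsTrivialRegularisation}: when $k$ is even, the coorientation $\tau_\Fcal$ of $Z$; when $k$ is odd, the orientation of the vertical bundle over $Z$. First I would invoke Corollary \ref{cor:trivialRegularisation} to produce, for a fixed component $Z_0$ of $Z$, a regularisation $\Fcal_+$ realising one choice of this data and a regularisation $\Fcal_-$ realising the opposite choice (on $Z_0$, keeping the data fixed on all other components if $Z$ is disconnected). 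Both exist by the Corollary, and concretely they differ by flipping the sign in the local defining one-form $\chi(z)\,dz + z^k\,ds$ near $Z_0$, as in the proof of that Corollary.

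Next I would argue that $\Fcal_+$ and $\Fcal_-$ are not equivalent, hence a fortiori not isotopic. The point is that the data isolated in Lemma \ref{lem:coorientationsTrivialRegularisation} is natural: if $\Phi: M\times\RR \to M\times\RR$ is an $\RR$-equivariant diffeomorphism fibered over $M$ taking $\Fcal_+$ to $\Fcal_-$, then $\Phi$ intertwines the two Lie algebroid submersions $\Psi_\pm: \Fcal_\pm \to \A^k_Z$ (uniqueness of $\Psi$ away from $Z$, hence everywhere by density, as established in the proof of Lemma \ref{lem:trivialRegularisation}), and therefore carries the distinguished vector field $X_+ = \Psi_+^{-1}(z^k\partial_z)$ to $X_- = \Psi_-^{-1}(z^k\partial_z)$ along $Z_0$. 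Since $\Phi$ is fibered over $M$ and $\RR$-equivariant, its vertical component is an orientation-preserving automorphism of the vertical line bundle (it commutes with translation, so it acts as $s \mapsto s + c(x)$ on each fibre). Hence $\Phi$ preserves the sign of the vertical component of $X$, which is exactly the datum defining $\tau_\Fcal$ (for $k$ even) or the vertical orientation over $Z$ (for $k$ odd). This contradicts the fact that $\Fcal_+$ and $\Fcal_-$ were chosen to induce opposite such data.

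The main obstacle is the naturality argument: making precise that an equivalence of regularisations must intertwine the submersions $\Psi_\pm$, and hence respect the invariant. This rests on the uniqueness of $\Psi$ (it equals $d\pi$ away from $Z\times\RR$, and extends uniquely by density), which is already in hand, plus the observation that an $\RR$-equivariant fibered diffeomorphism necessarily acts by $s\mapsto s + c(x)$ on the vertical, and in particular preserves its orientation. Once this is set up the conclusion is immediate. I would close by remarking that, in fact, this shows there are at least two equivalence classes of regularisations over any given $(M,Z,j)$, and that for $Z$ with several components one obtains a whole $(\ZZ/2)^{\#\pi_0(Z)}$-worth of them by making the flip componentwise.

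\begin{proof}
Fix a component $Z_0$ of $Z$. By Corollary \ref{cor:trivialRegularisation}, there is a trivial regularisation $\Fcal_+$ of $\A = \A^k_Z$ inducing a prescribed coorientation of $Z_0$ (if $k$ is even) or a prescribed orientation of the vertical bundle over $Z_0$ (if $k$ is odd), and likewise a trivial regularisation $\Fcal_-$ inducing the opposite datum over $Z_0$; on the remaining components of $Z$ we may take the induced data to agree. Concretely, these differ by a sign change in the local defining one-form $\chi(z)\,dz + z^k\,ds$ near $Z_0$, exactly as in the proof of Corollary \ref{cor:trivialRegularisation}.

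We claim $\Fcal_+$ and $\Fcal_-$ are not equivalent, and in particular not isotopic. Suppose $\Phi: M\times\RR \to M\times\RR$ is an $\RR$-equivariant diffeomorphism fibered over $M$ with $\Phi_*\Fcal_+ = \Fcal_-$. Being fibered over $M$ and $\RR$-equivariant, $\Phi$ has the form $(x,s)\mapsto (x, s + c(x))$ along the vertical directions; in particular its vertical part is orientation-preserving. Let $\Psi_\pm: \Fcal_\pm \to \A$ be the associated Lie algebroid submersions. Away from $Z\times\RR$, both $\Psi_+$ and $\Psi_-$ are given by $d\pi$, so $\Phi$ intertwines them there; by density (as in the proof of Lemma \ref{lem:trivialRegularisation}) $\Phi$ intertwines them over all of $M\times\RR$. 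Working in local coordinates $(z,x_2,\ldots,x_n)$ near a point of $Z_0$ with $z$ a defining function compatible with $j$, set $X_\pm := \Psi_\pm^{-1}(z^k\partial_z)$, the non-vanishing vector fields appearing in the proof of Lemma \ref{lem:coorientationsTrivialRegularisation}. Then $\Phi_* X_+ = X_-$ along $Z_0$, and since the vertical part of $\Phi$ preserves orientation, the vertical components of $X_+$ and $X_-$ have the same sign along $Z_0$. By the construction in Lemma \ref{lem:coorientationsTrivialRegularisation}, this sign determines $\tau_{\Fcal_\pm}$ when $k$ is even, and the induced vertical orientation over $Z_0$ when $k$ is odd. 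Hence $\Fcal_+$ and $\Fcal_-$ induce the same datum over $Z_0$, contradicting our choice. Therefore $\Fcal_+$ and $\Fcal_-$ are non-equivalent, hence non-isotopic.
\end{proof}
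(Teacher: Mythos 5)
Your proof is correct and follows essentially the same route as the paper's own (two-line) argument: both use Corollary \ref{cor:trivialRegularisation} to produce two regularisations inducing opposite (co)orientations and then conclude from the invariance of this discrete datum under isotopy. One small caution: your stronger claim of non-\emph{equivalence} rests on the assertion that a fibered $\RR$-equivariant diffeomorphism has the form $(x,s)\mapsto(x,s+c(x))$, i.e.\ covers the identity on $M$, whereas the paper's Corollary \ref{cor:nonUniquenessEven} indicates that equivalences are allowed to cover nontrivial elements of $\Diff(M,Z)$ (which can permute components or reverse coorientations); this does not affect the non-isotopy statement actually being proved, since there the base map is isotopic to the identity.
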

\begin{proof}
Use Corollary \ref{cor:trivialRegularisation} to construct a regularisation whose induced (co)orientations are opposite from the standard ones. Then the claim follows due to the invariance of (co)orientations under isotopy.
\end{proof}

Similarly:
\begin{corollary} \label{cor:nonUniquenessOdd}
Let $M$ be a manifold. Let $Z$ be a coorientable hypersurface. Fix a jet $j \in J_Z^{k-1}$ and denote the corresponding $b^k$-tangent bundle by $\A$. Suppose $k$ is odd and $Z$ has more than one connected component. Then, $\A$ admits two non-equivalent regularisations.
\end{corollary}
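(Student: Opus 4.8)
The idea is to exploit the invariant of Lemma \ref{lem:coorientationsTrivialRegularisation} in the odd case more carefully. Recall that for $k$ odd a regularisation $\Fcal$ does \emph{not} give a coorientation of $Z$, but only an orientation of the vertical bundle $V := \ker d\pi$ over $Z$ — and this orientation is a single global orientation of the line bundle $V|_Z = \underline{\RR}$, which is just a sign $\varepsilon_\Fcal \in \{\pm 1\}$ \emph{for each connected component $Z_0$ of $Z$}. So if $Z$ has components $Z_1,\dots,Z_m$ with $m>1$, a regularisation determines a tuple $(\varepsilon_1,\dots,\varepsilon_m) \in \{\pm1\}^m$. By Corollary \ref{cor:trivialRegularisation} every such tuple is realised by some regularisation. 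The plan is: (1) produce regularisations $\Fcal_0$ and $\Fcal_1$ with distinct tuples — e.g. $(+1,+1,\dots,+1)$ versus $(-1,+1,\dots,+1)$, flipping the sign only near $Z_1$; (2) argue that an $\RR$-equivariant diffeomorphism fibered over $M$ taking $\Fcal_0$ to $\Fcal_1$ would have to carry one vertical orientation to the other, hence equate the two tuples, a contradiction.

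For step (2), let $\Phi : M\times\RR \to M\times\RR$ be an $\RR$-equivariant, $M$-fibered diffeomorphism with $\Phi_*\Fcal_0 = \Fcal_1$. Since $\Phi$ is fibered over $M$, its restriction to each fibre $\{p\}\times\RR$ is a diffeomorphism of $\RR$, and $\RR$-equivariance forces this to be a translation $s \mapsto s + c(p)$; in particular $\Phi$ preserves the standard orientation of the vertical bundle $V$. Now, near a component $Z_0$, the vertical orientation $\varepsilon_{\Fcal}$ was defined intrinsically from the pair $(\Fcal, V)$: it is the sign making the holonomy of $\Fcal$ along $Z_0$ attracting in the positive vertical direction (equivalently, the sign of the vertical component of $X = \Psi^{-1}(z^k\partial_z)$, up to the fixed convention). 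Because $\Phi$ intertwines $\Fcal_0$ with $\Fcal_1$ and preserves the standard orientation of $V$, it must carry the intrinsic vertical orientation of $\Fcal_0$ near $Z_0$ to that of $\Fcal_1$ near $\Phi(Z_0) = Z_0$ (as $\Phi$ is $M$-fibered it fixes $Z$ setwise and permutes its components, but by connectedness and the fact that it is a diffeomorphism of $M$ it may a priori permute the $Z_i$; one handles this by noting $\Phi$ induces a permutation $\sigma$ of the components and the tuples must agree after applying $\sigma$, so it suffices to choose $\Fcal_0,\Fcal_1$ whose tuples are not related by any permutation — e.g. one with a single $-1$ and one with all $+1$'s, which have different numbers of $-1$ entries). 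This forces $\varepsilon_{i}^{(0)} = \varepsilon_{\sigma(i)}^{(1)}$ for all $i$, contradicting the count.

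The main obstacle is the bookkeeping around the permutation of components: an $M$-fibered diffeomorphism need not fix each $Z_i$, so the correct invariant to compare is not the ordered tuple in $\{\pm1\}^m$ but its orbit under the relabelling induced by $\mathrm{Diff}(M)$ — and strictly, only under those diffeomorphisms of $M$ that extend to an $\RR$-equivariant fibered diffeomorphism of $M\times\RR$ carrying one regularisation to the other. The clean way around this is to make the two tuples have different \emph{multisets} of signs (one $-1$ and $m-1$ ones, versus $m$ ones), which is permutation-invariant, so no hypothesis on $\sigma$ is needed. One then only needs: $\Phi$ fibered and $\RR$-equivariant $\Rightarrow$ $\Phi$ preserves the standard vertical orientation $\Rightarrow$ $\Phi$ matches the intrinsic vertical orientations of $\Fcal_0$ and $\Fcal_1$ componentwise up to $\sigma$ $\Rightarrow$ the multisets of signs coincide. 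The realisation part (step (1)) is immediate from Corollary \ref{cor:trivialRegularisation}, which explicitly allows prescribing the vertical orientation over each component independently by choosing the sign in $\chi(z)\,dz \pm z^k ds$. Note that for $k$ even the same construction with coorientations would fail to give \emph{non-equivalent} (only non-isotopic) examples, which is exactly why the hypothesis "$k$ odd and $Z$ disconnected" appears: a coorientation of a single component can be reversed by an $M$-fibered diffeomorphism (reflecting the normal direction), whereas the vertical orientation cannot, since the vertical direction is untouched by $M$-fibered maps.
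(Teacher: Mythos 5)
Your proposal is correct and follows exactly the paper's argument: the paper's own proof also uses Corollary \ref{cor:trivialRegularisation} to build one regularisation with positive vertical orientation on every component of $Z$ and another with positive orientation on all components but one, and then asserts these cannot be equivalent. You simply supply the justification the paper leaves implicit (that an $\RR$-equivariant fibered diffeomorphism is a fibrewise translation, hence preserves the vertical orientation and can at most permute components, so the multiset of signs is an invariant), which is a welcome expansion rather than a different route.
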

\begin{proof}
Using Corollary \ref{cor:trivialRegularisation} we construct a regularisation whose induced vertical orientation in each component of $Z$ is positive. We can also construct a regularisation whose induced vertical orientation is positive in all components but one. These two regularisations cannot be equivalent.
\end{proof}

For $k$ even, one can reason similarly and prove:
\begin{corollary} \label{cor:nonUniquenessEven}
Let $M$ be a manifold. Let $Z$ be a coorientable hypersurface. Fix a jet $j \in J_Z^{k-1}$ and denote the corresponding $b^k$-tangent bundle by $\A$. Suppose $k$ is even and consider the action of $\Diff(M,Z) \times \ZZ/2\ZZ$ on the set of coorientations of $Z$; here $\ZZ/2\ZZ$ acts by reversing all coorientations.

If this action is not transitive, $\A$ admits two non-equivalent regularisations.
\end{corollary}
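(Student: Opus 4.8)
The plan is to reduce the statement to the same (co)orientation bookkeeping that powered Corollaries \ref{cor:nonUniqueness} and \ref{cor:nonUniquenessOdd}. By Lemma \ref{lem:coorientationsTrivialRegularisation}, when $k$ is even a regularisation $\Fcal$ of $\A$ determines a coorientation $\tau_\Fcal$ of $Z$, and Corollary \ref{cor:trivialRegularisation} shows that every coorientation of $Z$ is realised by some trivial regularisation. So the assignment $\Fcal \mapsto \tau_\Fcal$ is a surjection from the set of trivial regularisations (up to equivalence) onto the set of coorientations of $Z$. The first step is to check that this surjection descends to equivalence classes in a $\Diff(M,Z)$-equivariant way: an $\RR$-equivariant diffeomorphism fibered over $M$ must preserve $Z\times\RR$ (it sends central leaves to central leaves, by property (2) of Definition \ref{def:regularisation}) and hence its restriction to $M$ lies in $\Diff(M,Z)$; tracing through the construction of $\tau_\Fcal$ in the proof of Lemma \ref{lem:coorientationsTrivialRegularisation} (which is built out of the local vector field $X = \Psi^{-1}(z^k\partial_z)$ and its vertical component), one sees that if $\Phi$ takes $\Fcal_1$ to $\Fcal_2$ over $\phi \in \Diff(M,Z)$ then $\phi_*\tau_{\Fcal_1} = \tau_{\Fcal_2}$, possibly up to the ambiguity of globally reversing the vertical orientation of $M\times\RR$, which acts on coorientations precisely as the $\ZZ/2\ZZ$ factor.

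The second step is then purely formal: if two regularisations $\Fcal_1,\Fcal_2$ are equivalent, their coorientations $\tau_{\Fcal_1},\tau_{\Fcal_2}$ lie in the same orbit of $\Diff(M,Z)\times\ZZ/2\ZZ$ acting on coorientations. Contrapositively, if we can find two coorientations of $Z$ lying in different orbits of this action, the corresponding regularisations supplied by Corollary \ref{cor:trivialRegularisation} cannot be equivalent. By hypothesis the action is not transitive, so two such coorientations exist, and we are done.

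The main obstacle is the first step, namely pinning down exactly how an equivalence of regularisations interacts with the coorientation $\tau_\Fcal$ — in particular, being careful that the only freedom introduced by an $\RR$-equivariant, $M$-fibered diffeomorphism (beyond an element of $\Diff(M,Z)$) is a possible global flip of the vertical orientation, and that this flip reverses \emph{all} coorientations simultaneously, matching the $\ZZ/2\ZZ$ action in the statement. This requires revisiting the definition of $\tau_\Fcal$: it was defined via the sign of the vertical component of $\Psi^{-1}(z^k\partial_z)$, and an $M$-fibered equivalence intertwines the two submersions $\Psi_1,\Psi_2$ and acts on the vertical $\RR$-factor by an orientation-preserving or orientation-reversing diffeomorphism, uniformly over $M$ by $\RR$-equivariance; in the orientation-preserving case one gets $\phi_*\tau_{\Fcal_1}=\tau_{\Fcal_2}$ on the nose, and in the orientation-reversing case one gets $\phi_*\tau_{\Fcal_1}=-\tau_{\Fcal_2}$. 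Once this compatibility is established the rest is immediate. A remark worth including is that the hypothesis is genuinely needed: when $Z$ is connected, $\ZZ/2\ZZ$ already acts transitively on its two coorientations, so the corollary says nothing new there, consistent with the fact that Corollary \ref{cor:nonUniqueness} in that case detected non-isotopy rather than non-equivalence.
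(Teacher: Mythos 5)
Your argument is correct and is essentially the paper's own: the paper proves this corollary only by the remark ``one can reason similarly'' to Corollary \ref{cor:nonUniquenessOdd}, and the intended reasoning is exactly what you spell out --- use Corollary \ref{cor:trivialRegularisation} to realise two coorientations in different orbits of $\Diff(M,Z)\times\ZZ/2\ZZ$, and observe that an equivalence of regularisations can only change $\tau_\Fcal$ by an element of that group. Your extra care in checking that $\Fcal\mapsto\tau_\Fcal$ is equivariant (with the global vertical flip accounting for the $\ZZ/2\ZZ$ factor) fills in a step the paper leaves implicit, and your closing remark about the connected case is a correct sanity check.
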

(Co)orientations are the only invariant that we have used to distinguish regularisations. Our expectation is that there should be a non-trivial moduli, as suggested by the freedom we have in choosing the cut-off function in the proof of Lemma \ref{lem:trivialRegularisation2}.

We revisit the question of uniqueness in Subsection \ref{sssec:uniquenessIntrinsic} below.

\subsection{The intrinsic regularisation in $b$-Geometry}

In the particular case of $b$-geometry, we can present a regularisation recipe that is the intrinsic analogue of Lemma \ref{lem:trivialRegularisation}. It has the additional advantage of generalising to the situation in which the hypersurface $Z$ is not coorientable. This requires us to replace $M \times \RR$ by a non-trivial line bundle:
\begin{proposition} \label{prop:intrinsicRegularisation}
Let $(M,Z)$ be a b-pair, and let $(W,s)$ be the associated b-divisor. Then, there exists a coorientable foliation $\Fcal$ on $\pi: W^* \setminus M \rightarrow M$ such that:
\begin{itemize}
\item $\Fcal$ is $\RR^*$-invariant.
\item The connected components of $\pi^{-1}(Z)$ are leaves.
\item All other leaves are diffeomorphic to a connected component of $M \setminus Z$.
\item There is a Lie algebroid submersion
\begin{center}
\begin{tikzcd}
\Fcal \ar[r,"\Psi"] \ar[d] & \A_Z \ar[d,"\rho"] \\
T(W^* \setminus M) \ar[r,"d\pi"] & TM.
\end{tikzcd}
\end{center}
\end{itemize}
\end{proposition}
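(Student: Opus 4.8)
The plan is to mimic the proof of Lemma \ref{lem:trivialRegularisation}, but to phrase the construction invariantly in terms of the tautological section on $W^*\setminus M$ rather than a local defining function. Recall that the b-divisor $(W,s)$ comes with $s\in\Gamma(W)$, i.e. a fibrewise-linear function $s:W^*\to\RR$; its zero set is $\pi^{-1}(Z)$, and away from there $s$ is a non-vanishing function on the total space of $W^*\setminus M$. The manifold $W^*\setminus M$ carries the natural $\RR^*$-action by scaling in the fibres, generated by the Euler vector field $\mathcal{E}$, and $s$ is homogeneous of degree $1$ for it, i.e. $\mathcal{L}_{\mathcal{E}}s = s$. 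The key one-form to consider is a ``logarithmic'' analogue of $\theta_f = df + f^k ds$ from Lemma \ref{lem:trivialRegularisation}; in the $b$ ($k=1$) case the right replacement turns out to be
\[ \theta := ds - s\,\lambda, \]
where $\lambda$ is any principal connection one-form for the $\RR^*$-bundle $\pi:W^*\setminus M\to M$ (equivalently, a one-form with $\lambda(\mathcal{E})=1$ and $\mathcal{L}_{\mathcal{E}}\lambda = 0$). Define $\Fcal := \ker(\theta)$.

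First I would check that $\theta$ has constant rank $1$ (so $\Fcal$ is a genuine codimension-one distribution): $\theta(\mathcal{E}) = ds(\mathcal{E}) - s = \mathcal{L}_{\mathcal{E}}s - s = 0$, and along $\pi^{-1}(Z)$ where $s=0$ we have $\theta = ds$, which is nonzero there because $s$ vanishes transversely (the remark following Lemma \ref{lem:divisor}: $d^\nu s$ is an isomorphism). Away from $\pi^{-1}(Z)$, $\theta = -s(\lambda - s^{-1}ds) = -s\, d_{\A}(\text{something})$ after dividing, so it is nonvanishing there too; more directly $ds$ and $\lambda$ are pointwise independent off $s=0$ once one notes $ds(\mathcal{E})=s\neq 0$ while $\lambda(\mathcal{E})=1$. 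Next, involutivity: I would compute $\theta\wedge d\theta$. We have $d\theta = -ds\wedge\lambda - s\,d\lambda$, so $\theta\wedge d\theta = (ds - s\lambda)\wedge(-ds\wedge\lambda - s\,d\lambda) = -s\,ds\wedge d\lambda + s\,ds\wedge\lambda\wedge\lambda + s^2\lambda\wedge d\lambda = -s\,ds\wedge d\lambda + s^2\,\lambda\wedge d\lambda$. This need not vanish for arbitrary $\lambda$, so here is where a choice enters: I would choose $\lambda$ to be \emph{flat} away from $\pi^{-1}(Z)$, or more simply pull back the construction of Lemma \ref{lem:trivialRegularisation} through local trivialisations of $W$. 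Concretely, over a trivialising chart $U$ for $W$ with local defining function $z$ for $Z$, the section $s$ reads $z$ and the fibre coordinate on $W^*$ is some $t$; then $s$ as a function on $W^*|_U$ is $zt$, and one can instead take $\theta$ to be (a constant multiple of) $dz/z - dt/t = d\log|z| - d\log|t|$ on $\{t\neq 0\}$, which patches with $ds = t\,dz + z\,dt$ across $\pi^{-1}(Z)$; this is manifestly closed, hence involutive, and this is really the intrinsic version of $d((k-1)^{-1}f^{1-k}) - ds$ appearing in Lemma \ref{lem:trivialRegularisation} for $k=1$ (where $(k-1)^{-1}f^{1-k}$ degenerates to $\log|f|$). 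I would therefore phrase the proof as: cover $M$ by trivialising charts $U_\alpha$ for $W$, build $\theta_\alpha$ on $\pi^{-1}(U_\alpha)$ by the local formula, and observe that on overlaps the $\theta_\alpha$ differ by multiplication by the (nowhere-zero) transition function $g^\alpha_\beta$ times a closed correction, so $\ker\theta_\alpha$ glue to a global foliation $\Fcal$; the $\RR^*$-invariance is visible from each local formula.

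With $\Fcal$ in hand, the three bulleted properties are checked exactly as in Lemma \ref{lem:trivialRegularisation}. The $\RR^*$-invariance was noted above. Along $\pi^{-1}(Z)$ we have $\Fcal = \ker(ds)$ and $ds$ vanishes on $T(\pi^{-1}(Z))$ (since $s\equiv 0$ there), and by a dimension count $\Fcal|_{\pi^{-1}(Z)} = T(\pi^{-1}(Z))$, so connected components of $\pi^{-1}(Z)$ are leaves. Off $\pi^{-1}(Z)$, $\Fcal$ is the kernel of the closed form $d\log|s/\,t|$ (in the local description $d\log|z|$, which is $\pi^*$ of a form on $U\setminus Z$), so the leaves are graphs over (connected components of) $M\setminus Z$; in particular each such leaf maps diffeomorphically onto a component of $M\setminus Z$. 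Coorientability of $\Fcal$ is immediate since it is globally the kernel of a (well-defined up to positive... in fact up to nonzero scaling, but the scalings can be taken to respect a coorientation by the argument of Lemma \ref{lem:coorientationsTrivialRegularisation}) one-form; alternatively $T(W^*\setminus M)/\Fcal$ is a line bundle and one checks it is trivial.

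Finally, the Lie algebroid submersion $\Psi:\Fcal\to\A_Z$. Off $\pi^{-1}(Z)$ the anchor $\rho:\A_Z\to TM$ is an isomorphism and $d\pi:\Fcal\to TM$ is an isomorphism onto $TM$ (leaves are graphical), so $\Psi := \rho^{-1}\circ d\pi$ is forced there; by density there is at most one smooth extension across $\pi^{-1}(Z)$, and I would exhibit it in the local model: over $U$ with coordinates $(z,x_2,\dots,x_n)$ and fibre coordinate $t$, the algebroid $\A_Z$ is spanned by $\langle z\partial_z,\partial_{x_2},\dots,\partial_{x_n}\rangle$, while $\Fcal$ is spanned (reading off $\ker(dz/z - dt/t) = \ker(t\,dz - z\,dt)$) by $\langle z\partial_z + t\partial_t,\ \partial_{x_2},\dots,\partial_{x_n}\rangle$, and one sets $\Psi(z\partial_z + t\partial_t) := z\partial_z$, $\Psi(\partial_{x_i}) := \partial_{x_i}$. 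This is a fibrewise isomorphism commuting with anchors; that it is a bracket morphism follows because it is one on the dense open set $\{t\neq 0\}$ and both sides are continuous. Checking that these local $\Psi$'s are compatible with the gluing of the $\theta_\alpha$ (equivalently, that $\Psi$ is well-defined globally) is the one genuinely fiddly point — it amounts to seeing that the local vector field $z\partial_z + t\partial_t$ is, up to the vertical Euler field which lies in $\ker d\pi$, independent of the trivialisation, which it is because $t\partial_t$ is exactly the Euler field $\mathcal{E}$ and $z\partial_z$ is the (trivialisation-independent) lift of $z\partial_z\in\A_Z$. I expect this globalisation/compatibility bookkeeping — rather than any single computation — to be the main obstacle; everything else is local and parallels Lemma \ref{lem:trivialRegularisation} verbatim.
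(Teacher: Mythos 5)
Your starting point is exactly right and is in fact the whole content of the paper's (very short) proof: the section $s\in\Gamma(W)$ determines a tautological fibrewise-linear function $\varphi:W^*\to\RR$, $e_x\mapsto\inp{s(x),e_x}$, which is regular outside the zero section, and the paper simply takes $\Fcal$ to be the foliation by level sets of $\varphi$ (so $\theta=d\varphi$, globally exact, hence involutive and coorientable for free). The problem is that you then deviate from this with a correction term that produces the wrong distribution. Your Ansatz $\theta=ds-s\lambda$ satisfies $\theta(\mathcal{E})=0$ \emph{everywhere}, i.e.\ the Euler field lies in $\ker\theta$ at every point; in the local model ($s=zt$, $\lambda=dt/t$) one gets $\theta=t\,dz$, whose leaves are the $\RR^*$-saturated cylinders $\pi^{-1}(\{z=c\})$. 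These can never be diffeomorphic to components of $M\setminus Z$ under $\pi$, so the construction fails the third bullet regardless of involutivity — the difficulty you flag is misdiagnosed. The correct intrinsic translation of $df+f\,ds$ from Lemma \ref{lem:trivialRegularisation} under $t=e^s$ is $t\,dz+z\,dt=d(zt)=d\varphi$, i.e.\ no correction term at all.

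Your fallback local form $dz/z-dt/t$ has the same sign error: clearing denominators gives $t\,dz-z\,dt$, which is \emph{not} proportional to $ds=t\,dz+z\,dt$ (they agree only along $\{z=0\}$), and its leaves are the radial lines $t=cz$ rather than the hyperbolas $zt=c$. More seriously, under a change of trivialisation $(z,t)\mapsto(gz,g^{-1}t)$ this form transforms as $dz/z-dt/t\mapsto dz/z-dt/t+2\,dg/g$: an \emph{additive} closed correction, which changes the kernel. Your gluing argument ("differ by multiplication by the transition function times a closed correction, so the kernels glue") is therefore false — only multiplicative corrections preserve kernels — and the local distributions do not patch unless $W$ is trivial. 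With the plus sign, $dz/z+dt/t=d\log\abs{zt}=d\log\abs{\varphi}$ is manifestly invariant under the transition (since $\varphi$ is global), and everything you wrote afterwards goes through verbatim, with the lifted frame vector being $z\partial_z-t\partial_t$ (as in the paper's Figure 2) rather than $z\partial_z+t\partial_t$; the map $\Psi(z\partial_z-t\partial_t):=z\partial_z$, $\Psi(\partial_{x_i}):=\partial_{x_i}$ is then the required Lie algebroid submersion.
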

\begin{proof}
The function
\begin{align*}
\varphi : W^* \rightarrow \RR, \quad e_x \mapsto \inp{s(x),e_x},
\end{align*}
is regular outside the zero-section and therefore defines a foliation $\Fcal$ on $W^* \setminus M$. Using the local coordinate description of $\A_Z$ it becomes apparent that $\pi : W^* \setminus M \rightarrow M$ defines a Lie algebroid submersion from $\Fcal$ to $\A_Z$. Checking the other properties of $\Fcal$ is straightforward.
\end{proof}
The foliated manifold produced by Proposition \ref{prop:intrinsicRegularisation} is called the \textbf{intrinsic regularisation} of $\A_Z$. It is unclear to the authors whether there is a similar description in the $b^k$-setting.

\begin{figure}
\includegraphics[scale=0.7]{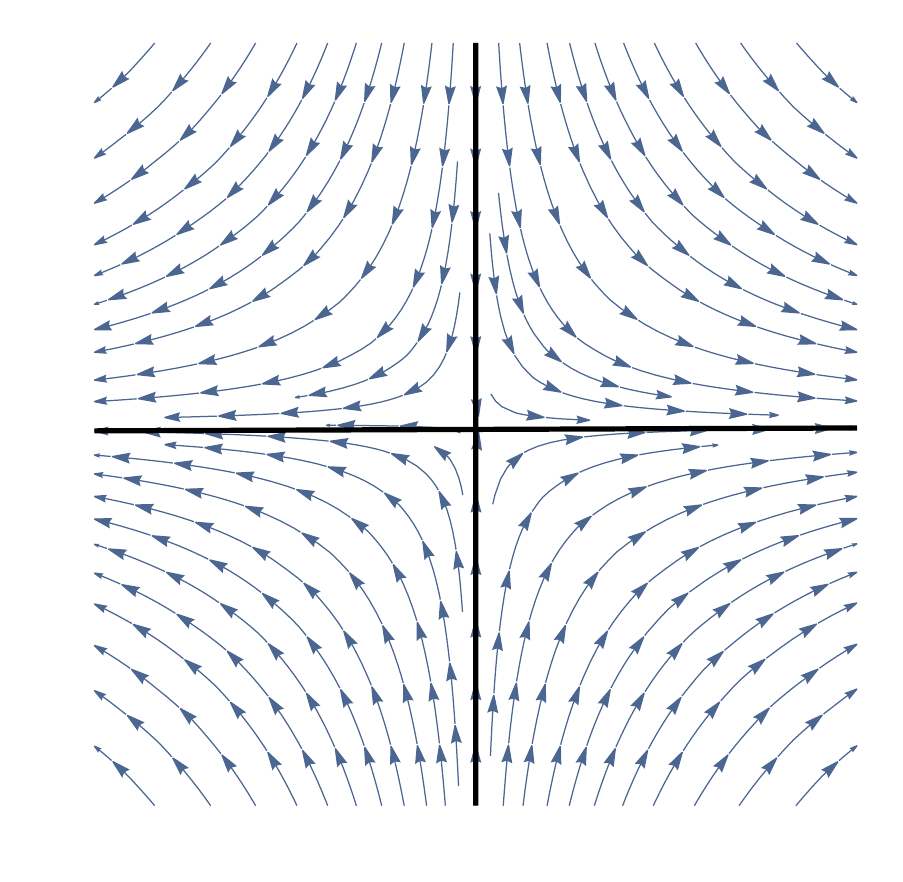}
\caption{When $M = \RR$ and $Z = \{0\}$, the foliation on $\RR^2\setminus\RR$ obtained using Proposition \ref{prop:intrinsicRegularisation} is given by the vector field $z\partial_z - s\partial_s$. Note that the foliation on the entirety of $\rr^2$ is singular, but when restricted to the complement of the $x$-axis it becomes regular.}
\end{figure}

\subsubsection{Uniqueness} \label{sssec:uniquenessIntrinsic}

We now explain how the intrinsic regularisation is actually intrinsic. In the statement of Proposition \ref{prop:intrinsicRegularisation} we are fixing some auxiliary data; namely, the section $s: M \rightarrow W$. Given some other section $s': M \rightarrow W$, also with transverse vanishing locus $Z$, we deduce that $h := s/s': M \rightarrow \RR$ is a non-vanishing function. Multiplication by $h$ is then an $\RR^*$-equivariant isomorphism of $W$ that takes $s'$ to $s$.

We then say that two foliations in $W^* \setminus \RR$ are \textbf{equivalent} if there is an $\RR^*$-equivariant diffeomorphism taking one to the other. The previous discussion implies that:
\begin{corollary}
Let $s,s': M \rightarrow W$ be sections whose transverse vanishing locus is $Z$. Then, the associated regularisations are equivalent.
\end{corollary}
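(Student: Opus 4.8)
The plan is to exhibit an explicit $\RR^*$-equivariant diffeomorphism of $W^*\setminus M$ over $M$ that intertwines the two regularisations, using the function $h = s/s'$ from the preceding discussion. Concretely, let $\Fcal$ be the foliation built from $s$ as in Proposition \ref{prop:intrinsicRegularisation}, defined as the kernel of $d\varphi_s$ where $\varphi_s(e_x) = \inp{s(x),e_x}$, and similarly let $\Fcal'$ be built from $s'$ via $\varphi_{s'}(e_x) = \inp{s'(x),e_x}$. Since $s = h s'$ with $h: M \to \RR$ nowhere vanishing, multiplication by $h$ defines a vector bundle automorphism of $W$, and its inverse transpose gives a vector bundle automorphism $\Lambda: W^* \to W^*$ covering $\id_M$, given fibrewise by $e_x \mapsto h(x) e_x$ (so that $\inp{s'(x), e_x} = \inp{s(x), \Lambda^{-1}(e_x)}$, i.e. $\varphi_{s'} = \varphi_s \circ \Lambda^{-1}$). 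Being linear on fibres, $\Lambda$ is manifestly $\RR^*$-equivariant, it restricts to a diffeomorphism of $W^*\setminus M$, and it is fibered over $M$.

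The key remaining step is to check that $\Lambda$ carries $\Fcal'$ to $\Fcal$. This is immediate from the relation $\varphi_{s'} = \varphi_s \circ \Lambda$ (equivalently $\varphi_s = \varphi_{s'}\circ \Lambda^{-1}$): pulling back, $\Lambda^*(d\varphi_s) = d(\varphi_s\circ\Lambda) = d\varphi_{s'}$, hence $d\Lambda$ maps $\ker(d\varphi_{s'}) = \Fcal'$ isomorphically onto $\ker(d\varphi_s) = \Fcal$. One should double-check the direction of the intertwining and that $h$ indeed has transverse vanishing-free image — but this is exactly the content of the sentence preceding the corollary, where $h = s/s'$ is observed to be non-vanishing because $s$ and $s'$ share the same (transverse) vanishing locus $Z$. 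Finally, since $\Lambda$ covers the identity on $M$ and commutes with $\pi$, it is automatically compatible with the Lie algebroid submersions $\Psi, \Psi'$ to $\A_Z$ away from $\pi^{-1}(Z)$, and by the density argument used in Proposition \ref{prop:intrinsicRegularisation} this compatibility extends over $\pi^{-1}(Z)$; so $\Lambda$ is an equivalence of regularisations in the full sense, not merely of the underlying foliations.

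I do not expect any real obstacle here: the only mild subtlety is bookkeeping of which bundle ($W$ versus $W^*$) the multiplication acts on and in which direction, so that the pullback of $\varphi_s$ is $\varphi_{s'}$ rather than $\varphi_{s^{-1}}$ or similar. Once that is pinned down, the proof is a one-line computation. The statement could therefore be given a proof of just two or three sentences, essentially: "Writing $h := s/s'$, which is nowhere vanishing by the discussion above, fibrewise multiplication by $h$ on $W^*$ is an $\RR^*$-equivariant diffeomorphism over $M$ intertwining $\varphi_{s'}$ and $\varphi_s$, hence carrying $\Fcal'$ to $\Fcal$."
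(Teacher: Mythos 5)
Your proof is correct and is essentially the paper's own argument, which simply invokes the preceding observation that $h=s/s'$ is non-vanishing and that multiplication by $h$ gives an $\RR^*$-equivariant automorphism intertwining the two defining functions $\varphi_s$ and $\varphi_{s'}$. The only blemish is the self-acknowledged direction mix-up (you assert both $\varphi_{s'}=\varphi_s\circ\Lambda^{-1}$ and $\varphi_{s'}=\varphi_s\circ\Lambda$), but since $\Lambda$ and $\Lambda^{-1}$ are both $\RR^*$-equivariant diffeomorphisms over $M$, this does not affect the conclusion.
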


Let us particularise to the case in which $W$ is the trivial bundle $M \times \RR$. We see $W$ as its own dual using the standard scalar product in $\RR$. We can then consider the positive cone $M \times \RR^{>0}$ and take exponential coordinates
\[ \Psi: M \times \RR \rightarrow M \times \RR^{>0} \]
fibrewise. The $\RR$-action in the source by translations corresponds then to the $\RR^{>0}$-action in the target by multiplication. Then:
\begin{proposition}
Let $(M,Z)$ be a b-pair with $Z$ given as the zero set of a global defining function $f$. Then, the exponential map $\Psi$ maps the trivial regularisation to the intrinsic regularisation (both associated to $f$).
\end{proposition}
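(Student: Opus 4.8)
The plan is to compare the two foliations directly in their defining one-forms under the change of coordinates $\Psi$. First I would recall the explicit descriptions: by Lemma~\ref{lem:trivialRegularisation} (with $k=1$), the trivial regularisation associated to $f$ is $\Fcal_f = \ker(df + f\,ds)$ on $M \times \RR$, where $s$ is the $\RR$-coordinate; and by Proposition~\ref{prop:intrinsicRegularisation}, the intrinsic regularisation on $W^* \setminus M = (M \times \RR) \setminus M = M \times \RR^*$ is the foliation defined by the function $\varphi(x,t) = \inp{s(x), e_x} = f(x)\cdot t$ (using the trivialisation $W = M \times \RR$ with $s = f$ and the standard pairing), hence by $\ker(d\varphi) = \ker(t\,df + f\,dt)$ on the positive cone $M \times \RR^{>0}$. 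One subtlety to flag: the intrinsic regularisation as stated lives on $W^* \setminus M$, which is disconnected; the exponential map only covers the positive cone $M \times \RR^{>0}$, so the claim is really about the restriction of the intrinsic regularisation to that component (I would say this explicitly).

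Next I would carry out the pullback computation. Writing $\Psi(x,s) = (x, e^s)$, so that $t = e^s$ and $dt = e^s\,ds$, we get
\[
\Psi^*(t\,df + f\,dt) = e^s\,df + f\,e^s\,ds = e^s\,(df + f\,ds) = e^s\,\theta_f.
\]
Since $e^s$ is nowhere zero, $\ker(\Psi^*(d\varphi)) = \ker(\theta_f) = \Fcal_f$, which shows that $\Psi$ maps $\Fcal_f$ to the intrinsic regularisation leaf-by-leaf. I would also note, for completeness, that $\Psi$ intertwines the relevant group actions (the $\RR$-action by translation in $s$ with the $\RR^{>0}$-action by multiplication in $t$), so the identification respects the equivariant structure, and that both foliations have the same Lie algebroid submersion to $\A_Z$ — on the complement of $Z$ both are forced to be $d\pi$, and $\Psi$ covers the identity on $M$, so the submersions agree there and hence everywhere by the density argument already used in the proof of Lemma~\ref{lem:trivialRegularisation}.

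I do not expect a serious obstacle here: this is essentially a one-line change-of-variables verification once the two foliations are written in their canonical defining forms. The only points requiring care are bookkeeping ones: fixing the trivialisation $W \cong M \times \RR$ and the self-duality identification so that the section $s$ really is the function $f$ and $\varphi$ really is $f \cdot t$; restricting attention to the positive cone where $\Psi$ is a diffeomorphism; and checking that "maps to" is meant as an equivalence of regularisations in the sense defined just above, i.e. an $\RR^*$-equivariant (here $\RR^{>0}$-equivariant) diffeomorphism over $M$ — which $\Psi$ manifestly is.
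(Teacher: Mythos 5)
Your computation is exactly the paper's proof: the paper also identifies $W^*$ with $M\times\RR$, writes the intrinsic regularisation as the level sets of $tf(x)$, pulls back along $(x,s)\mapsto(x,e^s)$ to get level sets of $e^sf(x)$, i.e.\ the kernel of $e^s(f\,ds+df)$, and concludes. Your additional remarks on restricting to the positive cone and on equivariance are sensible bookkeeping but do not change the argument.
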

\begin{proof}
For notational clarity, let us write $W^* = M \times \RR$. We see $f$ as a section of $W$, which we have identified with $W^*$. We use coordinates $(x,t)$ in $W^*$. Then, the intrinsic regularisation in $W^* \setminus M$, is given by the level sets of $tf(x)$.

We then consider $\Psi: M \times \RR \rightarrow W^*$ given by $(x,s) \mapsto (x,e^s)$. It follows that the pullback of the intrinsic regularisation is given by the level sets of $e^sf(x)$. Equivalently, it is the foliation given as the kernel of the $1$-form $e^s(fds+df)$, which is indeed the trivial regularisation.
\end{proof}
In particular, the regularisations produced by Lemma \ref{lem:trivialRegularisation} are unique up to $\RR$-equivariant equivalence.

%

\subsubsection{The additive and compact regularisations}

Consider a b-divisor $(W,s)$ and the corresponding intrinsic regularisation $(W^* \setminus M, \Fcal)$ produced by Proposition \ref{prop:intrinsicRegularisation}. Consider the $\ZZ_2$-action given by the $\RR^*$-action on $W$ and write $V \rightarrow M$ for the trivial $\RR^+$-bundle obtained upon quotienting. The $\RR^*$-invariance of $\Fcal$ implies that $V$ is endowed with an $\RR^+$-invariant foliation. This foliation can in turn be pulled back using the fibrewise exponential map $M \times \RR \rightarrow V$. We deduce:
\begin{corollary}
Any $b$-tangent bundle admits a trivial regularisation. 
\end{corollary}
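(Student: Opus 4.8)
The plan is to assemble this corollary from pieces that are already in place, rather than to construct anything new. The key observation is that the final statement only asks for a \emph{trivial} regularisation, and we already have two routes to one: the explicit Lemma \ref{lem:trivialRegularisation2}, which handles the coorientable case directly, and the \emph{intrinsic} regularisation of Proposition \ref{prop:intrinsicRegularisation}, which is built on the total space $W^* \setminus M$ of (the dual of) the canonical $b$-divisor and makes no coorientability assumption on $Z$. So the only thing to do is to pass from the intrinsic picture on $W^* \setminus M$ to a picture on $M \times \RR$, which is exactly what the preceding paragraph on ``the additive and compact regularisations'' sets up.

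Concretely, I would argue as follows. Start with a $b$-pair $(M,Z)$ and let $(W,s)$ be its associated $b$-divisor (Lemma \ref{lem:divisor}, or canonically via Remark \ref{rem:logconn}). Apply Proposition \ref{prop:intrinsicRegularisation} to obtain the intrinsic regularisation $(W^*\setminus M,\Fcal)$, which is $\RR^*$-invariant and Lie-algebroid-submerses onto $\A_Z$. Now quotient the fibrewise $\RR^*$-action to its $\ZZ_2$-part: the quotient bundle $V \to M$ is a trivial $\RR^+$-bundle (trivial because the transition functions of $W$, being squares after passing to $W\otimes W$—or more directly because $\RR^*/\{\pm1\}\cong\RR^+$ is contractible so the associated bundle is trivial), and the residual $\RR^+$-action on $V$ descends from the $\RR^*$-action on $W^*$. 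Since $\Fcal$ is $\RR^*$-invariant it is in particular $\ZZ_2$-invariant, hence descends to an $\RR^+$-invariant foliation $\bar\Fcal$ on $V$, and the Lie algebroid submersion descends too because the anchor data and the local model of $\A_Z$ are unaffected by quotienting the fibre. Finally pull $\bar\Fcal$ back along the fibrewise exponential map $M\times\RR \to V$, $(x,s)\mapsto (x,e^s\cdot{-})$, which is an $\RR$-equivariant diffeomorphism onto $V$ intertwining translation with multiplication; the pulled-back foliation is $\RR$-invariant, its central leaves are the components of $Z\times\RR$, its other leaves are graphical over components of $M\setminus Z$, and it inherits the Lie algebroid submersion to $\A_Z$. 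This is by definition a trivial regularisation.

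The step that requires the most care—and the one I'd expect a referee to scrutinise—is the claim that the $\RR^+$-bundle $V$ obtained by quotienting $W^*$ by $\pm1$ is actually trivial, so that we genuinely land in $M\times\RR$ and not in some twisted line bundle. This is where coorientability of $Z$ would have entered in the naive approach but is circumvented here: the point is that $\RR^*$ retracts onto $\ZZ_2$, so $W^*\setminus M$ fibrewise retracts onto its $\ZZ_2$-reduction, whose quotient is a principal $\RR^+=\RR^{>0}$-bundle; and principal bundles with contractible structure group (equivalently, vector-bundle-like bundles with structure group $\RR^{>0}$) are always trivial, admitting a global section. Equivalently one can note $W^*\otimes W^* \cong (W^*)^{\otimes 2}$ carries the section $s\otimes s$ which is positive away from $Z$, trivialising the square, and a trivialisation of the square of a line bundle is exactly a trivialisation of its associated $\RR^{>0}$-bundle. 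Once $V\cong M\times\RR^{>0}$ is fixed, everything else is a matter of unwinding the equivariance and invoking the properties already verified for the intrinsic regularisation in Proposition \ref{prop:intrinsicRegularisation}, together with the dictionary between the additive and multiplicative models spelled out just above the corollary.
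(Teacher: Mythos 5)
Your argument is correct and is essentially the paper's own proof: the authors likewise take the intrinsic regularisation on $W^*\setminus M$ from Proposition \ref{prop:intrinsicRegularisation}, quotient by the $\ZZ_2 \subset \RR^*$ part of the action to obtain an $\RR^+$-invariant foliation on the trivial $\RR^+$-bundle $V$, and pull it back along the fibrewise exponential $M\times\RR\to V$. One small quibble: your parenthetical ``equivalently'' via the section $s\otimes s$ does not literally trivialise $W^{\otimes 2}$ (that section vanishes along $Z$); your primary justification --- that the quotient is a principal bundle with contractible structure group $\RR^{>0}$ and is therefore trivial --- is the correct one and is all that is needed.
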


We can take this a step further and consider the $(\ZZ_2 \times \ZZ)$-action on $W$ given by
\[ (\epsilon,n) \cdot v = \epsilon 2^n v. \]
The quotient of $W^* \setminus M$ by this action is then a trivial $\NS^1$-bundle. We deduce:
\begin{corollary}\label{cor:compactreg}
Any $b$-tangent bundle admits a compact regularisation.
\end{corollary}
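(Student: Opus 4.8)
The statement to prove is Corollary \ref{cor:compactreg}: \emph{any $b$-tangent bundle admits a compact regularisation.}

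The plan is to mimic the construction of the preceding Corollary (the additive/trivial regularisation), but quotient by a larger discrete group so that the resulting fibre becomes $\NS^1$ rather than $\RR$. Concretely, I start from the intrinsic regularisation $(W^* \setminus M, \Fcal)$ supplied by Proposition \ref{prop:intrinsicRegularisation}, which is $\RR^*$-invariant. The key point is to exhibit a discrete subgroup of $\RR^*$ acting freely and properly discontinuously on $W^* \setminus M$ such that (a) the quotient bundle over $M$ is a \emph{trivial} $\NS^1$-bundle, and (b) the foliation $\Fcal$, being invariant under the full $\RR^*$-action, descends to the quotient. The group $\ZZ_2 \times \ZZ$ acting by $(\epsilon, n)\cdot v = \epsilon 2^n v$ does the job: modding out first by $\ZZ$ (scaling by powers of $2$) turns each punctured fibre $\RR^* \cong W^*_x \setminus \{0\}$ into two disjoint circles, and then modding out by $\ZZ_2$ (the sign, which is exactly the deck transformation of the orientation double cover of $W$) glues these two circles into one, yielding an honest $\NS^1$ over each point of $M$.

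The steps, in order, are: first, verify that the $(\ZZ_2\times\ZZ)$-action on $W^*\setminus M$ is free and properly discontinuous — freeness because $\epsilon 2^n v = v$ forces $\epsilon = 1, n = 0$ on a nonzero vector, and proper discontinuity because the orbits are discrete in each punctured line and $M$ is covered by trivialising charts; hence the quotient $P := (W^*\setminus M)/(\ZZ_2\times\ZZ)$ is a smooth manifold and $\pi$ descends to a fibre bundle $P \to M$ with fibre $\RR^*/(\ZZ_2\times\ZZ) \cong \NS^1$. Second, observe that this bundle is trivial: a global trivialisation is obtained by noting that $|\varphi|$ (with $\varphi$ the tautological function of Proposition \ref{prop:intrinsicRegularisation}) descends to a well-defined map $P \to \NS^1$ after taking logarithms modulo $\log 2$ — equivalently, $P \cong M \times \NS^1$ via $[e_x] \mapsto (x, \log|\langle s(x), e_x\rangle| \bmod \log 2)$ on the complement of $\pi^{-1}(Z)$, and one checks this extends; alternatively, cite that the associated $\RR^+$-bundle $V$ from the previous Corollary is trivial and $P$ is its fibrewise circle compactification. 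Third, since $\Fcal$ is $\RR^*$-invariant, it is in particular invariant under the finite-index-in-nothing-but-still-a-subgroup $\ZZ_2\times\ZZ$, so it descends to a codimension-$1$ foliation $\tilde\Fcal$ on $P \cong M\times\NS^1$; the central leaves $\pi^{-1}(Z)$ descend to $Z \times \NS^1$ (possibly after the $\ZZ_2$-identification merges the two components over each component of $Z$ into one), all other leaves remain diffeomorphic to components of $M\setminus Z$ and transverse to the $\NS^1$-direction, and the Lie algebroid submersion $\Psi$ descends because it was $\RR^*$-equivariant over $M$. Fourth, conclude that $\tilde\Fcal$ satisfies all clauses of Definition \ref{def:regularisation} with $I = \NS^1$, so it is a compact regularisation.

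The main obstacle — really the only non-formal point — is checking the triviality of the quotient $\NS^1$-bundle $P \to M$ and the precise identification of the descended central leaves. One must be slightly careful because the $\ZZ_2$ part of the action is exactly the orientation deck transformation of the line bundle $W$, so when $W$ is non-orientable the two components of the punctured fibre get swapped globally in a nontrivial way; nonetheless the quotient by the \emph{full} $\ZZ_2\times\ZZ$ still produces a trivial circle bundle because $\RR^*/\ZZ_2\ZZ$-quotient absorbs the sign ambiguity, and the logarithmic coordinate $\log|\varphi| \bmod \log 2$ provides the global section-free trivialisation independent of any choice of orientation of $W$. The rest (freeness, proper discontinuity, descent of $\Fcal$ and $\Psi$, verification of the three clauses of Definition \ref{def:regularisation}) is routine and parallels the trivial-regularisation case verbatim.
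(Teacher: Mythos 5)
Your proposal is correct and follows essentially the same route as the paper: quotient the intrinsic regularisation on $W^*\setminus M$ by the $(\ZZ_2\times\ZZ)$-action $(\epsilon,n)\cdot v=\epsilon 2^n v$, observe that the quotient is the trivial $\NS^1$-bundle $M\times\NS^1$, and let the $\RR^*$-invariant foliation and the submersion $\Psi$ descend. One caveat: your first proposed trivialisation via $\log\lvert\varphi\rvert \bmod \log 2$ does \emph{not} extend over $\pi^{-1}(Z)$ (where $\varphi$ vanishes identically), so you should rely on your alternative argument, which is the paper's own: first quotient by $\ZZ_2$ to obtain the trivial $\RR^+$-bundle $V\cong M\times\RR^{>0}$, then quotient the fibre $\RR^{>0}$ by multiplication by powers of $2$ to get $M\times\NS^1$.
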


%

\subsubsection{Regularisation as a connection}

Given a b-divisor $(W,s)$ associated to the b-pair $(M,Z)$, recall from Remark \ref{rem:logconn} that $W$ is isomorphic to $Q_{\A_Z}= \det(\A^*)\otimes \det(TM)$. We now prove that the intrinsic regularisation can be interpreted as the Ehresmann connection associated to the canonical Lie algebroid representation.

\begin{proposition}\label{prop:conrelation}
Let $(M,Z)$ be a b-pair, and consider the canonical $\A_Z$-representation $(Q_{\A_Z},\nabla)$. Then, the associated $\A_Z$-Ehresmann connection is precisely the intrinsic regularisation $\ff$ associated to the b-divisor $(Q_{\A_Z}^*,\det(\rho_{\A_Z})^*)$.
\end{proposition}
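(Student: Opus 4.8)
The statement compares two geometric structures on the same total space $Q_{\A_Z}^* \setminus M$: on the one hand, the intrinsic regularisation $\ff$ of the $b$-divisor $(Q_{\A_Z}^*, \det(\rho_{\A_Z})^*)$ produced by Proposition \ref{prop:intrinsicRegularisation}; on the other hand, the Ehresmann connection associated, via the horizontal lift construction of Subsection \ref{ssec:connections}, to the canonical representation $(Q_{\A_Z},\nabla)$ of Definition \ref{def:canonicalRep}. The plan is to show these two foliations coincide by comparing them in a local frame and using that both are determined by their restriction to $M \setminus Z$, where the anchor is an isomorphism.

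First I would set up local coordinates $(z,x_2,\dots,x_n)$ on a chart $U\subset M$ with $Z=\{z=0\}$, so that $\A_Z$ has the frame $e_0 := z\partial_z, e_i := \partial_{x_i}$, and the dual frame $e_0^*,\dots,e_n^*$ of $\A_Z^*$. Then $Q_{\A_Z} = \det(\A_Z^*)\otimes\det(TM)$ is trivialised by $q := (e_0^*\wedge\cdots\wedge e_n^*)\otimes(\partial_z\wedge\partial_{x_2}\wedge\cdots\wedge\partial_{x_n})$, and $Q_{\A_Z}^*$ by the dual section $q^*$; write $t$ for the associated fibre coordinate on $Q_{\A_Z}^*$. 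Under the identification of Remark \ref{rem:logconn}, $\det(\rho_{\A_Z})$ corresponds to the section $s$ of $Q_{\A_Z}^*$ given in these coordinates by $s = z\, q^*$ (since $\rho(e_0)=z\partial_z$, $\rho(e_i)=\partial_{x_i}$ contribute a Jacobian factor $z$). Hence the defining function $\varphi$ of Proposition \ref{prop:intrinsicRegularisation}, namely $e_x\mapsto\langle s(x),e_x\rangle$, is $\varphi = t z$, and $\ff = \ker d(tz) = \ker(z\,dt + t\,dz)$ on $U\times\RR_t$.

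Next I would compute the Ehresmann connection of $(Q_{\A_Z},\nabla)$ in the same frame. By Definition \ref{def:canonicalRep}, $\nabla_{e_i}(q) = 0$ for $i\geq 2$ (both Lie derivative terms vanish on the coordinate frame), while $\nabla_{e_0}(q) = \Lcal_{e_0}(e_0^*\wedge\cdots)\otimes(\cdots) + (\cdots)\otimes\Lcal_{z\partial_z}(\partial_z\wedge\cdots)$. A direct computation gives $\Lcal_{z\partial_z}e_0^* = 0$ (as $e_0 = z\partial_z$ is the anchored image and $e_0^*$ is its dual in the algebroid), $\Lcal_{z\partial_z}e_i^* = 0$, and $\Lcal_{z\partial_z}(\partial_z\wedge\partial_{x_2}\wedge\cdots) = -\partial_z\wedge\cdots$ since $[z\partial_z,\partial_z] = -\partial_z$; so $\nabla_{e_0}(q) = -q$, i.e. the connection matrix is $A = -e_0^* = -dz/z$ in the naive sense, or more precisely $\nabla_{e_0}q=-q$, $\nabla_{e_i}q=0$. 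The horizontal lift formula $\hor^{\nabla}(X) = X - a(A(X))$ of Subsection \ref{ssec:connections}, applied with the dual connection on $Q_{\A_Z}^*$, then gives $\hor^{\nabla}(e_0) = e_0 + t\partial_t$ (sign flip for the dual) and $\hor^{\nabla}(e_i) = e_i$, so the Ehresmann connection $H$ has anchored image spanned by $z\partial_z + t\partial_t, \partial_{x_2},\dots,\partial_{x_n}$, which is exactly $\ker(z\,dt + t\,dz)$. Thus $H$ and $\ff$ agree on every chart $U\times\RR_t$, hence globally.

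The main obstacle I anticipate is bookkeeping the signs and the dualisations correctly: the canonical representation lives on $Q_{\A_Z}$ but the regularisation is built on the dual $Q_{\A_Z}^*$, so one must carefully track how $\nabla$ passes to $\nabla^*$ and how the horizontal lift operator $\hor^{\nabla}$ (defined via $X(s) = \nabla^*_{\rho(\alpha)}(s)$ on linear functions) produces the vertical correction; a sign error here would give $\ker(z\,dt - t\,dz)$ instead. A secondary point worth a line is the globalisation: since $Q_{\A_Z}$ is canonically the line bundle of the $b$-divisor (Remark \ref{rem:logconn}) and both $\ff$ and $H$ are $\RR^*$-invariant foliations agreeing on the dense open set $Q_{\A_Z}^*|_{M\setminus Z}$ (where the anchor is invertible and the Ehresmann connection is forced to equal $d\pi^{-1}$ of nothing, i.e. is graphical), they must agree on the closure by continuity — this is the same density argument used in the proof of Lemma \ref{lem:trivialRegularisation}. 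Finally I would remark that this reproves, in a conceptual way, the independence of the regularisation from the choice of section $s$, since the canonical representation involves no choices.
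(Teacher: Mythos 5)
Your overall strategy is exactly the paper's: work in the local frame $z\partial_z,\partial_{x_2},\dots,\partial_{x_n}$, compute that the canonical representation acts on the frame $q$ of $Q_{\A_Z}$ only through $\nabla_{z\partial_z}q=-q$ (so the connection matrix is $-d\log z$), observe that the defining function of the intrinsic regularisation is $\varphi = zt$ in the induced fibre coordinate, and match the horizontal lift against $\ker d(zt)$. Up to and including the computation $\nabla_{e_0}q=-q$ your argument agrees with the paper's proof, and your closing remarks on globalisation and on the choice-independence of the regularisation are correct (if not strictly necessary, since the frame computation is valid in every chart).

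However, the decisive step fails as written. You assert that the horizontal lift is $\hor^{\nabla}(e_0)=z\partial_z+t\partial_t$ and that the span of $z\partial_z+t\partial_t,\partial_{x_2},\dots,\partial_{x_n}$ "is exactly $\ker(z\,dt+t\,dz)$". It is not: $(z\,dt+t\,dz)(z\partial_z+t\partial_t)=2zt$, which is nonzero away from $\set{z=0}\cup\set{t=0}$. The distribution you have written down is $\ker(z\,dt-t\,dz)=\ker d(t/z)$, a genuinely different $\RR^*$-invariant foliation, so the proof does not establish the claimed equality. The correct lift on the side where the regularisation is cut out by $\varphi=zt$ is $z\partial_z-t\partial_t$ (which does lie in $\ker(z\,dt+t\,dz)$): passing to the dual bundle negates the connection matrix, turning $-d\log z$ into $+d\log z$, and the horizontal lift formula then reads $\hor(X)=X-(d\log z)(X)\,t\partial_t$ — precisely the expression in the paper's proof, and the one depicted in Figure 2 as $z\partial_z-s\partial_s$. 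Your parenthetical "(sign flip for the dual)" went in the wrong direction; this is exactly the pitfall you flagged in your final paragraph, and as written you have fallen into it. The error is local and easily repaired — recompute the lift via parallel sections ($fq^*$ is parallel iff $zf_z+f=0$, i.e. $f=c/z$, whose graphs are the level sets of $tz$) — but the verification as stated is false, so the proof is not complete in its current form.
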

\begin{proof}
We argue by computing the local one-forms of the canonical representation. Using the normal form of the b-tangent bundle, one obtains that a local frame for $Q_{\A_Z}$ is given by:
\begin{equation*}
q = d\log z_{\alpha} \wedge dx_2 \wedge \cdots \wedge dx_n \otimes \partial_{z_{\alpha}} \wedge \partial_{x_2} \wedge \cdots \wedge \partial_{x_n} 
\end{equation*}
Using the expression of the canonical connection in Definition \ref{def:canonicalRep}, one-obtains that the only b-vector field acting non-trivially on $q$ is $z_{\alpha}\partial_{z_{\alpha}}$. Because $\nabla_{z_{\alpha}}\partial_{z_{\alpha}}(q) = -1$, we conclude that the canonical $A_Z$-representation has connection matrices $-d\log z_{\alpha}$. 

Let $\hor : \A_Z \rightarrow T(Q_{\A_Z}\backslash M)$, denote the horizontal lift with respect to this $\A_Z$ connection. In terms of the connection matrices the horizontal lift is given by $\hor(X) = X - d\log z_{\alpha} (X)s\partial_s$.

In the same local coordinates, the function $\varphi$ used to define the intrinsic regularisation is given by $z_{\alpha}s$. From this description it follows that $\hor(\A_Z) = \ff$.
\end{proof}
Figure 2 neatly describes this point of view. The vector field $z\partial_z$ is lifted to the vector field $z\partial_z -s\partial_s$. Notice that although this vector field becomes vertical at the central fibre, the horizontal part, $s\partial_s$, remains non-zero when views as an element of $p^!\A_Z$ and thus corresponds to an honest $\A_Z$-connection.

\begin{remark}
One can also consider the canonical representation of the $b^k$-tangent bundle and hope that the associated $\mathcal{A}_Z^k$-Ehersmann connection defines a foliation away from the zero-section. However, this is unfortunately not the case. If $\Gamma(\mathcal{A}_Z^k) = \inp{z^k\partial_{z}}$ on $\rr$, then the lift with the associated canonical representation is $z^k\partial_z - z^{k-1}ks\partial_s$, which vanishes at $\set{z=0}$. 

The existence of the regularisation of the $b$-tangent bundle arises from the fact that the horizontal distribution associated to the canonical $\A_Z$-representation is regular away from the zero-section. With the zero-section included, the horizontal distribution defines a singular foliation on $W^*$.
\end{remark}

\subsection{Regularisation for elliptic}

We can easily extend the regularisation to some other singularities. The regularisation for elliptic divisors is the complex analogue:
\begin{proposition}\label{prop:ellireg}
Let $(L,\sigma)$ be a complex log divisor with zero locus $D$, and let $(W,s)$ denote the corresponding elliptic divisor. Then, there exists a (real) codimension-two foliation $\Fcal$ on $\pi: L^* \backslash M \rightarrow M$ such that
\begin{itemize}
\item $\Fcal$ is $\CC^*$-invariant.
\item Connected components of $\pi^{-1}(D)$ are leaves.
\item All other leaves are graphical over and diffeomorphic to a component of $M \setminus D$.
\item There is a Lie algebroid submersion $\Fcal \rightarrow \A_{\abs{D}}$ which is a fibrewise isomorphism.
\end{itemize}
\end{proposition}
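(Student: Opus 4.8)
The plan is to mimic the proof of Proposition~\ref{prop:intrinsicRegularisation}, replacing the real line bundle $W^*$ by the complex line bundle $L^*$ and the defining function $\varphi: W^* \to \RR$ by its complex analogue. Concretely, I would define
\begin{equation*}
\varphi: L^* \to \CC, \qquad e_x \mapsto \inp{\sigma(x), e_x},
\end{equation*}
where $\inp{\cdot,\cdot}$ is the pairing of $L$ with $L^*$. Since $\sigma$ vanishes transversely along $D$, the section $\sigma$ is a submersion onto the zero-section near $D$, and it follows that $\varphi$ is a submersion on $L^* \setminus M$ (away from the zero-section its differential is surjective onto $\CC \cong \RR^2$). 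Hence the fibres of $\varphi$ define a codimension-two foliation $\Fcal := \ker(d\varphi)$ on $L^* \setminus M$.

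Next I would verify the four listed properties. The $\CC^*$-invariance is immediate: the $\CC^*$-action on $L^*$ by fibrewise scalar multiplication intertwines $\varphi$ with the $\CC^*$-action on $\CC$ by multiplication, so it permutes the level sets of $\varphi$ and hence preserves $\Fcal$. For the second property, over $D$ we have $\sigma(x) = 0$, so $\varphi$ vanishes identically on $\pi^{-1}(D)$; thus each connected component of $\pi^{-1}(D) = D \times \CC^*$ (locally) is contained in the zero-level set and, by the dimension count, is a leaf. For the third property, over $M \setminus D$ the section $\sigma$ is nonvanishing, so on each trivialising chart $U \subset M \setminus D$ with fibre coordinate $t$ we have $\varphi = t\,\sigma(x)$ with $\sigma(x) \neq 0$; the level set $\{t\sigma(x) = c\}$ for $c \neq 0$ is the graph of $t = c/\sigma(x)$ over $U$, so the non-central leaves are graphical over and diffeomorphic to a component of $M \setminus D$. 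Finally, the Lie algebroid submersion: away from $D$ the projection $d\pi$ restricts to an isomorphism from $\Fcal$ onto $T(M\setminus D) = \rho(\rest{\A_{\abs{D}}}{M\setminus D})$; it remains to check this extends across $\pi^{-1}(D)$ as a Lie algebroid morphism. I would do this in local complex coordinates: taking $(w, x_2, \ldots)$ on $M$ with $D = \{w = 0\}$ and $\sigma$ the coordinate $w$, the elliptic tangent bundle $\A_{\abs{D}}$ is spanned (as a \emph{real} rank-$2n$ bundle, writing $w = x_1 + i y_1$) by the real and imaginary parts of $w\,\partial_w$ together with $\partial_{x_j}$, $\partial_{y_j}$; in the corresponding fibre coordinate on $L^*$ the foliation $\Fcal$ is spanned by lifts of these fields with a vertical correction proportional to $s\,\partial_s$ (the $\CC$-analogue of $z\partial_z - s\partial_s$), which is non-vanishing exactly as in the $b$-case. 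Setting $\Psi$ to send this local frame to the corresponding frame of $\A_{\abs{D}}$ gives the desired fibrewise isomorphism, and density (as in Lemma~\ref{lem:trivialRegularisation}) guarantees it is the unique extension and is automatically a bracket morphism.

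The main obstacle I anticipate is the last point: checking that the naive extension of $d\pi$ across the central locus $\pi^{-1}(D)$ really is a well-defined \emph{Lie algebroid} morphism (compatible with brackets, not just anchors) and a fibrewise isomorphism. As in Proposition~\ref{prop:intrinsicRegularisation} and Lemma~\ref{lem:trivialRegularisation}, the clean way around this is the density argument: $\Psi$ is forced on the open dense set $L^* \setminus \pi^{-1}(D)$, so any continuous extension is unique and inherits the bracket-compatibility by continuity, and one only needs to exhibit \emph{some} smooth bundle extension that is a fibrewise isomorphism — which the explicit local frame above provides. A secondary, purely bookkeeping, point is to confirm that $\A_{\abs{D}}$ really does have the local frame I claimed, i.e. that vector fields preserving the ideal $I_s = |w|^2 \cdot C^\infty$ are exactly the real span of $\Re(w\partial_w)$, $\Im(w\partial_w)$ and the $\partial_{x_j}, \partial_{y_j}$; this is standard (see \cite{CG17}) but should be recalled. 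The rest of the verification is routine and parallels the $b$-case verbatim.
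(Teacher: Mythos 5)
Your proposal is correct and follows exactly the paper's approach: the paper's proof consists of defining the same function $\varphi(e_x) = \inp{\sigma(x),e_x}$ and asserting that "as in the real case" its level sets give the desired foliation, which is precisely the verification you carry out in detail. Your elaboration of the local-frame and density argument for the Lie algebroid submersion matches the template of Lemma \ref{lem:trivialRegularisation} and Proposition \ref{prop:intrinsicRegularisation} that the paper implicitly invokes.
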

\begin{proof}
Consider the function
\begin{equation*}
\varphi : L^* \rightarrow \cc, \quad e_x \mapsto \inp{\sigma(x),e_x}.
\end{equation*}
As in the real case, one can show that the foliation on $L^*\backslash M$ by the level sets of $\varphi$ provides the desired foliation.
\end{proof}

\begin{remark}\label{rem:elliconrelation}
Suppose we are in the setting of Proposition \ref{prop:ellireg}. As explained in Remark \ref{rem:complexlogconn}, we may choose $(L,\sigma) = (Q_{\A_D}^*,\det(\rho_{\A_D})^*)$. One may argue exactly as in Proposition \ref{prop:conrelation} to show that the associated $\A_D$-Ehresmann connection coincides with $\Fcal$ from Proposition \ref{prop:ellireg}. 
\end{remark}

Taking a $\ZZ^2$-quotient of $L^*$, one obtains a codimension two foliation on a $T^2$-bundle over $M$.
\begin{proposition}
Let $(L,\sigma)$ be a complex log divisor. Then there exists a codimension-two foliation $\Fcal$ on $\pi: \SS^1L^* \times \SS^1 \rightarrow M$ such that
\begin{itemize}
\item $\Fcal$ is $T^2$-invariant.
\item $\pi^{-1}(D)$ is a leaf.
\item All other leaves are graphical over and diffeomorphic to a component of $M \setminus D$.
\item There is a Lie algebroid submersion $\Fcal \rightarrow \A_{\abs{D}}$ which is a fibrewise isomorphism.
\end{itemize}
\end{proposition}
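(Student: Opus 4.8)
The plan is to reduce this to Proposition \ref{prop:ellireg} by taking an appropriate discrete quotient of the total space $L^* \setminus M$. First I would fix, fibrewise, a Hermitian metric on $L$ (equivalently on $L^*$), so that $L^* \setminus M$ inherits a fibrewise polar decomposition $L^* \setminus M \cong \SS^1 L^* \times_M \RR^{>0}$, where $\SS^1 L^*$ denotes the unit circle bundle. Under this identification, the $\CC^* = \SS^1 \times \RR^{>0}$-action of Proposition \ref{prop:ellireg} splits accordingly. I would then quotient the $\RR^{>0}$-factor by the $\ZZ$-action generated by multiplication by $2$ (exactly as in Corollary \ref{cor:compactreg}), producing a trivial $\SS^1$-bundle factor. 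Concretely, $(L^* \setminus M)/\ZZ \cong \SS^1 L^* \times_M (\RR^{>0}/2^{\ZZ}) \cong \SS^1 L^* \times \SS^1$, which is a $T^2$-bundle over $M$ as claimed (the first $\SS^1$ from the circle bundle of $L$, the second from the quotiented ray).

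The key point is that the foliation $\Fcal$ from Proposition \ref{prop:ellireg}, being $\CC^*$-invariant, is in particular invariant under the $\ZZ \subset \RR^{>0} \subset \CC^*$ action used for the quotient, since $\ZZ$ acts by a subgroup of the translations we mod out by. Hence $\Fcal$ descends to a codimension-two foliation on $\SS^1 L^* \times \SS^1$; I will keep calling it $\Fcal$. Its $T^2$-invariance is immediate: the residual $\SS^1 \times (\RR^{>0}/2^\ZZ)$-action on the quotient still preserves the descended foliation, and this group is precisely $T^2$ acting along the fibres. The leaf $\pi^{-1}(D)$ statement follows because the connected components of $\pi^{-1}(D)$ in $L^* \setminus M$ were leaves of the original $\Fcal$, and the $\ZZ$-action permutes them (or preserves each, if $D$ is connected in the relevant sense); in any case their image in the quotient is $\pi^{-1}(D) \cap (\SS^1 L^* \times \SS^1)$, which is a leaf. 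The graphical property of the remaining leaves descends as well: in Proposition \ref{prop:ellireg} the non-central leaves are graphical over $M \setminus D$, i.e. each is a section of $\pi$ over a component of $M \setminus D$; quotienting by the fibrewise $\ZZ$-action sends such a section to a section over the same component, still diffeomorphic to that component.

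For the Lie algebroid submersion, I would observe that the projection $p: L^* \setminus M \to M$ factors through the quotient map $q: L^* \setminus M \to \SS^1 L^* \times \SS^1$, say $p = \bar{p} \circ q$ with $\bar{p}: \SS^1 L^* \times \SS^1 \to M$ the bundle projection. The Lie algebroid submersion $\Psi: \Fcal \to \A_{\abs{D}}$ of Proposition \ref{prop:ellireg} is $\ZZ$-equivariant — it covers $p$, which is $\ZZ$-invariant, and it is built out of the $\CC^*$-invariant data $\varphi$ — so it descends to a bundle map from the descended $\Fcal$ on $\SS^1 L^* \times \SS^1$ to $\A_{\abs{D}}$ covering $\bar{p}$. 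That it is a Lie algebroid morphism, a submersion, and a fibrewise isomorphism all follow because these are local properties along the fibres of $q$, which is a covering map, so they transfer verbatim from Proposition \ref{prop:ellireg}.

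The only mild subtlety — and the step I would be most careful about — is the triviality of the resulting $\SS^1$-factor and the claim that the total space is genuinely of the form $\SS^1 L^* \times \SS^1$ rather than merely a $T^2$-bundle. This is where the choice of Hermitian metric is used: it gives a global splitting $L^* \setminus M \cong \SS^1 L^* \times_M \RR^{>0}$ with the $\RR^{>0}$-factor trivial, and since $\RR^{>0}/2^{\ZZ} \cong \SS^1$ canonically, the second circle factor splits off globally. I would spell this out explicitly, and also note (as in the $b$-case, cf. the $(\ZZ_2 \times \ZZ)$-quotient preceding Corollary \ref{cor:compactreg}) that one could alternatively incorporate the $\ZZ_2$ coming from the real structure if one wanted the elliptic-divisor rather than complex-log-divisor normalisation; but since the statement only assumes a complex log divisor $(L,\sigma)$, the plain $\ZZ$-quotient described above suffices. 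Everything else is a routine transfer of the four bullet points through the covering map $q$.
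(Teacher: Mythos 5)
Your proposal is correct and takes essentially the same route as the paper, which gives no detailed proof here: it only remarks that one obtains the result by ``taking a $\ZZ^2$-quotient'' of the intrinsic regularisation of Proposition \ref{prop:ellireg} and otherwise defers to Proposition 2.17 of \cite{CG17}. Your construction --- fixing a Hermitian metric to split $L^*\setminus M \cong \SS^1L^*\times_M \RR^{>0}$, quotienting the radial factor by $2^{\ZZ}$, and checking that the four properties descend through the resulting covering map --- is exactly the intended argument, spelled out correctly (and your single $\ZZ$-quotient is indeed what is needed to pass from the $\CC^*$-fibres to $T^2$-fibres).
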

This statement appears in the context of stable generalized complex structures as part of Proposition 2.17 in \cite{CG17}.


\subsection{Regularisation for self-crossings}

The above regularisations can be adapted to the simple normal-crossing case. 
\begin{proposition}\label{prop:reglogx}
Let $Z_1,\ldots,Z_k$ be transversely intersecting embedded hypersurfaces on a manifold $M$. Let $Z$ denote their union, and let $(W_1,s_1),\ldots,(W_k,s_k)$ denote corresponding $b$-divisors. Denote by $\pi : W := W_1 \oplus \cdots \oplus W_k \rightarrow M$ the direct sum and consider the associated intrinsic regularisations $\ff_{j}$ on $W_j^*\backslash M$. Then,
\begin{equation*}
\ff := \ff_1~ {}_{\pi_1} \times_{\pi_2} \ff_2 ~{}_{\pi_2}\times_{\pi_3} \cdots {}_{\pi_{k-1}}\times_{\pi_k} \ff_k.
\end{equation*}
defines a codimension-$k$ foliation on $W^* \backslash M$ such that 
\begin{itemize}
\item $\Fcal$ is $(\RR^*)^k$-invariant.
\item The map $d\pi: \Fcal \to \A_Z$ is a Lie algebroid submersion, and a fibrewise isomorphism.
\end{itemize}
\end{proposition}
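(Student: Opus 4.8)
The plan is to reduce the statement to the single-hypersurface case, Proposition \ref{prop:intrinsicRegularisation}, together with the fibre-product description of $\A_Z$ in Proposition \ref{prop:logfib}, and then check that taking fibre products of Ehresmann connections behaves well. First I would set up notation: write $\pi_j : W_j^* \setminus M \to M$ for the projections, $\ff_j \subset T(W_j^* \setminus M)$ for the intrinsic regularisation of $\A_{Z_j}$ supplied by Proposition \ref{prop:intrinsicRegularisation}, and observe that $W^* \setminus M = (W_1^* \oplus \cdots \oplus W_k^*) \setminus M$ is (up to the zero section of the summands, which we must be slightly careful about) the iterated fibre product of the $W_j^* \setminus M$ over $M$. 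The foliation $\ff$ is then declared to be the pullback of each $\ff_j$ under the $j$-th projection, intersected; concretely, on $W^* \setminus M$ with fibre coordinates $(t_1, \dots, t_k)$ and $Z_j = \{z_j = 0\}$ locally, $\ff$ is cut out by the $k$ one-forms $d(t_i z_i)$, $i = 1, \dots, k$, exactly as in the proof of Proposition \ref{prop:intrinsicRegularisation} applied to each factor. Since these $k$ one-forms are independent (they involve disjoint sets of variables up to the shared base coordinates, and each $t_i z_i$ is a submersion off its zero set), $\ff$ is a genuine codimension-$k$ distribution, and it is involutive because each $d(t_i z_i)$ is closed; so $\ff$ is a foliation.

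Next I would verify the two bulleted properties. The $(\RR^*)^k$-invariance is immediate: the $i$-th factor $\RR^*$ scales $t_i$, hence fixes each level set $\{t_i z_i = \text{const}\}$ only up to the value of the constant, but the \emph{foliation} (the family of level sets) is preserved, just as in Proposition \ref{prop:intrinsicRegularisation}. For the Lie algebroid submersion statement, I would use that $\A_Z = \A_{Z_1} {}_{\rho_1}\times_{\rho_2} \cdots {}_{\rho_{k-1}}\times_{\rho_k}\A_{Z_k}$ by Proposition \ref{prop:logfib}, that $T(W^*\setminus M)$ maps to $\bigoplus TM$ compatibly, and that $d\pi_j|_{\ff_j} : \ff_j \to \A_{Z_j}$ is a fibrewise isomorphism by Proposition \ref{prop:intrinsicRegularisation}. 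The universal property of the fibre product then produces a bundle map $d\pi : \ff \to \A_Z$; one checks it commutes with anchors (this is automatic from the commuting squares for each factor) and with brackets (a fibre product of Lie algebroids has its bracket determined by the projections, so a map that is a Lie algebroid morphism into each factor is one into the fibre product). Fibrewise it is an isomorphism because it is so on each factor and the fibres of $\ff$ and $\A_Z$ are the corresponding fibre products of the factor fibres; in particular $d\pi : \ff \to \A_Z$ is a Lie algebroid submersion as claimed.

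I expect the main technical nuisance — rather than a genuine obstacle — to be the behaviour along the zero sections of the individual summands $W_j^*$. On $W^*\setminus M$ one removes only the zero section of the total bundle, so points where some but not all of the $t_i$ vanish are included; there the naive "fibre product of $\ff_j$'s" must be interpreted correctly, since $\ff_j$ was only defined on $W_j^*\setminus M$. The fix is to note that the one-form $d(t_i z_i)$ is defined and nonvanishing on all of $W^*\setminus M$ away from $\{t_i = 0\} \cap \{z_i = 0\}$, i.e. away from $\pi^{-1}(Z_i)$ intersected with the zero section of the $i$-th summand — and over $\pi^{-1}(Z_i)$ one argues by density/local normal form exactly as in Lemma \ref{lem:trivialRegularisation}, extending $d\pi$ there by hand using the local frame $\langle z_i\partial_{z_i} - t_i\partial_{t_i}, \partial_{x}\rangle$. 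So the cleanest route is probably not to invoke the abstract fibre product of foliations at all, but to write $\ff := \ker(d(t_1 z_1), \dots, d(t_k z_k))$ directly, prove it is a foliation by the closedness/independence computation, and then produce and check $d\pi : \ff \to \A_Z$ in local coordinates, the transverse-intersection hypothesis guaranteeing that the various local models glue. The remaining properties stated in the unnumbered items (leaves over $\pi^{-1}(Z)$, graphicality elsewhere) would follow stratum by stratum along the stratification $M = \bigsqcup_a Z[a]$ introduced after Proposition \ref{prop:logfib}, reducing on each stratum to a product of the single-hypersurface pictures of Figure 2.
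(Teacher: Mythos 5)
Your core argument coincides with the paper's: the published proof is exactly the reduction you describe, namely that $\A_Z$ is the fibre product of the $\A_{Z_i}$ by Proposition \ref{prop:logfib}, that each $\ff_i \to \A_{Z_i}$ is a Lie algebroid submersion by Proposition \ref{prop:intrinsicRegularisation}, hence the fibre product of the $\ff_i$ submerses onto $\A_Z$, with a rank count upgrading this to a fibrewise isomorphism, and with $(\RR^*)^k$-invariance inherited factorwise. So at the level of strategy you are doing the same thing, only with the local coordinate computation written out explicitly.

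The one place where you diverge --- your proposed handling of the locus where some but not all of the fibre coordinates $t_i$ vanish --- is the one step that would fail. At a point with $t_i = z_i = 0$ the one-form $d(t_iz_i) = t_i\,dz_i + z_i\,dt_i$ vanishes identically, and the level sets of $t_iz_i$ have a genuine Morse-type singularity there (the local model is the singular foliation of $\RR^2$ by level sets of $xy$); equivalently, the vector field $z_i\partial_{z_i} - t_i\partial_{t_i}$ in your proposed frame vanishes at such a point, so no density or normal-form argument in the style of Lemma \ref{lem:trivialRegularisation} can extend $\ff$ as a \emph{regular} codimension-$k$ foliation over that stratum. The resolution is that the fibre product $\ff_1\times_M\cdots\times_M\ff_k$ naturally lives on $(W_1^*\setminus M)\times_M\cdots\times_M(W_k^*\setminus M)$, i.e.\ the locus where \emph{every} component $t_i$ is nonzero, and this is how the notation $W^*\setminus M$ in the statement should be read; the paper's own proof sidesteps the issue by never leaving the abstract fibre product. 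On that domain your computation with the forms $d(t_iz_i)$ is correct and the rest of what you wrote goes through.
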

\begin{proof}
As each $\ff_i$ is an $\rr^*$-invariant foliation on $W_i^*\backslash M$, the fibre product $\ff$ will be an $(\rr^*)^k$-invariant foliation.
By Proposition \ref{prop:logfib} Lie algebroid $\A_Z$, decomposes as a fibre-product:
\begin{equation*}
\A_Z = \A_{Z_1}~ {}_{\rho_1}\times_{\rho_2} \A_{Z_2}~ {}_{\rho_2}\times_{\rho_3} \cdots {}_{\rho_{k-1}}\times_{\rho_k} \A_{Z_k}
\end{equation*}
As each of the $\pi_i$ defines a Lie algebroid submersion $\ff_i \rightarrow \A_{Z_i}$, the map $\pi$ will define a Lie algebroid submersion onto $\A_Z$. By rank considerations, this is furthermore a fibre-wise isomorphism.
\end{proof}

And again, the complex analogue is given by:
\begin{proposition}\label{prop:regellx}
Let $I_{\abs{D_1}},\ldots,I_{\abs{D_k}}$ be co-orientable elliptic divisors on $M$ with transversely intersecting vanishing loci and let $I_{\abs{D}}$ be their product. Let $(L_i,\sigma_i)$ denote the associated complex log divisors and denote by $\pi : L:= L_1\oplus \cdots \oplus L_k \rightarrow M$, the direct sum and consider the associated intrinsic regularisations $\ff_j$ on $L_j^*\backslash M$.  Then 
\begin{equation*}
\ff := \ff_1~ {}_{\pi_1} \times_{\pi_2} \ff_2 ~{}_{\pi_2}\times_{\pi_3} \cdots {}_{\pi_{k-1}}\times_{\pi_k} \ff_k.
\end{equation*}
defines a codimension $2k$ foliation on $L\backslash M$ such that:
\begin{itemize}
\item $\Fcal$ is $(\CC^*)^k$-invariant.
\item The map $d\pi: \Fcal \to \elli$ is a Lie algebroid submersion, and a fibrewise isomorphism.
\end{itemize}
\end{proposition}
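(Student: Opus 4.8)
The plan is to mimic, essentially verbatim, the proof of Proposition \ref{prop:reglogx}, replacing every real-geometric ingredient by its complex counterpart. The key observation is that the single-divisor case has already been established in Proposition \ref{prop:ellireg}: for each $i$, the function $\varphi_i : L_i^* \to \cc$, $e_x \mapsto \inp{\sigma_i(x), e_x}$, is a submersion away from the zero-section, its level sets define a $\cc^*$-invariant codimension-two foliation $\ff_i$ on $L_i^* \setminus M$, the connected components of $\pi_i^{-1}(D_i)$ are leaves, and $d\pi_i : \ff_i \to \A_{\abs{D_i}}$ is a Lie algebroid submersion which is a fibrewise isomorphism. So the only genuinely new content is checking that the fibre product behaves well and that the relevant transversality holds.

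First I would verify that the fibre product $\ff := \ff_1 \;{}_{\pi_1}\!\times_{\pi_2} \ff_2 \;{}_{\pi_2}\!\times_{\pi_3} \cdots {}_{\pi_{k-1}}\!\times_{\pi_k} \ff_k$ is well-defined and is a smooth codimension-$2k$ foliation on $L \setminus M$, where $L = L_1 \oplus \cdots \oplus L_k$. Here one uses that the hypersurfaces $D_i$ intersect transversely, so the associated Lie algebroids $\A_{\abs{D_i}}$ are mutually transverse over $M$ (their anchors are isomorphisms off $D_i$ and the $D_i$ meet transversely), exactly as in Proposition \ref{prop:ellfib}; pulling back along the bundle projections $L_j^* \setminus M \to M$ preserves this transversality, so the iterated fibre product of the foliated manifolds $(L_j^* \setminus M, \ff_j)$ is transverse at each stage and hence smooth. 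Each $\ff_i$, being $\cc^*$-invariant on the $i$-th factor and trivially acted on in the other factors, gives a $(\cc^*)^k$-invariant foliation on the product; taking the fibre product over $M$ retains this invariance. Counting: each $\ff_i$ has codimension two in $L_i^* \setminus M$, and the fibre product adds these codimensions, giving codimension $2k$ in $L \setminus M$.

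Next I would assemble the Lie algebroid submersion. By Proposition \ref{prop:ellfib}, $\elli = \A_{\abs{D_1}} \;{}_{\rho_1}\!\times_{\rho_2} \cdots {}_{\rho_{k-1}}\!\times_{\rho_k} \A_{\abs{D_k}}$. Each $d\pi_i : \ff_i \to \A_{\abs{D_i}}$ is a Lie algebroid submersion and a fibrewise isomorphism by Proposition \ref{prop:ellireg}, and the universal property of the fibre product (which is compatible with the Lie algebroid structures, since a fibre product of Lie algebroid morphisms along transverse maps is again a Lie algebroid morphism) then produces $d\pi : \ff \to \elli$ as a Lie algebroid morphism lifting $\pi : L \setminus M \to M$. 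It is a submersion because each factor map is; and since each $d\pi_i$ is a fibrewise isomorphism and the ranks of $\ff$ and $\elli$ coincide (both equal $\dim M$), $d\pi$ is a fibrewise isomorphism as well — this is the rank-count argument used verbatim at the end of the proof of Proposition \ref{prop:reglogx}. The remaining bullet-type properties (components of $\pi^{-1}(D)$ being leaves, other leaves graphical over and diffeomorphic to strata $Z[a]$ of $M \setminus D$) follow componentwise from the corresponding statements for the $\ff_i$ together with the stratification of $M$ by the $D_i$.

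The main obstacle, modest as it is, is the transversality bookkeeping needed to guarantee that the iterated fibre product of the $(L_j^*\setminus M, \ff_j)$ is smooth: one must check that at each stage the map $\ff_1 \times_M \cdots \times_M \ff_{i} \to M$ and $\ff_{i+1} \to M$ are transverse, which reduces — via the anchors — to the transverse intersection of $D_1, \ldots, D_k$ and the fact that off the $D_j$'s the anchors are isomorphisms. This is the exact analogue of what is implicitly used in Propositions \ref{prop:ellfib} and \ref{prop:reglogx}, so once that is noted, the rest of the argument is a routine transcription of the real case into the complex ($\cc^*$, codimension-two) setting.
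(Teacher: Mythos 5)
Your proposal is correct and follows essentially the same route as the paper: the paper's proof likewise observes that the fibre product of the $\cc^*$-invariant foliations $\ff_i$ is $(\cc^*)^k$-invariant, invokes the fibre-product decomposition of $\elli$ from Proposition \ref{prop:ellfib}, combines the individual Lie algebroid submersions $\ff_i \to \A_{\abs{D_i}}$ into one onto $\elli$, and concludes the fibrewise isomorphism by rank count. You simply make explicit the transversality bookkeeping that the paper leaves implicit, which is a welcome but not substantively different addition.
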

\begin{proof}
As each $\ff_i$ is an $\cc^*$-invariant foliation on $L_i^*\backslash M$, the fibre product $\ff$ will be an $(\cc^*)^k$-invariant foliation.

By Proposition \ref{prop:ellfib}, the Lie algebroid $\elli$ decomposes as a fibre-product:
\begin{equation*}
\A_{\abs{D}} = \A_{\abs{D_1}}~ {}_{\rho_1}\times_{\rho_2} \A_{\abs{D_2}}~ {}_{\rho_2}\times_{\rho_3} \cdots {}_{\rho_{k-1}}\times_{\rho_k} \A_{\abs{D_k}}.
\end{equation*}
As each of the $\pi_i$ defines a Lie algebroid submersion $\ff_i \rightarrow \A_{\abs{D_i}}$, the map $\pi$ will define a Lie algebroid submersion onto $\A_Z$. By rank considerations, this is furthermore a fibre-wise isomorphism.
\end{proof}
Because the tangent space to a fibre product, is the fibre product of the tangent spaces, we find that the leaves of the above foliation $\ff$ are of the form $L_1~ {}_{\pi_1} \times_{\pi_2} L_2 ~{}_{\pi_2}\times_{\pi_3} \cdots {}_{\pi_{k-1}}\times_{\pi_k} L_k$, with $L_i$ a leaf of $\ff_i$.

In both cases one can  take quotients to obtain compact regularisations.

\begin{remark}
Suppose we are in the setting of Proposition \ref{prop:reglogx}, with $(W_i,s_i)= (Q_{\A_{Z_i}}^*,\det(\rho_{\A_{Z_i}})^*)$. We may take the direct sums of the canonical representations, to obtain an $\A_Z$-representation:
\begin{equation*}
(Q_{\A_{Z_1}}\oplus \cdots \oplus Q_{\A_{Z_k}},\nabla_1\oplus \cdots \oplus  \nabla_k)
\end{equation*} 
One may now apply Proposition \ref{prop:conrelation}, to show that the associated $\A_Z$-Ehresmann-connection is precisely the foliation $\ff$ from Proposition \ref{prop:reglogx}.

Similarly, suppose we are in the setting of Proposition \ref{prop:reglogx}, with $(L_i,\sigma_i)= (Q_{\A_{D_i}}^*,\det(\rho_{\A_{D_i}})^*)$. We may take the direct sums of the canonical representations, to obtain an $\A_D$-representation:
\begin{equation*}
(Q_{\A_{D_1}}\oplus \cdots \oplus Q_{\A_{D_k}},\nabla_1\oplus \cdots \oplus  \nabla_k)
\end{equation*} 
One may now apply Remark \ref{rem:elliconrelation}, to show that the associated $\A_D$-Ehresmann-connection is precisely the foliation $\ff$ from Proposition \ref{prop:regellx}.
\end{remark}

\subsection{Regularisation of Lie algebroid distributions} \label{ssec:regularisationDistributions}

The general philosophy of this paper is as follows:
\begin{center}
\fbox{\textbf{Geometric structures on $\A$ can be studied as foliated structures on its regularisation $\Fcal$.}}
\end{center}

A first result in this direction reads:
\begin{proposition} \label{prop:regularisationDistributions}
Let $\A \rightarrow M$ and $\Bcal \rightarrow N$ be Lie algebroids of the same rank, endowed with a Lie algebroid submersion
\begin{center}
\begin{tikzcd}
\Bcal \ar[r,"\Psi"] \ar[d] & \A \ar[d] \\
TN \ar[r,"d\pi"] & TM.
\end{tikzcd}
\end{center}
Given any regular distribution $\xi \subset \A$ we can consider its preimage $\Psi^*\xi \subset \Bcal$. Then:
\begin{itemize}
\item $(\Psi^*\xi)_i = \Psi^*(\xi_i)$, where $(\xi_i)_i$ is the Lie flag of $\xi$.
\item $\Psi^*\xi$ is bracket-generating if and only if $\xi$ is.
\item $\Psi^*\xi$ is involutive if and only if $\xi$ is.
\end{itemize}
\end{proposition}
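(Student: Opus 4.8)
The plan is to reduce everything to the two defining properties of a Lie algebroid submersion: that $\Psi$ commutes with anchors and brackets, and that it is a fibrewise isomorphism (which holds here because $\A$ and $\Bcal$ have the same rank). The first bullet implies the other two, so the real content is the statement $(\Psi^*\xi)_i = \Psi^*(\xi_i)$, which I would prove by induction on $i$. The base case $i=1$ is the definition of $\Psi^*\xi$. For the inductive step, the key observation is a lifting statement for sections: since $\Psi$ is a fibrewise isomorphism over the submersion $\pi$, every section $v$ of $\xi$ pulls back to a section $\Psi^*v$ of $\Psi^*\xi$ characterised by $\Psi(\Psi^*v) = v \circ \pi$, and these generate $\Gamma(\Psi^*\xi)$ as a $C^\infty(N)$-module. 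Because $\Psi$ is a bracket morphism, $\Psi([\Psi^*v,\Psi^*w]) = [v,w]\circ\pi = \Psi(\Psi^*[v,w])$, and fibrewise injectivity of $\Psi$ forces $[\Psi^*v,\Psi^*w] = \Psi^*[v,w]$. Combining this with the Leibniz rule (to handle $C^\infty(N)$-linear combinations, where the anchor-compatibility $\rho_\Bcal \circ \Psi^{-1}$-relation enters) gives $\Gamma(\Psi^*\xi)^{(i+1)} = \langle \Psi^*[v,w] : v\in\Gamma(\xi), w\in\Gamma(\xi)^{(i)}\rangle = \Psi^*(\Gamma(\xi)^{(i)})$ at the level of sheaves; regularity of $\xi$ then lets us identify this with $\Psi^*(\xi_{i+1})$ at the level of bundles, and also shows $\Psi^*\xi$ is itself regular.

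Once the first bullet is established, the second and third are formal. For bracket-generating: $\xi$ has step $i_0$ iff $\xi_{i_0} = \A$; pulling back, $(\Psi^*\xi)_{i_0} = \Psi^*(\A) = \Bcal$ since $\Psi^*(\A)$ is all of $\Bcal$ (again because $\Psi$ is a fibrewise isomorphism), and conversely if $(\Psi^*\xi)_{i_0} = \Bcal$ then applying $\Psi$ fibrewise gives $\xi_{i_0} = \A$. For involutivity: $\xi$ involutive means $\xi_i = \xi_1$ for all $i$, which by the first bullet is equivalent to $(\Psi^*\xi)_i = (\Psi^*\xi)_1$ for all $i$, i.e. $\Psi^*\xi$ involutive. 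One should also note the local nature of the argument: near a point of $N$ one can trivialise $\pi$ and choose a local frame of $\xi$, and the computations above reduce to the classical distribution case fibred over the $\pi$-fibre directions.

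The main obstacle I anticipate is purely a matter of bookkeeping rather than genuine difficulty: one must be careful that the Lie flag is defined via the $C^\infty$-span of brackets of sections, not pointwise, so the pullback operation $\Psi^*$ on sheaves of sections must be shown to commute with taking $C^\infty$-spans and with the bracket simultaneously. The cleanest way to control this is to fix, locally, a frame $v_1,\dots,v_r$ of $\A$ adapted to the flag $\xi_1\subset\xi_2\subset\cdots$, lift it to the frame $\Psi^*v_1,\dots,\Psi^*v_r$ of $\Bcal$, and observe that the structure functions of the bracket on $\Bcal$ in this frame are exactly the $\pi$-pullbacks of those on $\A$ (by bracket-compatibility of $\Psi$ on the $\Psi^*v_a$) together with the extra vertical directions, which do not affect whether a given bundle is reached by the flag. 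Modulo this care with sheaves versus bundles, every step is a one-line consequence of $\Psi$ being a fibrewise-iso Lie algebroid morphism.
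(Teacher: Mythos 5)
Your proposal is correct and follows exactly the route the paper takes: the paper's entire proof is the single remark that ``$\Psi$ is a fibrewise isomorphism that preserves the Lie bracket,'' and your induction on the Lie flag via pullback sections $\Psi^*v$, the identity $[\Psi^*v,\Psi^*w]=\Psi^*[v,w]$, and the Leibniz rule is a faithful (and more careful) expansion of that one line. The bookkeeping you flag about $C^\infty$-spans versus pointwise data is handled correctly, so nothing further is needed.
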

The proof is immediate from the fact that $\Psi$ is a fibrewise isomorphism that preserves the Lie bracket.

\subsubsection{The $b$-case}

Proposition \ref{ssec:regularisationDistributions} deals with the lifting process to flat Ehresmann connections. The regularisation is certainly an example, but it has additional structure. We spell this out for regular $b$-distributions:
\begin{proposition}
Let $\A_Z \rightarrow M$ be a $b$-tangent bundle with canonical representation $\nabla$ and intrinsic regularisation $\mathcal{F}$. Let $\xi \subset \A_Z$ be a regular distribution. Then $\hor^{\nabla}(\xi) \subset \Fcal$ is a regular $\RR^*$-invariant distribution with $\hor^{\nabla}(\xi)_i = \hor^{\nabla}(\xi_i)$ and the following diagram commutes:
\begin{center}
\begin{tikzcd}
\xi_i \times \xi_j \ar[r,"\hor^{\nabla}"] \ar[d,"\Omega_{i,j}(\xi)"] & \hor^{\nabla}(\xi_i) \times \hor^{\nabla}(\xi_j) \ar[d,"\Omega_{i,j}(\hor^{\nabla}(\xi))"]\\
\xi_{i+j}/\xi_{i+j-1} \ar[r,"\hor^{\nabla}"] & \hor^{\nabla}(\xi_{i+j})/\hor^{\nabla}(\xi_{i+j-1})
\end{tikzcd}
\end{center}
In particular, $\xi$ is bracket generating if and only if its lift $\hor^{\nabla}(\xi)$ is bracket generating.
\end{proposition}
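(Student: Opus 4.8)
The plan is to deduce everything from the fact, established earlier, that $\hor^{\nabla}$ is the horizontal lift of an Ehresmann connection for the submersion $\Psi : \Fcal \to \A_Z$ which is a fibrewise isomorphism and preserves the Lie bracket; concretely, $\hor^{\nabla}(\Gamma(\A_Z))$ is exactly $\Gamma(\Fcal)$ (since $\Fcal$ has the same rank as $\A_Z$), and $\Psi \circ \hor^{\nabla} = \id$. So $\hor^{\nabla}$ is a Lie algebroid isomorphism from $\A_Z$ onto $\Fcal$. The statement is then just Proposition \ref{prop:regularisationDistributions} applied to the submersion $\Psi$ with $\Psi^*\xi = \hor^{\nabla}(\xi)$ (using $\Psi^{-1} = \hor^{\nabla}$ on sections), together with one additional observation: $\RR^*$-invariance.

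First I would observe that $\hor^{\nabla}(\xi)$ is a genuine subbundle of $\Fcal$ of rank equal to $\rk \xi$, because $\hor^{\nabla}$ is a vector bundle monomorphism (it is a section of $\Psi$), and hence $\hor^{\nabla}(\xi)$ is regular since $\xi$ is. Next, $\RR^*$-invariance: in the local normal-form coordinates used in the proof of Proposition \ref{prop:conrelation}, one has $\hor^{\nabla}(X) = X - d\log z_\alpha(X)\, s\partial_s$; the coefficient functions of this expression, and of the frame vectors of $\A_Z$ (namely $z_\alpha \partial_{z_\alpha}, \partial_{x_2}, \dots, \partial_{x_n}$, which extend to $\RR^*$-invariant vector fields on $W^*\setminus M$ via the lift), are independent of the fibre coordinate $s$ up to the overall scaling carried by $s\partial_s$, so the distribution they span is invariant under the $\RR^*$-action $s \mapsto \lambda s$. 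Equivalently, $\hor^{\nabla}$ intertwines the trivial $\RR^*$-action on $\A_Z$ with the $\RR^*$-action on $\Fcal$, so its image is $\RR^*$-invariant.

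Then I would run the Lie-flag computation: since $\hor^{\nabla}$ is bracketpreserving and $C^\infty(M)$-linear after the identification of $C^\infty(M)$ with $\RR^*$-invariant functions on $W^*\setminus M$ (and bracket-generation and involutivity can be tested using $\RR^*$-invariant sections, which are exactly the $\hor^{\nabla}$ of sections of $\A_Z$), one gets $\hor^{\nabla}(\xi)^{(i)} = \hor^{\nabla}(\xi^{(i)})$ by induction on $i$: the case $i=1$ is the definition, and
\[
\hor^{\nabla}(\xi)^{(i+1)} = \langle [\hor^{\nabla}(v), \hor^{\nabla}(w)] \rangle = \langle \hor^{\nabla}([v,w]) \rangle = \hor^{\nabla}(\xi^{(i+1)}),
\]
for $v \in \Gamma(\xi)$, $w \in \Gamma(\xi^{(i)})$, using $C^\infty$-linearity of $\hor^{\nabla}$ to pass the module spans through. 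This gives $\hor^{\nabla}(\xi)_i = \hor^{\nabla}(\xi_i)$ and hence, comparing ranks, the bracket-generating and involutive equivalences.

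Finally, the commuting square for the curvatures is a formal consequence: the curvature $\Omega_{i,j}$ is induced by the bracket $\xi_i \times \xi_j \to \xi_{i+j}/\xi_{i+j-1}$, and since $\hor^{\nabla}$ is bracket-preserving and takes $\xi_\bullet$ to $\hor^{\nabla}(\xi_\bullet)$ compatibly with the quotients, it intertwines the two curvature maps; one just has to note well-definedness of the induced map on $\hor^{\nabla}(\xi_i)/\hor^{\nabla}(\xi_{i-1})$, which again follows from $\hor^{\nabla}$ being a fibrewise isomorphism. I do not expect a serious obstacle here; the only point requiring a little care is the bookkeeping between $C^\infty(M)$-module spans on $M$ and $\RR^*$-invariant $C^\infty$-module spans on $W^*\setminus M$, i.e. checking that the Lie flag of the lifted distribution is generated by $\RR^*$-invariant sections so that it is itself the lift of a distribution on $M$ — this is where the $\RR^*$-invariance established in the second step does the real work.
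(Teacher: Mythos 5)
Your proposal is correct and follows essentially the same route as the paper: the paper's proof is a one-line appeal to the flatness of the canonical $\A_Z$-connection and to Proposition \ref{prop:conrelation} identifying $\Fcal$ with its Ehresmann connection, after which everything reduces to Proposition \ref{prop:regularisationDistributions} exactly as you argue. Your write-up simply spells out the details the paper leaves implicit (the $\RR^*$-invariance, the identification of $\hor^{\nabla}(\xi)$ with $\Psi^*\xi$, and the bookkeeping between $C^\infty(M)$-spans and spans of projectable sections), and these details are handled correctly.
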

\begin{proof}
This follows readily from the fact that the canonical $\A_Z$-connection is flat, and the fact that $\Fcal$ is precisely its Ehresmann connection by Proposition \ref{prop:conrelation}.
\end{proof}

\subsubsection{The $b^k$-case}

Identically, we can make the following statements:
\begin{proposition}\label{prop:inducedcontactfol}
Let $(M,Z,\xi)$ be a regular $b^k$-distribution with $Z$ coorientable. Then:
\begin{itemize}
\item In the trivial regularisation, there is an $\rr$-invariant regular distribution lifting $\xi$.
\item In the compact regularisation, there is an $\SS^1$-invariant regular distribution lifting $\xi$.
\end{itemize}
Furthermore, taking the Lie flag and curvatures commute with passing to the regularisation.
\end{proposition}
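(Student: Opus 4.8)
The plan is to reduce Proposition \ref{prop:inducedcontactfol} to the already-established lifting machinery, arguing exactly as in the $b$-case but replacing the intrinsic regularisation by the trivial (resp.\ compact) one produced in Lemma \ref{lem:trivialRegularisation2} and Corollary \ref{cor:trivialRegularisation}. The point is that a regularisation $\Fcal$ in $M\times I$ comes, by Definition \ref{def:regularisation}, with a Lie algebroid submersion $\Psi: \Fcal \to \A_Z^k$ which is a fibrewise isomorphism. Hence $\Psi^{-1}(\xi) =: \tilde\xi \subset \Fcal$ is a subbundle of the same rank as $\xi$, i.e.\ a regular $\Fcal$-distribution, and Proposition \ref{prop:regularisationDistributions} (applied with $\Bcal = \Fcal$, $\A = \A_Z^k$, $N = M\times I$) immediately gives $(\tilde\xi)_i = \Psi^{-1}(\xi_i)$, together with the equivalences ``$\tilde\xi$ bracket-generating $\iff$ $\xi$ is'' and ``$\tilde\xi$ involutive $\iff$ $\xi$ is''. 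So the only genuinely new content is the $I$-invariance of $\tilde\xi$ and the compatibility of curvatures.

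For $I$-invariance I would observe that the regularisations constructed in Lemma \ref{lem:trivialRegularisation2} are by construction $I$-invariant foliations (the defining one-forms $\chi(z)dz + z^k ds$ and $ds$ are translation-invariant in $s$), and that the submersion $\Psi$ is $I$-equivariant for the trivial $I$-action downstairs on $M$ — this is already implicit in Definition \ref{def:regularisation}, where $\Psi$ fits into a commutative square with $d\pi$ and $\pi$ is the $I$-invariant projection. Since $\xi \subset \A_Z^k$ is fixed and $\Psi$ intertwines the $I$-actions, $\tilde\xi = \Psi^{-1}(\xi)$ is $I$-invariant. Concretely, in the local frame $\langle z^k\partial_z - \partial_s, \partial_{x_2},\dots,\partial_{x_n}\rangle$ for $\Fcal$ from the proof of Lemma \ref{lem:trivialRegularisation}, all frame vectors are $\partial_s$-invariant, so any subbundle pulled back from $M$ is spanned by $\partial_s$-invariant sections. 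This handles both bullet points at once, taking $I = \rr$ for the trivial regularisation and $I = \SS^1$ for the compact one (which exists by the Lemma following Definition \ref{def:regularisation}).

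For the final sentence, ``taking the Lie flag and curvatures commute with passing to the regularisation'', the Lie flag statement is the first bullet of Proposition \ref{prop:regularisationDistributions} transported through $\Psi^{-1}$. For curvatures: because $\Psi$ is a fibrewise isomorphism preserving the bracket, it sends $\xi_i$ isomorphically to $(\tilde\xi)_i$ and hence induces an isomorphism $\xi_{i+j}/\xi_{i+j-1} \to (\tilde\xi)_{i+j}/(\tilde\xi)_{i+j-1}$; one then checks that the square
\begin{center}
\begin{tikzcd}
\xi_i \times \xi_j \ar[r,"\Psi^{-1}"] \ar[d,"\Omega_{i,j}(\xi)"] & (\tilde\xi)_i \times (\tilde\xi)_j \ar[d,"\Omega_{i,j}(\tilde\xi)"]\\
\xi_{i+j}/\xi_{i+j-1} \ar[r,"\Psi^{-1}"] & (\tilde\xi)_{i+j}/(\tilde\xi)_{i+j-1}
\end{tikzcd}
\end{center}
commutes, which is immediate from the definition of the curvature in Proposition \ref{prop:curvature} as the bracket read modulo lower terms, together with the fact that $\Psi$ is a Lie algebroid morphism. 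I expect no real obstacle here: the whole proposition is a transport-of-structure argument, and the only subtlety worth spelling out is why $\Psi$ is $I$-equivariant — which is built into the definition of regularisation — and why the curvature square is well-defined, i.e.\ that $\Psi^{-1}$ descends to the quotients $\xi_{i+j}/\xi_{i+j-1}$, which follows from $(\tilde\xi)_i = \Psi^{-1}(\xi_i)$. I would therefore keep the proof to a few lines, citing Proposition \ref{prop:regularisationDistributions}, the construction in Lemma \ref{lem:trivialRegularisation2}, and the existence of the compact regularisation, and remarking that the argument is formally identical to the $b$-case treated in the preceding proposition.
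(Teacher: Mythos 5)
Your proposal is correct and follows the route the paper intends: the proposition is stated without proof precisely because it is the transport-of-structure argument of Proposition \ref{prop:regularisationDistributions} applied to the fibrewise-isomorphic submersion $\Psi$ supplied by Definition \ref{def:regularisation}, with $I$-invariance of $\Psi^{-1}(\xi)$ coming from the $I$-equivariance of $\Psi$ (forced on $M\setminus Z$ where $\Psi = d\pi$, hence everywhere by density). Your curvature square and the remark on why $\Psi^{-1}$ descends to the quotients $\xi_{i+j}/\xi_{i+j-1}$ match the diagram the paper records in the $b$-case, so there is nothing missing.
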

Analogous statements hold in the complex-log, elliptic, and self-crossing settings.

\section{Lie algebroid contact structures} \label{sec:contact}

Before we get into general distributions on Lie algebroids, let us discuss the concrete case of contact structures. There is an already existing body of literature dealing with them in the algebroid setting, paticularly within the frameworks of Foliation Theory \cite{CPP14,dPP18} and $b^k$-Geometry \cite{MO18,MO21}.

Our goal in this section is to define them in general, introduce a number of standard constructions (Subsections \ref{ssec:symplectisation}, \ref{ssec:contactElements}, and \ref{ssec:contactHypersurface}), and then particularise to the settings of $b^k$-Geometry (Subsection \ref{ssec:bkContact}) and elliptic geometry (Subsection \ref{ssec:ellipticContact}).

\subsection{The definition}

\begin{definition}
Let $\A \to M$ be a Lie algebroid of rank $2r+1$. A corank-$1$ distribution $\xi \subset \A$ is said to be an \textbf{$\A$-contact structure} if its first curvature
\[ \Omega_{1,1}(\xi): \xi \times \xi \longrightarrow \A/\xi \]
is a non-degenerate $2$-form.
\end{definition}
Non-degeneracy implies, in particular, that $\xi$ is bracket-generating of step $2$.

As per usual, the contact condition can be rephrased in terms of forms:
\begin{definition}
Let $\A \to M$ be a Lie algebroid of rank $2r+1$. An \textbf{$\A$-contact form} $\alpha \in \Omega^1(\A)$ is a Lie algebroid 1-form such that
\[ \alpha \wedge (d\alpha)^r \in \Omega^{2r+1}(\A) \]
is a Lie algebroid volume form.
\end{definition}
Given an $\A$-contact form $\alpha$, we have that $\xi = \ker(\alpha) \subset \A$ defines an $\A$-contact structure. This follows from the formula 
\[ d\alpha = - \alpha \circ \Omega_{1,1}(\xi). \]
Moreover, any $\A$-contact structure is locally given as the kernel of an $\A$-contact form, but this need not be the case globally.

\begin{example}\label{ex:Heiscont}
Let $\mathcal{A} = \mathcal{H}_r$ be the Heisenberg Lie algebra, which has generators $p_1,\ldots,p_r,q_1,\ldots,q_r,z$ satisfying $[p_i,q_j] = \delta_{ij}z$ and all other brackets zero. Then $\xi = \text{span}(p_1,\ldots,p_r,q_1,\ldots,q_r)$ is a contact structure.
\end{example}

\subsection{Recap: Algebroid symplectic forms} \label{ssec:algebroidSymplectic}

Contact structures can be understood as homogeneous versions of symplectic structures, via the symplectisation functor (see Subsection \ref{ssec:symplectisation} below). See \cite{TYV} for a more general incarnation of this phenomenon. We now recall some of the basic theory of symplectic forms in Lie algebroids.
\begin{definition}
Let $\A \to M$ be a Lie algebroid of rank $2r$. An \textbf{$\A$-symplectic form} $\omega \in \Omega^2(\A)$ is a closed Lie algebroid 2-form for which $\omega^r \in \Omega^{2r}(\A)$ is a Lie algebroid volume form.
\end{definition}

\subsubsection{The Liouville form}

Cotangent bundles are the prototypical examples of symplectic manifolds. It turns out that, similarly, the dual of any Lie algebroid admits a tautological $1$-form whose differential is symplectic. Let us go through the construction; see also \cite{LMM04,Smi21}.

Let $\A \rightarrow M$ be a Lie algebroid and write $\pi: \A^* \to M$ for its dual. We can then consider the pullback algebroid $\pi^!\A \rightarrow \A^*$. Do note that, if $\A$ is one of the algebroids presented in Section \ref{sec:examples}, then $\pi^!\A$ is of the same type.
\begin{definition} \label{def:Liouville}
The \textbf{canonical/Liouville one-form} $\lambda_\can \in \Omega^1(\pi^!\A)$ is defined as
\begin{equation*}
\lambda_\can(\alpha)(v) = \alpha(d_{\A}\pi(v)),
\end{equation*}
for all $\alpha \in \A^*$ and $v \in (\pi^!\A)_{\alpha}$.
\end{definition}

Let $(v_1,\ldots,v_r)$ be a local frame of $\A$, let $(\alpha_1,\ldots,\alpha_r)$ be the dual coframe of $\A^*$, and denote by $(t_1,\ldots,t_r)$ the associated fibrewise coordinates on $\A^*$. Then, the canonical one-form reads:
\begin{equation*}
\lambda_{\can} = t_1\alpha_1 + \cdots + t_r \alpha_r.
\end{equation*}
Using this local expression is immediate to check that:
\begin{lemma}[\cite{LMM04}]
The two-form $\omega_{\can} = -d\lambda_{\can} \in \Omega^2(\pi^!\A)$ is symplectic.
\end{lemma}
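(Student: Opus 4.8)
The plan is to verify non-degeneracy of $\omega_{\can} = -d\lambda_{\can}$ purely in local coordinates, which is legitimate since non-degeneracy is a pointwise condition on the fibres of $\pi^!\A$. First I would fix a local frame $(v_1,\dots,v_r)$ of $\A$ over an open set $U \subset M$, with dual coframe $(\alpha_1,\dots,\alpha_r)$ of $\A^*$ and induced fibrewise coordinates $(t_1,\dots,t_r)$ on $\A^*$. The key point is to understand the Lie algebroid $\pi^!\A \to \A^*$ explicitly: over the coordinate patch $T^*U \subset \A^*$ (really $\A^*|_U \cong U \times \RR^r$), a frame for $\pi^!\A$ is given by the horizontal lifts $\tilde v_1,\dots,\tilde v_r$ of the $v_i$ together with the vertical vectors $\partial_{t_1},\dots,\partial_{t_r}$ (these last are genuine tangent vectors to $\A^*$ lying in $\ker d_\A\pi$, hence in $\pi^!\A$). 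Let $(\alpha_1,\dots,\alpha_r,dt_1,\dots,dt_r)$ denote the dual coframe in $\Omega^1(\pi^!\A)$.

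The central computation is then $d_{\pi^!\A}\lambda_{\can}$ for $\lambda_{\can} = \sum_i t_i \alpha_i$. Using the Leibniz rule for $d_{\pi^!\A}$ we get $d_{\pi^!\A}\lambda_{\can} = \sum_i dt_i \wedge \alpha_i + \sum_i t_i\, d_{\pi^!\A}\alpha_i$. The first sum is the standard symplectic-type term; the second is a correction involving the structure functions $c_{jk}^i$ of $\A$ (defined by $[v_j,v_k] = \sum_i c_{jk}^i v_i$), since $d_{\pi^!\A}\alpha_i = -\tfrac12\sum_{j,k} (c_{jk}^i \circ \pi)\,\alpha_j \wedge \alpha_k$ — note there is no $dt$-contribution here because the $\alpha_j$ pulled back from $\A^*$ and the pairing only sees the $\A$-directions. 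Therefore, up to sign,
\begin{equation*}
\omega_{\can} = \sum_{i=1}^r \alpha_i \wedge dt_i + \tfrac12\sum_{i,j,k} t_i (c_{jk}^i\circ\pi)\, \alpha_j \wedge \alpha_k.
\end{equation*}

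To conclude non-degeneracy, I would compute $\omega_{\can}^r$ and show it is a volume form on $\pi^!\A$. The top-degree term must contain each of $\alpha_1,\dots,\alpha_r,dt_1,\dots,dt_r$ exactly once; the only way to saturate all $r$ of the $dt_i$'s is to pick the term $\sum_i \alpha_i\wedge dt_i$ from all $r$ factors (the correction term has no $dt$'s, so any factor contributing a correction term forces a repeated $dt$ and kills the product). Hence $\omega_{\can}^r = r!\, \alpha_1\wedge dt_1\wedge\cdots\wedge\alpha_r\wedge dt_r$ up to sign and reordering, which is nowhere zero — a $\pi^!\A$-volume form. Since $\omega_{\can}$ is closed by construction ($d_{\pi^!\A}^2 = 0$), this establishes that it is symplectic.

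I do not expect any serious obstacle here; the only point requiring a little care is setting up the local frame for $\pi^!\A$ correctly and confirming that $d_{\pi^!\A}\alpha_i$ produces no $dt$-terms, so that the correction term in $\omega_{\can}$ is annihilated in the top power. One may alternatively bypass the structure functions entirely by invoking the already-recorded local expression $\lambda_{\can} = t_1\alpha_1 + \cdots + t_r\alpha_r$ and the Koszul formula abstractly, but writing out $\omega_{\can}^r$ as above is the cleanest way to see non-degeneracy. If one prefers an even more economical argument, note that $\pi^!\A$ restricted to the open dense locus where $\rho$ is an isomorphism is just $T\A^*$ with its usual cotangent symplectic form, and non-degeneracy on a dense set together with continuity of $\omega_{\can}^r$ as a section of $\det((\pi^!\A)^*)^{\otimes ?}$ does not immediately suffice (the complement may have interior in the algebroids with singularities), so the direct local computation is the safe route and is what I would write up.
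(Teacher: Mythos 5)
Your proof is correct and follows the same route as the paper, which simply asserts that the claim is ``immediate to check'' from the local expression $\lambda_{\can} = t_1\alpha_1 + \cdots + t_r\alpha_r$; you have carried out exactly that check, and the key observations (that $d_{\pi^!\A}\alpha_i$ contributes no $dt$-terms, and that consequently only the $\sum_i \alpha_i \wedge dt_i$ part survives in $\omega_{\can}^r$) are right. One tiny wording slip: a factor contributing a correction term does not force a ``repeated $dt$'' but rather leaves some $dt_i$ unused and forces a repeated $\alpha_j$ in the degree-$2r$ term; your preceding clause about saturating all $r$ of the $dt_i$'s already gives the correct reason, so nothing is affected.
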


\subsubsection{The $b$-setting}

We can now particularise to $b$-tangent bundles. Recall:
\begin{lemma}[\cite{GMP14}]\label{lem:logsympcosymp}
Let $(M,Z,\omega)$ be a b-symplectic manifold. Then, there is an induced cosymplectic structure $(\theta,\eta)$ on $Z$. Here $\theta = \Res(\alpha)$, and $\eta = \iota^*_Z(\omega - d\log \abs{z} \wedge p^*\theta)$, where $\abs{z}$ is the distance to $Z$ with respect to some metric.
\end{lemma}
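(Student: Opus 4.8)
\textbf{Proof plan for Lemma \ref{lem:logsympcosymp}.}

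The plan is to work in the standard local normal form for a $b$-symplectic structure near $Z$ and then check that the globally-defined objects $\theta$ and $\eta$ assemble correctly. First I would recall that a $b$-symplectic form $\omega \in \Omega^2(\A_Z)$ can be written, in a tubular neighbourhood of $Z$ and in $b$-coordinates $(z, x_2, \dots, x_n)$ with $Z = \{z = 0\}$, as $\omega = d\log|z| \wedge p^*\theta_0 + \eta_0$ for forms pulled back from $Z$; this is Guillemin--Miranda--Pires' $b$-Darboux/Moser argument, which I would cite rather than reprove. The residue $\theta := \Res(\omega) \in \Omega^1(Z)$ is intrinsically defined (it is $\iota_Z^*(\iota_{z\partial_z}\omega)$, independent of the choice of defining function by Lemma \ref{lem:scalingJ}-type scaling invariance), so it is a genuine global $1$-form on $Z$. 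Then I would define $\eta := \iota_Z^*\bigl(\omega - d\log|z| \wedge p^*\theta\bigr)$, where $|z|$ is the distance to $Z$ in some auxiliary metric; the point is that although $d\log|z|$ depends on the metric, the ambiguity is a smooth $1$-form near $Z$ whose wedge with $p^*\theta$ restricts to something that, after pulling back by $\iota_Z$ and using that $\theta = \Res(\omega)$, does not affect $\eta$ — this needs a short check that $\iota_Z^*(g \wedge p^*\theta) $ depends only on $\iota_Z^* g$ for the relevant correction terms.

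The core of the argument is then verifying the two cosymplectic conditions on $Z^{2n-1}$: (i) $d\theta = 0$ and $d\eta = 0$, and (ii) $\theta \wedge \eta^{n-1}$ is a volume form on $Z$. For (i), closedness of $\theta$ follows because $d_{\A}\omega = 0$ and the residue map intertwines $d_{\A}$ with the de Rham differential on $Z$ (a standard property of $\Res$); closedness of $\eta$ follows similarly by pulling back $d_{\A}\omega = 0$ and using $d(d\log|z|) = 0$. For (ii), I would expand $\omega^n$ in $b$-forms: $\omega^n = n\, d\log|z| \wedge p^*\theta \wedge \eta^{n-1} + (\text{terms without } d\log|z|)$, and since $\omega^n$ is a $b$-volume form, its "$\Res$" — i.e. $\iota_Z^*\iota_{z\partial_z}\omega^n = n\, \iota_Z^*(p^*\theta \wedge \eta^{n-1}) = n\, \theta \wedge \eta^{n-1}$ — must be a volume form on $Z$, which is exactly the non-degeneracy condition. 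This reduces everything to the local normal form computation.

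The main obstacle I anticipate is the well-definedness of $\eta$: one must be careful that the formula $\eta = \iota_Z^*(\omega - d\log|z|\wedge p^*\theta)$ really is independent of the chosen metric (equivalently, of the chosen defining function implicit in "$|z|$"). The cleanest way around this is probably to observe that two choices of defining function differ by multiplication by a positive function $h$, so $d\log|z|$ changes by the exact-on-$Z$-irrelevant term $d\log h$; then $\iota_Z^*(d\log h \wedge p^*\theta)$ — wait, this is not obviously zero, so one instead argues that $\omega - d\log|z|\wedge p^*\theta$ is already a smooth (non-$b$) form near $Z$, and its pullback to $Z$ is what changes by $\iota_Z^*(d\log h \wedge p^*\theta)$; one shows this vanishes because near $Z$ the smooth form $\omega - d\log|z| \wedge p^*\theta$ has no leftover $dz$-component paired against $\theta$ in a way that survives $\iota_Z^*$. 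I would handle this by doing the bookkeeping once in the local normal form, where it becomes transparent, and then noting that $\theta$ and the non-degeneracy statement are manifestly intrinsic. Alternatively, if \cite{GMP14} already records this as a lemma, I would simply invoke it and restrict the proof to indicating how it follows from the $b$-Darboux normal form.
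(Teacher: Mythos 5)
The paper offers no proof of this lemma at all: it is recalled verbatim from \cite{GMP14} (note the statement even carries a typo, $\Res(\alpha)$ for $\Res(\omega)$), so the comparison is with the standard argument in the literature, which your plan essentially reproduces. Writing $\omega = \frac{dz}{z}\wedge\alpha + \beta$ near $Z$ with $\alpha,\beta$ smooth, noting that $\theta = \iota_Z^*\alpha$ is intrinsic, observing that $\omega - d\log\abs{z}\wedge p^*\theta$ is then a genuinely smooth form near $Z$ (because $\alpha - p^*\theta$ pulls back to zero on $Z$, so its wedge with $\frac{dz}{z}$ loses the pole), deducing $d\theta = d\eta = 0$ from $d\omega = 0$, and getting non-degeneracy of $\theta\wedge\eta^{n-1}$ from $\Res(\omega^n) = n\,\theta\wedge\eta^{n-1}$ being the residue of a $b$-volume form: all of this is correct and is the intended route.

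The one genuine misstep is your ``main obstacle.'' You try to show that $\eta$ is independent of the choice of metric, and you correctly sense mid-argument that $\iota_Z^*(d\log h\wedge p^*\theta)$ is not obviously zero. It is not zero, and no bookkeeping will make it so: the paper states explicitly, in the sentence immediately after the lemma, that ``$\theta$ is well-defined, but $\eta$ will depend on the choice of metric.'' So metric-independence of $\eta$ is not part of the claim, and your attempted resolution (arguing the correction term ``does not survive $\iota_Z^*$'') is false in general. What you should check instead is that the cosymplectic conditions hold for \emph{every} choice, which they do: replacing $\abs{z}$ by $h\abs{z}$ changes $\eta$ by $-\iota_Z^*(d\log h)\wedge\theta$, which is closed (since $\theta$ is) and does not affect $\theta\wedge\eta^{n-1}$, because every correction term in the binomial expansion contains $\theta\wedge\theta = 0$. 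With that adjustment your proof is complete.
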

Note that $\theta$ is well-defined, but $\eta$ will depend on the choice of metric.

The fact whether or not $M$ and $Z$ are orientable plays an important role in our study:
\begin{lemma}
Let $(M,Z,\omega)$ be a b-symplectic manifold. There exists a global defining function for $Z$ if and only if $M$ is orientable.
\end{lemma}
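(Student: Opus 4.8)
I would prove this by relating the orientability of $M$ to the triviality of the $b$-divisor line bundle $W$ of $(M,Z)$, and then invoking the last Remark of Section~\ref{ssec:logGeometry} (that a global defining function exists iff $W$ is trivial). The key link is Remark~\ref{rem:logconn}: the canonical $b$-divisor is $(\det(\A^*_Z)\otimes\det(TM),\det(\rho_{\A_Z}))$, so $W\simeq \det(\A^*_Z)\otimes\det(TM)$. Hence $W$ is trivial if and only if the real line bundle $\det(\A_Z^*)\otimes\det(TM)$ is trivial, i.e. orientable. Now the whole point of the presence of a $b$-symplectic form is that it trivialises $\det(\A_Z^*)$: the form $\omega\in\Omega^2(\A_Z)$ has $\omega^r\in\Omega^{2r}(\A_Z)=\Gamma(\det\A_Z^*)$ a nowhere-vanishing section, so $\det(\A_Z^*)$ is trivial. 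Therefore $W\simeq \det(TM)$, and $W$ is trivial iff $M$ is orientable. Combining with the Remark recalled above finishes the argument.

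\textbf{Steps, in order.} First I would recall that $\Omega^{2r}(\A_Z)=\Gamma(\wedge^{2r}\A_Z^*)=\Gamma(\det\A_Z^*)$ since $\rk\A_Z=\dim M=2r$, and note that $\omega^r$ being a Lie algebroid volume form means precisely that it is a nowhere-zero section of this line bundle; hence $\det(\A_Z^*)$ is (smoothly) trivial. Second, I would invoke Remark~\ref{rem:logconn} to identify the associated line bundle $W$ with $\det(\A_Z^*)\otimes\det(TM)$, so that the previous step gives $W\simeq \det(TM)$ as real line bundles. Third, I would recall that $\det(TM)$ is trivial exactly when $M$ is orientable, so $W$ is trivial iff $M$ is orientable. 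Fourth and last, I would quote the Remark stating that a global defining function for $Z$ exists if and only if $W$ is trivial, yielding the desired equivalence.

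\textbf{Expected main obstacle.} There is no deep obstacle here; the statement is essentially a bookkeeping exercise in line-bundle isomorphisms. The one point requiring a little care is making sure the identification $W\simeq \det(\A_Z^*)\otimes\det(TM)$ from Remark~\ref{rem:logconn} is being used as an isomorphism of line bundles (which is all we need for the triviality comparison), and that ``$b$-symplectic volume form'' is correctly read as giving a \emph{global} nowhere-vanishing section of $\det(\A_Z^*)$ rather than merely a pointwise nondegeneracy condition --- but this is immediate from the definition of $\A$-symplectic form recalled just above the statement. One might also remark, for completeness, that the coorientability of $Z$ (automatic from the existence of the $b$-tangent bundle data in this setting, or from $\det(TM)|_Z$ considerations) is consistent with the conclusion, but it is not needed for the proof.
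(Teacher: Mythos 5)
Your proof is correct and follows exactly the paper's argument: the paper's (very terse) proof likewise invokes Remark \ref{rem:logconn} to identify $W$ with $\det(\A_Z^*)\otimes\det(TM)$, notes that $\omega^n$ trivialises $\det(\A_Z^*)$ so that $W\cong\det(TM)$, and implicitly concludes via the earlier remark that a global defining function exists iff $W$ is trivial. Your write-up is simply a more complete version of the same bookkeeping.
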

\begin{proof}
Let $(W,s)$ be the divisor associated to $Z$. According to Remark \ref{rem:logconn} $\det(M) \cong W$, since $\omega^n$ trivialises $\det(A_Z^*)$.
\end{proof}

Consider the following concrete example: Let $(A,B)$ be a b-pair. Then, the Lie algebroid $\pi^!\A_B \rightarrow \A_B^*$ is the b-tangent bundle associated to the b-pair $(\A_B^*, \pi^{-1}(B))$. Furthermore, we can endow it with the canonical $2$-form $\omega_\can$. It follows that $\pi^{-1}(B)$ has a cosymplectic structure.

\subsection{The symplectisation} \label{ssec:symplectisation}
 
We now discuss the symplectisation functor. It has appeared already in the literature in the foliated \cite{dPP18} and $b$-settings \cite{MO18}. We will need the following notation: if $E_i \rightarrow M_i$ are vector bundles, and $\pi_i : M_1\times M_2 \rightarrow M_i$ denote the projections, then $E_1\boxtimes E_2$ is the vector bundle $\pi_1^*E_1 \oplus \pi_2^* E_2$.

We define:
\begin{definition}
Let $\alpha \in \Omega^1(\A)$ be an $\A$-contact form on $M$. Consider the Lie algebroid $\A \boxtimes T\RR \rightarrow M \times \RR$ and write $t$ for the coordinate in the $\RR$ factor.

The pair $(\A \boxtimes T\RR \rightarrow M \times \RR, d(e^t\alpha))$ is the \textbf{symplectisation} of $(\A \to M,\alpha)$.
\end{definition}
It can be checked that $d(e^t\alpha)$ is a $\A \boxtimes T\RR$-symplectic form.

We can also observe that the symplectisation is a functor. It takes Lie algebroid contact structures of a given subclass (e.g. $b^k$, complex-log, elliptic, self-crossing geometries) to symplectic structures of the same subclass. Indeed, any morphism $\psi: (\A \to M) \rightarrow (\Bcal \to N)$ (meaning a bracket-preserving bundle map between algebroids, commuting with the anchor, and lifting an equi-dimensional immersion) pulling back the contact structure $\eta = \ker(\beta) \subset \Bcal$ to $\xi = \ker(\alpha) \subset \A$ has a conformal factor $f: M \rightarrow \RR$ given by $\psi^*\beta = e^f\alpha$. The associated symplectomorphism between the symplectisations is then $(p,t) \mapsto (\psi(p),e^{f(p)}+t)$.

\subsection{The space of contact elements} \label{ssec:contactElements}

Given a Lie algebroid $\A \rightarrow M$, we now explain how to produce a Lie algebroid contact structure in $\SS\A^*$, the sphere bundle of its dual. This generalises the usual construction of the space of contact elements in the sphere cotangent bundle.

Let $\RR^*$ act on $\pi: \A^*\setminus M \rightarrow M$ by scaling and consider the quotient $\SS\A^* := (\A^*\setminus M)/\RR^*$. Because $\RR^*$ acts via $\pi^!\A$-algebroid morphisms, $\pi^!\A$ descends to an algebroid on $\SS\A^*$. Alternatively, the same Lie algebroid may be obtained as the pullback $\pi_\SS^!\A$, where $\pi_\SS : \SS\A^* \rightarrow M$ denotes the projection. Since $\lambda_{\can} \in \Omega^1(\pi^!\A)$ is $\RR^*$-homogeneous, its kernel is $\RR^*$-invariant and thus descends to a corank-$1$ distribution $\Dcal_\can \subset \pi_\SS^!\A$.
\begin{lemma} \label{lem:contactElements}
The canonical $\pi_\SS^!\A$-distribution $\Dcal_\can$ is contact. Its symplectisation is isomorphic to $(\pi^!\A|_{\A^*\setminus M},\omega_\can)$.
\end{lemma}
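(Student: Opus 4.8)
The plan is to verify the two assertions of Lemma \ref{lem:contactElements} in local coordinates, since both are essentially computations that reduce to the explicit local form of $\lambda_{\can}$ recorded above. First I would fix a local frame $(v_1,\ldots,v_r)$ of $\A$ with dual coframe $(\alpha_1,\ldots,\alpha_r)$ and fibrewise coordinates $(t_1,\ldots,t_r)$ on $\A^*$, so that $\lambda_{\can} = t_1\alpha_1 + \cdots + t_r\alpha_r$ on $\pi^!\A$. The $\RR^*$-action is $(t_1,\ldots,t_r) \mapsto (\mu t_1,\ldots,\mu t_r)$, under which $\lambda_{\can}$ is homogeneous of degree $1$; hence $\ker(\lambda_{\can})$ is $\RR^*$-invariant and descends to the corank-$1$ distribution $\Dcal_\can \subset \pi_\SS^!\A$. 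On the chart $\{t_1 \neq 0\}$ of $\A^*\setminus M$ I would use the affine coordinates $\sigma_j := t_j/t_1$ (for $j\geq 2$) together with $t_1$ on the fibre of $\pi^!\A$ over $\SS\A^*$; there $\lambda_{\can} = t_1(\alpha_1 + \sigma_2\alpha_2 + \cdots + \sigma_r\alpha_r)$, so $\Dcal_\can = \ker(\beta)$ with $\beta := \alpha_1 + \sigma_2\alpha_2 + \cdots + \sigma_r\alpha_r$ an honest local $\pi_\SS^!\A$-contact form.

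Next I would compute $d_{\pi_\SS^!\A}\beta$. The key point is that the differential splits into the ``vertical'' part, coming from the $d\sigma_j$, and the ``horizontal'' part $d_{\A}\alpha_j$; the latter lives in the span of the $\alpha_i$ and contributes nilpotently, while $d\sigma_2\wedge\alpha_2 + \cdots + d\sigma_r\wedge\alpha_r$ supplies the genuinely non-degenerate piece pairing the $\sigma_j$-directions against the $\alpha_j$-directions. One then checks directly that $\beta \wedge (d_{\pi_\SS^!\A}\beta)^r$ is a nowhere-vanishing section of $\wedge^{2r+1}(\pi_\SS^!\A)^*$: expanding the $r$-th power, the only surviving monomial is the one using each $d\sigma_j\wedge\alpha_j$ once, so the top form equals (up to sign and a nonzero scalar) $\alpha_1 \wedge d\sigma_2 \wedge \alpha_2 \wedge \cdots \wedge d\sigma_r \wedge \alpha_r$, a local volume form on $\pi_\SS^!\A$ (note that $\dim \SS\A^* = \dim M + (r-1)$, and $\rk \pi_\SS^!\A = \rk \A = 2r+1$ once we recall the base has the right dimension — more cleanly, the rank count is forced since $\pi_\SS^!\A$ is a pullback of $\A$, so it has rank $2r+1$ and the displayed form is top-degree). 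Since the contact condition is independent of the choice of local contact form defining $\Dcal_\can$, and since the charts $\{t_i\neq 0\}$ cover $\A^*\setminus M$, this proves $\Dcal_\can$ is contact.

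For the second assertion I would compare the two symplectic Lie algebroids over $(\A^*\setminus M)\times\RR$ versus $\A^*\setminus M$. By definition the symplectisation of $(\pi_\SS^!\A,\Dcal_\can)$ is $(\pi_\SS^!\A \boxtimes T\RR \to \SS\A^*\times\RR,\ d(e^{\tau}\beta))$ where $\tau$ is the $\RR$-coordinate and $\beta$ is a local contact form with $\ker\beta = \Dcal_\can$. The map to compare it with is the obvious one: $\SS\A^* \times \RR \to \A^*\setminus M$ induced fibrewise by $([\xi],\tau) \mapsto e^{\tau}\xi$, which is an $\RR^*$-equivariant diffeomorphism (with $\RR^*$ acting by scaling on the target and by translation plus sign on $\tau$ in the source, together with the residual action). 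Covering this by the tautological identification $\pi_\SS^!\A\boxtimes T\RR \cong \pi^!\A|_{\A^*\setminus M}$ (the $T\RR$-factor maps to the Euler/scaling direction), I would check that under this identification $e^{\tau}\beta$ pulls back to $\lambda_{\can}$: indeed along the fibre over $[\xi]$ with $\xi$ normalised so $\beta$ reads off the first coordinate, the point $e^\tau\xi$ has coordinates $t_i = e^\tau \cdot (\text{value of } \alpha_i \text{ on the section})$, and $\lambda_{\can} = \sum t_i\alpha_i = e^\tau(\alpha_1 + \sum\sigma_j\alpha_j) = e^\tau\beta$. Taking $-d$ of both sides gives $\omega_\can$ on one side and the symplectisation form on the other, and these patch together globally since $\lambda_{\can}$, hence $\omega_{\can}$, is globally defined. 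The main obstacle I anticipate is purely bookkeeping: tracking the pullback Lie algebroid structures through the two pullbacks ($\pi^!$ and then the quotient, versus $\pi_\SS^!$ and then $\boxtimes T\RR$) and confirming the tautological map between them is a Lie algebroid isomorphism intertwining the anchors — but this is forced by the universal property of pullback algebroids (Section \ref{ssec:connections}) and the $\RR^*$-equivariance of $\lambda_{\can}$, so no real difficulty arises, only care with signs and the Euler vector field.
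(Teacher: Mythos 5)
The paper states Lemma \ref{lem:contactElements} without proof, so there is nothing to compare against; your local-coordinate strategy (affine chart $\sigma_j = t_j/t_1$ on $\{t_1\neq 0\}$, explicit volume-form computation, degree-$1$ homogeneity of $\lambda_\can$) is surely the intended argument, and the key observations --- that $\Dcal_\can = \ker(\alpha_1 + \sum_{j\geq 2}\sigma_j\alpha_j)$ locally, that the horizontal terms $d_\A\alpha_i$ cannot contribute to the top-degree form for lack of $d\sigma$-factors, and that $\lambda_\can = e^\tau\beta$ under the exponential identification --- are all correct. However, your rank bookkeeping contains a genuine error that makes one step fail as written. With $\rk\A = r$, the pullback algebroid $\pi_\SS^!\A$ has rank $\rk\A + \dim\SS\A^* - \dim M = r + (r-1) = 2r-1$, \emph{not} $2r+1$; the claim that ``the rank count is forced since $\pi_\SS^!\A$ is a pullback of $\A$'' is false, since pullback Lie algebroids $\phi^!\A$ change rank by $\dim N - \dim M$. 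Consequently the contact condition you must verify is $\beta\wedge(d_{\pi_\SS^!\A}\beta)^{r-1}\neq 0$, not $\beta\wedge(d_{\pi_\SS^!\A}\beta)^{r}$: the latter has degree $2r+1 > 2r-1$ and is identically zero. The fix is immediate --- there are exactly $r-1$ terms $d\sigma_j\wedge\alpha_j$, so the $(r-1)$-st power produces your monomial $\alpha_1\wedge d\sigma_2\wedge\alpha_2\wedge\cdots\wedge d\sigma_r\wedge\alpha_r$, which is indeed a frame volume for the rank-$(2r-1)$ bundle --- but as written the exponent is off by one and the conclusion does not follow.

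Two smaller points on the second assertion. The paper defines $\SS\A^*$ as the quotient by $\RR^*$ (a projectivisation), so each fibre of $\A^*\setminus M\to\SS\A^*$ is a punctured line with two components, and your map $([\xi],\tau)\mapsto e^\tau\xi$ is not well defined without choosing a ray in each fibre; globally you only obtain an identification of the symplectisation with a ``half'' of $\pi^!\A|_{\A^*\setminus M}$ determined by a (local) coorientation of $\Dcal_\can$, which is consistent with the two-component statement of Lemma \ref{lem:Bottcontact} but should be said explicitly. Finally, note the sign convention: the paper sets $\omega_\can = -d\lambda_\can$ while the symplectisation carries $+d(e^t\alpha)$, so the isomorphism you construct intertwines the forms only up to sign (or after reversing the $\RR$-coordinate). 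None of these affects the viability of the approach, but the exponent error in particular must be corrected for the first half of the proof to be valid.
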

The pair $(\pi_\SS^!\A, \Dcal_\can)$ is called the \textbf{space of contact elements} of $\A \to M$.

\subsection{Contact hypersurfaces} \label{ssec:contactHypersurface}

Another source of $\A$-contact structures are hypersurfaces in $\A$-symplectic manifolds.
\begin{definition}
Let $\rho: \A \rightarrow M$ be a Lie algebroid with symplectic form $\omega \in \Omega^2(\A)$. A hypersurface $N \subset M$ is said to be \textbf{contact} if there are a tubular neighbourhood $U \supset N$ and a section $X \in \Gamma(\A|_U)$ such that $\rho(X)$ is transverse to $N$ and $\mathcal{L}_X\omega = \omega$ on $\U$.
\end{definition}

Then, as in the classic case we can prove:
\begin{lemma}\label{lem:contacthypersurface}
Let $\rho: \A \rightarrow M$ be a Lie algebroid with symplectic form $\omega \in \Omega^2(\A)$. Let $j: N \to M$ be the inclusion of a contact hypersurface.

Then $j^*(\iota_X\omega)$ is a $j^*\A$-contact form.
\end{lemma}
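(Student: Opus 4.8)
The statement is the Lie-algebroid analogue of the classical fact that a contact-type hypersurface in a symplectic manifold inherits a contact structure, so the plan is to mimic the classical argument, being careful that everything lives on the pulled-back algebroid $j^*\A$ rather than on $TN$. First I would record the algebraic setup: on the tubular neighbourhood $\U$ we have $\mathcal{L}_X\omega = \omega$, and using the algebroid Cartan formula $\mathcal{L}_X = d_\A\iota_X + \iota_X d_\A$ together with $d_\A\omega = 0$, this says $d_\A(\iota_X\omega) = \omega$ on $\U$. Write $\alpha := \iota_X\omega \in \Omega^1(\A|_\U)$, so that $d_\A\alpha = \omega$; pulling back along $j$ and using that $j^*$ commutes with $d$ (as $\Psi := d_\A j : j^*\A \to \A$ is a Lie algebroid morphism), we get $d_{j^*\A}(j^*\alpha) = j^*\omega$. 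It thus remains to check that $j^*\alpha \wedge (d_{j^*\A} j^*\alpha)^r = j^*\alpha \wedge (j^*\omega)^r$ is a volume form on $j^*\A$, where $\A$ has rank $2r+2$ and hence $j^*\A$ has rank $2r+1$.

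The pointwise computation is the heart of the matter, and it is purely linear algebra fibrewise. Fix $p \in N$ and work in the fibre $\A_p$, which carries the symplectic form $\omega_p$; the anchored vector $\rho(X_p)$ is transverse to $N$, so $X_p \notin \ker(\Psi_p) = (j^*\A)_p$ — more precisely, the rank-$(2r+1)$ subspace $(j^*\A)_p \subset \A_p$ is a hyperplane not containing $X_p$. I would show $(j^*\A)_p = \ker(\alpha_p)$ inside $\A_p$: indeed $\alpha_p(v) = \omega_p(X_p, v)$, and $(j^*\A)_p = \{v : \rho(v) \in T_pN\}$; since $\omega_p$ restricted to the symplectic orthogonal considerations gives $\ker(\alpha_p)$ as a hyperplane, one checks the two hyperplanes agree because $\alpha_p$ vanishes on the rank-$2r$ subspace $\xi_p := (j^*\A)_p \cap X_p^{\omega_p}$ (the kernel of $\iota_X\omega$ contains this) and the dimension count forces equality. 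Then on $(j^*\A)_p$ the form $d_{j^*\A}j^*\alpha|_p = \omega_p|_{(j^*\A)_p}$ has a one-dimensional kernel spanned by some $R_p$ with $\alpha_p(R_p) \neq 0$ (the Reeb direction), so $\omega_p|_{(j^*\A)_p}$ restricts to a genuine symplectic form on a complement of $R_p$, whence $\alpha_p \wedge (d_{j^*\A}j^*\alpha)_p^r \neq 0$. Non-vanishing at every point is exactly the statement that it is a $j^*\A$-volume form.

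The main obstacle — really the only subtle point — is verifying the identification $(j^*\A)_p = \ker(\alpha_p)$ and the resulting transversality of the Reeb direction to this hyperplane; everything else is either the formal transport of the classical argument or routine fibrewise linear algebra. One has to use that $\rho(X)$ is transverse to $N$ in an essential way: it guarantees both that $X_p$ is not tangent to the pullback algebroid and that the radial-type identity $d_\A\alpha = \omega$ (valid on the whole neighbourhood) restricts nondegenerately. I would phrase the linear-algebra step as a lemma about a symplectic vector space $(V,\omega)$ of dimension $2r+2$ together with a vector $X$ and a hyperplane $H = \ker(\iota_X\omega)$: then $\iota_X\omega|_H \wedge (\omega|_H)^r \neq 0$, which is standard. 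Finally, I would note that $\alpha = \iota_X\omega$ need only be defined on $\U$, but since contactness of $j^*\alpha$ is a pointwise condition along $N \subset \U$, this causes no difficulty, and the resulting contact form on $j^*\A$ is independent of the choice of tubular neighbourhood up to the usual conformal ambiguity inherent in the choice of $X$.
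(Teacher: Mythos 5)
The paper leaves this lemma unproved (``as in the classic case we can prove''), so the only question is whether your argument stands on its own, and it does not: the step you yourself single out as the heart of the matter is false. You claim $(j^*\A)_p = \ker(\alpha_p)$ inside $\A_p$, where $\alpha_p = \iota_{X_p}\omega_p$. These are two different hyperplanes in general: $\ker(\alpha_p)$ is the symplectic orthogonal $X_p^{\omega_p}$, while $(j^*\A)_p = \{v \in \A_p : \rho(v) \in T_pN\}$, and the fact that they share the codimension-two subspace $\xi_p$ does not force them to coincide --- two distinct hyperplanes always meet in codimension two. Worse, if they did coincide then $j^*\alpha$ would vanish identically at $p$, contradicting the conclusion and also your own next sentence, where you produce $R_p \in (j^*\A)_p$ with $\alpha_p(R_p) \neq 0$. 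The auxiliary linear-algebra lemma you propose at the end inherits the same error: with $H = \ker(\iota_X\omega)$ one has $\iota_X\omega|_H = 0$, so $\iota_X\omega|_H \wedge (\omega|_H)^r$ is identically zero, not nonzero.

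The repair is short. The hyperplane to use is $H := (j^*\A)_p$; the hypothesis that $\rho(X_p)$ is transverse to $N$ says precisely that $X_p \notin H$. Your Cartan-formula computation $d_\A(\iota_X\omega) = \omega$ on $\U$ is fine, and it gives
\[ \alpha \wedge (d_\A\alpha)^r = \iota_X\omega \wedge \omega^r = \tfrac{1}{r+1}\,\iota_X\left(\omega^{r+1}\right). \]
Since $\omega^{r+1}$ is an $\A$-volume form, $\iota_{X_p}(\omega^{r+1})_p$ is a nonzero $(2r+1)$-covector on $\A_p$ whose kernel is exactly $\RR X_p$; its restriction to any hyperplane not containing $X_p$ --- in particular to $H$ --- is therefore a volume form, which is the contact condition for $j^*\alpha$. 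No identification of $H$ with $\ker\alpha_p$ is needed (nor true): the kernel of the resulting contact structure is $H \cap X_p^{\omega_p}$, and the Reeb direction $H^{\omega_p}$ satisfies $\alpha_p \neq 0$ on it precisely because $X_p \notin H$. The remaining scaffolding of your write-up (pullback commuting with $d_\A$, the pointwise nature of the condition, independence of the tubular neighbourhood) is correct.
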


\subsection{The $b$-setting} \label{ssec:bkContact}

For future reference we collect some results on contact forms on the b-tangent bundle. Most of these have appeared before, but we will expand on some of them.

\subsubsection{Geometric structure on the critical set}

The geometric structure induced on the singular set of a b-contact structure is described in \cite{MO18} by passing through the Jacobi structure associated with it. We will give a more detailed description of this structure, and relate it to the regularisation. 

\begin{theorem}[\cite{MO18}]\label{th:geomCE}
Let $(M^{2n+1},\alpha)$ be a $b^k$-contact manifold, and let $u := \Res(\alpha)$. Then:
\begin{itemize}
\item $u^{-1}(\set{0})$ inherits a contact structure.
\item $Z\backslash u^{-1}(\set{0})$ inherits a locally conformal symplectic structure.
\end{itemize}
\end{theorem}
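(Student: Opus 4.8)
The plan is to work in local coordinates adapted to $Z$ and extract the two pieces of structure directly from the normal form of the $b^k$-contact form. First I would pick local coordinates $(z,x_1,\dots,x_{2n})$ near a point of $Z = \{z=0\}$ in which the chosen jet $j$ is represented by $z$, so that a frame for $\A_Z^k$ is $\langle z^k\partial_z,\partial_{x_1},\dots,\partial_{x_{2n}}\rangle$ with dual coframe $\langle z^{-k}dz, dx_1,\dots,dx_{2n}\rangle$. Writing a general $\A_Z^k$-$1$-form as $\alpha = g\,z^{-k}dz + \beta$, with $g \in C^\infty(M)$ and $\beta$ a genuine $1$-form, the residue is $u = \Res(\alpha) = \iota_Z^*(g)$, i.e. the restriction of $g$ to $Z$. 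I would then compute $\alpha \wedge (d_{\A}\alpha)^n$ in this frame and read off the nondegeneracy condition; along $Z$ the coefficient of the algebroid volume form $z^{-k}dz\wedge dx_1\wedge\cdots\wedge dx_{2n}$ splits into a term proportional to $g\cdot(\text{leafwise Pfaffian of }d_Z\beta)$ and a term proportional to $dg\wedge(\ldots)$, which is exactly the dichotomy $u=0$ versus $u\neq 0$.

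On the open set $Z\setminus u^{-1}(0)$, where $g|_Z = u \neq 0$, I would rescale and solve for $z^{-k}dz$ in terms of the other data: the condition that $\alpha\wedge(d_\A\alpha)^n$ be a volume form forces, along $Z$, that the restricted $2$-form $\iota_Z^*(d\beta)$ (equivalently $\iota_Z^*(d_\A\alpha)$, which makes sense since $z^{-k}dz\wedge(\ldots)$ restricts in a controlled way) be of maximal rank $2n$ on $TZ$; combined with the Leibniz/derivation identity for $d_\A$ one gets that $\iota_Z^*(d_\A\alpha) = d\eta + \theta\wedge\eta$-type relation, producing a $2$-form $\sigma$ on $Z$ with $d\sigma = \vartheta \wedge \sigma$ for a closed Lee $1$-form $\vartheta$ built from $d\log|u|$ and the residue data. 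That is precisely a locally conformal symplectic structure. I would verify the closedness $d\vartheta = 0$ and the gluing of local models via Lemma~\ref{lem:scalingJ} (the freedom $j \mapsto \lambda j$ only rescales $u$, hence shifts $\vartheta$ by an exact form, so the lcs class is well-defined).

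On the locus $u^{-1}(0) = \{g|_Z = 0\}$, I would show $g$ cuts out $u^{-1}(0)$ transversely inside $Z$ (this is where nondegeneracy of $\alpha\wedge(d_\A\alpha)^n$ is used: the $dg\wedge(\ldots)$ term must be nonzero there), so $u^{-1}(0)$ is a smooth submanifold of $Z$ of codimension $1$, hence of dimension $2n-1$. Restricting $\beta$ to $u^{-1}(0)$ and using the normal-form computation once more, the $(d_\A\alpha)^n$ factor forces $\iota^*\beta \wedge (d\,\iota^*\beta)^{n-1}$ to be a volume form on the $(2n-1)$-manifold $u^{-1}(0)$, i.e. $\iota^*\beta$ is an honest contact form. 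I expect the main obstacle to be bookkeeping the restriction of $d_\A\alpha$ to $Z$ and to $u^{-1}(0)$: $d_\A$ is not the de Rham differential and the singular term $z^{-k}dz$ does not restrict naively, so one must carefully expand $d_\A(g\,z^{-k}dz + \beta) = d_\A g \wedge z^{-k}dz + d_\A\beta$ and track which summands survive pullback. Once that expansion is organized cleanly, both induced structures and their independence of the auxiliary choices follow by direct inspection; I would also remark that these descriptions match the ones obtained via the regularisation $\Fcal$, where $\alpha$ lifts to a foliated contact form and the two strata of $Z$ correspond to the two strata of the central leaf $Z\times\SS^1$.
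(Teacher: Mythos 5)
Your proposal is correct, and the computation it rests on is sound: writing $\alpha = g\,z^{-k}dz + \beta$ one gets $d_{\A}\alpha = dg\wedge z^{-k}dz + d\beta$ (the singular term $z^{-k}dz$ is $d_\A$-closed, so the expansion is cleaner than you fear), and the volume condition along $Z$ reduces to $u(d\beta)^n + n\,\beta\wedge du\wedge(d\beta)^{n-1}\neq 0$; at $u=0$ this forces transversality of $u$ and the contact condition for $\iota^*\beta$, while at $u\neq 0$ the form $\tau = d\beta + u^{-1}\beta\wedge du$ is nondegenerate with $d\tau = -d\log|u|\wedge\tau$, hence lcs with closed Lee form. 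This is essentially the description the paper itself sketches in the paragraph preceding Proposition \ref{prop:bettergeom}, and it is the same computation carried out in the proof of Lemma \ref{lem:hondaconvex}.

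However, it is not the route the paper (or the cited source \cite{MO18}) actually uses to \emph{prove} the statement. The written proof of the refined version, Proposition \ref{prop:bettergeom}, passes through the Jacobi pair $(\Lambda,R)$ associated to $\alpha$: one restricts the Jacobi structure to $Z$, observes that at points of $u^{-1}(0)$ the form $\alpha$ is smooth so the restriction is directly a contact form, and on the complement uses the local normal forms of \cite{MO18} to exhibit the restricted Jacobi pair as conformally equivalent to a nondegenerate Poisson structure, whose inverse is the (locally conformal) symplectic form. The Jacobi route buys compatibility with the Poisson/Jacobi machinery used throughout the appendix and identifies the induced structure intrinsically as the restriction of a Jacobi structure to its degeneracy locus; your direct differential-form route is more elementary and self-contained, makes the Lee form and the exactness of $u^{-1}\tau = d(u^{-1}\beta)$ visible at a glance, and lines up immediately with the convex-surface description of the central leaf via the regularisation (Proposition \ref{prop:comparestructures}). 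One small caution: your well-definedness argument should note that rescaling $j\mapsto\lambda j$ rescales $u$ by the \emph{constant} $\lambda^{k-1}$ (not by a function), so the Lee form $-d\log|u|$ is genuinely unchanged, not merely shifted by an exact form; the only real gluing issue is the choice of local defining function compatible with $j$, which affects $\tilde u$ off $Z$ but not $u=\Res(\alpha)$.
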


The geometric structure on the critical set may also be obtained in the following way. Write $\alpha = \tilde{u}dz/z^k + \tilde{\beta}$ with $\tilde{u} \in C^{\infty}(M)$ and $\beta \in \Gamma(p^*TZ)$, and let $(u,\beta) := (\Res(\alpha),\iota_Z^*\beta)$. The contact condition ensures that $u$ vanishes transversely, and thus $\Gamma := u^{-1}(\set{0})$ defines a hypersurface in $Z$. The contact condition furthermore implies that $\alpha_{\Gamma} :=\iota^*_\Gamma\beta$ defines a contact structure on $\Gamma$. On the complement of $\Gamma$, we consider the two-form $\tau := d\beta + u^{-1}\beta \wedge du $. This defines a locally conformally symplectic form. Moreover, we actually see that $\omega := u^{-1}\tau= d(u^{-1}\beta)$ is exact, and we can conclude on the following improvement of Theorem \ref{th:geomCE}, which will be important in the discussion on Weinistein conjectures in Section \ref{sec:Weinstein}:
\begin{proposition}\label{prop:bettergeom}
Let $(M^{2n+1},Z,\alpha)$ be a $b^k$-contact manifold. Then $Z$ inherits the following geometric structure:
\begin{itemize}
\item The complement $Z \backslash \Gamma$ inherits the structure of an ideal Liouville domain\footnote{As in Definition \ref{def:idealdomain}. Note that here we view $Z$ as the ideal boundary of $M\backslash Z$.} $\omega := d(u^{-1}\beta) \in \Omega^2(Z\backslash \Gamma)$.
\item The hypersurfaces $\Gamma \subset Z$ given by $u^{-1}(\set{0})$ inherit a contact form $\iota_{\Gamma}^*\beta \in \Omega^1(\Gamma)$.
\end{itemize}
The contact structure on $\Gamma$ coincides with the one from Theorem \ref{th:geomCE}, and the ideal Liouville structure is an $u^{-1}$-multiple of the locally confromal symplectic structure from Theorem \ref{th:geomCE}. 
\end{proposition}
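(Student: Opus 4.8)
The plan is to reduce everything to a local normal form and then a direct computation, as already hinted in the paragraph preceding the statement. First I would recall the $b^k$-Darboux-type normal form for the contact form: in a tubular neighbourhood of $Z$ with semilocal defining function $z$ (compatible with the jet $j$) and projection $p$ to $Z$, one may write $\alpha = \tilde u\, dz/z^k + \tilde\beta$ with $\tilde u \in C^\infty(M)$ and $\tilde\beta \in \Gamma(p^*T^*Z)$; this is the expression used in the discussion above, and its existence is part of the $b^k$-contact structure theory of \cite{MO18,MO21,S13}. Setting $u := \Res(\alpha)$, which in these coordinates is $\iota_Z^*\tilde u$, and $\beta := \iota_Z^*\tilde\beta$, I would expand the $b^k$-contact condition $\alpha\wedge(d_{\A_Z^k}\alpha)^n \neq 0$ in this frame. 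The key computational fact to extract is: (i) $u$ vanishes transversely along $Z$ (so $\Gamma := u^{-1}(\{0\})$ is a genuine hypersurface of $Z$), (ii) along $\Gamma$ the form $\iota_\Gamma^*\beta$ satisfies the contact condition $\iota_\Gamma^*\beta\wedge(d\iota_\Gamma^*\beta)^{n-1}\neq 0$, and (iii) on $Z\setminus\Gamma$ the two-form $d\beta + u^{-1}\,\beta\wedge du$ is non-degenerate. These are exactly the three conclusions of Theorem~\ref{th:geomCE}, so re-deriving them this way also pins down the precise relationship to that theorem's output.

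Next I would observe the algebraic identity on $Z\setminus\Gamma$: since $u$ is nowhere zero there, $u^{-1}\tau = u^{-1}d\beta + u^{-2}\beta\wedge du = d(u^{-1}\beta)$, which is manifestly exact; here $\tau = d\beta + u^{-1}\beta\wedge du$ is the locally conformal symplectic form of Theorem~\ref{th:geomCE}. Non-degeneracy of $\omega := d(u^{-1}\beta)$ follows from that of $\tau$ (as $u$ is an invertible conformal factor), so $(Z\setminus\Gamma, u^{-1}\beta)$ is a Liouville domain in the interior sense; the claim that it is an \emph{ideal} Liouville domain in the sense of Definition~\ref{def:idealdomain}, with $Z$ playing the role of the ideal boundary of $M\setminus Z$, requires checking the boundary behaviour of the primitive near $\Gamma$: one needs that $u\cdot(u^{-1}\beta) = \beta$ extends smoothly up to $\Gamma$ and restricts there to the contact form $\iota_\Gamma^*\beta$, which is precisely conclusion (ii) above together with the transversality in (i). That is the content of the ideal Liouville structure, and it matches Definition~\ref{def:idealdomain} with defining function $u$.

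Finally I would address well-definedness: the decomposition $\alpha = \tilde u\,dz/z^k + \tilde\beta$ depends on the choice of $z$ (equivalently, on the representative of the jet $j$ up to the $\RR^*$-scaling of Lemma~\ref{lem:scalingJ}), so I must check that $\Gamma$, the contact form $\iota_\Gamma^*\beta$, and the ideal Liouville form $d(u^{-1}\beta)$ are independent of this choice — the residue $\Res(\alpha)$ is already intrinsic by construction, and one checks that rescaling $z \mapsto \lambda z$ (resp. $j\mapsto\lambda j$) rescales $u$ and $\beta$ by matching powers so that $u^{-1}\beta$, hence $\omega$, is unchanged, while $\iota_\Gamma^*\beta$ changes only by the same constant and so defines the same contact structure; I would also note $\Gamma = u^{-1}(0)$ is cut out by the intrinsic function $\Res(\alpha)$. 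The last sentence of the statement — that this contact structure agrees with Theorem~\ref{th:geomCE}'s and that $\omega$ is the $u^{-1}$-multiple of that theorem's lcs form — is then immediate from the local computation, since both are read off from the same normal form. I expect the main obstacle to be the careful verification at step two that the structure is \emph{ideal} in the precise sense of Definition~\ref{def:idealdomain} (smooth extension of $u$ and of $u\omega = d\beta + \cdots$ across $\Gamma$, and the identification of the induced boundary contact form), rather than merely Liouville on the open part; the transversality of $u$ forced by the $b^k$-contact condition is what makes this go through, and that is where I would be most careful with the expansion of $\alpha\wedge(d_{\A_Z^k}\alpha)^n$.
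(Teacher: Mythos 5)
Your proposal is correct, but it takes a genuinely different route from the paper's own proof. You work with the semi-global decomposition $\alpha = \tilde{u}\,dz/z^k + \tilde{\beta}$ and expand the contact condition $\alpha\wedge(d\alpha)^n\neq 0$ directly on $Z$, obtaining the non-vanishing of $u(d\beta)^n + n\,\beta\wedge du\wedge(d\beta)^{n-1}$, from which transversality of $u$, the contact condition on $\Gamma$, and the non-degeneracy of $d(u^{-1}\beta)$ all follow; this is essentially the convex-surface computation of Lemma \ref{lem:hondaconvex} transplanted to the divisor, and it is the argument the paper only sketches in the paragraph preceding the proposition. The paper's actual proof instead passes through the Jacobi pair $(\Lambda,R)$ associated to $\alpha$: it invokes the Miranda--Oms normal forms (Proposition \ref{prop:localform}), computes the restricted Jacobi structure on $Z$ explicitly in each local model, exhibits a conformal rescaling by $(1+y_1)$ making it a non-degenerate Poisson structure, and reads off the symplectic form $d((1+y_1)^{-1}\beta)$. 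Your approach is more elementary and coordinate-free, avoids the case analysis of normal forms, and makes the ideal-Liouville boundary behaviour (smooth extension of $u\cdot u^{-1}\beta=\beta$ across $\Gamma$ and identification of the induced boundary contact form, per Definition \ref{def:idealdomain}) completely transparent. What the paper's route buys is the final sentence of the statement: since Theorem \ref{th:geomCE} is itself established in \cite{MO18} via the Jacobi structure, computing the Jacobi pair makes the claimed identification of the contact structure on $\Gamma$ and the conformal relation $\omega = u^{-1}\tau$ with that theorem's output immediate, whereas in your argument this comparison is asserted rather than derived and would still require unwinding the Jacobi-theoretic definitions. Two small points to tighten: the vanishing of the middle term $\beta\wedge(d\beta)^n$ in your expansion should be justified by dimension reasons on $Z$, and your claim that $u^{-1}\beta$ is literally unchanged under $j\mapsto\lambda j$ is only true for $k=1$; for $k\geq 2$ it rescales by $\lambda^{1-k}$, which still leaves the contact structure and the symplectic structure well defined up to a global constant, but the phrasing should reflect this.
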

\begin{proof}
We will make use of the Jacobi structure $(\Lambda,R)$ associated to $\alpha$. For the relevant concepts see Appendix \ref{sec:Jacobi}. For the points $x \in u^{-1}(\set{0})$ we have that $\alpha_x$ is in fact a smooth form. Consequently the induced Jacobi structure on $u^{-1}(\set{0})$ is simply given by restricting $\alpha$, which is indeed the contact form $\iota_{\Gamma}^*\beta$.

To describe the locally conformal symplectic structure we will use the normal forms from \cite{MO18}. By Theorem 4.1 (or Proposition \ref{prop:localform} c.f.) from \cite{MO18} there are two cases we need to consider, we will give the argument for one of these because the other is similar. The first case is when $\alpha = dx_1 + (1+y_1)\frac{dz}{z^k} + \sum_{i=2}^nx_i dy_i$. One readily computes that the associated Jacobi structure is given by:
\begin{equation*}
(\Lambda,R) = (z^k\pd{}{z}\wedge \pd{}{y_1} + \sum_{i=2}^n \pd{}{y_i}\wedge \pd{}{x_i} + (y_1+1)\pd{}{y_1}\wedge \pd{}{x_1} + \sum_{i=2}^n x_i \pd{}{x_i} \wedge \pd{}{x_1},\pd{}{x_1}).
\end{equation*}
The restriction of this Jacobi structure to $Z$ is given by
\begin{equation*}
(\Lambda_Z,R_Z) = (\sum_{i=2}^n \pd{}{y_i}\wedge \pd{}{x_i} + (y_1+1)\pd{}{y_1}\wedge \pd{}{x_1} + \sum_{i=2}^n x_i \pd{}{x_i} \wedge \pd{}{x_1},\pd{}{x_1}).
\end{equation*}
Note that this Jacobi pair is conformally equivalent to the Poisson structure
\begin{equation*}
(y_1+1)\sum_{i=2}^n \pd{}{y_i}\wedge \pd{}{x_i} + (y_1+1)^2\pd{}{y_1}\wedge \pd{}{x_1} + (y_1+1)\sum_{i=2}^n x_i \pd{}{x_i} \wedge \pd{}{x_1}.
\end{equation*}
This Poisson structure is non-degenerate and the corresponding symplectic form is given by
\begin{equation*}
(1+y_1)^{-1}\sum_{i=2}^n dx_i \wedge dy_i + (1+y_1)^{-2} \sum_{i=2}^n x_i dy_i \wedge dy_1,
\end{equation*}
which is precisely $d((1+y_1)^{-1}\beta)$.
\end{proof}
\begin{remark}\label{rem:gammanonempty}
Note that when $Z$ is compact $\Gamma$ cannot be empty, otherwise $\omega$ would be an exact symplectic form on a compact manifold.
\end{remark}


When $Z$ is co-orientable one can obtain the geometric structure on the critical set also by passing through the regularisation:
\begin{proposition}\label{prop:comparestructures}
Let $(M,Z,\alpha)$ be a co-orientable b-contact manifold, and let $(Z\times \rr,\alpha_{\reg})$ be the contact structure on the central leaf of the trivial regularisation. Then the geometric structures obtained on $Z$ through Theorem \ref{th:geomCE} and Lemma \ref{lem:hondaconvex} coincide in the following way:
\begin{itemize}
\item The submanifolds $u^{-1}(\set{0})$, is precisely the dividing set of $\alpha_{\reg}$ and the induced contact structures on these coincide
\item The locally conformally symplectic structure, $\tau \in \Omega^2(Z\backslash \Gamma)$, and the symplectic structure, $\omega \in \Omega^2(Z \backslash \Gamma)$, satisfy $\omega = u^{-1}\tau$.
\end{itemize}
\end{proposition}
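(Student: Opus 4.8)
The plan is to reduce the statement to a local computation in the Moser-type normal forms provided by \cite{MO18}, since both constructions under comparison --- the structure on $Z$ coming from the Jacobi-theoretic Theorem \ref{th:geomCE} (refined in Proposition \ref{prop:bettergeom}) and the one coming from restricting the regularised contact form $\alpha_{\reg}$ to the central leaf --- are manifestly local and natural in $(M,Z,\alpha)$. First I would recall the construction of the central leaf: by Lemma \ref{lem:trivialRegularisation} the trivial regularisation of $\A_Z$ is $\Fcal_f = \ker(df + f^k\,ds)$ for a local defining function $f=z$, and by Proposition \ref{prop:inducedcontactfol} the $b^k$-contact form $\alpha$ lifts to a genuine foliated contact form on $\Fcal$; its restriction $\alpha_{\reg}$ to the central leaf $Z\times\rr$ is the honest contact form on $Z\times\rr$ that one obtains by pulling $\alpha = \tilde u\,dz/z^k + \tilde\beta$ through the lift $\Psi^{-1}$, which trades $dz/z^k$ for (a multiple of) $-ds$. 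So concretely $\alpha_{\reg} = -u\,ds + \iota_Z^*\tilde\beta = -u\,ds + \beta$ on $Z\times\rr$, with $u = \Res(\alpha)$, $\beta = \iota_Z^*\tilde\beta$.

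Next I would invoke the theory of convex hypersurfaces (Lemma \ref{lem:hondaconvex}, referenced in the statement): the vector field $\partial_s$ on $Z\times\rr$ is a contact vector field for $\alpha_{\reg}$ precisely because $\alpha_{\reg}$ is $\rr$-invariant, so $Z = Z\times\{0\}$ is a convex surface with dividing set $\Gamma_{\alpha_{\reg}} = \{\alpha_{\reg}(\partial_s) = 0\} = \{u = 0\} = u^{-1}(\{0\})$. This already identifies the dividing set with $\Gamma$. On $\Gamma$, the induced contact structure from the convex surface theory is $\ker(\iota_\Gamma^* i_{\partial_s} d\alpha_{\reg})$; but at points of $\Gamma$ one has $u=0$, where $\alpha$ itself is smooth and restricts to $\iota_\Gamma^*\beta$, and a short computation shows this matches $\iota_\Gamma^*\beta$ --- i.e. the contact form on $\Gamma$ from Theorem \ref{th:geomCE}/Proposition \ref{prop:bettergeom}. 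On the complement $Z\setminus\Gamma$, the convex surface theory produces (the conformal class of) the symplectic form $\tau_{\reg}$ obtained from the characteristic foliation / the positive and negative regions; writing $\alpha_{\reg} = -u\,ds + \beta$ on $\{u\neq 0\}$ and computing $d\alpha_{\reg} = -du\wedge ds + d\beta$, one extracts the leafwise symplectic form on the leaves transverse to $\partial_s$, which is exactly $\tau = d\beta + u^{-1}\beta\wedge du$, and then the normalisation $\omega = u^{-1}\tau = d(u^{-1}\beta)$ of Proposition \ref{prop:bettergeom} gives the asserted relation $\omega = u^{-1}\tau$. I would carry out this last extraction in one of the two normal forms from \cite{MO18} (say $\alpha = dx_1 + (1+y_1)\,dz/z^k + \sum_{i\ge 2} x_i\,dy_i$, as in the proof of Proposition \ref{prop:bettergeom}), check it there, and appeal to naturality for the general case.

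The main obstacle I anticipate is bookkeeping rather than conceptual: pinning down the precise conformal factor relating the locally-conformally-symplectic form $\tau$ coming from the Jacobi picture (Theorem \ref{th:geomCE}) to the "Liouville" normalisation $\omega$ (Proposition \ref{prop:bettergeom}) and to whatever normalisation the convex surface machinery of Lemma \ref{lem:hondaconvex} spits out --- these three differ by explicit powers of $u$, and getting the statement $\omega = u^{-1}\tau$ exactly right (as opposed to up to a nonzero conformal factor) requires care with how $\alpha_{\reg}$ is normalised when lifting through $\Psi$, in particular tracking the factor of $z^k$ versus $f^k$ and the sign coming from $\Psi^{-1}(z^k\partial_z) = z^k\partial_z - \partial_s$ (cf.\ the proof of Lemma \ref{lem:trivialRegularisation}). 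A secondary subtlety is that, strictly speaking, the convex-surface dividing set and its contact germ are defined only up to isotopy, so to get an \emph{equality} of structures (not just a contactomorphism) one should fix representatives consistently --- I would do this by using the specific contact vector field $\partial_s$ throughout and the specific primitive $u^{-1}\beta$, so that no choices remain.
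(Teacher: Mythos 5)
Your proposal is correct and follows essentially the same route as the paper's proof: write $\alpha = \tilde u\,dz/z^k + \tilde\beta$ near $Z$, use Lemma \ref{lem:centralleaf} to identify $\alpha_{\reg}$ on the central leaf as $u\,ds+\beta$, read off the dividing set as $u^{-1}(0)$ via the convex-surface decomposition of Lemma \ref{lem:hondaconvex}, and compare the resulting formulae with those of Proposition \ref{prop:bettergeom}. The only discrepancy is the sign of the $ds$-term (you get $-u\,ds+\beta$, the paper states $u\,ds+\beta$), which is a harmless convention depending on the orientation of the chosen regularisation and does not affect either assertion.
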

\begin{proof}
Let $z$ be a defining function for $Z$, then on a neighbourhood of $Z$ we may write $\alpha = \tilde{u}d\log z + \tilde{\beta}$ with $\tilde{u} \in C^{\infty}(M)$ and $\beta \in \Gamma(p^*TZ)$. Let $(u,\beta) := (\iota^*_Zu,\iota_Z^*\beta)$, then from Lemma \ref{lem:centralleaf} it follows that $\alpha_{\reg} =uds + \beta$. Therefore, the dividing set if given by the zero-set of $f$, and on these sets $\beta$ restricts to a contact form. Comparing the formulae from above Proposition \ref{prop:bettergeom} and Lemma \ref{lem:hondaconvex} we obtain that the contact structures coincide, and we obtain the desired relation between the two-forms.
\end{proof}

The relation with convex surface theory motivates the following terminology (which we will also use in the non-coorientable case):
\begin{definition}
Let $(M^{2n+1},Z,\alpha)$ be a $b^k$-contact manifold, then we call the submanifold $\Gamma(Z):= \Res(\alpha)^{-1}(0)$ the \textbf{dividing set} of $\alpha$.
\end{definition}

One can wonder whether the ideal Liouville structure itself can be viewed as a singular symplectic form. Miranda-Oms remark that in dimension 3 the degeneracy locus does inherits a $b$-symplectic form, but this does not hold in general dimension. There is a conformal $b$-symplectic form though:
\begin{remark}\label{rem:confbsymp}
When $(F,d\lambda)$ is an ideal Liouville domain, the form $d\lambda$ is singular on $\partial F$. Because $u\lambda$ extends smoothly over $\partial F$ there exists a smooth one-form $\beta \in \Omega^1(F)$ with $\lambda = u^{-1}\beta$. Then the ideal Liouville structure is given by
\begin{equation*}
\omega = d(\beta/u) = u^{-1}d\beta + u^{-2}\beta \wedge du.
\end{equation*}
Unfortunately, this does not define a Lie algebroid form for the b-tangent bundle of $\partial F$. However, the conformal symplectic form $\tau = u d\lambda$ does define a well-defined $\A_{\partial F}$-form.
\end{remark}

%
%

The restriction of the Reeb vector field to the dividing set is compatible with the geometric structure in the following way:
\begin{lemma}\label{lem:restreebprops}
Let $(M,Z,\alpha)$ be a $b^k$-contact manifold. Then:
\begin{itemize}
\item The Reeb vector field is tangent to the dividing set $\Gamma(Z) \subset Z$. 
\item The restriction of the Reeb vector field, $R_Z$, to the complement of the dividing set is Hamiltonian for $-\Res(\alpha)^{-1}$.
\end{itemize}
\end{lemma}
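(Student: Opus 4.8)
The plan is to work in the local normal form provided by \cite{MO18}, exactly as in the proof of Proposition \ref{prop:bettergeom}, and read off both claims from an explicit computation. First I would recall that near $Z$ we may write $\alpha = \tilde u\, dz/z^k + \tilde\beta$, with $\tilde u \in C^\infty(M)$ and $\tilde\beta \in \Gamma(p^*T^*Z)$, and set $u = \iota_Z^*\tilde u$, $\beta = \iota_Z^*\tilde\beta$. The Reeb vector field $R$ of the $b^k$-contact form $\alpha$ is the unique section of $\A_Z^k$ with $\iota_R\alpha = 1$ and $\iota_R d\alpha = 0$. Since $\alpha$ is a genuine (smooth) one-form on all of $M$ away from the $z^k$-pole, and since $z^k\partial_z$ is a section of $\A_Z^k$ that anchors to a vector field vanishing on $Z$, restricting the defining equations of $R$ to $Z$ will express $\rho(R)|_Z$ purely in terms of $u$, $\beta$ and their restrictions.

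For the first bullet, the key point is that $R$, being a section of $\A_Z^k$, has anchor tangent to $Z$; so it descends to a vector field $R_Z$ on $Z$. To see that $R_Z$ is moreover tangent to $\Gamma(Z) = u^{-1}(0)$, I would use the relation between $R_Z$ and the geometric data $(u,\beta)$ on $Z$: away from $\Gamma$ the structure is the ideal Liouville form $\omega = d(u^{-1}\beta)$ of Proposition \ref{prop:bettergeom}, and on $\Gamma$ it is the contact form $\iota_\Gamma^*\beta$. Concretely, in the normal form $\alpha = dx_1 + (1+y_1)\,dz/z^k + \sum_{i\ge 2} x_i\,dy_i$ used in the proof of Proposition \ref{prop:bettergeom}, one computes $R$ explicitly (it is $\partial_{x_1}$ up to lower-order corrections dictated by $d\alpha$), restricts to $Z = \set{z=0}$, and checks directly that the resulting vector field annihilates $u = 1+y_1 - 1 = y_1$, i.e. is tangent to $\set{y_1 = 0} = \Gamma$. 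The same computation, carried out in the other normal form of Theorem 4.1 of \cite{MO18}, finishes this bullet; alternatively one can argue invariantly that $R_Z$ preserves the Jacobi structure $(\Lambda_Z, R_Z)$ on $Z$, hence preserves its singular locus $\Gamma$, and a vector field preserving a hypersurface is tangent to it.

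For the second bullet, on $Z\setminus\Gamma$ we have the exact symplectic form $\omega = d(u^{-1}\beta) = d\lambda$ with $\lambda = u^{-1}\beta$, and I would show $\iota_{R_Z}\omega = -d(u^{-1})$, i.e. $R_Z$ is the Hamiltonian vector field of the function $-u^{-1} = -\Res(\alpha)^{-1}$ with respect to $\omega$. The cleanest route is again the normal form: from the proof of Proposition \ref{prop:bettergeom}, $\omega = d\big((1+y_1)^{-1}\beta\big)$ with $\beta = \sum_{i\ge2} x_i\,dy_i$, and $R_Z = \partial_{x_1}$; one then checks $\iota_{\partial_{x_1}}\omega = -d\big((1+y_1)^{-1}\big)$ by a direct contraction, and since $u = y_1$ in these coordinates (so $1+y_1$ plays the role of $u$ after the harmless shift already present in the normal form), this is exactly $-d(u^{-1})$. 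To make this coordinate-free I would instead use Cartan's formula together with the defining equations of $R$: from $\iota_R d\alpha = 0$ and $\iota_R\alpha = 1$ one gets $\mathcal L_R\alpha = 0$; writing $\alpha|_{\text{near }Z} = u\,d\log z \cdot(\text{something})+\dots$ and extracting the residue and the $Z$-tangential part, the equation $\mathcal L_R\alpha = 0$ becomes, upon restriction, precisely the pair of equations saying that $\mathcal L_{R_Z}(u^{-1}\beta)$ is controlled and $\iota_{R_Z}d(u^{-1}\beta) = -d(u^{-1})$.

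The main obstacle is bookkeeping: one must be careful that the residue $\Res(\alpha) = \iota_Z^*(\iota_{z\partial_z}\alpha)$ involves $z\partial_z$ whereas $\A_Z^k$ is generated by $z^k\partial_z$, so the Reeb field's vertical component carries a factor $z^k$ that degenerates on $Z$ and must be handled via the pole structure rather than naive restriction; getting the signs and the conformal factor $u^{-1}$ right (as opposed to $u$ or $u^{-2}$, cf. Remark \ref{rem:confbsymp}) is where care is needed. Once the normal-form computation is set up this is routine, and I would present the argument by reducing to the two normal forms of \cite{MO18} and displaying the one-line contraction in each case, exactly parallel to the proof of Proposition \ref{prop:bettergeom}.
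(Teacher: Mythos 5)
Your strategy works, but it is a genuinely different route from the paper's. The paper invokes no normal form at all: on an arbitrary tubular neighbourhood it writes $R = g\,z^k\partial_z + X$ and $\alpha = \tilde u\,dz/z^k + \beta$, spells out $\iota_R\alpha = 1$ and $\iota_R d\alpha = 0$ to obtain the two identities $g\tilde u + \beta(X) = 1$ and $-g\,d\tilde u + X(\tilde u)\,dz/z^k + \iota_X d\beta = 0$, reads off $X(\tilde u) = 0$ from the $dz/z^k$-coefficient of the second (giving tangency to $\Gamma = u^{-1}(0)$), and then substitutes both identities into $\iota_{R_Z}\bigl(u^{-1}d\beta + u^{-2}\beta\wedge du\bigr)$ to get $u^{-2}du = d(-u^{-1})$. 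This is essentially your ``coordinate-free'' fallback made precise, and it is both shorter and safer than the normal-form route: a normal form for the contact \emph{structure} (as in Proposition \ref{prop:localform}) does not pin down the Reeb field, since $R_{e^f\alpha} \neq R_\alpha$; your primary argument is only valid because Theorem 4.1 of \cite{MO18} is a Darboux theorem for the \emph{form}, and you should say so explicitly.

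If you keep the normal-form route, two concrete slips need fixing. First, in the model $\alpha = dx_1 + (1+y_1)\,dz/z^k + \sum_{i\ge 2}x_i\,dy_i$ the residue is $u = 1+y_1$, so $\Gamma = \set{y_1 = -1}$, not $\set{y_1=0}$; this is the model at points \emph{off} the dividing set, where the tangency claim is vacuous, and the entire first bullet must instead be read off from the other model $dx_1 - y_1\,dz/z^k + \sum_{i\ge2}x_i\,dy_i$, where $u = -y_1$ and $R = \partial_{x_1}$ is indeed tangent to $\set{y_1=0}$. Second, the tangential part is $\beta = dx_1 + \sum_{i\ge 2}x_i\,dy_i$, not $\sum_{i\ge 2}x_i\,dy_i$: with your $\beta$ the contraction $\iota_{\partial_{x_1}}d(u^{-1}\beta)$ would vanish instead of producing $-d(u^{-1})$; the $dx_1$ term is exactly what yields the $(1+y_1)^{-2}dy_1$ you need.
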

\begin{proof}
On a tubular neighbourhood $p : \mathcal{U} \rightarrow Z$, let $R = g z^k\partial_z +X$, with $g \in C^{\infty}(M)$ and $X \in \Gamma(p^*TZ)$. Moreover, write $\alpha = \tilde{u} dz/z^k + \beta$, with $\tilde{u} \in C^{\infty}(M)$, and $\beta \in \Gamma(p^*T^*Z)$. Spelling out the contact condition gives
\begin{equation}\label{eq:someequations}
g\tilde{u} + \beta(X) = 1, \quad -gd\tilde{u} + X(\tilde{u})dz/z^k + \iota_Xd\beta = 0,
\end{equation}
from this it follows that $X(\tilde{u}) = 0$. Because $u = \tilde{u}|_Z$ is the divining function for the dividing set, we conclude that $X$ is tangent to $\Gamma(Z)$. Moreover
\begin{equation*}
\iota_{R_Z}(u^{-1}d\beta + u^{-2}\beta \wedge du) = u^{-1}gdu + u^{-2}(1-gu)du = u^{-2}du = d(-u^{-1}),
\end{equation*} 
showing that $R_Z$ is Hamiltonian, as desired.
\end{proof}

\subsubsection{Regularisation}

Here we will describe properties of the regularisation of $b^k$-contact structures. A similar discussion for $b^k$-symplectic structures appears in Appendix \ref{sec:Poisson}, where we will also make the connection with Poisson and Jacobi geometry. 

The foliation on the trivial regularisation is $\rr$-invariant and therefore the central leaf $Z \times \rr$ inherit an $\rr$-invariant contact structure, which we will now describe explicitly:
\begin{lemma}\label{lem:centralleaf}
Let $(M,Z)$ be a co-orientable b-pair and let $\alpha \in \Omega^1(\A_Z^k)$ be a $b^k$-contact form. Let 
\begin{equation*}
(u,\beta) := (\alpha(z\partial_z)|_Z,\iota^*_Z(\alpha - udz/z^k)).
\end{equation*}
Then the contact structure on the central leaf of any trivial regularisation $\ff$ is given by $\alpha_{\reg}|_{Z\times \rr} = uds+\beta$, with $s$ the coordinate in the $\rr$-direction.
\end{lemma}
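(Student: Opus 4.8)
The plan is to work in the local model provided by the proof of Lemma \ref{lem:trivialRegularisation2} (or Lemma \ref{lem:trivialRegularisation} when a global defining function exists), restrict the lifted contact form to the central leaf, and compute. First I would recall that the trivial regularisation $\ff$ comes equipped with a Lie algebroid submersion $\Psi : \ff \to \A_Z^k$ which is a fibrewise isomorphism; hence the $\A_Z^k$-contact form $\alpha$ pulls back along $\Psi^{-1}$ (equivalently, along the horizontal lift operator $\hor$ of the associated Ehresmann connection) to a foliated $1$-form $\alpha_{\reg} := (\Psi^{-1})^*\alpha \in \Omega^1(T\ff)$. By Proposition \ref{prop:inducedcontactfol} (the $b^k$-case of Proposition \ref{prop:regularisationDistributions}, applied to distributions and their curvatures) this $\alpha_{\reg}$ is again a foliated contact form, and it is $\rr$-invariant because $\ff$ and $\Psi$ are. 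So the only content of the lemma is the explicit formula $\alpha_{\reg}|_{Z\times\rr} = u\,ds + \beta$.

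To get the formula I would fix a semilocal defining function $z$ for $Z$ with $j^{k-1}_Z z = j$, as in Lemma \ref{lem:trivialRegularisation2}, and write $\alpha = \tilde u\, dz/z^k + \tilde\beta$ near $Z$, where $\tilde u \in C^\infty(M)$ and $\tilde\beta \in \Gamma(p^*T^*Z)$ (using the tubular projection $p$); then $\tilde u = \alpha(z^k\partial_z) = z^{k-1}\alpha(z\partial_z)$, hmm — more precisely $\alpha(z\partial_z)$ makes sense as $z^{-(k-1)}\tilde u$, and its restriction to $Z$ is $u$, while $\iota_Z^*\tilde\beta = \beta$. Recall from the proof of Lemma \ref{lem:trivialRegularisation} that in these coordinates the foliation is $\ff = \langle z^k\partial_z - \partial_s,\ \partial_{x_2},\dots,\partial_{x_n}\rangle$ and $\Psi$ sends $z^k\partial_z - \partial_s \mapsto z^k\partial_z$ and is $d\pi$ on the remaining frame vectors. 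Dually, the lifted coframe pairs $\alpha_{\reg}$ with this frame exactly as $\alpha$ pairs with $\langle z^k\partial_z, \partial_{x_2},\dots,\partial_{x_n}\rangle$; so $\alpha_{\reg}(z^k\partial_z - \partial_s) = \alpha(z^k\partial_z) = \tilde u$ and $\alpha_{\reg}(\partial_{x_i}) = \tilde\beta(\partial_{x_i})$. On the other hand, in $M\times\rr$ the foliated form $u\,ds + \beta$ (pulled back from $Z$ along $p$, plus the $u\,ds$ term) evaluates on $z^k\partial_z - \partial_s$ to $-u \cdot (\text{coefficient})$... here I need to be a little careful with signs: in the complement of $Z$ the foliation is the kernel of $d((k-1)^{-1}z^{-k+1}) - ds$, i.e. $z^k\partial_z - \partial_s$ is tangent, and spiralling conventions fix the sign so that the $ds$-component works out to give $u\,ds + \beta$ after restriction to $Z\times\rr$. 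Evaluating both candidate foliated forms on the frame of $\ff$ along $Z\times\rr$, where $\tilde u|_Z$ becomes $u$ and $\tilde\beta|_Z$ becomes $\beta$, shows they agree; since a foliated $1$-form on $Z\times\rr$ is determined by its values on a foliated frame, this proves $\alpha_{\reg}|_{Z\times\rr} = u\,ds + \beta$.

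I would then remark that the answer is manifestly independent of the choices (tubular neighbourhood, semilocal $z$) because the space of defining functions compatible with $j$ is convex and $(u,\beta)$ is defined intrinsically in the statement; and it is independent of \emph{which} trivial regularisation is used, since any two differ by an $\rr$-equivariant diffeomorphism over $M$ which, along $Z\times\rr$, can only be a vertical translation (it must preserve the central leaf and the $\rr$-action), and $u\,ds+\beta$ is translation-invariant. The main obstacle, such as it is, is purely bookkeeping: getting the sign and the power of $z$ right in the decomposition $\alpha = \tilde u\, dz/z^k + \tilde\beta$ versus $\alpha(z\partial_z)$, and matching the orientation convention of the vertical $\rr$-direction (cf. Lemma \ref{lem:coorientationsTrivialRegularisation}) with the sign in front of $ds$ in $\theta_f = df + f^k\,ds$. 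There is no analytic difficulty — everything reduces to the local normal form and the explicit description of $\Psi$ already established in Section \ref{sec:regularisation}.
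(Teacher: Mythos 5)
Your proposal is correct and follows exactly the computation the paper intends: the paper actually states Lemma \ref{lem:centralleaf} with no proof at all, and the argument it implicitly relies on (e.g.\ in Proposition \ref{prop:comparestructures}) is precisely yours --- write $\alpha = \tilde u\,dz/z^k + \tilde\beta$ near $Z$, use that $\alpha_{\reg} = \alpha\circ\Psi$ with $\Psi$ the fibrewise isomorphism sending $z^k\partial_z \mp \partial_s$ to $z^k\partial_z$ and acting as $d\pi$ on the rest of the frame, and evaluate both sides on a frame of $T(Z\times\rr)$. The one point you flagged but did not resolve is worth settling: for the regularisation of Lemma \ref{lem:trivialRegularisation}, $\Fcal_f = \ker(dz + z^k\,ds)$, the frame vector $z^k\partial_z - \partial_s$ restricts on $Z$ to $-\partial_s$, so the computation literally gives $\alpha_{\reg}(\partial_s) = -\tilde u|_Z$, i.e.\ $\alpha_{\reg}|_{Z\times\rr} = -u\,ds + \beta$; the stated sign $+u\,ds+\beta$ is what one gets for the regularisation $\ker(dz - z^k\,ds)$. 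Since by Corollaries \ref{cor:trivialRegularisation}--\ref{cor:nonUniqueness} the two choices are genuinely non-isotopic, the phrase ``any trivial regularisation'' in the statement can only be read up to the substitution $s\mapsto -s$ (equivalently $u\mapsto -u$), which is an imprecision of the paper rather than of your argument. Also note that for $k>1$ the expression $\alpha(z\partial_z)|_Z$ in the statement is itself an abuse of notation (as you observed, $z\partial_z\notin\Gamma(\A_Z^k)$ and $z^{1-k}\tilde u$ does not restrict to $Z$); the consistent reading, forced by the companion formula $\beta = \iota_Z^*(\alpha - u\,dz/z^k)$, is $u = \alpha(z^k\partial_z)|_Z = \tilde u|_Z$, which is what your computation uses.
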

This lemma already indicates that convex surface theory will play an important role in our study, therefore we have recalled the relevant concepts in Appendix \ref{sec:convex}.


We want to describe the cosymplectic structure on the singular locus of the symplectisation of a $b^k$-contact manifold. We will see that the regularisation naturally appears in this.
\begin{proposition}\label{prop:cosympofsymp}
Let $(M,Z,\alpha)$ be a co-orientable $b^k$-contact manifold, with dividing set $\Gamma \subset Z$. Let $(M\times \rr, Z \times \rr, d(e^t\alpha))$ be the $b^k$-symplectisation. Let $(\theta,\eta)$ be the induced cosymplectic (Lemma \ref{lem:logsympcosymp}) structure on $Z\times \rr$. Then: 
\begin{enumerate}
\item $e^t\theta = \theta_{\reg}$, with $\theta_{\reg} = du + udt$ the one-form defining the regularisation of $\A_{\Gamma(Z)} \rightarrow \Gamma(Z)$ via the defining function $u = \Res(\alpha)$.
\item $\rest{\eta}{\Gamma \times \rr} = d(e^t\alpha_{\Gamma})$.
\item $u\eta = \theta \wedge \beta + e^tu^2\omega$ with $\omega = u^{-1}\beta \wedge du + u^{-2}d\beta \in \Omega^2(Z\backslash \Gamma)$ the Liouville structure from Proposition \ref{prop:bettergeom}. Consequently $\eta$ and $\omega$ coincide, up to a factor, when restricted to the $(Z\backslash \Gamma)\times \rr$ leaves of the regularisation $\ff_{\reg}$ of $\A_{\Gamma(Z)}$.
\end{enumerate}
\end{proposition}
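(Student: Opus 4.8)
The plan is to prove the three identities by a direct computation in the local normal form, exactly as was done in the proof of Proposition \ref{prop:bettergeom}. Since everything in the statement is local around $Z \times \rr$ (the residue, the cosymplectic pair, the regularisation one-form, and the Liouville structure), it suffices to fix a defining function $z$ for $Z$, write $\alpha = \tilde u\, dz/z^k + \tilde\beta$ with $\tilde u \in C^\infty(M)$ and $\tilde\beta \in \Gamma(p^*T^*Z)$, and set $(u,\beta) = (\iota_Z^*\tilde u, \iota_Z^*\tilde\beta)$ as in Lemma \ref{lem:centralleaf}. Then $e^t\alpha = e^t\tilde u\, dz/z^k + e^t\tilde\beta$, so the $b^k$-symplectisation form is $\omega_{\mathrm{symp}} = d(e^t\alpha) = d(e^t\tilde u)\wedge dz/z^k + d(e^t\tilde\beta)$, and I would expand this as a $b^k$-form on $M \times \rr$ with hypersurface $Z \times \rr$.

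First I would extract $\theta = \Res(d(e^t\alpha))$. By the definition of the residue, $\theta = \iota^*_{Z\times\rr}(\iota_{z\partial_z}\omega_{\mathrm{symp}})$; contracting $z\partial_z$ with $d(e^t\tilde u)\wedge dz/z^k$ picks out $e^t\tilde u$ (the $dz/z^k$ term, upon which $\iota_{z\partial_z}$ acts as $z^{k-1}\cdot \iota_{z\partial_z}(dz/z) $, giving back $e^t\tilde u \,(\ldots)$ after one checks the conventions), restricting to $Z\times\rr$ gives $e^t(u\, dt + du) = e^t\theta_{\reg}$, where I use that $\Res(\alpha) = u$. This is item (1), and it is essentially a repackaging of Lemma \ref{lem:centralleaf} together with the definition of $\theta_{\reg}$ from Proposition \ref{prop:ellireg}'s real analogue (the one-form $du + u\,dt$). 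Next, for (2), I would use Lemma \ref{lem:logsympcosymp}: $\eta = \iota^*_{Z\times\rr}(\omega_{\mathrm{symp}} - d\log|z|\wedge p^*\theta)$. On the locus $\Gamma\times\rr$ we have $u = 0$, so $\tilde u$ vanishes to first order along $Z$; one checks that the singular contributions cancel against the $d\log|z|\wedge p^*\theta$ correction term and the remaining smooth part restricts to $d(e^t\tilde\beta)\big|_{\Gamma\times\rr} = d(e^t\alpha_\Gamma)$, where $\alpha_\Gamma = \iota_\Gamma^*\beta$ is the contact form from Theorem \ref{th:geomCE}/Proposition \ref{prop:bettergeom}. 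This reduces to checking that the residue term and the $u$-linear part of $\eta$ both vanish on $\Gamma\times\rr$, which is immediate since $u|_\Gamma = 0$.

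For (3), which is the computational heart of the statement, I would compute $\eta$ on all of $Z\times\rr$ (not just $\Gamma\times\rr$) from the same formula. Writing things out, $\eta = \iota^*_{Z\times\rr}\big(d(e^t\tilde u)\wedge dz/z^k + d(e^t\tilde\beta) - d\log|z|\wedge p^*\theta\big)$; the contact condition from \eqref{eq:someequations}-style bookkeeping (i.e. the nondegeneracy of $\alpha\wedge(d\alpha)^r$) forces $du$ to be expressible in terms of the leafwise data, and after collecting terms one should land on $u\eta = \theta\wedge\beta + e^t u^2\omega$ with $\omega = u^{-1}\beta\wedge du + u^{-2}d\beta$ the Liouville form of Proposition \ref{prop:bettergeom}. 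The cleanest route is probably to multiply through by $u$ from the start (so as to clear denominators and stay with smooth forms) and verify the identity of smooth $2$-forms on $Z\times\rr$; then dividing by $u$ on $(Z\setminus\Gamma)\times\rr$ gives the stated conformal comparison of $\eta$ with $\omega$ on those leaves of $\ff_{\reg}$. The final sentence of (3) then follows because restricting to a leaf $(Z\setminus\Gamma)\times\rr$ of the regularisation kills the $\theta\wedge\beta$ term (that term is "vertical" with respect to the foliation, being a multiple of $\theta_{\reg}$ up to the factor $e^{-t}$), leaving $u\eta = e^t u^2\omega$, i.e. $\eta = e^t u\,\omega$ on those leaves.

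The main obstacle I anticipate is bookkeeping rather than conceptual: pinning down the exact residue/normalisation conventions (the factor conventions for $\Res$ of a $b^k$-form with a $dz/z^k$ pole, and the sign/scaling in $\theta_{\reg} = du + u\,dt$ versus what $\Res(d(e^t\alpha))$ literally produces) so that all three identities come out with the stated constants and no stray powers of $z$ or signs. A secondary subtlety is that $\eta$ in Lemma \ref{lem:logsympcosymp} depends on the auxiliary metric through $|z|$, so I should be careful to state (as the proposition implicitly does) that the identities in (2) and (3) hold for the natural choice of $|z|$ coming from the defining function $z$ used in the normal form, or else verify that the $\theta\wedge\beta + e^tu^2\omega$ combination is metric-independent. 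I would handle this by simply fixing $|z| = |z|$ for the chosen defining function throughout, matching the convention already used implicitly in Lemma \ref{lem:centralleaf} and Proposition \ref{prop:comparestructures}.
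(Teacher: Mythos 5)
Your proposal is correct and follows essentially the same route as the paper: write $\alpha = \tilde u\,dz/z^k + \tilde\beta$ in a tubular neighbourhood, expand $d(e^t\alpha)$, read off $\theta$ and $\eta$ from Lemma \ref{lem:logsympcosymp}, and verify the three identities by direct algebra (the paper's item (3) is literally the one-line check $u\eta - e^t(du+u\,dt)\wedge\beta = e^t(u\,d\beta + \beta\wedge du) = e^tu^2\omega$). Your flagged worries about residue normalisation and the metric-dependence of $\eta$ are legitimate bookkeeping points that the paper's own proof also resolves by fixing the defining function, so nothing essential is missing.
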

\begin{proof}
(1): Recall that $\theta = \Res(d(e^t\alpha))= d(e^t\Res(\alpha))$. Therefore $\theta = e^t(du + udt)$.\\
(2): If we write $\alpha = \tilde{u}dz/z^k + \tilde{\beta}$, then
\begin{equation*}
d(e^t\alpha) = e^tdt \wedge (\tilde{u} dz/z^k + \tilde{\beta}) + e^t(d \tilde{u} \wedge dz/z^k + d\tilde{\beta}),
\end{equation*}
and thus $\eta = e^tdt \wedge \beta + e^t d \beta$. Restricted to $\Gamma \times \rr$, this equals $d(e^t\iota^*_{\Gamma}\beta)$.\\
(3): We compute
\begin{equation*}
u\eta - e^t(du + u dt) \wedge \beta = e^t(ud\beta + \beta \wedge du) = e^tu^2\omega. \hfill \qedhere
\end{equation*}
\end{proof}

\subsubsection{Normal forms}

We will re-obtain normal form results for b-contact structures using normal form results in the foliated setting, so let us first recall:
\begin{proposition}[Gray Stability,\cite{CPP14}]
Let $\mathcal{F}$ be a codimension-one foliation, and $(\xi_t)_{t\in [0,1]}$ a family of codimension-two distributions such that $(\mathcal{F},\xi_t)$ is foliated contact for all $t$. Then there exists a family of diffeomoprhisms $\set{\varphi_t}_{t\in [0,1]}$ tangent to $\mathcal{F}$ with $\varphi_t^*\xi_t = \xi_0$.
\end{proposition}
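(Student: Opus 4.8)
The proof is the Moser deformation argument, carried out leafwise on the foliation algebroid $\A := T\mathcal{F}$; write $d_{\mathcal F}$ and $\mathcal{L}_{(\cdot)}$ for its de Rham differential and Lie derivative along algebroid sections. The plan is to produce a time-dependent section $X_t \in \Gamma(\xi_t) \subset \Gamma(\A)$, integrate it to a leafwise isotopy $\varphi_t$ of $M$ (a diffeomorphism preserving every leaf of $\mathcal{F}$, hence acting on $\A$ and on $\Omega^\bullet(\A)$), and verify that $\varphi_t^*\xi_t = \xi_0$.

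First I would work on a leafwise-saturated open set on which $\xi_t$ is coorientable inside $\A$ and choose a smooth family of $\A$-contact forms $\alpha_t \in \Omega^1(\A)$ with $\ker\alpha_t = \xi_t$; these exist because the foliated contact condition is exactly nondegeneracy of $d_{\mathcal F}\alpha_t|_{\xi_t}$ and the set of such forms is a convex cone. Let $R_t \in \Gamma(\A)$ be the leafwise Reeb field of $\alpha_t$. Following Moser, I seek $X_t \in \Gamma(\xi_t)$ and a leafwise function $\lambda_t$ with
\begin{equation*}
\dot\alpha_t + \iota_{X_t} d_{\mathcal F}\alpha_t = \lambda_t\,\alpha_t .
\end{equation*}
Contracting with $R_t$ forces $\lambda_t = \dot\alpha_t(R_t)$; with $\lambda_t$ fixed, the difference $\lambda_t\alpha_t - \dot\alpha_t$ annihilates $R_t$, so it restricts to a section of $\xi_t^*$, and since $d_{\mathcal F}\alpha_t|_{\xi_t}$ is a nondegenerate $2$-form (the contact condition, i.e.\ nondegeneracy of $\Omega_{1,1}(\xi_t)$) there is a unique $X_t \in \Gamma(\xi_t)$ solving the equation. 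A direct computation comparing the equations for $\alpha_t$ and $f\alpha_t$ with $f>0$ shows that $X_t$ and the $1$-form $\lambda_t\alpha_t$ depend only on $\xi_t$, not on the chosen representative; hence the locally defined $X_t$ glue to a globally defined leafwise time-dependent section of $\A$, and the coorientability assumption can be dropped.

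Next I would integrate $X_t$. Through the anchor $\A = T\mathcal{F} \hookrightarrow TM$, the section $X_t$ is an ordinary time-dependent vector field on $M$; its flow $\varphi_t$, $t\in[0,1]$, exists under the compactness hypothesis of \cite{CPP14} (or with compact supports), is tangent to $\mathcal{F}$, and so acts on $\Omega^\bullet(\A)$. Then
\begin{equation*}
\frac{d}{dt}\big(\varphi_t^*\alpha_t\big) = \varphi_t^*\big(\dot\alpha_t + \mathcal{L}_{X_t}\alpha_t\big) = \varphi_t^*\big(\dot\alpha_t + \iota_{X_t}d_{\mathcal F}\alpha_t\big) = (\lambda_t\circ\varphi_t)\,\varphi_t^*\alpha_t ,
\end{equation*}
where the middle equality uses Cartan's formula together with $\iota_{X_t}\alpha_t=0$, and the last one is the Moser equation. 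Hence $\varphi_t^*\alpha_t = g_t\,\alpha_0$ with $g_t := \exp\!\big(\int_0^t (\lambda_s\circ\varphi_s)\,ds\big) > 0$, so $\varphi_t^*\xi_t = \ker(\varphi_t^*\alpha_t) = \ker\alpha_0 = \xi_0$; phrasing the construction invariantly in terms of $X_t$, the same conclusion holds globally without choosing a contact form. (Alternatively, one could pass to the symplectisation of Subsection \ref{ssec:symplectisation} and invoke the symplectic Moser theorem for the algebroid $\A\boxtimes T\RR$.)

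The only genuine subtlety is the completeness of the flow of the leafwise time-dependent field $X_t$ — which is exactly why the statement carries the compactness hypotheses of \cite{CPP14}; granting that, every remaining step is the classical Gray argument transported verbatim to $\A = T\mathcal{F}$, using that its Cartan calculus and notion of Reeb field behave exactly as for $TM$.
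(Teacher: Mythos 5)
The paper does not prove this proposition; it is recalled verbatim from \cite{CPP14}, so there is no in-text argument to compare yours against. Your leafwise Moser/Gray argument is the standard proof and is essentially correct: the key points --- solvability of $\dot\alpha_t + \iota_{X_t}d_{\mathcal F}\alpha_t = \lambda_t\alpha_t$ with $X_t\in\Gamma(\xi_t)$ via non-degeneracy of $d_{\mathcal F}\alpha_t|_{\xi_t}$, independence of $X_t$ from the chosen defining form (which lets you glue over a non-coorientable $\xi_t$), tangency of the flow to $\mathcal F$, and the ODE $\tfrac{d}{dt}\varphi_t^*\alpha_t=(\lambda_t\circ\varphi_t)\varphi_t^*\alpha_t$ --- are all handled properly, and the completeness caveat is rightly attributed to the compactness hypothesis implicit in \cite{CPP14}. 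One small inaccuracy: your parenthetical claim that the $1$-form $\lambda_t\alpha_t$ depends only on $\xi_t$ is false. Replacing $\alpha_t$ by $f_t\alpha_t$ one finds $\mu_t f_t\alpha_t = \bigl(\dot f_t + df_t(X_t) + f_t\lambda_t\bigr)\alpha_t \neq f_t\lambda_t\alpha_t$ in general; only $X_t$ is independent of the representative. This does not affect the argument, since gluing requires only the well-definedness of $X_t$.
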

From which we readily obtain a Moser statement:
\begin{lemma}\label{lem:folcontmos}[Foliated contact Moser]
Let $\xi_0,\xi_1$ be codimension-two distributions such that $(\mathcal{F},\xi_i)$ is foliated contact for $i=0,1$. Let $Z \subset M$ be a closed submanifold and assume that $(\xi_0|_{\mathcal{F}})_x = (\xi_1|_{\mathcal{F}})_x$ for all $x \in Z$. Then there exists a diffeomorphism, fixing $Z$, $\varphi : M \rightarrow M$ such that $\varphi^*\xi_1 = \xi_0$.
\end{lemma}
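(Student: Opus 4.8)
The plan is to derive this statement from the foliated Gray stability result stated just above, using the standard Moser trick adapted to the foliated contact setting. Since the hypothesis requires $(\mathcal{F},\xi_i)$ to be foliated contact for $i=0,1$ but does not directly provide a path of such structures interpolating between them, the first task is to construct one. I would begin by choosing foliated contact forms $\alpha_0,\alpha_1 \in \Omega^1(\mathcal{F})$ (at least locally, or globally if the relevant bundle is trivial) with $\ker\alpha_i = \xi_i|_{\mathcal{F}}$, and consider the linear interpolation $\alpha_t = (1-t)\alpha_0 + t\alpha_1$. Along $Z$ we have $(\xi_0|_{\mathcal{F}})_x = (\xi_1|_{\mathcal{F}})_x$, so after rescaling $\alpha_1$ by a positive function (adjusting so that $\alpha_0$ and $\alpha_1$ agree, not just their kernels, along $Z$) the forms $\alpha_t$ all restrict to the same nonzero form along $Z$; hence $\alpha_t \wedge (d_{\mathcal{F}}\alpha_t)^r$ is nonvanishing along $Z$ for every $t$, and by compactness of $Z$ and openness of the contact condition it remains nonvanishing on a neighbourhood $U$ of $Z$ for all $t \in [0,1]$. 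On a possibly smaller cutoff region one can glue this family to $\xi_0$ away from $Z$ so that $(\mathcal{F},\xi_t)$ is a genuine path of foliated contact structures on all of $M$ that is constant (equal to $\xi_0$) outside a compact neighbourhood of $Z$ and with $\xi_t|_Z$ independent of $t$.

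Next I would invoke foliated Gray stability (the Proposition quoted above) to obtain a family of diffeomorphisms $\{\varphi_t\}_{t\in[0,1]}$, tangent to $\mathcal{F}$, with $\varphi_t^*\xi_t = \xi_0$. The key point is then to check that $\varphi_t$ can be taken to fix $Z$ pointwise. This is where one looks into the proof of Gray stability: the diffeomorphisms $\varphi_t$ are the flow of a time-dependent vector field $X_t$ obtained by solving a pointwise-linear-algebra equation $\iota_{X_t} d_{\mathcal{F}}\alpha_t + (\text{something}) = -\dot\alpha_t$ on each leaf, where $\dot\alpha_t = \tfrac{d}{dt}\alpha_t$. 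Since we have arranged $\alpha_t$ to be independent of $t$ along $Z$, we get $\dot\alpha_t|_Z = 0$, which forces $X_t$ to vanish along $Z$ (the relevant operator is invertible on the complement of the foliated Reeb direction, and the equation has zero right-hand side there). Therefore its flow $\varphi_t$ fixes $Z$, and $\varphi := \varphi_1$ is the desired diffeomorphism with $\varphi^*\xi_1 = \xi_0$ and $\varphi|_Z = \id$. Away from the compact neighbourhood of $Z$, $X_t$ vanishes as well since $\xi_t$ is constant there, so $\varphi$ is compactly supported near $Z$ — in particular it is an honest diffeomorphism of $M$.

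The main obstacle I anticipate is the global construction of the interpolating path $\xi_t$: a priori $\xi_i$ need not be coorientable inside $\mathcal{F}$, so global contact forms $\alpha_i$ may not exist, and one must either argue the statement is local enough that this does not matter, or handle the interpolation using the curvature form $\Omega_{1,1}$ directly (interpolating the conformal symplectic structures on $\xi_i|_{\mathcal{F}}$ rather than the forms themselves) together with a partition-of-unity argument to patch local Moser vector fields. A secondary technical point is making the \emph{normalisation along $Z$} precise: one needs not just $\ker\alpha_0 = \ker\alpha_1$ along $Z$ but literal equality of the forms (up to the choice), which requires multiplying $\alpha_1$ by a smooth positive function extending the pointwise ratio $\alpha_0/\alpha_1|_Z$; this is routine but must be stated. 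Once these are in place the argument is a direct transcription of the classical relative Gray/Moser stability proof into the foliated category, using that all constructions are leafwise and that $Z$ being a closed submanifold on which everything matches to first order kills the Moser vector field there.
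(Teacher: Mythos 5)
Your overall strategy is exactly the one the paper intends: the paper offers no proof beyond the remark that the lemma ``readily'' follows from the foliated Gray stability proposition, and your plan --- interpolate linearly between (normalised) foliated contact forms, use openness of the leafwise contact condition near the compact set $Z$, run the Gray/Moser argument, and observe that $\dot\alpha_t|_Z=0$ kills the Moser vector field along $Z$ --- is the standard relative Gray argument that fills in that ``readily''. The points you flag (coorientability of $\xi_i$ inside $\mathcal{F}$, and the need to match the forms rather than just their kernels along $Z$ by a conformal rescaling) are real and correctly handled; interpolating the leafwise conformal symplectic data on $\xi_i|_{\mathcal{F}}$ and patching local Moser vector fields is indeed the way around non-coorientability.

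There is, however, one genuine mismatch between what your argument produces and what the lemma literally claims. Your cut-off step replaces $\xi_1$ by a structure $\xi_1'$ that agrees with $\xi_1$ only near $Z$ and with $\xi_0$ elsewhere; applying Gray stability to the path from $\xi_0$ to $\xi_1'$ then yields a diffeomorphism $\varphi$ with $\varphi^*\xi_1'=\xi_0$, hence $\varphi^*\xi_1=\xi_0$ only on a neighbourhood of $Z$. This is the standard \emph{relative} Moser conclusion (equivalence of germs along $Z$), and it is all that the paper actually uses in Lemma \ref{lem:regsemiglobform}; but it is strictly weaker than the global equality $\varphi^*\xi_1=\xi_0$ on all of $M$ asserted in the statement. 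The global version cannot follow from the stated hypotheses alone --- two foliated contact structures agreeing along $Z$ need not even be homotopic through contact structures away from $Z$, so no Gray-type argument can connect them globally. You should either state your conclusion as a semi-local one (a diffeomorphism of a neighbourhood of $Z$ onto a neighbourhood of $Z$), or note explicitly that the global form of the lemma requires an additional hypothesis (e.g.\ that the linear interpolation $\xi_t$ is contact on all of $M$ for every $t$), under which your argument goes through verbatim without any cut-off.
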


Applying this to regularisation yields:
\begin{lemma}\label{lem:regsemiglobform}
Let $(M,Z,\xi)$ be a $b^k$-contact manifold with $Z$ co-orientable, and let $(M\times \rr,\xi_{\reg,t})$ be the trivial regularisation. There is a tubular neighbourhood $p: \mathcal{U} \rightarrow Z\times \rr$ on which $\xi_{\reg,t}$ is contactomorphic to $p^*((\xi_{\reg,t})|_{Z\times \rr})$.
\end{lemma}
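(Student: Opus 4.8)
The plan is to deduce this semi-local normal form from the foliated contact Moser lemma (Lemma~\ref{lem:folcontmos}) applied inside the regularisation. First I would work in a tubular neighbourhood of the central leaf: pick a defining function $z$ for $Z$ compatible with the jet $j$, so that on $\mathcal{U}_0 \supset Z$ we have coordinates realising $\A_Z^k$ in its standard local form, and so that the trivial regularisation is defined by $\theta := \chi(z)dz + z^k ds$ as in Lemma~\ref{lem:trivialRegularisation2}. Using the tubular neighbourhood projection $p\colon \mathcal{U}\to Z\times\rr$ (shrinking $\mathcal{U}$ as needed), pull back the restricted foliated contact structure $(\xi_{\reg,t})|_{Z\times\rr}$ to a second foliated contact distribution $p^*((\xi_{\reg,t})|_{Z\times\rr})$ on $\mathcal{U}$. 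Both this pulled-back distribution and the original $\xi_{\reg,t}$ are $\Fcal$-foliated contact structures on $\mathcal{U}$ — here one has to check that the pullback is indeed transverse to $\Fcal$ and foliated contact near $Z\times\rr$, which holds because along $Z\times\rr$ the two distributions agree (the pullback is the identity there) and both conditions are open.

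The second step is to verify the hypothesis of Lemma~\ref{lem:folcontmos}: the submanifold $Z\times\rr$ is closed in $\mathcal{U}$ (after passing to a relatively compact subneighbourhood, or by working with the lemma in its local form along $Z\times\rr$), and $(\xi_{\reg,t}|_\Fcal)_x = (p^*((\xi_{\reg,t})|_{Z\times\rr})|_\Fcal)_x$ for all $x \in Z\times\rr$ since $p$ restricts to the identity on $Z\times\rr$ and hence its leafwise differential is the identity there. Then Lemma~\ref{lem:folcontmos} produces a diffeomorphism $\varphi$ of $\mathcal{U}$ fixing $Z\times\rr$ with $\varphi^*(p^*((\xi_{\reg,t})|_{Z\times\rr})) = \xi_{\reg,t}$, which is exactly the asserted contactomorphism after relabelling $\mathcal{U}$. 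If one wants the statement for the whole family simultaneously, one applies the Gray stability version (using that $(\xi_{\reg,t})|_{Z\times\rr}$ is the same for all $t$ up to foliated isotopy, by Lemma~\ref{lem:centralleaf}) or simply applies the Moser statement $t$-by-$t$.

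The main obstacle I anticipate is the closedness/compactness bookkeeping: Lemma~\ref{lem:folcontmos} as stated requires $Z\subset M$ closed, whereas here $Z\times\rr$ is non-compact and one only has a normal form on a tubular neighbourhood. This is handled by either invoking the (standard) fact that the Moser argument is local along the submanifold where the two structures agree — the required isotopy is generated by a leafwise vector field supported near $Z\times\rr$, obtained by solving a leafwise linear equation with the foliated contact form, and such a vector field integrates because it vanishes on $Z\times\rr$ — or, more cleanly in the $\SS^1$-compact case, by running the entire argument in the compact regularisation $M\times\SS^1$, where the central leaf $Z\times\SS^1$ is closed whenever $Z$ is, and then transferring back via $\rr$-invariance. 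A second, minor point is checking that the pulled-back distribution is genuinely transverse to $\Fcal$ on all of $\mathcal{U}$ and not just near $Z\times\rr$; this is why one shrinks $\mathcal{U}$, using openness of transversality together with the fact that $p$ is a submersion onto $Z\times\rr$.
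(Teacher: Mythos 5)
Your proposal is correct and follows the same route the paper intends: the lemma is stated immediately after the foliated contact Moser lemma precisely because the proof is to observe that $\xi_{\reg,t}$ and $p^*((\xi_{\reg,t})|_{Z\times\rr})$ agree along the central leaf $Z\times\rr$ and then invoke Lemma \ref{lem:folcontmos}. Your extra care about the non-compactness of $Z\times\rr$ (localising the Moser vector field, or passing to the compact regularisation and transferring back by $\rr$-invariance) addresses a point the paper leaves implicit, but it does not change the argument.
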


We can further specify this semi-local model to obtain a normal form around points in the central leaf:
\begin{proposition}
Let $x\in Z$, then there exists local coordinates $(x_i,y_i,s)$ around $(x,0) \in Z\times \rr$ such that $\xi_{\reg,t}$ has normal forms:
\begin{itemize}
\item At points in the complement of the dividing set, it is given by $\ker(ds-\sum_{i=1}^nx_idy_i) $, 
\item At points in the dividing set, it is given by $\ker(x_1ds-dy_1 - \sum_{i=2}^nx_idy_i)$.
\end{itemize}
\end{proposition}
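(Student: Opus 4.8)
The plan is to first reduce the statement to one about the contact structure on the central leaf. By Lemma~\ref{lem:regsemiglobform} there is a tubular neighbourhood $p\colon\mathcal{U}\to Z\times\rr$ on which $\xi_{\reg,t}$ is contactomorphic to $p^{*}\bigl(\xi_{\reg,t}|_{Z\times\rr}\bigr)$, so it suffices to exhibit the two normal forms for the contact structure $\xi_{\reg,t}|_{Z\times\rr}$ near $(x,0)$ in terms of coordinates along $Z\times\rr$; leaf-adapted coordinates on $\mathcal{U}$ are then obtained by adjoining a coordinate transverse to the foliation. By Lemma~\ref{lem:centralleaf} we have $\xi_{\reg,t}|_{Z\times\rr}=\ker\alpha_{\reg}$ with $\alpha_{\reg}=u\,ds+\beta$, where $u=\Res(\alpha)$ and $\beta$ is an $\rr$-invariant $1$-form on $Z$. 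One then splits according to whether $x$ lies on the dividing set $\Gamma=u^{-1}(0)$.

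If $x\notin\Gamma$, then $u$ is non-vanishing near $x$, so $\ker\alpha_{\reg}=\ker(ds+u^{-1}\beta)$, and by Proposition~\ref{prop:bettergeom} the $1$-form $\lambda:=u^{-1}\beta$ is a primitive of the symplectic form $\omega=d(u^{-1}\beta)$ on a neighbourhood of $x$ in $Z$. I would apply the symplectic Darboux theorem to obtain coordinates $(x_i,y_i)$ on $Z$ with $\omega=\sum_i dy_i\wedge dx_i$; then $\lambda$ and $-\sum_i x_i\,dy_i$ are both primitives of $\omega$, hence differ by $df$ for a function $f=f(x_i,y_i)$, and $ds+\lambda=d(s+f)-\sum_i x_i\,dy_i$. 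Renaming $s+f$ as $s$ (a valid change of coordinates on $Z\times\rr$, since $f$ does not involve the $\rr$-coordinate) yields $\xi_{\reg,t}|_{Z\times\rr}=\ker\bigl(ds-\sum_i x_i\,dy_i\bigr)$, the first normal form.

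If $x\in\Gamma$, then $u(x)=0$ with $du(x)\neq0$, so $\Gamma$ is a hypersurface of $Z$ near $x$ and $\iota_\Gamma^{*}\beta$ is a contact form on it (Proposition~\ref{prop:bettergeom}). The key preliminary step is to choose the transverse coordinate carefully: pick a vector field $V$ on $Z$ near $x$, transverse to $\Gamma$, with $\beta(V)|_\Gamma=0$, and use its flow to produce a coordinate $x_1$ with $\Gamma=\{x_1=0\}$ and $\partial_{x_1}|_\Gamma=V$; this forces the $dx_1$-component of $\beta$ to vanish along $\Gamma$. Writing $u=\phi\,x_1$ with $\phi$ non-vanishing (positive after possibly reversing $x_1$) and replacing $\alpha_{\reg}$ by $\phi^{-1}\alpha_{\reg}=x_1\,ds+\beta'$, the $1$-form $\beta':=\phi^{-1}\beta$ still has no $dx_1$-component along $\Gamma$. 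Next, put $\iota_\Gamma^{*}\beta'$ in contact Darboux form $dy_1-\sum_{i\geq2}x_i\,dy_i$ and extend the functions $y_1,x_i,y_i$ off $\Gamma$ so as to be independent of $x_1$; then $\beta'-\bigl(dy_1-\sum_{i\geq2}x_i\,dy_i\bigr)$ vanishes at every point of $\Gamma=\{x_1=0\}$, so $\phi^{-1}\alpha_{\reg}=x_1\,ds+dy_1-\sum_{i\geq2}x_i\,dy_i+x_1\,\nu$ for a smooth $1$-form $\nu$. Since the model $\alpha_1:=x_1\,ds+dy_1-\sum_{i\geq2}x_i\,dy_i$ is a contact form agreeing with $\phi^{-1}\alpha_{\reg}$ as a $1$-form along $\Gamma\times\rr$, the path $\alpha_1+\tau x_1\,\nu$ consists of contact forms in a neighbourhood of $\Gamma\times\rr$, and a relative Gray--Moser argument (cf.\ Lemma~\ref{lem:folcontmos}) furnishes a coordinate change, fixing $\Gamma\times\rr$, taking $\ker\alpha_{\reg}$ to $\ker\alpha_1$. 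A final reflection $y_1\mapsto-y_1$ produces $\ker\bigl(x_1\,ds-dy_1-\sum_{i\geq2}x_i\,dy_i\bigr)$, the second normal form.

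I expect the main obstacle to be the dividing-set case, and within it the bookkeeping needed to make $\alpha_{\reg}$ match the linear model closely enough along $\Gamma\times\rr$ for the Moser trick to apply. The crucial point is that one cannot simply take $x_1=u$: with that choice the $dx_1$-component of $\beta$ along $\Gamma$ need not vanish, and then the contact hyperplanes of $\alpha_{\reg}$ and of the model fail to agree along $\Gamma\times\rr$, so relative Gray stability is unavailable. Choosing $x_1$ adapted to $\ker\beta$, and absorbing the resulting mismatch between the $ds$-coefficient and $x_1$ into the conformal factor $\phi$, is what makes the two forms agree to first order along the dividing set. The remaining verifications (that $\alpha_1$ is a contact form, that the interpolating path stays contact, and the variance bookkeeping in Gray stability) are routine.
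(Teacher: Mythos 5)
Your proof is correct and follows essentially the same route as the paper's: write the contact form on the central leaf as $u\,ds+\beta$ via Lemma \ref{lem:centralleaf}, split according to whether $u$ vanishes, and apply a Darboux/relative Moser argument, with the paper's own proof being only a two-sentence sketch of exactly this. Your version supplies the details the paper omits; in particular, the interpolation $\alpha_1+\tau x_1\nu$ does stay contact along $\Gamma\times\rr$ because $\nu(\partial_s)=0$ (both forms have $ds$-coefficient exactly $x_1$ after rescaling by $\phi^{-1}$), so the decomposable perturbation $dx_1\wedge\nu$ does not affect the contact volume there.
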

\begin{proof}
We know that the contact form on $Z \times \rr$, $\alpha_Z$, is of the form $\alpha = uds + \beta$, with $u \in C^{\infty}(Z)$ and $\beta \in \Omega^1(Z)$. Consequently, applying a Moser argument we have that the contact foliation around a point in $Z\times \rr$ looks like $(Z\times \rr \times \rr, dz, uds + \beta)$. Then, the two different normal forms are obtained from the fact that the dividing set is given by $u = 0$.
\end{proof}
Combing the normal form in $Z\times \RR$ with Lemma \ref{lem:folcontmos}, we can find normal forms for the $b^k$-contact structure:
\begin{proposition}\label{prop:localform}
Let $(M,Z,\xi)$ be a $b^k$-contact manifold with $Z$ co-orientable. Then for $x \in Z$ we have coordinates $(t,x_i,y_i)$ around $x$ such that
\begin{itemize}
\item At points in the complement of the dividing set, $\xi$ is given by $\ker(\dfrac{dz}{z^k}-\sum_i x_idy_i)$,
\item At points in the dividing set, $\xi$ is given by $\ker(dx_1-y_1\dfrac{dz}{z^k} + \sum_{i=2}^n x_i dy_i)$.
\end{itemize}
where $Z = \set{z=0}$.
\end{proposition}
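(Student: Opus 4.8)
The plan is to transfer the foliated normal forms already established on the central leaf $Z\times\RR$ of the trivial regularisation first to a neighbourhood of that leaf (via Lemma \ref{lem:regsemiglobform}) and then down to $M$ through the submersion $\Psi$; the bridge is a local dictionary between $b^k$-contact forms on $M$ near $Z$ and $\RR$-invariant foliated contact forms on $(M\times\RR,\Fcal)$ near the central leaf.

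\emph{The dictionary.} Fix $x\in Z$ and a local defining function $z$ with $j^{k-1}_Z z = j$. By Lemma \ref{lem:trivialRegularisation} the trivial regularisation is, near $x$, the foliation $\Fcal=\ker(dz+z^k\,ds)$, with $\Psi(z^k\partial_z-\partial_s)=z^k\partial_z$ and $\Psi=d\pi$ on the remaining frame. An immediate computation gives $\Psi^*(dz/z^k)=-ds|_\Fcal$ and $\Psi^*(dx_i)=dx_i|_\Fcal$, so that $\alpha_{\reg}:=\Psi^*\alpha$ is obtained from the $b^k$-contact form $\alpha$ by the substitution $dz/z^k\mapsto -ds$ (and restriction to $\Fcal$); conversely, any leafwise form on $\Fcal$ that is the restriction of an ambient form $a\,ds+b$ with $a\in C^\infty(M)$ and $b\in\Omega^1(M)$ is $\Psi^*$ of a $b^k$-contact form on $M$, via $ds\mapsto -dz/z^k$. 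Since $\Psi$ is a fibrewise isomorphism, it suffices to bring $\alpha_{\reg}$ into the appropriate shape.

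\emph{Normalising near the central leaf.} The Proposition preceding this one (whose proof runs an $\RR$-equivariant Moser argument starting from $\alpha_{\reg}|_{Z\times\RR}=u\,ds+\beta$, cf.\ Lemma \ref{lem:centralleaf}) produces coordinates $(x_i,y_i,s)$ around $(x,0)$ in $Z\times\RR$ in which $(\xi_{\reg})|_{Z\times\RR}$ is $\ker(ds-\sum_i x_i\,dy_i)$ when $x\notin\Gamma(Z)$ and $\ker(x_1\,ds-dy_1-\sum_{i\ge 2}x_i\,dy_i)$ when $x\in\Gamma(Z)$; which case occurs is decided by whether $\Res(\alpha)$ vanishes at $x$, since by Proposition \ref{prop:comparestructures} the dividing set of $\alpha_{\reg}$ equals $\Res(\alpha)^{-1}(0)$. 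Now apply Lemma \ref{lem:regsemiglobform}: after a foliated contactomorphism fixing $Z\times\RR$, we may assume $\xi_{\reg}$ equals, on a whole neighbourhood of $Z\times\RR$, the pullback of $(\xi_{\reg})|_{Z\times\RR}$ along a tubular projection. Extending $x_i,y_i$ off $Z$ by this projection while keeping $z$ and $s$, the form $\alpha_{\reg}$ acquires, up to sign, one of the two formulas above, with $s$ still entering only through $ds$, so the inverse dictionary $ds\mapsto -dz/z^k$ yields $\xi=\ker(dz/z^k+\sum_i x_i\,dy_i)$, respectively $\xi=\ker(x_1\,dz/z^k+dy_1+\sum_{i\ge 2}x_i\,dy_i)$; a final linear change of the $x_i,y_i$ (sign flips, together with $(x_1,y_1)\mapsto(y_1,-x_1)$ in the dividing-set case) delivers the two stated normal forms, with $Z=\{z=0\}$.

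\emph{Main obstacle.} It is the middle step: one must know that the foliated contactomorphism provided by Lemma \ref{lem:regsemiglobform} can be taken $\RR$-equivariant — hence of the form $(p,s)\mapsto(\phi(p),s+g(p))$, so that it descends to a diffeomorphism $\phi$ of $M$ fixing $Z$ — and that after applying it the variable $s$ still enters $\alpha_{\reg}$ only through $ds$, which is precisely what keeps the inverse substitution inside the $b^k$-category. Both points follow because every foliated contact structure in sight ($\xi_{\reg}$ and the pullback model) is $\RR$-invariant, so the Gray--Moser isotopy of Lemma \ref{lem:folcontmos} is $\RR$-equivariant; the remaining checks — the identity $\Psi^*(dz/z^k)=-ds|_\Fcal$ and the two terminal linear coordinate changes — are routine.
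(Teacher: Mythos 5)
Your proposal is correct and follows essentially the same route as the paper: take the normal form of $\xi_{\reg}$ on the central leaf $Z\times\RR$, spread it to a neighbourhood via the foliated Moser argument (Lemma \ref{lem:folcontmos}, packaged as Lemma \ref{lem:regsemiglobform}), and push the result down through the regularisation using the substitution $ds\leftrightarrow -dz/z^k$. The only difference is that you make explicit the $\RR$-equivariance of the Moser isotopy needed for the descent to $M$, a point the paper leaves implicit.
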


Using the semi-local normal form around the central leaf of the regularisation we may also obtain a linear model around the degeneracy locus of a contact structure. The appropriate notion of linearity for the form is the following:
\begin{definition}[\cite{MO18}]
A $b^k$-contact form $\alpha \in \Omega^1(\A_Z^k)$ is said to be $\rr^+$\textbf{-invariant} if there exists a tubular neighbourhood of $Z$ on which it has the form
\begin{equation*}
\alpha = u\frac{dz}{z^k} + \beta, \quad f \in C^{\infty}(Z),\beta \in \Omega^1(Z). \hfill \qedhere
\end{equation*}
\end{definition}
Hence by Lemma \ref{lem:regsemiglobform} we obtain:
\begin{proposition}\label{prop:convexity}
Any $b^k$-contact structure is contactomorphic to a $b^k$-contact structure representable by an $\rr^+$-invariant form.
\end{proposition}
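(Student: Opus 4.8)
The plan is to reduce the statement to the corresponding normal form result on the central leaf of the regularisation, which has already been established, and then transport it back upstairs using the foliated contact Moser lemma (Lemma \ref{lem:folcontmos}). First I would invoke Corollary \ref{cor:trivialRegularisation} (or Lemma \ref{lem:trivialRegularisation2}) to fix a trivial regularisation $(M\times\rr,\Fcal)$ of $\A_Z^k$, and use Proposition \ref{prop:inducedcontactfol} to obtain the lifted foliated contact structure $\xi_{\reg}$ on it, whose restriction to the central leaf $Z\times\rr$ is computed by Lemma \ref{lem:centralleaf} to be $\ker(uds+\beta)$ with $u=\Res(\alpha)$ and $\beta$ the induced form on $Z$. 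The key point is that this expression for the contact form on the central leaf is already $\rr^+$-invariant in the relevant sense: it does not involve the $\rr$-coordinate $s$ except through the constant-coefficient term $u\,ds$.

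Next I would use the semi-local normal form of Lemma \ref{lem:regsemiglobform}, which says that on a tubular neighbourhood $p:\mathcal{U}\to Z\times\rr$ the foliated contact structure $\xi_{\reg}$ is contactomorphic to the pullback $p^*((\xi_{\reg})|_{Z\times\rr})$. Via the correspondence between $b^k$-contact structures and their regularisations (Propositions \ref{prop:inducedcontactfol} and \ref{prop:comparestructures}), this semi-local model on $\mathcal{U}$ descends to a semi-local model for $\xi$ on a tubular neighbourhood of $Z$ in $M$. Translating the normal form $uds+\beta$ on $Z\times\rr$ through the identification of the central leaf with the regularisation of $\A^k_Z$ (spelled out in Lemma \ref{lem:centralleaf}) produces precisely a form of the type $u\,dz/z^k+\beta$ with $u\in C^\infty(Z)$ and $\beta\in\Omega^1(Z)$, i.e. an $\rr^+$-invariant $b^k$-contact form on the neighbourhood of $Z$.

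Finally, to promote this from a semi-local to a global statement, I would apply Lemma \ref{lem:folcontmos} (foliated contact Moser): the original distribution $\xi$ and the model built above agree along $Z$ (indeed they induce the same foliated contact germ on the central leaf by construction), so there is a diffeomorphism of $M$, fixing $Z$, carrying one to the other. This exhibits $\xi$ as contactomorphic to a $b^k$-contact structure representable by an $\rr^+$-invariant form, which is the claim.

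The main obstacle is ensuring that the $\rr^+$-invariant normal form obtained on the central leaf, which a priori lives in the regularisation $Z\times\rr$, genuinely pulls back to a $b^k$-form of the prescribed shape on a full tubular neighbourhood of $Z$ in $M$ — in other words, that the conversion $ds\leftrightarrow dz/z^k$ performed by Lemma \ref{lem:centralleaf} is compatible with the semi-local contactomorphism of Lemma \ref{lem:regsemiglobform} in the $s$-direction, and that the latter can be chosen $\rr$-equivariantly near $Z$ so as not to reintroduce $z$-dependence in $u$ or $\beta$. Once one checks that the Moser diffeomorphism of Lemma \ref{lem:regsemiglobform} preserves the $\rr$-action (which it does, since the whole setup — foliation, lifted distribution, and the model — is $\rr$-invariant), this compatibility is automatic and the argument goes through.
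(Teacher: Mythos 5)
Your proposal is correct and follows essentially the same route as the paper, which derives Proposition \ref{prop:convexity} directly from the semi-local normal form of Lemma \ref{lem:regsemiglobform} (itself a consequence of the foliated Moser lemma applied to the regularisation); your write-up simply makes explicit the descent from the central-leaf model $u\,ds+\beta$ to the $\rr^+$-invariant form $u\,dz/z^k+\beta$ via $ds\leftrightarrow dz/z^k$, together with the $\rr$-equivariance needed for the Moser diffeomorphism to descend to $M$. These are exactly the points the paper leaves implicit, so no discrepancy.
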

The above proposition was proven independently by a direct Moser argument in the recent paper \cite{CO22}.

\subsection{The elliptic setting} \label{ssec:ellipticContact}

In this section we will comment on some ways to construct examples of elliptic contact structures and their relation with b-contact structures. 

\begin{remark}[Generalized contact structures]
Elliptic symplectic structures arise from the study of special generalized complex structures. They are in one-to-one correspondence with a special class of generalized complex structures called stable.

An important question in generalized geometry is what the right notion of a ``generalized contact structure'' is. As of yet there are many competing notions (see for instance \cite{VW15} and the citations therein), however all of these have a shortcoming: Boundaries of generalized complex structures don't seem to provide examples of generalized contact structures.

Elliptic contact structures do appear as the contact boundaries of elliptic symplectic manifolds, see Lemma \ref{lem:contacthypersurface}. Given that elliptic symplectic structures correspond to stable generalized complex structures, it would be very interesting to investigate whether there is a notion of generalized contact structures which have elliptic contact structures as examples.
\end{remark}

\begin{lemma}
Let $(D^2,I_{\abs{D}} = \inp{x^2+y^2})$ be the disk with the standard elliptic divisor. Then the spaces of contact elements $(ST^*D^2)|_{D^2\backslash \set{0}}$ and $S\A^*|_{D^2\backslash \set{0}}$ are contactomorphic.
\end{lemma}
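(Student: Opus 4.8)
The plan is to exhibit an explicit isomorphism of Lie algebroids over $D^2\setminus\set{0}$ that intertwines the two canonical contact distributions. First I would recall that away from the critical set $\set{0}$, the elliptic divisor $I_{\abs{D}}=\inp{x^2+y^2}$ is non-vanishing, so the anchor $\rho:\A_{\abs D}\to TD^2$ is an isomorphism over $D^2\setminus\set{0}$. Hence $\A_{\abs D}|_{D^2\setminus\set 0}\cong T(D^2\setminus\set 0)$ as Lie algebroids, and dualising gives $\A^*_{\abs D}|_{D^2\setminus\set 0}\cong T^*(D^2\setminus\set 0)$, which after fibrewise spherisation yields $S\A^*|_{D^2\setminus\set 0}\cong ST^*(D^2\setminus\set 0)=(ST^*D^2)|_{D^2\setminus\set 0}$ as bundles. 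The remaining task is to check that this bundle isomorphism lifts to an isomorphism of the pullback Lie algebroids $\pi_\SS^!\A_{\abs D}$ and $\pi_\SS^!T D^2$, and that it carries $\Dcal_{\can}$ to $\Dcal_{\can}$.

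The key observation is that both the pullback construction $\phi\mapsto\phi^!\A$ and the space of contact elements $(\pi_\SS^!\A,\Dcal_{\can})$ from Subsection \ref{ssec:contactElements} are functorial: they are built purely out of the Lie algebroid $\A$, the Liouville form $\lambda_{\can}\in\Omega^1(\pi^!\A)$, and the scaling $\RR^*$-action, all of which are natural in $\A$. So I would set up the chain of natural isomorphisms: the anchor $\rho$ restricted to $D^2\setminus\set 0$ is an isomorphism of Lie algebroids $\A_{\abs D}|_{D^2\setminus\set 0}\to T(D^2\setminus\set 0)$; applying the (contravariant-on-the-base, covariant-on-fibres) functor $\A\mapsto(\SS\A^*,\pi_\SS^!\A,\Dcal_{\can})$ then produces a contactomorphism $(S\A^*|_{D^2\setminus\set 0},\Dcal_{\can})\cong((ST^*D^2)|_{D^2\setminus\set 0},\Dcal_{\can})$. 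Concretely, if $(v_1,v_2)$ is a local frame for $\A_{\abs D}$ with $\rho(v_i)=\partial_{x_i}$ (possible exactly because $\rho$ is invertible off the origin) and $(t_1,t_2)$ the associated fibre coordinates, then $\lambda_{\can}=t_1\alpha_1+t_2\alpha_2$ is literally pulled back from the standard Liouville form $t_1dx_1+t_2dx_2$ on $T^*(D^2\setminus\set 0)$ under $d_\A\rho$, so the $\RR^*$-invariant kernels agree and descend to the same distribution on the sphere bundle.

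The main point to be careful about — and the step I expect to require the most attention — is the behaviour near the puncture, i.e. making sure the isomorphism is well-defined on all of $D^2\setminus\set 0$ and not merely locally, and that one is comparing the intrinsic $\Dcal_{\can}$ on $S\A^*$ with the restriction to $D^2\setminus\set 0$ of the one on $ST^*D^2$ (rather than something that only extends over $\set 0$ in one model). Since $D^2\setminus\set 0$ is an honest open manifold and $\rho|_{D^2\setminus\set 0}$ is a genuine vector bundle isomorphism covering the identity, there is no monodromy obstruction and the local identifications glue; the contact condition is preserved automatically because $\Omega_{1,1}(\Dcal_{\can})$ is transported by any Lie algebroid isomorphism. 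I would close by remarking that this is a special case of the more general principle that whenever the anchor of a divisor-type Lie algebroid is invertible on an open set $U$, all canonical constructions (spaces of contact elements, symplectisations, Liouville forms) restricted to $U$ agree with their classical counterparts on $U$.
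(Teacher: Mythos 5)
Your proof is correct and is essentially the paper's argument: both reduce the statement to the fact that $\elli$ and $T(D^2\setminus\set{0})$ are isomorphic as Lie algebroids over $D^2\setminus\set{0}$ and then invoke naturality of the contact-elements construction. The paper produces this isomorphism by passing through cylindrical coordinates on $\rr\times\SS^1$, but the composite it obtains is exactly the anchor, so your direct use of $\rho$ (justified by the anchor being invertible off the critical set) is the same isomorphism stated more economically.
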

\begin{proof}
Let $\Phi : D^2\setminus \set{0} \rightarrow \rr \times S^1$ be the standard diffeomorphism, which lifts to a Lie algebroid isomorphism of $\elli \rightarrow (D^2\backslash \set{0})$ and $T(\rr \times S^1) \rightarrow \rr \times S^1$. But as $T(D^2\backslash \set{0})$ is isomorphic to $T(\rr \times S^1)$, we conclude that $\elli$ and $T(D^2\backslash \set{0})$ are isomorphic as Lie algebroids over $D^2 \setminus \set{0}$. Consequently, the spaces of contact elements are contactomorphic.
\end{proof}

%
%
%

\begin{proposition}\label{prop:buildelli}
Let $(M^3,\xi)$ be a contact manifold, and let $L \subset M$ be a Legendrian circle. Then there exists an elliptic-contact structure with degeneracy locus $L$.
\end{proposition}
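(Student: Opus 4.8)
The plan is to build the elliptic contact structure by an explicit local model near the Legendrian circle $L$, glued to the given contact structure away from $L$. First I would fix a Weinstein neighbourhood of the Legendrian $L \subset (M^3,\xi)$: there are coordinates $(\theta, x, y)$ on a tubular neighbourhood $U \cong S^1 \times D^2$ of $L$, with $L = \set{x=y=0}$ and $\xi = \ker(dx - y\, d\theta)$ (the standard jet-$1$ Legendrian model in dimension $3$). I would equip $U$ with the elliptic divisor $(W,s) = (U \times \RR,\ x^2+y^2)$, so that the degeneracy locus $D = s^{-1}(0)$ is exactly $L$; write $\A_{\abs{D}} \to M$ for the elliptic tangent bundle obtained by taking this divisor on $U$ and the trivial (isomorphism) divisor on $M \setminus L$. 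Passing to polar coordinates $(r,\varphi)$ on $D^2$, recall that near $D$ the elliptic tangent bundle is locally spanned by $\langle r\partial_r,\ \partial_\varphi,\ \partial_\theta\rangle$ (equivalently, by the log-type generators dual to $d\log r$ and $d\varphi$).

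The core of the argument is then to write down an $\A_{\abs{D}}$-contact form on $U$ that matches $\alpha = dx - y\,d\theta$ near $\partial U$. I would take, in polar coordinates, a form of the type
\[
\alpha_{\abs{D}} = \chi(r)\,\big(g_1(r)\,d\log r + g_2(r)\,d\varphi\big) + \big(1-\chi(r)\big)\,(dx - y\,d\theta) + h(r)\,d\theta,
\]
with $\chi$ a cutoff equal to $1$ near $r=0$ and $0$ near $\partial U$, and $g_1,g_2,h$ chosen so that (i) the bracketed expression is a genuine smooth section of $\A_{\abs{D}}^*$, (ii) the contact condition $\alpha_{\abs{D}} \wedge d\alpha_{\abs{D}} \neq 0$ holds as an $\A_{\abs{D}}$-volume form on all of $U$ — including along $r=0$, where one computes the wedge in the frame $\langle r\partial_r,\partial_\varphi,\partial_\theta\rangle$ — and (iii) $\alpha_{\abs{D}}$ agrees with $dx-y\,d\theta$ on the region where $\chi \equiv 0$. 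A clean way to arrange (i)–(iii): first note $d\log r$ and $d\varphi$ together with $d\theta$ furnish a coframe of $\A_{\abs{D}}^*$ near $D$, and in this coframe the model $b$-type/elliptic contact form $d\log r + f(r,\varphi)\,d\varphi + d\theta$-style expressions are contact for generic $f$; one then interpolates the coefficient functions between this model and the smooth Legendrian model on an annulus $\set{r_0 \le r \le r_1}$, checking the contact condition pointwise on the (one-parameter family of) interpolating forms. Since being contact is an open condition and the interpolation is through $1$-forms whose leading behaviour we control, a suitable choice of cutoff rate makes the whole family contact; Lemma \ref{lem:folcontmos}-type Moser arguments are not even needed, only the openness of the contact condition.

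The main obstacle I anticipate is step (ii) at the degeneracy locus: one must verify that the candidate $\alpha_{\abs{D}}$ remains contact precisely along $r=0$, where $d\log r$ is the dangerous generator and a careless interpolation can make $\alpha_{\abs{D}} \wedge d\alpha_{\abs{D}}$ degenerate. Concretely, writing $\alpha_{\abs{D}} = a(r)\,d\log r + b(r,\varphi)\,d\varphi + c(r,\varphi)\,d\theta$ near $D$, the volume form is (a nonzero multiple of) $\big(a\,\partial_r b - b\,\partial_r a + \dots\big)\,d\log r \wedge d\varphi \wedge d\theta$ plus terms involving $\partial_\varphi$ and $\partial_\theta$ derivatives, and one needs the bracketed scalar to be nowhere zero on $\set{r=0}$; this forces, e.g., $a(0) \neq 0$ together with a transversality condition on the $\theta$-dependence, which is exactly what the $d\theta$-term $h(r)\,d\theta$ (with $h(0)\neq 0$ or $h'(0)\neq 0$ as appropriate) is there to guarantee. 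Once the local model is contact and agrees with $(M,\xi)$ outside $U$, gluing is immediate and the resulting $\A_{\abs{D}}$-contact structure has degeneracy locus $L$ by construction, proving the proposition.
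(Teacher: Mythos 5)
There is a genuine gap: your proposed local model near $L$ is never contact along the degeneracy locus. Write a general elliptic $1$-form near $L$ as $\alpha = a\,d\log r + b\,d\varphi + c\,d\theta$ in the coframe dual to $\langle r\partial_r,\partial_\varphi,\partial_\theta\rangle$. Since $d\log r$, $d\varphi$, $d\theta$ are $d_{\A}$-closed, the coefficient of $\alpha\wedge d_{\A}\alpha$ on $d\log r\wedge d\varphi\wedge d\theta$ is
\[
a(\partial_\varphi c - \partial_\theta b) + b(\partial_\theta a - r\partial_r c) + c(r\partial_r b - \partial_\varphi a),
\]
and at $r=0$ the terms $r\partial_r(\cdot)$ vanish while $\partial_\varphi a = \partial_\varphi c = 0$ (the angular derivative of a smooth function on $D^2$ vanishes at the origin), so the contact condition along $L$ reduces to $a\,\partial_\theta b - b\,\partial_\theta a \neq 0$. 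In other words, the residue pair $(a,b)|_{r=0}$ must wind nontrivially as a function of the coordinate $\theta$ \emph{along} $L$. Your ansatz takes $a = \chi g_1(r)$, $b = \chi g_2(r)$ (and even the refined version $a(r)$, $b(r,\varphi)$, $c(r,\varphi)$) independent of $\theta$, so this quantity is identically zero on $\{r=0\}$: no choice of $g_1$, $g_2$, $h$, or cutoff rate can repair it, and the extra term $h(r)\,d\theta$ does not help because it contributes nothing to $a\,\partial_\theta b - b\,\partial_\theta a$. The correct model is of the form $\cos\theta\,d\log r + \sin\theta\,d\varphi$. A secondary issue is that, even with the right endpoint model, ``openness of the contact condition'' does not produce a contact interpolation between two given contact forms that are far apart; openness only controls small perturbations, and the space of contact forms is not convex.

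The paper's proof is arranged precisely so that this $\theta$-winding is automatic: it identifies a neighbourhood of $L$ with a neighbourhood of the zero fibre in the space of contact elements $ST^*D^2$ (rather than with the jet-space model $\ker(dx-y\,d\theta)$ you chose), notes that $ST^*D^2$ and $\SS\A^*$ for the elliptic tangent bundle of $(D^2,\langle x^2+y^2\rangle)$ are contactomorphic over $D^2\setminus\{0\}$, and glues in $\SS\A^*$ by surgery. There the canonical contact form is $\cos\psi\,dx+\sin\psi\,dy$, respectively $\cos\psi\,d\log r+\sin\psi\,d\varphi$, with $\psi$ the fibre coordinate playing the role of your $\theta$; the winding is built in and no interpolation of contact forms is needed. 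If you want to salvage your explicit route, you should start from the $ST^*D^2$ normal form of the Legendrian neighbourhood and match coefficients with $\cos\theta\,d\log r + \sin\theta\,d\varphi$ on an annulus, which essentially reproduces the paper's argument.
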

\begin{proof}
Around the Legendrian circle, the contact structure has a normal form given by the standard contact structure on $ST^*D^2$, with the fibre over zero corresponding with $L$. By the Lemma above $(ST^*D^2)\setminus D^2$ and $S\A^*\setminus D^2$ are contactomorphic, so we can perform a surgery to glue in a copy of $S\A^*$ around $L$. This gives the desired elliptic contact structure.
\end{proof}
Because any contact three-manifold has a Legendrian circle, we conclude that any contact three-manifold also admits an elliptic contact structure.

A nice interaction between b- and elliptic geometry, is given through the real oriented blow-up. This construction replaces a codimension-two co-orientable embedded submanifold $D \subset M$, with a copy of $S^1ND$, creating a manifold with boundary $p:\tilde{M} \rightarrow M$, with $\partial\tilde{M} = S^1ND$. In \cite{KL19} it was established that the real oriented blow-up of the degeneracy locus of an elliptic divisor provides a morphism of divisors $p : (\tilde{M},\partial\tilde{M}) \rightarrow (M,I_{\abs{D}})$. This morphism of divisors induces a fibre-wise isomorphism between the associated Lie algebroids, which is then used to pull-back elliptic symplectic structures on $M$ to b-symplectic structures on $\tilde{M}$. Completely similarly, we also have:
\begin{lemma}\label{lem:blowup}
Let $(M,I_{\abs{D}},\alpha)$ be an elliptic contact manifold with $D$ co-oriented, and let $p:\tilde{M} \rightarrow M$ denote the real oriented blow-up along $D$. Then $p^*\alpha$ defines a b-contact form on $(\tilde{M},\partial \tilde{M})$.
\end{lemma}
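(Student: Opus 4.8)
The plan is to reduce the statement to a local computation in a tubular neighbourhood of the degeneracy locus $D$, exactly in the spirit of the symplectic analogue established in \cite{KL19}. First I would recall the key structural fact about the real oriented blow-up: the map $p : \tilde M \to M$ restricts to a diffeomorphism $\tilde M \setminus \partial\tilde M \to M \setminus D$, and by \cite{KL19} it is a morphism of divisors $(\tilde M, \partial\tilde M) \to (M, I_{\abs D})$, inducing a Lie algebroid isomorphism $dp : \A_{\partial\tilde M} \to p^!\A_{\abs D}$ which is a fibrewise isomorphism (an honest isomorphism away from $\partial\tilde M$, where it degenerates in the base but not on the bundle). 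Since $\alpha \in \Omega^1(\A_{\abs D})$ is a Lie algebroid $1$-form, $p^*\alpha := dp^*\alpha$ is a well-defined element of $\Omega^1(\A_{\partial\tilde M})$. The content of the lemma is then just that the $b$-contact condition $p^*\alpha \wedge (d_{\A_{\partial\tilde M}} p^*\alpha)^n$ is a $\A_{\partial\tilde M}$-volume form on $\tilde M$.

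The next step is to observe that away from $\partial\tilde M$ this is immediate: there $p$ is a diffeomorphism and $dp$ is a genuine Lie algebroid isomorphism, so $p^*$ intertwines $d_{\A_{\abs D}}$ with $d_{\A_{\partial\tilde M}}$ and carries the volume form $\alpha \wedge (d\alpha)^n$ to $p^*\alpha \wedge (d\, p^*\alpha)^n$, which is therefore nonvanishing on $\tilde M \setminus \partial\tilde M$. So the only thing to check is nonvanishing along the boundary locus $\partial\tilde M = S^1ND$. For this I would pass to the standard local model: near $D$ we have coordinates so that $W = M \times \RR$ with $s = r^2$ (where $r,\varphi$ are polar coordinates on the normal disk), the elliptic divisor is $\langle x^2+y^2\rangle$, and $\A_{\abs D}$ is locally framed by $r\partial_r$, $\partial_\varphi$, together with $\partial_{z_i}$ in the directions along $D$. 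The blow-up introduces the boundary coordinate $r \ge 0$ and $\A_{\partial\tilde M}$ is locally framed by $r\partial_r, \partial_\varphi, \partial_{z_i}$ — but now these are genuine smooth frames all the way to $r=0$. Writing $\alpha$ in the elliptic frame and pulling back, one checks that $p^*\alpha$ has smooth nonvanishing coefficients in the $b$-frame, and that the top form $p^*\alpha \wedge (d\,p^*\alpha)^n$ evaluated on the $b$-frame equals (the pullback of) $\alpha\wedge(d\alpha)^n$ evaluated on the elliptic frame, hence is nonzero since $\alpha$ was assumed elliptic-contact. The co-orientedness of $D$ is exactly what is needed to make the real oriented blow-up, and hence the globally defined boundary hypersurface $\partial\tilde M$ with its $b$-structure, well-defined.

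I expect the only real obstacle to be bookkeeping: making sure the degeneration of the anchor is being tracked in the base and not in the Lie algebroid bundle, i.e. that $dp$ really is a fibrewise isomorphism of vector bundles and that ``$p^*\alpha$'' therefore makes literal sense as a smooth section of $\wedge^1 \A_{\partial\tilde M}^*$ rather than merely as a form with singularities. Once that is set up — which is precisely what \cite{KL19} provides — the contact/non-degeneracy condition is a pointwise statement about a fibrewise isomorphism pulling back a volume form to a volume form, which is automatic. So the argument is essentially: cite the divisor morphism from \cite{KL19}, note it is a fibrewise Lie algebroid isomorphism, and conclude that it pulls the elliptic-contact form back to a $b$-contact form; the local model is only needed to double-check smoothness at the boundary, and there it matches the $1$-dimensional picture $r\partial_r \leftrightarrow r\partial_r$ underlying the whole regularisation story.
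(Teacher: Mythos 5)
Your proposal is correct and follows essentially the same route as the paper, which proves this lemma purely by invoking the divisor morphism $p:(\tilde M,\partial\tilde M)\to(M,I_{\abs{D}})$ from \cite{KL19} and the fact that it induces a fibrewise isomorphism of the associated Lie algebroids, so that the pullback of a Lie algebroid volume form is again a volume form. Your additional local-model verification in the frame $r\partial_r,\partial_\varphi,\partial_{z_i}$ is a harmless (and welcome) expansion of what the paper leaves implicit in the phrase ``completely similarly'' to the symplectic case.
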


\section{The Weinstein conjecture for Lie algebroid contact structures} \label{sec:Weinstein}
 
In this section we prove Lie algebroid versions of the Weinstein conjecture, under overtwistedness assumptions. The main ingredients to be used are the regularisation procedure and the foliated Weinstein conjecture:
\begin{theorem}[\cite{dPP18}]\label{th:foliatedweinstein}
Let $(M^{2n+1+m},\Fcal^{2n+1},\xi)$ be a contact foliation in a closed manifold. Let $\alpha$ be a defining one form for an extension of $\xi$ and let $R$ be its Reeb vector field. Let $\Lcal \hookrightarrow M$ be a leaf, with $\xi|_{\Lcal}$ overtwisted. Then, $R$ has a closed orbit $\gamma$ in the closure of $\Lcal$.

In general, $\gamma$ is contractible in $M$. Furthermore, if $\gamma$ is contained in $\Lcal$, it is contractible within $\Lcal$. \hfill$\triangle$
\end{theorem}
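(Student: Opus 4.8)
The plan is to prove this by Hofer's pseudoholomorphic-curve method, carried out leafwise in the symplectisation of the contact foliation. First I would symplectise: on $M \times \RR$ consider the symplectic foliation $\Fcal \times \RR$ equipped with the leafwise form $d(e^t \alpha)$, exactly the leafwise version of the symplectisation of Subsection \ref{ssec:symplectisation}. I would then pick a leafwise almost complex structure $J$ that is cylindrical (invariant under translation in $t$), compatible with $d(e^t\alpha)$ on $\ker \alpha$, and sends $\partial_t$ to the leafwise Reeb field $R$. The decisive structural feature is that $J$ preserves the tangent distribution to $\Fcal \times \RR$, so every finite-energy $J$-holomorphic curve is confined to a single leaf $L \times \RR$; within that leaf Hofer's dictionary between non-compact finite-energy curves and closed orbits of $R$ applies verbatim.

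Next I would exploit the overtwistedness of $\xi|_{\Lcal}$ to launch a Bishop family. In dimension three this is the classical family of small holomorphic disks emanating from an overtwisted disk $\Delta \subset \Lcal$; in higher dimensions I would use the bordered Legendrian / plastikstufe supplied by the Borman--Eliashberg--Murphy notion of overtwistedness, from which Niederkr\"uger-type Bishop disks emanate. Setting up the moduli space $\Mcal$ of such $J$-holomorphic disks with boundary on $\Delta$, a generic leafwise perturbation of $J$ makes the component containing the Bishop disks a smooth, connected, one-dimensional manifold that starts at the constant disk at the centre of $\Delta$ and is non-compact.

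I would then run Gromov--SFT compactness for this one-parameter family. The boundaries remain on $\Delta$, and the overtwisted geometry prevents the family from closing up into a compact moduli space or from collapsing back: the only available end is that the disks acquire unbounded symplectic area and push energy up the symplectisation direction. A bubbling analysis extracts a non-constant finite-energy $J$-holomorphic plane (or building), whose positive asymptote is a closed orbit $\gamma$ of $R$. Because each curve lies in a single leaf $L \times \RR$ and $M$ is compact, the supporting leaves subconverge and $\gamma$ lands in the closure $\overline{\Lcal}$. The capping disk then homotopes $\gamma$ to a point, giving contractibility in $M$; if moreover $\gamma \subset \Lcal$, leafwise confinement together with unique continuation forces the whole building to lie in the single leaf $\Lcal \times \RR$, so $\gamma$ is contractible within $\Lcal$.

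The main obstacle is precisely this compactness step in the foliated setting. Leaves are non-compact and possibly recurrent, and the leaf space of $\Fcal$ need not be Hausdorff, so one must (i) derive uniform energy bounds from the compactness of $M$ and the $t$-invariance of $J$ alone, and (ii) control how the supporting leaf may jump in the limit, so as to guarantee that the broken configuration is a genuine Reeb orbit in $\overline{\Lcal}$ rather than an escaping or degenerate limit. A secondary difficulty is securing Fredholm transversality using only foliation-compatible (leafwise) perturbations of $J$, since perturbations transverse to $\Fcal$ are unavailable.
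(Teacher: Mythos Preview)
The paper does not actually prove this theorem: it is quoted from \cite{dPP18} and used as a black box. The only information the paper gives about the proof is in Remark~\ref{rem:contracti}, which says that the orbit arises from bubbling analysis on a finite energy plane, and that the plane may lie in a leaf other than $\Lcal$ (hence contractibility only in the ambient $M$ in general).

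Your outline matches that description and is a faithful sketch of the argument in \cite{dPP18}: leafwise symplectisation, a cylindrical $J$ tangent to the leaves, a Bishop family launched from the overtwisted disk/plastikstufe, and SFT-type bubbling producing a finite energy plane asymptotic to a closed Reeb orbit in $\overline{\Lcal}$. You also correctly identify the genuine subtleties (uniform energy bounds and control of the limiting leaf despite non-Hausdorff leaf space; transversality via leafwise perturbations). One small correction: your justification of contractibility within $\Lcal$ when $\gamma \subset \Lcal$ is not quite the mechanism. It is not that ``the whole building is forced into $\Lcal$'' by unique continuation; rather, as Remark~\ref{rem:contracti} explains, the bubbling produces the finite energy plane in some leaf in $\overline{\Lcal}$, together with a pseudoholomorphic cylinder graphical over $\gamma$ connecting it to the original family. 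If that plane happens to sit in $\Lcal \times \RR$, it directly caps $\gamma$ inside $\Lcal$; if it sits in another leaf, one only gets contractibility in $M$ via the cylinder plus plane.
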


\begin{remark}[Contractibility]\label{rem:contracti}
In contrast to the classic version of this result, the orbit $\gamma$ obtained using Theorem \ref{th:foliatedweinstein} is (a priori) contractible in the ambient space $M$, but not necessarily in the leaf in which it is contained. The reason is that the proof produces a pseudoholomorphic cylinder that is graphical over $\gamma$. This cylinder is obtained from bubbling analysis on a finite energy plane. If the plane is in the leaf containing $\gamma$, it provides a nullhomotopy of $\gamma$ within the leaf. However, the plane may be in some other leaf within the closure of $\Lcal$, which proves contractibility only in the ambient.

It is still an open question to find examples of contact foliations where this is indeed the case.
\end{remark}

We will look at $b^k$-geometry first and then at elliptic geometry.

\subsection{The Weinstein conjecture in the $b^k$-contact setting}

In this subsection we present a proof of the Weinstein conjecture in overtwisted $b^k$-contact manifolds; this result appeared first in \cite{MO18}, where the developed a theory of pseudoholomorphic curves adapted to the $b^k$-structure. Our approach uses the regularistion instead, reducing the question to the foliated setting. This allows us to proof the existence of Reeb orbits in a more general setting than presented in \cite{MO18}.

Before we do so, we make some observations about the Reeb orbits in the divisor $Z$, particularly in dimensions three and five.

\subsubsection{Preliminary remarks}

\begin{lemma}[\cite{MO21}]\label{lem:cheatWeinstein}
Let $(M^{2n+1},Z,\alpha)$ be a $b^k$-contact manifold. Then:
\begin{itemize}
\item When $n= 1$ there are infinitely many periodic orbits on $Z$.
\item When $n = 2$ there is a periodic orbit on $Z$.
\end{itemize}
\end{lemma}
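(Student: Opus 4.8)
The plan is to exploit the geometric structure induced on the singular hypersurface $Z$, as described in Proposition \ref{prop:bettergeom} and Lemma \ref{lem:restreebprops}. Recall that $Z$ splits into the dividing set $\Gamma = \Res(\alpha)^{-1}(0)$, which carries a genuine contact form $\iota_\Gamma^*\beta$, and its complement $Z \setminus \Gamma$, which carries an ideal Liouville structure $\omega = d(u^{-1}\beta)$; moreover, by Lemma \ref{lem:restreebprops}, the Reeb vector field $R$ is tangent to $Z$, it is tangent to $\Gamma$, and on $Z \setminus \Gamma$ its restriction $R_Z$ is Hamiltonian for the function $-u^{-1}$ (where $u = \Res(\alpha)$). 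I would also keep in mind Remark \ref{rem:gammanonempty}: when $Z$ is compact, $\Gamma$ is non-empty, so there is always a contact piece to work with.

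For $n=1$, $Z$ is a surface and $\Gamma \subset Z$ is a (non-empty, by Remark \ref{rem:gammanonempty}) disjoint union of circles carrying a one-dimensional contact form, i.e.\ a nowhere-vanishing $1$-form on each circle. Since $R_Z$ is tangent to $\Gamma$ and a contact form on $S^1$ has nowhere-vanishing Reeb field, each component of $\Gamma$ is itself a periodic Reeb orbit; as $\Gamma$ is a $1$-manifold we get infinitely many periodic orbits (each point of $\Gamma$ lies on a closed orbit, or, more conservatively, there are infinitely many since one may also perturb, but the cleanest statement is that $\Gamma$ is a union of closed orbits). This handles the first bullet.

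For $n=2$, $Z$ is $3$-dimensional and $\Gamma \subset Z$ is a non-empty closed surface equipped with a contact form $\iota_\Gamma^*\beta$ — wait, a contact form on a surface does not make sense, so here $\Gamma$ must rather be odd-dimensional; re-reading Proposition \ref{prop:bettergeom}, $\Gamma$ has codimension one in $Z$, hence $\dim \Gamma = 2n-1$, which for $n=2$ is $3$. So $\Gamma$ is a closed $3$-manifold with a genuine contact form $\iota_\Gamma^*\beta$, and $R_Z$ is tangent to $\Gamma$ and restricts there to (a positive multiple of) its Reeb field. The plan is then to invoke the three-dimensional Weinstein conjecture of Taubes \cite{Taub} on the closed contact $3$-manifold $(\Gamma, \iota_\Gamma^*\beta)$: its Reeb field has a closed orbit, and since the Reeb field of $\alpha$ restricted to $\Gamma$ is a positive reparametrisation of it (as it is Hamiltonian-type/tangent with the same kernel), this closed orbit is a periodic orbit of $R$ in $Z$. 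The one point requiring care is checking that $R|_\Gamma$ is indeed a non-vanishing positive multiple of the Reeb field of $\iota_\Gamma^*\beta$; this follows by unwinding the contact condition exactly as in the proof of Lemma \ref{lem:restreebprops} (the equations \eqref{eq:someequations} force $\iota_{X}\iota_\Gamma^*(d\beta) = 0$ and $\iota_\Gamma^*\beta(X) = 1$ along $\Gamma$ in the $\rr^+$-invariant model, which is precisely the Reeb condition). The main obstacle is therefore not analytic — it is the bookkeeping of identifying the structure on $\Gamma$ precisely enough that Taubes' theorem applies verbatim; once that identification is in place, the statement is immediate.
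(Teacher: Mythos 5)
Your proof follows essentially the same route as the paper's: both identify the dividing set $\Gamma = \Res(\alpha)^{-1}(0)$ as a codimension-two contact submanifold (the paper reaches this via the central leaf of the regularisation and Lemma \ref{lem:hondaconvex}, you via Proposition \ref{prop:bettergeom} directly; by Proposition \ref{prop:comparestructures} these give the same structure), then use Lemma \ref{lem:restreebprops} to see that the Reeb field of $\alpha$ is tangent to $\Gamma$ and restricts there to the Reeb field of $\iota_\Gamma^*\beta$, and conclude by the triviality of the one-dimensional case for $n=1$ and by Taubes for $n=2$. Your verification from the equations \eqref{eq:someequations} that $R|_\Gamma$ is precisely the Reeb field of $\iota_\Gamma^*\beta$ (using $\beta(X)=1$ on $\{u=0\}$ and $\iota_\Gamma^*(\iota_X d\beta)=g\,\iota_\Gamma^* du=0$) is correct and is a worthwhile detail the paper leaves implicit.

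The one place your write-up falls short of the statement is the word \emph{infinitely} in the $n=1$ case. There $\Gamma$ is a finite union of circles, hence only finitely many geometrically distinct closed orbits; neither ``each point of $\Gamma$ lies on a closed orbit'' nor an unspecified perturbation upgrades this count. The intended argument --- which the paper makes explicit immediately after the lemma, in the discussion leading to Proposition \ref{prop:anotheranswer} --- is that $R_\alpha$ is tangent to \emph{every} level set $u^{-1}(\varepsilon)$, and since $0$ is a regular value of $u$, for all small $\varepsilon$ these level sets are embedded circles on which the induced form $\beta/\varepsilon$ is nowhere vanishing; hence each such level set is itself a union of closed orbits, yielding infinitely many distinct periodic orbits in $Z$. (The paper's own proof of the lemma is equally terse on this point, asserting only the existence of an orbit for $n=1$, so this is a shared omission rather than a wrong turn; but if you want the statement as written, you need the level-set argument.)
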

\begin{proof}
Because $\Gamma(Z) \subset (Z\times S^1,\alpha_{\reg})$ is a contact submanifold by Lemma \ref{lem:hondaconvex} it is immediate that it has a Reeb orbit when $n=1$ and follows from Taubes' proof of the Weinstein conjecture for $n=3$. By Lemma \ref{lem:restreebprops} we have that the Reeb vector field of $\alpha$ is tangent to $\Gamma(Z)$, and thus these orbits also define orbits of $R_{\alpha}$. 
\end{proof}

In dimension five we can improve on this by observing the following: The Reeb vector field is tangent to all the level sets of $u= \Res(\alpha)$. Therefore, the dynamics is constrained by the structure on these level sets. But as $0$ is a regular value, for all small values $\varepsilon$ the level set $u^{-1}(\varepsilon)$ is regular as well. Moreover, the induced contact form on these manifolds is simply $\beta/\varepsilon$, and therefore they have orbits if and only if $\Gamma(Z)$ has. We arrive at the following conclusion:
\begin{lemma}\label{prop:anotheranswer}
Let $(M,Z,\alpha)$ be a $b^k$-contact manifold, and assume that the dividing set has a Reeb orbit. Then, there are infinitely many Reeb orbits on $Z$.
\end{lemma}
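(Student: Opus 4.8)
The plan is to reduce the statement to the dynamics of the \emph{honest} Reeb field of the contact form $\iota_\Gamma^*\beta$ on the dividing set $\Gamma:=\Gamma(Z)$, transported to the nearby regular level sets of $u:=\Res(\alpha)$, and then to exploit that these level sets are pairwise disjoint.

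First I would set up the reduction. Write $R:=\rho(R_\alpha)|_Z$ for the restriction of the Reeb field of $\alpha$ to $Z$. By Lemma~\ref{lem:restreebprops}, $R$ is tangent to $\Gamma$ and, on $Z\setminus\Gamma$, is Hamiltonian for $-u^{-1}$ with respect to the exact symplectic form $\omega=d(u^{-1}\beta)$ of Proposition~\ref{prop:bettergeom}. Since $\iota_R\iota_R\omega=0$, this forces $R(u)=0$, so $R$ is tangent to every level set $L_\varepsilon:=u^{-1}(\varepsilon)\subset Z$; and $\iota_R\omega=d(-u^{-1})$ restricts on $L_\varepsilon$ (where $du$ is conormal) to $\iota_{R|_{L_\varepsilon}}\bigl(d(\iota_{L_\varepsilon}^*\beta)\bigr)=0$, so that $R|_{L_\varepsilon}$ lies in the Reeb line of $\iota_{L_\varepsilon}^*\beta$. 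It is nowhere zero near $\Gamma$, since the contact identity $g\tilde u+\beta(R)=1$ from the proof of Lemma~\ref{lem:restreebprops} makes $\beta(R)$ close to $1$ there. As the contact condition forces $u$ to vanish transversely along $\Gamma$, there is $\delta>0$ such that for every $\varepsilon\in(-\delta,\delta)$ the level set $L_\varepsilon$ is a closed hypersurface of $Z$, these sweep out a collar of $\Gamma$, and $\iota_{L_\varepsilon}^*\beta$ is a contact form (contact being an open condition and $\iota_\Gamma^*\beta=\iota_{L_0}^*\beta$ being contact by Theorem~\ref{th:geomCE}). Thus $R|_{L_\varepsilon}$ is a nowhere-zero multiple of the Reeb field of $\iota_{L_\varepsilon}^*\beta$, and any periodic Reeb orbit of $\iota_{L_\varepsilon}^*\beta$ gives, after reparametrisation, a non-constant periodic orbit of $R$ --- hence of $R_\alpha$ --- contained in $Z$.

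The crux is then to compare $(L_\varepsilon,\iota_{L_\varepsilon}^*\beta)$ with $(\Gamma,\iota_\Gamma^*\beta)$. I would use the ideal Liouville domain structure on $Z\setminus\Gamma$ recorded in Proposition~\ref{prop:bettergeom}: $\Gamma$ is its ideal contact boundary, and the Liouville field $Y$ defined by $\iota_Y\omega=u^{-1}\beta$ (so $\mathcal L_Y\omega=\omega$) is transverse to the $L_\varepsilon$ near $\Gamma$ and interpolates between these contact-type hypersurfaces and the ideal boundary while rescaling the primitive $u^{-1}\beta$ exponentially along its flow. Consequently, for all $\varepsilon$ in a one-sided punctured neighbourhood of $0$, the contact manifold $(L_\varepsilon,\iota_{L_\varepsilon}^*\beta)$ is identified with $(\Gamma,\iota_\Gamma^*\beta)$ by a diffeomorphism and a constant rescaling of the contact form, so their Reeb trajectories coincide up to that diffeomorphism and reparametrisation; this is exactly the sense in which "they have orbits if and only if $\Gamma(Z)$ has". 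When $\Gamma$ is closed one can instead give a direct Gray-stability/Moser argument inside a collar $\Gamma\times(-\delta,\delta)$, $t=u$: writing $\beta=\beta_t+f_t\,dt$ one checks $t\mapsto\beta_t$ is a path of contact forms on $\Gamma$ for $\delta$ small, whence all $(L_\varepsilon,\ker\beta_\varepsilon)$ are contactomorphic to $(\Gamma,\ker\beta_0)$ with matching Reeb dynamics.

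Finally I would assemble: by hypothesis the Reeb field of $\iota_\Gamma^*\beta$ has a periodic orbit; by the previous paragraph so does the Reeb field of $\iota_{L_\varepsilon}^*\beta$ for every $\varepsilon$ in an uncountable set near $0$; by the first step each of these yields a non-constant periodic Reeb orbit of $\alpha$ lying in $L_\varepsilon\subset Z$; and since distinct $L_\varepsilon$ are disjoint, these orbits are pairwise distinct, giving infinitely many. The main obstacle is precisely the third paragraph --- controlling the Liouville flow, or equivalently the family $\beta_t$, uniformly as one approaches the ideal boundary $\Gamma$, so as to make the ``contactomorphic to the dividing set'' claim (and the persistence of a periodic orbit) rigorous. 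Once this is granted the rest is bookkeeping; note also that in all the Weinstein-type applications of Section~\ref{sec:Weinstein} the manifold is closed, so $\Gamma$ is closed (cf.\ Remark~\ref{rem:gammanonempty}) and both the choice of $\delta$ and the Gray-stability variant are routine.
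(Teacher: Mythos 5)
Your proposal is correct and follows essentially the same route as the paper: the paper's own (rather terse) argument, given in the paragraph preceding the lemma, is precisely that the Reeb field is tangent to the level sets $u^{-1}(\varepsilon)$, that for small $\varepsilon$ these are regular contact hypersurfaces with contact form $\beta/\varepsilon$ contactomorphic to the dividing set, and that disjointness of the levels yields infinitely many orbits. You merely supply the details the paper leaves implicit (tangency via $R(u)=0$, identification of $R|_{L_\varepsilon}$ with the Reeb line of $\iota_{L_\varepsilon}^*\beta$, and the Gray-stability identification of nearby levels with $\Gamma$), which is a faithful fleshing-out rather than a different argument.
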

In dimension 5, the dividing set has dimension 3 and therefore has closed Reeb orbits. Therefore Proposition \ref{prop:anotheranswer} provides an answer to Question 6.3 in \cite{MO21}, in dimension 5. Moreover, in higher dimensions the validity of the Weinstein conjecture would give an affirmative answer in general. This fact was also observed independently in \cite{CO22}.

One can wonder whether in higher dimensions the dividing set possesses Reeb orbits. Unfortunately, by Lemma \ref{lem:tight} we have the following:
\begin{lemma}
Let $(M,Z,\alpha)$ be a b-contact manifold, then the induced contact structure on the dividing set $(\Gamma(Z),\rest{\alpha}{\Gamma(Z)})$  tight.
\end{lemma}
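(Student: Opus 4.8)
The plan is to recognise $(\Gamma(Z),\iota^*_{\Gamma}\beta)$ as the ideal contact boundary of an ideal Liouville domain and then to invoke the general principle that symplectically fillable contact manifolds are tight (Gromov--Eliashberg in dimension three, and its higher-dimensional counterpart: a fillable contact manifold is not overtwisted in the Borman--Eliashberg--Murphy sense). Concretely, the statement should follow by combining Proposition \ref{prop:bettergeom} with a lemma of the shape ``the ideal boundary of an ideal Liouville domain is tight'', which is presumably the content of the cited Lemma.

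First I would use Proposition \ref{prop:bettergeom}: on a closed $M$ (so that $Z$ is closed and, by Remark \ref{rem:gammanonempty}, $\Gamma:=\Gamma(Z)$ is non-empty and the statement has content), the complement $Z\setminus\Gamma$ carries the ideal Liouville structure $\omega=d(u^{-1}\beta)$ with $u=\Res(\alpha)$, whose ideal contact boundary along $\Gamma$ is exactly $\iota^*_{\Gamma}\beta$. Since this structure is determined locally near $\Gamma$ (and away from $\Gamma$ it is merely a symplectic form on an open manifold), I would cut $Z$ open along $\Gamma$ to obtain a \emph{compact} manifold $\widehat Z$ with boundary --- a genuine ideal Liouville domain (possibly disconnected) --- whose boundary is either two copies of $\Gamma$ (if $\Gamma$ is two-sided in $Z$) or its orientation double cover, each carrying the contact structure $\iota^*_{\Gamma}\beta$ up to the evident pullback and an irrelevant sign.

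Then I would invoke Giroux's observation that the ideal boundary of an ideal Liouville domain coincides, as a contact manifold, with the convex boundary of an honest compact Liouville domain (a sufficiently large sublevel set of a completion); hence $\partial\widehat Z$ is Liouville-fillable, so by the fillability obstructions above it is tight. Tightness of $\partial\widehat Z$ then forces tightness of $\Gamma$: in the two-sided case a disjoint union is tight iff each component is, and in the one-sided case an overtwisted disk in $\Gamma$ would lift through the covering $\partial\widehat Z\to\Gamma$, a contradiction.

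The main obstacle is the identification in the previous paragraph: verifying that the data $\omega=d(u^{-1}\beta)$ of Proposition \ref{prop:bettergeom} really is an ideal Liouville collar near $\Gamma$ (so that cutting along $\Gamma$ is legitimate and the induced boundary contact structure is indeed $\iota^*_{\Gamma}\beta$), together with quoting the passage from ideal Liouville domains to honest Liouville fillings. Once that is in place, the tightness conclusion is automatic. As an alternative route sidestepping Proposition \ref{prop:bettergeom}, one can argue directly in the regularisation: $\Gamma(Z)$ is the dividing set of the convex hypersurface $Z\times\RR\subset(M\times\RR,\xi_{\reg})$ (as in Lemma \ref{lem:hondaconvex}), and in higher-dimensional convex hypersurface theory the positive and negative regions of a convex hypersurface are ideal Liouville domains with common ideal boundary the dividing set --- again reducing to ``ideal Liouville boundary $\Rightarrow$ fillable $\Rightarrow$ tight''.
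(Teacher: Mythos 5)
Your proposal is correct and takes essentially the paper's route: the paper deduces the statement from Lemma \ref{lem:tight}, whose proof realises $\Gamma$ as the contact boundary of the honest compact Liouville domain $u^{-1}([\varepsilon,\infty))$ (the level set $u^{-1}(\varepsilon)$ carries the form $\beta/\varepsilon$ and is contactomorphic to $\Gamma$), and concludes tightness from fillability --- exactly your ``ideal Liouville boundary $\Rightarrow$ fillable $\Rightarrow$ tight'' mechanism. Your cutting-open step and the worry about a one-sided $\Gamma$ are superfluous (since $\Gamma = u^{-1}(0)$ is automatically coorientable and the superlevel set is already a compact filling), but nothing in your argument is wrong.
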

This observation gives a (rather unsatisfactory) ``answer'' to Question 6.10 posed in \cite{MO21}:
\begin{lemma}
A b-contact $(2n+3)$-manifold $(M,\alpha)$ without periodic Reeb orbits provides a counterexample to the ordinary Weinstein conjecture in dimension $2n+1$.
\end{lemma}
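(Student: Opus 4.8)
\emph{Proof proposal.} The plan is to extract from such an $(M,\alpha)$ a nonempty closed contact $(2n+1)$-manifold whose Reeb flow has no periodic orbit; this manifold will be the dividing set $\Gamma := \Gamma(Z) = \Res(\alpha)^{-1}(0)$ with its induced contact structure. I assume $M$, hence $Z$ and hence $\Gamma$, to be closed; this is implicit in the statement, since otherwise there is nothing to contradict and moreover $\Gamma$ could be empty.

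First I would record the dimension count and the induced geometry on the divisor. Since $\dim M = 2n+3$ we get $\dim Z = 2n+2$ and $\dim \Gamma = 2n+1$. By Proposition \ref{prop:bettergeom} (equivalently Theorem \ref{th:geomCE}), $\Gamma$ is a hypersurface of $Z$ carrying the contact form $\iota_\Gamma^*\beta$, where near $Z$ one writes $\alpha = \tilde u\, dz/z + \tilde\beta$ and $\beta = \iota_Z^*\tilde\beta$; and by Remark \ref{rem:gammanonempty}, $\Gamma$ is nonempty because $Z$ is compact. One checks, using the transverse vanishing of $\Res(\alpha)$, that $\iota_\Gamma^*\alpha$ is a well-defined smooth $1$-form on $\Gamma$ equal to $\iota_\Gamma^*\beta$, and that $\iota_\Gamma^* d\alpha = d(\iota_\Gamma^*\alpha)$; so $(\Gamma,\rest{\alpha}{\Gamma})$ is a nonempty closed contact $(2n+1)$-manifold.

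Next I would identify the restricted Reeb field. By Lemma \ref{lem:restreebprops} the Reeb vector field $R_\alpha$ of $\alpha$ is tangent to $\Gamma$. The key point is that $\rest{R_\alpha}{\Gamma}$ is precisely the Reeb field of $\iota_\Gamma^*\beta$: since $R_\alpha$ is tangent to $\Gamma$, pulling back the defining identities $\iota_{R_\alpha}\alpha = 1$ and $\iota_{R_\alpha}d\alpha = 0$ along $\iota_\Gamma$ and using the two relations from the previous paragraph gives $\iota_{\rest{R_\alpha}{\Gamma}}(\iota_\Gamma^*\beta) = 1$ and $\iota_{\rest{R_\alpha}{\Gamma}}\,d(\iota_\Gamma^*\beta) = 0$. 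This identification is the one step that requires care, but it is essentially contained in the computation proving Lemma \ref{lem:restreebprops} (in particular the relation $X(\tilde u)=0$ there); it just needs to be made explicit. I expect this — together with ensuring $\Gamma$ really is a closed manifold and is nonempty — to be the only genuine content; everything else is formal.

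Finally I would conclude: if $R_\alpha$ has no periodic orbit on $M$, then in particular it has no periodic orbit contained in $\Gamma$, so by the previous step the Reeb vector field of the nonempty closed contact $(2n+1)$-manifold $(\Gamma,\iota_\Gamma^*\beta)$ has no periodic orbit, which is by definition a counterexample to the Weinstein conjecture in dimension $2n+1$. The argument goes through verbatim for $b^k$-contact structures with any $k$.
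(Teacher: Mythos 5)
Your argument is correct and follows the same route as the paper: restrict to the dividing set $\Gamma(Z)$, which is a nonempty closed contact $(2n+1)$-manifold to which the Reeb field is tangent by Lemma \ref{lem:restreebprops}, so absence of periodic orbits in $M$ forces their absence in $(\Gamma(Z),\rest{\alpha}{\Gamma(Z)})$. Your explicit verification that $\rest{R_\alpha}{\Gamma}$ is the Reeb field of the induced contact form is a detail the paper leaves implicit, but it is the same proof.
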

\begin{proof}
By Lemma \ref{lem:restreebprops}, the Reeb vector field is tangent to the dividing set, and thus any Reeb orbit of $(\Gamma(Z),\rest{\alpha}{\Gamma(Z)}$ will be a Reeb orbit of $(M,\alpha)$. Thus if $M$ has no periodic Reeb orbits, than in particular the tight contact manifold $(\Gamma(Z),\rest{\alpha}{\Gamma(Z)})$ should have no periodic Reeb orbits.
\end{proof}

\subsubsection{Weinstein conjecture for overtwisted $b^k$-contact forms}

To obtain Reeb orbits we will apply the following lemma:
\begin{lemma}\label{lem:projectingReeb}
Let $(M,Z,\alpha)$ be a $b^k$-contact manifold, and let $(M\times S^1,\ff_{\reg,c},\alpha_{\reg})$ be its compact regularisation. If $\gamma$ is an orbit of the Reeb vector field of $\alpha_{\reg}$, then $\pi(\gamma)$ is a Reeb orbit of $R_{\alpha}$.
\end{lemma}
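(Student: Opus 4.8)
\textit{Proof proposal.} The plan is to identify, via the Lie algebroid submersion $\Psi \colon \ff_{\reg,c} \to \A^k_Z$ of Definition \ref{def:regularisation}, the Reeb vector field of $\alpha_{\reg}$ with a lift of the $b^k$-Reeb field $R_\alpha$, and then to read off the claim from the commuting anchor square of $\Psi$. Concretely, I will show that the leafwise Reeb field $R_{\reg}$ of $\alpha_{\reg}$ is $\Psi$-related to $R_\alpha$; since the anchor of the foliation $\ff_{\reg,c}$ is the tautological inclusion $\ff_{\reg,c}\hookrightarrow T(M\times\SS^1)$ and $\Psi$ covers $\pi$ over the anchors, this is exactly the assertion that the vector field $R_{\reg}$ on $M\times\SS^1$ is $\pi$-related to the vector field $\rho(R_\alpha)$ on $M$, whence $\pi$ carries integral curves to integral curves.

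First I would recall that, by construction of the lifted contact form (Theorem \ref{thm:liftingMain}, Proposition \ref{prop:regularisationDistributions}), $\alpha_{\reg}=\Psi^*\alpha$, and that $\Psi$ is a fibrewise isomorphism. Hence there is a unique section $\tilde R\in\Gamma(\ff_{\reg,c})$ with $\Psi\circ\tilde R=R_\alpha$, and I would check that $\tilde R$ satisfies the two equations characterising the (foliated) Reeb field of $\alpha_{\reg}$. The normalisation is immediate: $\alpha_{\reg}(\tilde R)=(\Psi^*\alpha)(\tilde R)=\alpha(R_\alpha)=1$. For the second equation I use that $\Psi$, being a Lie algebroid morphism, intertwines the algebroid differentials, $d_{\ff_{\reg,c}}\Psi^*=\Psi^*d_{\A^k_Z}$, together with the naturality of the interior product along $\Psi$-related sections:
\[ \iota_{\tilde R}\, d_{\ff_{\reg,c}}\alpha_{\reg} \;=\; \iota_{\tilde R}\,\Psi^*\!\big(d_{\A^k_Z}\alpha\big) \;=\; \Psi^*\!\big(\iota_{R_\alpha}\, d_{\A^k_Z}\alpha\big) \;=\; 0. \]
By uniqueness of the Reeb vector field of a contact form, $\tilde R=R_{\reg}$, so $R_{\reg}$ is $\Psi$-related to $R_\alpha$.

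Finally, the commuting anchor square of $\Psi$ gives $d\pi\big(\rho_{\ff_{\reg,c}}(R_{\reg})\big)=\rho(\Psi(R_{\reg}))=\rho(R_\alpha)$; since $\rho_{\ff_{\reg,c}}$ is the inclusion, this says that the honest vector field $R_{\reg}$ on $M\times\SS^1$ is $\pi$-related to $\rho(R_\alpha)$ on $M$. Therefore, if $\gamma(t)$ is an integral curve of $R_{\reg}$, then $\pi(\gamma(t))$ is an integral curve of $\rho(R_\alpha)$, i.e. a Reeb orbit of $R_\alpha$; and if $\gamma$ is periodic so is $\pi(\gamma)$ (it degenerates to a constant orbit precisely when $\gamma$ is tangent to the $\SS^1$-fibres, which is the case singled out in the discussion following Theorem \ref{th:Weinnormal}). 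I do not expect a genuine obstacle here: the whole argument is the naturality of $\Psi$, and the only point deserving a line of justification is the identity $\iota_{\tilde R}\Psi^*\omega=\Psi^*(\iota_{R_\alpha}\omega)$, which is valid because $\Psi$ is a fibrewise isomorphism covering $\pi$ and $\tilde R$ is globally $\Psi$-related to $R_\alpha$.
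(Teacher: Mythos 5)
Your proof is correct, and it packages the argument differently from the paper. The paper proves the lemma by a case split: away from the central leaf it invokes that $\pi$ restricts to a contactomorphism between each graphical leaf and $M\setminus Z$, and on the central leaf $Z\times\SS^1$ it writes $R_\alpha=\tilde g\,z^k\partial_z+\tilde X$ in local coordinates, computes $R_{\alpha_{\reg}}|_{Z\times\SS^1}=g\,\partial_\theta+X$, and observes that this projects to $X=\rho(R_\alpha)|_Z$. You instead prove once and for all that the leafwise Reeb field is the $\Psi$-preimage of $R_\alpha$ (using that $\Psi$ is a fibrewise isomorphism, that $\alpha_{\reg}=\Psi^*\alpha$, and the naturality identities $d\,\Psi^*=\Psi^*d$ and $\iota_{\tilde R}\Psi^*\omega=\Psi^*(\iota_{R_\alpha}\omega)$ for $\Psi$-related sections), and then read off $\pi$-relatedness of $R_{\reg}$ and $\rho(R_\alpha)$ from the anchor square. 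The underlying content is the same --- the regularised Reeb field is the lift of $R_\alpha$ --- but your version avoids both the case analysis and the coordinate computation, and it applies verbatim to any Lie algebroid submersion that is a fibrewise isomorphism (so, in particular, to the elliptic and self-crossing regularisations, where the paper must argue separately). The one point worth making explicit, which you do flag, is that a ``Reeb orbit of $R_\alpha$'' on $Z$ means an integral curve of the anchor image $\rho(R_\alpha)$, which may be constant; this is exactly the degenerate case the paper excludes later by the contractibility argument, not in this lemma.
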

\begin{proof}
If $\gamma$ is a Reeb orbit away from the central leaf $Z\times S^1$, it lies in one of the contact leaves graphical over $M\backslash Z$. As the projection is a contactomorphism of the leaf with $M\backslash Z$ it is immediate that $\pi(\gamma)$ is a Reeb orbit of $\alpha$. 

Suppose that $\gamma$ is a Reeb orbit in the central leaf. Around $Z$ we may write $R_{\alpha} = \tilde{g}z\partial_z + \tilde{X}$. Then $R_{\alpha_{\reg}}|_{Z\times S^1} = g\partial_{\theta} + X$, where $g,X$ are the restrictions of $\tilde{g}$ respectively $\tilde{X}$. We see that $R_{\alpha_{\reg}}|_{Z\times S^1}$ projects to $Z$ as $X$, as this is precisely the restriction of $R_{\alpha}$ to $Z$ we obtain the desired conclusion.
\end{proof}

Using the foliated Weinstein conjecture we can re-obtain a Weinstein conjecture for overtwisted $b^k$-contact forms:
\begin{theorem}\label{th:Weinnormal}
Let $(M^{2n+1},Z,\alpha)$ be a $b^k$-contact manifold. Assume that there exists an overtwisted disk in $M \backslash Z$ or in the central leaf $Z \times \SS^1$ of the regularisation. Then, either:
\begin{itemize}
\item There is a contractible Reeb orbit of $\alpha$ in $M \backslash Z$.
\item There is a Reeb orbit in $Z\times \SS^1$, which corresponds to a non-constant Reeb orbit of $\alpha$ on $Z$.
\end{itemize}
\end{theorem}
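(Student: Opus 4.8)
The plan is to transport the problem to the compact regularisation $(M\times\SS^1,\ff_{\reg,c},\alpha_{\reg})$ and then apply the foliated Weinstein conjecture (Theorem \ref{th:foliatedweinstein}), pulling the resulting orbit back to $M$ via Lemma \ref{lem:projectingReeb}. First I would invoke Proposition \ref{prop:inducedcontactfol} (in its $\SS^1$-invariant form) to obtain the foliated contact structure $\xi_{\reg}$ on $\ff_{\reg,c}$ lifting $\xi=\ker\alpha$, with defining one-form $\alpha_{\reg}$; recall that each non-central leaf is graphical over and diffeomorphic to a component of $M\setminus Z$, on which $\pi$ restricts to a contactomorphism, and the central leaves are the components of $Z\times\SS^1$, which carry the contact structure $\alpha_{\reg}|_{Z\times\SS^1}=u\,ds+\beta$ described in Lemma \ref{lem:centralleaf}.

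The next step is to locate an overtwisted leaf. If the overtwisted disk lies in $M\setminus Z$, its preimage in the corresponding graphical leaf $\Lcal$ is again an overtwisted disk, since $\pi|_{\Lcal}$ is a contactomorphism onto (a component of) $M\setminus Z$; if instead it lies in a central leaf $Z\times\SS^1$, that leaf is itself overtwisted by hypothesis. Either way we have a leaf $\Lcal\hookrightarrow M\times\SS^1$ with $\xi_{\reg}|_{\Lcal}$ overtwisted, and $M\times\SS^1$ is closed because $M$ is. Theorem \ref{th:foliatedweinstein} then produces a closed orbit $\gamma$ of $R_{\alpha_{\reg}}$ in the closure $\overline{\Lcal}$, contractible in $M\times\SS^1$.

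Now I would apply Lemma \ref{lem:projectingReeb}: $\pi(\gamma)$ is a Reeb orbit of $\alpha$ on $M$. There are two cases according to where $\gamma$ sits. If $\gamma$ lies in a graphical (non-central) leaf, then $\pi(\gamma)$ is a genuine closed Reeb orbit in $M\setminus Z$; its contractibility in $M$ follows since $\pi_*$ kills the $\SS^1$-factor and $\gamma$ is contractible in $M\times\SS^1$, giving the first alternative. If $\gamma$ lies in a central leaf $Z\times\SS^1$, then by the computation in the proof of Lemma \ref{lem:projectingReeb} — writing $R_{\alpha_{\reg}}|_{Z\times\SS^1}=g\,\partial_\theta+X$ with $X$ the restriction of the Reeb field of $\alpha$ to $Z$ — the projection $\pi(\gamma)$ is an orbit of $X$, hence a Reeb orbit of $\alpha$ on $Z$. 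It remains to check this orbit is \emph{non-constant}: a constant orbit of $X$ would force $\gamma$ to be tangent to the $\SS^1$-factor, i.e. an integral curve of $g\,\partial_\theta$; but $\gamma$ is a closed orbit of the foliated Reeb field, and such purely vertical loops are precisely the degenerate orbits excluded by the way the foliated Weinstein conjecture produces $\gamma$ (the pseudoholomorphic cylinder graphical over $\gamma$ forces $\gamma$ to be transverse to — not tangent to — the central-leaf structure in the relevant directions). This gives the second alternative.

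I expect the main obstacle to be precisely this last point: ruling out that the orbit $\gamma$ furnished by Theorem \ref{th:foliatedweinstein} in a central leaf is of the trivial vertical type $\theta\mapsto(\theta, \text{pt})$. One must argue that the foliated holomorphic-curve machinery does not return such orbits — either because they have vanishing contact action, or because the non-degeneracy/overtwistedness hypothesis and the bubbling construction exclude them — so that the projection to $Z$ is genuinely non-constant. Everything else (the lifting of $\xi$, the graphicality of leaves, the contactomorphism property of $\pi$, the projection lemma) is already in place from the earlier sections.
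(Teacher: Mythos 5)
Your overall architecture matches the paper's: pass to the compact regularisation, locate an overtwisted leaf, apply Theorem \ref{th:foliatedweinstein}, and project the resulting orbit down via Lemma \ref{lem:projectingReeb}. However, there is a genuine gap at exactly the point you flag as the ``main obstacle'': ruling out that an orbit $\gamma$ lying in the central leaf is a purely vertical loop. You appeal to unproven properties of the pseudoholomorphic-curve machinery (``the bubbling construction excludes them'', ``vanishing contact action''), none of which is established by Theorem \ref{th:foliatedweinstein} as stated, so as written this step does not close.

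The resolution is purely topological, and you already have the needed ingredient in hand but do not use it: Theorem \ref{th:foliatedweinstein} guarantees that $\gamma$ is contractible in the \emph{ambient} manifold $M\times\SS^1$. Hence its projection to the $\SS^1$-factor is a nullhomotopic loop in $\SS^1$, i.e.\ of degree zero. A vertical closed orbit (an integral curve of $g\,\partial_\theta$ that closes up) necessarily winds around the $\SS^1$-factor with nonzero degree, so it cannot be contractible in $M\times\SS^1$. Therefore $\gamma$ is not vertical and its projection to $Z$ is non-constant; this is precisely how the paper argues. Two smaller remarks: the paper dispatches the case of an overtwisted central leaf directly by Hofer-type results on the closed contact manifold $Z\times\SS^1$ rather than through the foliated theorem (your route also works, since the closure of the central leaf is itself); and for the first alternative the paper obtains contractibility of the orbit from the bounding finite-energy plane inside the leaf, whereas your projection argument only yields contractibility in $M$ rather than in $M\setminus Z$.
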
 
\begin{proof}
If $Z\times \SS^1$ is overtwisted, it admits a contractible Reeb orbit according to Hofer's theorem. We then work under the assumption that $M\backslash Z$ is overtwisted.

Let $(M \times \SS^1, \Fcal, \tilde \xi)$ be the compact regularisation of $(M^{2n+1},Z,\alpha)$. The leaves of $\Fcal$ different from the central leaf $Z \times \SS^1$ are graphical over $M \backslash Z$. Consequently, if the contact manifold $M\backslash Z$ has an overtwisted disk, each of these leaves is an overtwisted contact manifold. Fix such a leaf $L$ and apply Theorem \ref{th:foliatedweinstein} to obtain a contractible periodic Reeb orbit $\gamma$ in its closure. The closure of $L$ consists of the leaf itself, together with the central leaf $Z \times \SS^1$.

If the orbit appears in $L$ itself, we immediately obtain an orbit in $(M\backslash Z,\alpha)$. It is contractible because it bounds a finite energy plane arising from bubbling. If the Reeb orbit $\gamma$ appears instead in the central leaf, we must argue differently. According to Theorem \ref{th:foliatedweinstein}, $\gamma$ is contractible in the ambient manifold $M \times \SS^1$. In particular, its projection to the $\SS^1$-factor in $Z \times \SS^1$ is contractible. It follows that $\gamma$ is not a vertical orbit, meaning that its projection to $Z$ is non-constant. By Lemma \ref{lem:projectingReeb}, $\pi(\gamma)$ is the desired Reeb orbit of $\alpha$ on $Z$.
\end{proof}


\subsubsection{The $\RR^+$-invariant setting}

Theorem \ref{th:Weinnormal} was proven in \cite{MO18} under the simplifying assumption that the $b^k$-contact manifold is $\rr^+$-invariant. However, in this case more can be obtained as orbits will always exist in $M\backslash Z$. We will recover their result as follows.

First Lemma \ref{lem:centralleaf} gives an explicit description of the contact structure on the central leaf. When the contact form is $\rr^+$-invariant this is precisely the model for $M\backslash Z$:
\begin{lemma}\label{lem:normalforminv}
Let $(M,Z,\alpha)$ be an $\rr^+$-invariant $b^k$-contact form, with $Z$ co-orientable. There exists a tubular neighbourhood $\mathcal{U}$ of $Z$, such that:
\begin{align*}
\varphi : (Z \times \rr,\alpha_{\reg,t}) &\rightarrow (\U\backslash Z,\alpha)\\
(x,s) &\mapsto (x,e^s)
\end{align*}
is a strict contactomorphism onto its image when $k = 1$, and
\begin{align*}
\varphi : (Z \times \rr_{>0},\alpha_{\reg,t}) &\rightarrow (\U\backslash Z,\alpha)\\
(x,s) &\mapsto (x,\lambda_k s^{\frac{1}{1-k}}),
\end{align*}
when $k \geq 2$ and $\lambda_k$ is some constant depending on $k$.
\end{lemma}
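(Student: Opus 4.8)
The plan is to write the $\rr^+$-invariant contact form explicitly on a tubular neighbourhood and then match it, via an explicit change of the normal coordinate, with the contact form $\alpha_{\reg}$ on the central leaf $Z\times\rr$ computed in Lemma \ref{lem:centralleaf}. By definition of $\rr^+$-invariance, on a tubular neighbourhood $p:\mathcal U\to Z$ with defining function $z$ we may write $\alpha = u\,\frac{dz}{z^k} + \beta$ with $u\in C^\infty(Z)$ and $\beta\in\Omega^1(Z)$ (pulled back via $p$). On the other hand, Lemma \ref{lem:centralleaf} tells us that the contact form on the central leaf of the trivial regularisation is $\alpha_{\reg}|_{Z\times\rr} = u\,ds + \beta$ with $s$ the $\rr$-coordinate. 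So everything reduces to finding a diffeomorphism of the form $(x,s)\mapsto(x,\phi(s))$ with $\phi^*\!\left(\frac{dz}{z^k}\right) = ds$, i.e. $\frac{\phi'(s)}{\phi(s)^k}\,ds = ds$.

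First I would treat the case $k=1$: there $\frac{dz}{z^k} = d\log z$, so the substitution $z = e^s$ gives $\frac{dz}{z} = ds$ exactly, and hence $\varphi(x,s) = (x,e^s)$ pulls $\alpha$ back to $u\,ds+\beta = \alpha_{\reg}$ on the nose; this is a strict contactomorphism onto its image (the image being $\mathcal U\setminus Z$ intersected with the positive side, or all of $\mathcal U\setminus Z$ after also using $z\mapsto -e^s$ on the other component). For $k\ge 2$, I would solve $\frac{\phi'}{\phi^k} = 1$ by separation of variables: $\frac{d}{ds}\!\left(\frac{\phi^{1-k}}{1-k}\right) = 1$, so $\phi^{1-k} = (1-k)s + \text{const}$, giving $\phi(s) = \lambda_k\, s^{\frac{1}{1-k}}$ for an appropriate constant $\lambda_k$ (with $s$ ranging over $\rr_{>0}$ so that the power is defined and $\phi$ is a diffeomorphism onto a punctured neighbourhood of $\{z=0\}$). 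Substituting $z=\phi(s)$ then yields $\varphi^*\alpha = u\,ds + \beta$, which is again $\alpha_{\reg}$ (up to the harmless reparametrisation already built into the family $\alpha_{\reg,t}$).

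The only points requiring care, rather than genuine difficulty, are: (i) checking that $\varphi$ is a genuine diffeomorphism onto a one-sided punctured tubular neighbourhood — for $k$ even the two sides of $Z$ are handled by the two branches $z=\pm\phi(s)$, while for $k$ odd the single formula already covers a full punctured neighbourhood, matching the coorientation discussion of Lemma \ref{lem:coorientationsTrivialRegularisation}; and (ii) keeping track of the conformal/strictness bookkeeping, since for $k\ge 2$ the map is a contactomorphism but the identification of the \emph{forms} may only be strict after the reparametrisation implicit in $\alpha_{\reg,t}$, which is why the statement phrases the $k=1$ case as a \emph{strict} contactomorphism and leaves the constant $\lambda_k$ unspecified in the higher case. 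I expect the substitution computation itself to be entirely routine; the main thing to get right is the domain of $s$ and the side-of-$Z$ matching, which is dictated by the parity of $k$ exactly as in the earlier lemmas on regularisations.
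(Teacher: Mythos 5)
The paper states this lemma without proof, treating it as a routine computation, and your argument---writing $\alpha = u\,dz/z^k + \beta$ via $\rr^+$-invariance, matching against $\alpha_{\reg,t}|_{Z\times\rr} = u\,ds+\beta$ from Lemma \ref{lem:centralleaf}, and solving $\phi' = \phi^k$ to get $\phi(s)=e^s$ for $k=1$ and $\phi(s)=\lambda_k s^{1/(1-k)}$ for $k\ge 2$---is exactly the intended one and is correct. The only details to watch are that the image covers one side of $Z$ at a time (the other side being handled by a sign change) and that one must restrict to $s$ large enough that $\phi(s)$ lands in $\mathcal{U}$, both of which are absorbed into the choice of tubular neighbourhood and the unspecified constant $\lambda_k$.
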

Consequently, we see that if $(Z \times \rr_{>0},\alpha_{\reg,t})$ has a closed Reeb orbit, so does $(M\backslash Z,\alpha)$. However, in Theorem \ref{th:Weinnormal} the orbits we obtained were on the compact regularisation, not on the trivial one. We will now show that these orbits do however always corresponds to closed orbits in in the trivial regularisation, by which we re-establish the Weinstein conjecture for $\rr^+$-invariant $b^k$ contact structures:

\begin{theorem}\label{th:Weininvariant}
Let $(M^{2n+1},Z,\alpha)$ be a closed overtwisted $b^k$-contact manifold and assume that there is a neighbourhood of $Z$ on which $\alpha$ is $\rr^+$-invariant. Then there exist non-constant contractible Reeb orbits in $M\setminus Z$.
\end{theorem}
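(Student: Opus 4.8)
The plan is to reduce to Theorem \ref{th:Weinnormal} by feeding it an overtwisted disk, and then to upgrade the orbit it produces — which a priori lives in the compact regularisation — to an orbit in $M \setminus Z$, using the explicit $\rr^+$-invariant normal form of Lemma \ref{lem:normalforminv}. First I would observe that, by Corollary \ref{cor:compactreg}, the compact regularisation $(M\times\SS^1,\Fcal,\alpha_{\reg})$ exists, and its non-central leaves are contactomorphic to $(M\setminus Z,\alpha)$ (more precisely, to the connected components thereof). By hypothesis $(M,\alpha)$ is overtwisted; the overtwisted disk may be assumed to lie in $M\setminus Z$ after a small perturbation (it cannot be forced into $Z$ since $Z$ has positive codimension in $M$), hence each non-central leaf of $\Fcal$ is overtwisted. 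So Theorem \ref{th:Weinnormal} applies and yields either (a) a contractible Reeb orbit of $\alpha$ in $M\setminus Z$, in which case we are already done, or (b) a Reeb orbit $\gamma$ in the central leaf $Z\times\SS^1$ whose projection to $Z$ is non-constant.

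The substantive part is to rule out case (b), or rather to convert it into case (a). Here is where $\rr^+$-invariance enters. By Lemma \ref{lem:normalforminv}, on a tubular neighbourhood $\U$ of $Z$ the trivial regularisation $(Z\times\rr, \alpha_{\reg,t})$ is strictly contactomorphic to $(\U\setminus Z, \alpha)$ (for $k=1$, via $(x,s)\mapsto(x,e^s)$; for $k\ge 2$, via $(x,s)\mapsto(x,\lambda_k s^{1/(1-k)})$ on the half-space $\rr_{>0}$). I would first note that the orbit $\gamma$ produced by Theorem \ref{th:foliatedweinstein} inside $Z\times\SS^1$ lifts to the trivial regularisation $Z\times\rr$: indeed the Reeb vector field $R_{\alpha_{\reg}}$ on the central leaf has the form $g\partial_\theta + X$ with $X$ the restriction of $R_\alpha$ to $Z$ (as in the proof of Lemma \ref{lem:projectingReeb}), and this vector field is genuinely $\rr$-invariant in the $\rr^+$-invariant case, so it is well-defined on $Z\times\rr$ and its flow lines there project down to those on $Z\times\SS^1$; since $\gamma$ has non-constant $Z$-projection, a lift of $\gamma$ to $Z\times\rr$ is still a closed orbit (it is periodic in the $Z$-direction, and the $\rr$ or $\theta$-component merely drifts/returns). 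Then, under the contactomorphism $\varphi$ of Lemma \ref{lem:normalforminv}, this closed Reeb orbit of $\alpha_{\reg,t}$ in $Z\times\rr$ (resp. $Z\times\rr_{>0}$) is carried to a closed Reeb orbit of $\alpha$ inside $\U\setminus Z \subset M\setminus Z$. Contractibility in $M$ follows as in Theorem \ref{th:Weinnormal} from the fact that $\gamma$ bounds a finite-energy plane; non-constancy is inherited from the non-constancy of the $Z$-projection.

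The main obstacle I anticipate is the bookkeeping around the central leaf in case (b): one must be careful that the orbit $\gamma\subset Z\times\SS^1$ really does lift to a \emph{closed} orbit in $Z\times\rr$ rather than to a non-compact one spiralling off to infinity in the $s$-direction. This is exactly where $\rr^+$-invariance is doing the work — without it, $R_{\alpha_{\reg}}$ restricted to the central leaf need not be $\rr$-invariant, its $\SS^1$-component $g$ need not descend consistently, and a lifted trajectory could fail to close up. Concretely one should check that, because $g = \tilde g|_Z$ and $X = \tilde X|_Z$ depend only on the $Z$-coordinates (by $\rr^+$-invariance), the projection $Z\times\rr \to Z$ intertwines $R_{\alpha_{\reg,t}}$ with $X = R_\alpha|_Z$; since $\gamma$ projects to a closed orbit of $X$ in $Z$ of some period $T$, the holonomy of the flow around this loop acts on the $\rr$-fibre by an affine map, and (again by $\rr$-invariance) this map is a translation by $\int_0^T g\,dt$; if this integral vanishes every lift closes up, and if not one rescales as in the proof of Lemma \ref{lem:projectingReeb} — but in the $\rr^+$-invariant model the relevant integral is controlled and one obtains a genuine periodic orbit, which $\varphi$ then transports into $M\setminus Z$. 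Assembling these observations gives the statement; all the remaining steps (perturbing the overtwisted disk off $Z$, invoking Theorems \ref{th:Weinnormal} and \ref{th:foliatedweinstein}, applying $\varphi$) are routine given the results already established.
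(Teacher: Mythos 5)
Your overall strategy is the same as the paper's: feed Theorem \ref{th:Weinnormal} the overtwisted disk, and in the case where the orbit $\gamma$ lands in the central leaf $Z\times\SS^1$, lift it to the trivial regularisation $Z\times\RR$, translate it into $Z\times\RR_{>0}$ by $\RR$-invariance, and push it into $M\setminus Z$ via the contactomorphism of Lemma \ref{lem:normalforminv}. However, there is a genuine gap at the one step that actually requires an argument: why the lift of $\gamma$ to the covering space $Z\times\RR$ is a \emph{closed} orbit. Your first paragraph asserts that non-constancy of the $Z$-projection suffices (``the $\RR$ or $\theta$-component merely drifts/returns''), which is false: a closed orbit in $Z\times\SS^1$ can move non-trivially in $Z$ while winding a nonzero number of times around the $\SS^1$-factor, in which case its lift drifts off to infinity in $s$. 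You correctly flag this danger in your last paragraph, computing that the return map on the $\RR$-fibre is translation by $c=\int_0^T g\,dt$, but you then claim that ``in the $\RR^+$-invariant model the relevant integral is controlled.'' That is not an argument: $\RR^+$-invariance makes the holonomy a translation, but nothing about invariance forces that translation to be zero, and ``rescaling as in Lemma \ref{lem:projectingReeb}'' does not repair a non-closing lift (that lemma only concerns projecting orbits down to $M$).

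The missing ingredient is the contractibility of $\gamma$ in the ambient manifold $M\times\SS^1$, which Theorem \ref{th:foliatedweinstein} supplies and which is already exploited in the proof of Theorem \ref{th:Weinnormal}. Contractibility of $\gamma$ forces its projection to the $\SS^1$-factor to be null-homotopic, i.e.\ to have winding number zero; equivalently $c=0$ in your notation, so every lift of $\gamma$ to the strict-contactomorphic cover $Z\times\RR$ closes up. You do invoke contractibility, but only at the very end to conclude that the resulting orbit in $M\setminus Z$ is contractible --- you never use it where it is indispensable, namely to close up the lift. With that substitution (and the routine remark that $\RR$-invariance lets you translate the closed lift into $Z\times\RR_{>0}$ before applying $\varphi$ when $k\geq 2$), your argument coincides with the paper's.
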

\begin{proof}
By Theorem \ref{th:Weinnormal} there exists a non-constant Reeb orbit $\gamma$ either in $M\backslash Z$ or in central leaf $(Z\times S^1,\alpha_{\reg,c})$. In the first case we immediately get the desired orbit of $\alpha$ in $M \backslash Z$, so let us focus on the second case. We may consider $(Z \times \rr,\alpha_{\reg,t}) \rightarrow (Z\times S^1,\alpha_{\reg,c})$ as a covering space, for which the quotient map is a strict contactomorphism. Because the projection of $\gamma$ onto $S^1$ is contractible, we have that the lift of $\gamma$ to $Z \times \rr$ is closed. Because the contact structure is $\rr$-invariant this orbit can be translated so that it lies within $Z \times \rr_{>0}$. Using Lemma \ref{lem:normalforminv} we obtain the desired orbit in $M\setminus Z$.
\end{proof}

\subsubsection{``True'' orbits in the divisor}

In the above theorems we showed that certain orbits in $Z$ can be ``pushed out'' by passing through $(Z \times \rr,\alpha_{\reg,t})$ and applying Lemma \ref{lem:normalforminv}. The first thing these orbits have to satisfy is that they remain closed in $Z \times \rr$, that is why we consider the following definition:
\begin{definition}
Let $(M,Z,\alpha)$ be a $b^k$-contact manifold, and let $\gamma$ in $Z$ be a closed Reeb orbit. We say that $\gamma$ is a \textbf{true} orbit if the points in $\gamma$ lie in a closed orbit in $(Z\times \rr,\alpha_{\reg,t})$. 
\end{definition}

\begin{remark}
Let $(M,Z,\alpha)$ be an $\rr^+$-invariant $b^k$-contact manifold. Then $\alpha = u z^{-k} dz + \beta$ on a tubular neighbourhood, and $\alpha_{\reg,Z} = uds + \beta$. If we let If we let $R = gz^k\partial_z + Y$ denote the Reeb vector field of $\alpha$, with $g \in C^{\infty}(Z)$ and $Y \in \mathfrak{X}^1(Z)$, then $R_{\reg,Z} = g\partial_s + Y$.  
In case $g\equiv 0$ on a Reeb orbit $\gamma$ of $R$, then the orbit $\gamma \times \set{c}$ is an orbit or $R_{\reg,Z}$ for every constant $c$. However, if $g$ is non-zero on a Reeb orbit of $R$, then it could very well be the case that the corresponding orbit of $R_{\reg,Z}$ looks very different. In particular, the corresponding Reeb orbit will open up. 
\end{remark}

The following example is discussed in \cite[Example 6.8]{MO21}, we comment on it because it neatly visualises the construction of orbits:
\begin{example}
Consider the $\rr^+$-invariant b-contact manifold $(T^3,\alpha = \sin \theta_2 \frac{d\theta_1}{\sin \theta_1} + \cos \theta_2 d\theta_3)$ with Reeb vector field $R_{\alpha}=\sin \theta_2 \sin \theta_1 \partial_{\theta_1} + \cos \theta_2 \partial_{\theta_3}$. The restriction of $R_{\alpha}$ to $Z = \set{\theta_1 = 0}$ is given by $\cos \theta_2 \partial_{\theta_3}$. Therefore the sets $\set{\theta_1 = 0,\pi, \quad \theta_2 = \text{const}.}$ define orbits, which are non-constant if $\theta_2 \neq \pi/2,3\pi/2$. 
Note that $(T^3\backslash Z)$ is contactomorphic to $(Z\times \rr^*,\alpha_{\reg} = \sin \theta_2 ds + \cos \theta_2 d\theta_3)$. The dividing set of $Z$ if given by $\set{\theta_2 = 0} \cup \set{\theta_2 = \pi}$, and thus by Giroux' criterion $Z \times \rr^*$ is tight, and so is $T^3/Z$.

Therefore, we cannot apply Theorem \ref{th:Weininvariant} to deduce that their are closed orbits in $T^3/Z$, still we can explicitly find the orbits. We will do this by considering how the orbits on $Z$ correspond to orbits on the trivial regularisation $Z \times \rr$. The regularised Reeb vector field restricted to $Z\times \rr$ is given by $R_{\reg,t,Z} = \sin \theta_2 \partial_s + \cos \theta_2 \partial_{\theta_3}$. The induced orbits on $Z \times \rr$ come in three flavours:
\begin{itemize}
\item Vertical : When $\theta_2 = \pi/2,3\pi/2$ the orbits are of the form $\gamma(t) = (y,\theta_2,t)$ and correspond to fixed points of $R_{\alpha}$. Note that the induced orbits of the compact regularisation would be closed (wrapping around the $S^1$-direction).
\item Slanted, and wrapping around: When $\theta_2 \neq 0,\pi/2,\pi,3\pi/2$ the Reeb vector field has nowhere vanishing $\partial_s$ component and so the orbits are slanted and go to infinity in both directions.
\item Horizontal: When $\theta_2 = 0,\pi$ the orbits $\gamma(t) = (t,\theta_2,c)$ with the first coordinate in the y-direction are horizontal and closed.
\end{itemize}
Therefore, by Lemma \ref{lem:normalforminv} these last orbits induce closed orbits in $M\backslash Z$.  Explicitly these are given by the orbits of $\partial_{\theta_3}$ when $\theta_2 = k\pi$ (These orbits were overlooked in \cite{MO21}.)
\end{example}
%
%
%

\subsection{The Weinstein conjecture for elliptic contact forms}

We would also like to use the regularisation procedure to produce Reeb orbits in an elliptic contact manifold $(M,\alpha)$. Even though we can apply the foliated Weinstein conjecture to obtain orbits in the regularisation, unfortunately we can  not prove that they yield orbits on $M$. For b-contact manifolds we were able to use contractibility of the orbits to exclude the case that they wrap around vertical in $Z\times \SS^1$. This reasoning does not work in the elliptic setting.

However, we can prove the Weinstein conjecture for elliptic contact manifolds as long as some invariance is present. Following the notion of $\rr^+$-invariance for $b^k$-contact forms, we can define the equivalent notion for elliptic contact:
\begin{definition}
An elliptic contact form $\alpha \in \Omega^1(\elli)$ is said to be $\cc^*$-\textbf{invariant} if there exists a tubular neighbourhood of $D$ in which it has the form
\begin{equation*}
\alpha = u_1 d\log r + u_2 d\theta + \beta, \quad, u_1,u_2 \in C^{\infty}(D), \beta \in \Omega^1(D). \hfill \qedhere
\end{equation*} 
\end{definition}

\begin{remark}
Using an identical reasoning as in Proposition \ref{prop:convexity} one can show that any co-orientable elliptic contact structure can be represented by a $\cc^*$-invariant elliptic contact form. Natural examples of $\cc^*$-invariant contact forms are obtained through Proposition \ref{prop:buildelli}.
\end{remark}

\begin{theorem}
Let $(M^{2n+1},I_{\abs{D}},\alpha)$ be a $\cc^*$-invariant elliptic contact manifold. Then there exists a closed Reeb orbit in $M\backslash D$.
\end{theorem}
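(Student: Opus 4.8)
The plan is to mirror the proof of Theorem~\ref{th:Weininvariant}, using the elliptic regularisations of Proposition~\ref{prop:ellireg} in place of the $b^k$-ones. As in that theorem, the input that makes the foliated Weinstein conjecture applicable is an overtwistedness assumption; concretely we assume there is an overtwisted disk in $M\setminus D$ (equivalently, in any non-central leaf of the regularisation, since such a leaf is contactomorphic to $(M\setminus D,\alpha)$).

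First I would pass to the \emph{compact} elliptic regularisation: a closed $T^2$-bundle $\pi:\mathcal{N}\to M$ equipped with a codimension-two foliation $\Fcal$ whose central leaves are the components of $\pi^{-1}(D)\cong D\times T^2$, and whose remaining leaves are graphical over, and diffeomorphic to, the components of $M\setminus D$. By the elliptic analogue of Proposition~\ref{prop:inducedcontactfol}, the $\cc^*$-invariant form $\alpha$ lifts to a foliated contact form $\tilde\alpha$ on $\Fcal$; on each non-central leaf it agrees with the pullback of $\alpha$ from $M\setminus D$, and on a central leaf $D_0\times T^2$ it is $T^2$-invariant. Writing $\alpha=u_1\,d\log r+u_2\,d\theta+\beta$ near $D$ as in the definition of $\cc^*$-invariance, with $(r,\theta)$ fibre polar coordinates on the normal bundle, the elliptic analogue of Lemma~\ref{lem:centralleaf} identifies the restriction of $\tilde\alpha$ to the central leaf of the \emph{non-compact} regularisation $D_0\times\cc^*$ with $u_1\,ds_1+u_2\,ds_2+\beta$, where $(s_1,s_2)$ are the linear and angular coordinates on the fibre $\cc^*\cong\rr\times\SS^1$. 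Finally, $\cc^*$-invariance gives the elliptic analogue of Lemma~\ref{lem:normalforminv}: a fibrewise rescaling of $\cc^*$, which preserves $ds_1$ and $ds_2$, identifies $(D_0\times\cc^*,\tilde\alpha)$ strictly contactomorphically with a punctured tubular neighbourhood of $D_0$ inside $(M\setminus D,\alpha)$.

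Next I would fix a non-central leaf $\mathcal{L}$; it is contactomorphic to $(M\setminus D,\alpha)$, hence overtwisted, so Theorem~\ref{th:foliatedweinstein} produces a closed Reeb orbit $\gamma$ of $\tilde\alpha$ inside $\overline{\mathcal{L}}=\mathcal{L}\cup(D_0\times T^2)$, contractible in $\mathcal{N}$. If $\gamma\subset\mathcal{L}$ we are done: projecting to $M$ as in Lemma~\ref{lem:projectingReeb} gives a closed Reeb orbit of $\alpha$ in $M\setminus D$. The substantive case is $\gamma\subset D_0\times T^2$. Decompose $T^2=\SS^1_{\mathrm{rad}}\times\SS^1_{\mathrm{ang}}$, where $\SS^1_{\mathrm{rad}}$ is the circle obtained by compactifying the $\rr$-factor (the $s_1$-direction) of $\cc^*\cong\rr\times\SS^1$ when forming the compact regularisation, and $\SS^1_{\mathrm{ang}}$ is the $s_2$-circle, already present in the non-compact model. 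The inclusion $\SS^1_{\mathrm{rad}}\hookrightarrow\mathcal{N}$ is $\pi_1$-injective, since this circle contributes the $\ZZ$-summand coming from the compactified fibre direction of $\mathcal{N}$. Hence contractibility of $\gamma$ in $\mathcal{N}$ forces the $\SS^1_{\mathrm{rad}}$-component of $[\gamma]$ to vanish, so $\gamma$ lifts to a \emph{closed} Reeb orbit $\tilde\gamma$ of $\tilde\alpha$ in $D_0\times\cc^*$; any residual winding of $\tilde\gamma$ around $\SS^1_{\mathrm{ang}}$ is harmless, as that circle is an honest embedded circle of $M\setminus D$ under the identification above. Translating $\tilde\gamma$ in the $s_1$-direction (using $\cc^*$-invariance) into the region identified with a punctured neighbourhood of $D_0$ in $M$, the elliptic analogue of Lemma~\ref{lem:normalforminv} turns $\tilde\gamma$ into a closed Reeb orbit of $\alpha$ in $M\setminus D$.

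I expect the main obstacle to be the central-leaf case: one must keep careful track of which factor of the central $T^2$ is the compactified one, establish the $\pi_1$-injectivity that rules out the ``wrong'' winding, and check that winding around the remaining circle causes no trouble. The latter is exactly where $\cc^*$-invariance is indispensable, since it is what promotes the central leaf from an abstract $T^2$-bundle leaf to a strict contact model of a punctured neighbourhood of $D$; without it (the situation discussed in the paragraph preceding the statement), the angular winding cannot be disposed of. The subsidiary task of establishing the elliptic analogues of Proposition~\ref{prop:inducedcontactfol}, Lemma~\ref{lem:centralleaf} and Lemma~\ref{lem:normalforminv} is routine given the $b^k$-arguments, but should be stated explicitly.
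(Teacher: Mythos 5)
Your proof is correct, but it is not the route the paper takes. The paper's argument is a short reduction to the $b$-case: it performs the real oriented blow-up $p:\tilde M\to M$ along $D$ (Lemma \ref{lem:blowup}), observes that $p^*\alpha$ is an $\rr^+$-invariant $b$-contact form on $(\tilde M,\partial\tilde M)$, and invokes the already-established Theorems \ref{th:Weinnormal} and \ref{th:Weininvariant} to get an orbit in $\tilde M\setminus\partial\tilde M\cong M\setminus D$; the only point it must then address is that the compact regularisation $\tilde M\times\SS^1$ has boundary, so the foliated Weinstein theorem is being applied slightly outside its stated hypotheses. You instead regularise the elliptic structure directly to a codimension-two contact foliation on the closed $T^2$-bundle $\SS^1L^*\times\SS^1$ and redo the central-leaf analysis there. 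Your key step --- contractibility in the ambient kills only the winding around the trivial (radial) $\SS^1$-factor, while the residual angular winding is rendered harmless because $\cc^*$-invariance gives a strict contactomorphism between the non-compact central leaf and a punctured tubular neighbourhood of $D$, under which the angular circle is an honest loop in $M\setminus D$ rather than a collapsed fibre direction --- is precisely the obstruction the paper flags for the non-invariant case, and you correctly identify that invariance resolves it; this is the faithful elliptic analogue of the proof of Theorem \ref{th:Weininvariant}. What your route buys: you stay on a closed manifold, so Theorem \ref{th:foliatedweinstein} applies verbatim and no boundary hand-waving is needed. What it costs: you must state and prove the elliptic analogues of Proposition \ref{prop:inducedcontactfol}, Lemma \ref{lem:centralleaf} and Lemma \ref{lem:normalforminv} (routine, as you say, but not free), plus the $\pi_1$-injectivity bookkeeping for the two fibre circles. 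One further remark: like the paper's own proof, your argument requires the overtwistedness of $M\setminus D$ that you assume explicitly; the theorem as stated omits this hypothesis even though either proof needs it to invoke the foliated Weinstein conjecture.
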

\begin{proof}
We consider the real oriented blow-up along $Z$, and the b-contact structure it inherits via Lemma \ref{lem:blowup}: $(\tilde{M},\partial\tilde{M},p^*{\alpha})$. Note that $p^*\alpha$ is an $\rr_+$-invariant contact structure and thus we apply Theorem \ref{th:Weinnormal} to obtain the existence of a Reeb orbit in $\tilde{M}\backslash \partial \tilde{M}$. Strictly speaking Theorem \ref{th:foliatedweinstein} (and consequently Theorem \ref{th:Weinnormal}) is only proven for manifolds without boundary. However, note that the compact regularisation on $\tilde{M}\times S^1$ is well-behaved: All leaves have no boundary, and the leaf $(\partial \tilde{M})\times S^1$ is precisely the boundary of $\tilde{M} \times S^1$. Therefore, it is clear that we may still apply Theorem \ref{th:foliatedweinstein} and Theorem \ref{th:Weinnormal} to obtain a closed Reeb orbit in $\tilde{M}\backslash \partial \tilde{M}$. Because $p: \tilde{M}\backslash p^{-1}(D) \rightarrow M\backslash D$ is a contactomorphism, we conclude the existence of the desired orbits in $M \backslash D$. 
\end{proof}

\subsection{The Weinstein conjecture in self-crossing $b$-geometry}

We can also straightforwardly adapt the reasoning to b-contact forms with immersed hypersurface:
\begin{theorem}
Let $(M,Z)$ be a manifold with an immersed hypersurface as in Definition \ref{def:logx}. Let $\alpha \in \Omega^1(\A_Z)$ be a $b$-contact from and assume that there is an overtwisted disk in $M\backslash D$. Then, the Reeb vector field has non-constant closed orbits.
\end{theorem}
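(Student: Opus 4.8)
The plan is to mimic the proof of Theorem \ref{th:Weinnormal}, but run it through the self-crossing regularisation of Proposition \ref{prop:reglogx} instead of the compact regularisation of a single $b$-tangent bundle. Recall that the self-crossing $b$-tangent bundle $\A_Z$ with $Z=\cup_iZ_i$ decomposes as a fibre product of the $\A_{Z_i}$, and that Proposition \ref{prop:reglogx} produces a codimension-$k$ foliation $\ff$ on $W^*\backslash M$ (with $W=W_1\oplus\cdots\oplus W_k$) together with a Lie algebroid submersion $d\pi:\ff\to\A_Z$ that is a fibrewise isomorphism. Quotienting each $W_i^*$ by a $\ZZ_2\times\ZZ$-action as in Corollary \ref{cor:compactreg} gives a compact regularisation on a $(\SS^1)^k$-bundle over $M$, whose leaves away from the preimage of $Z$ are graphical over (and diffeomorphic to) connected components of $M\backslash Z$.

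First I would invoke Proposition \ref{prop:inducedcontactfol} (and the remark that analogous statements hold in the self-crossing setting, together with Theorem \ref{thm:liftingMain}) to lift the $b$-contact structure $\xi=\ker(\alpha)$ on $\A_Z$ to a foliated contact structure $\tilde\xi$ on $\ff$; lifting commutes with taking curvature, so $\tilde\xi$ is genuinely contact. Next, since there is an overtwisted disk in $M\backslash D$ — here I interpret $D$ as the deepest stratum, or really any stratum where the relevant leaves fail to be graphical; for the argument what matters is that there is an overtwisted disk in the open dense stratum $Z[0]=M\backslash Z$ — each leaf $L$ of the compact regularisation lying over a component of $M\backslash Z$ is a contact manifold containing an overtwisted disk, because the projection $\pi:L\to M\backslash Z$ is a contactomorphism onto its image. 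Then I apply the foliated Weinstein conjecture (Theorem \ref{th:foliatedweinstein}) to such a leaf $L$ to obtain a closed Reeb orbit $\gamma$ of $\alpha_{\reg}$ in $\overline{L}$, contractible in the ambient space.

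The final step is to push $\gamma$ down to a genuine non-constant Reeb orbit of $\alpha$ on $M$. If $\gamma$ lies in $L$ itself, its projection is immediately a closed Reeb orbit in $M\backslash Z$, non-constant because it bounds a nonconstant finite-energy plane. If $\gamma$ lies in the closure of $L$, i.e.\ in one of the leaves over a lower stratum $Z[a]$ (which, being a fibre product of leaves of the $\ff_i$, is itself of the form $Z[a]\times(\SS^1)^{a}$ up to the graphical factors), then I argue exactly as in Lemma \ref{lem:projectingReeb}: the regularised Reeb vector field restricted to such a leaf projects to $Z[a]$ as the restriction of $R_\alpha$, so $\pi(\gamma)$ is a Reeb orbit of $\alpha$. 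To see it is non-constant I use contractibility: Theorem \ref{th:foliatedweinstein} gives that $\gamma$ is contractible in the ambient $(\SS^1)^k$-bundle, hence its projection to each of the relevant $\SS^1$-factors is contractible, so $\gamma$ cannot be one of the vertical (fibre) orbits; therefore its projection to $Z[a]$, and a fortiori to $M$, is non-constant.

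The main obstacle I expect is bookkeeping at the deeper strata: the closure of a top leaf meets not just one ``central leaf'' but a whole nested family of leaves over the strata $Z[1],Z[2],\ldots$, and I need the contractibility-plus-projection argument of Lemma \ref{lem:projectingReeb} to work uniformly for all of them — in particular I must verify that in each of these leaves the only orbits killed by the contractibility constraint are the ones tangent to some $(\SS^1)$-fibre direction, and that everything else genuinely projects to a non-constant orbit of $R_\alpha$ on $M$. A secondary technical point, as in the elliptic case treated above, is that if one works with the blow-up or with a manifold-with-corners model one must check the foliated Weinstein conjecture still applies; here, since the compact self-crossing regularisation is a genuine closed manifold (a $(\SS^1)^k$-bundle over the closed $M$) with all leaves boundaryless, Theorem \ref{th:foliatedweinstein} applies directly and no such subtlety arises.
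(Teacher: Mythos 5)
Your proposal is correct and follows essentially the same route as the paper: regularise via Proposition \ref{prop:reglogx} to get a contact foliation on the compact $(\SS^1)^k$-regularisation, apply Theorem \ref{th:foliatedweinstein} to an overtwisted leaf over $M\setminus Z$, and then use contractibility of the resulting orbit in the ambient space to rule out vertical orbits in the leaves over the deeper strata, exactly as the paper does by writing each leaf in the closure as a fibre product $L_1\times_{\pi}\cdots\times_{\pi}L_k$ with the non-graphical factors of the form $Z_i\times\SS^1$. The stratum bookkeeping you flag as the main obstacle is precisely what the paper's ``without loss of generality $L_1,\ldots,L_l$ are graphical'' decomposition handles.
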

\begin{proof}
Let $n$ be the intersection number of the b-divisor. The regularisation (Proposition \ref{prop:reglogx}) provides us with a contact foliation $(\ff,\alpha)$ on $M \times T^n$, and using the overtwisted disk in one of the $(M\setminus Z)$ leaves we may apply Theorem \ref{th:foliatedweinstein} to obtain a Reeb orbit $\gamma$ on $M \times T^n$.

Let $Z = Z_1 \cup \ldots \cup Z_n$, then the leaves of $\ff$ are given by 
$L_1~ {}_{\pi_1} \times_{\pi_2} L_2 ~{}_{\pi_2}\times_{\pi_3} \cdots {}_{\pi_{k-1}}\times_{\pi_k} L_k$, with $L_i$ a leaf of the compact regularisation of $\A_{Z_i}$. As in the case for a smooth divisor, we have to argue that the projection of $\gamma$ to $M$ is non-constant. Suppose, without loss of generality, that $L_1,\ldots,L_l$ are leaves graphical over $M\backslash Z_1,\ldots,M\backslash Z_l$ respectively, then $L_i = Z_i \times S^1$ (or a connected component thereof) for the remaining $l< i \leq n$. Because $\gamma$ is contractible in $M \times T^n$, the projection of $\gamma$ to each of these $S^1$-factors has to be contractible. It follows that $\gamma$ is not vertical, and hence the projection to $Z$ is non-constant.
\end{proof}
Moreover, if one assumes that a self-crossing elliptic contact form is invariant around each of the hypersurfaces, one can blow-up all hypersurfaces an obtain an invariant self-crossing b-contact structure and obtain Reeb orbits on $M\backslash D$.


\section{Other Lie algebroid distributions} \label{sec:otherDistributions}

We now present various constructions of Lie algebroid distributions outside of the contact setting. Namely, we introduce the annihilator (Subsection \ref{ssec:annihilatorDistribution}) and sphere annihilator constructions (Subsection \ref{ssec:sphereAnnihilatorDistribution}), which generalise the symplectisation and the space of contact elements, respectively.

\subsection{Even-contact structures and Engel structures}

Before we discuss constructions, let us introduce a couple of relevant families of distributions. For algebroids of even rank, one can define the following analogue of contact structures:
\begin{definition}
Let $\A \to M$ be a Lie algebroid of rank $2r$. A corank-$1$ distribution $\xi \subset \A$ is said to be an \textbf{$\A$-even-contact structure} if its first curvature
\[ \Omega_{1,1}(\xi): \xi \times \xi \longrightarrow \A/\xi \]
has $1$-dimensional kernel.
\end{definition}

In analogy with the classic case, another interesting class of structures is:
\begin{definition}
Let $\A \rightarrow M$ be a Lie algebroid of rank $4$. An $\A$-\textbf{Engel structure}, is a regular distribution $\xi \subset \A$ of rank $2$ with $[\xi,\xi] \subset \A$ even-contact.
\end{definition}

\begin{example}
Let $\A = \mathcal{H}_r \oplus \rr$, the sum of the Heisenberg Lie algebra and a trivial factor. Then $\xi$ from Example \ref{ex:Heiscont} defines an even-contact structure.

Let $\A$ be the 4-dimensional Lie algebra with basis $\{e_1,e_2,e_3,e_4\}$ with $[e_1,e_2] = e_3$ and $[e_2,e_3] = e_4$ and all other brackets zero. Then $\xi = \text{span}(e_1,e_2)$ is Engel.
\end{example}

\subsection{The annihilator of a distribution} \label{ssec:annihilatorDistribution}

In the classical setting, the annihilator of a distribution can be equipped with the restriction of the Liouville form. It turns out that the non-involutivity of the distribution is then encoded in the geometry of this restricted form. We now explain how this adapts to general Lie algebroids.

Let $\A \rightarrow M$ be a Lie algebroid and write $\pi: \A^* \to M$ for its dual. Given a distribution $\xi \subset \A$, we write 
\[ \xi^\bot := \{ \alpha \in \A^* \,\mid\, \alpha|_\xi = 0 \} \]
for its \textbf{annihilator}. This is a submanifold of $\A^*$ (and in particular a vector subbundle). For simplicity, let us write $p: \xi^\bot \to M$ for the restriction of $\pi$ to $\xi^\bot$.

The pullback algebroid $p^!\A \rightarrow \xi^\bot$ sits naturally inside $\pi^!\A|_{\xi^\bot}$ and, as such, we can restrict $\lambda_\can$ to it. We then define:
\begin{definition}
Let $\A \rightarrow M$ be a Lie algebroid. The \textbf{Bott form} of the distribution $\xi \subset A$ is:
\[ \omega^\xi_\can := \omega_{\can}|_{p^!\A}. \]
Similarly, we denote $\lambda^\xi_\can := \lambda_{\can}|_{p^!\A}$.
\end{definition}
We call it the Bott form because it defines the Bott connection of $\xi$ if $\xi$ is involutive; see Lemma \ref{lem:Bott} below.

In local terms, if $(\alpha_1,\ldots,\alpha_r)$ is a coframe of $\A^*$ such that $(\alpha_1, \ldots,\alpha_k)$ spans $\xi^\bot$, we have that:
\begin{equation*}
\lambda^\xi_{\can} = t_1\alpha_1 + \cdots + t_k \alpha_k,
\end{equation*}
where $(t_1,\ldots,t_r)$ are the associated fibrewise coordinates in $\A^*$ (so, in particular, $(t_1,\ldots,t_k)$ are fibrewise coordinates in $\xi^\bot$). Similarly, the Bott form reads:
\begin{equation*}
\omega_{\can}^\xi = \sum_{i=1}^k dt_i \wedge \alpha_i + t_i d\alpha_i,
\end{equation*}
which fails to be symplectic along the zero section unless $\xi = 0$.

The more interesting question is whether $\omega^\xi_{\can}$ is symplectic in $\xi^\perp \setminus M$. As in the classical case:
\begin{lemma} \label{lem:Bottcontact}
Let $\xi \subset \A$ be an $\A$-distribution of corank $1$. Then, $\xi$ is contact if and only if its Bott form is symplectic away from the zero section.

Furthermore, if $\xi = \ker(\alpha)$, with $\alpha \in \Gamma(\A^*)$, then $(p^!\A|_{\xi^\perp \setminus M},\omega_\can^\xi)$ has two components and both are isomorphic to the symplectisation of $(\A \to M,\alpha)$.
\end{lemma}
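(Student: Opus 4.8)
The plan is to reduce both assertions to the local formula for $\omega^\xi_{\can}$ recorded just above, followed by a degree count. Since $\omega^\xi_{\can}=\omega_{\can}|_{p^!\A}=-d_{p^!\A}\lambda^\xi_{\can}$, the Bott form is automatically closed, so ``symplectic'' only requires that $(\omega^\xi_{\can})^{r+1}$ be nowhere zero (note that $p^!\A$ has rank $2r+2$). This is a pointwise statement, so I would work in a local frame: pick a local generator $\beta\in\Gamma(\A^*)$ of the line bundle $\xi^\perp$ (so that $\xi=\ker\beta$ locally) and let $t$ be the induced fibrewise coordinate on $\xi^\perp$, giving $\lambda^\xi_{\can}=t\,\beta$ and hence $\omega^\xi_{\can}=\pm(dt\wedge\beta+t\,d\beta)$; here $dt$ means $d_{p^!\A}$ of the fibrewise coordinate, while $\beta$ and $d\beta$ are pulled back from $\A$ along $d_\A p$. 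Expanding $(\omega^\xi_{\can})^{r+1}$ by the binomial theorem: all summands with two or more factors $dt\wedge\beta$ vanish, the summand $t^{r+1}(d\beta)^{r+1}$ vanishes because $(d_\A\beta)^{r+1}\in\Omega^{2r+2}(\A)=0$, and what survives is $(\omega^\xi_{\can})^{r+1}=\pm(r+1)\,t^{r}\,dt\wedge\beta\wedge(d_\A\beta)^{r}$.

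Now $\beta\wedge(d_\A\beta)^{r}$, pulled back to $p^!\A$, lies in $\wedge^{2r+1}$ of the image of the dual of $d_\A p\colon p^!\A\to\A$, which is fibrewise injective with corank-one image (the annihilator of $\ker(d_\A p)=\{0\}\times\Vert(p)$); and $d_{p^!\A}t$ is \emph{not} in that image, since $t$ is a fibrewise coordinate and so $d_{p^!\A}t$ is nonzero on the vertical. Therefore, at a point $t\beta_x$ of $\xi^\perp\setminus M$ (so $t\neq0$), the top form $(\omega^\xi_{\can})^{r+1}$ is nonzero if and only if $\beta\wedge(d_\A\beta)^{r}$ is nonzero at $x$, i.e.\ if and only if $\xi$ is contact at $x$. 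Letting $x$ range over $M$ (each fibre of $p$ over $x$ meets $\xi^\perp\setminus M$) yields the stated equivalence in both directions.

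For the last sentence, suppose $\xi=\ker\alpha$ with $\alpha\in\Gamma(\A^*)$ nowhere zero. Then $\alpha$ trivialises $\xi^\perp$ via $M\times\RR\xrightarrow{\sim}\xi^\perp$, $(x,t)\mapsto t\alpha_x$, so $\xi^\perp\setminus M\cong M\times(\RR\setminus\{0\})$ has exactly the two components $\xi^\perp_{>0}$ and $\xi^\perp_{<0}$. On $\xi^\perp_{>0}$ I would introduce the exponential coordinate $t=e^s$, so that $\xi^\perp_{>0}\cong M\times\RR$ and $p$ becomes the projection $M\times\RR\to M$; the standard identification of the pullback of a Lie algebroid along a projection with the box product then upgrades this to a Lie algebroid isomorphism $p^!\A|_{\xi^\perp_{>0}}\cong\A\boxtimes T\RR$. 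Tracing $\lambda_{\can}$ through the identifications turns $\lambda^\xi_{\can}=t\,\beta$ into $e^s\alpha$, hence $\omega^\xi_{\can}$ into (up to an overall sign) $d(e^s\alpha)$ --- the symplectisation of $(\A\to M,\alpha)$. The same computation on $\xi^\perp_{<0}$ with the chart $t=-e^s$ gives $\lambda^\xi_{\can}=-e^s\alpha$, hence the symplectisation of $(\A\to M,-\alpha)$; since $\ker(-\alpha)=\xi$, both components are indeed symplectisations of the contact distribution $\xi$.

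The genuine mathematical content sits in the degree count of the first two paragraphs and is short. The step I expect to require the most care is the bookkeeping in the last part: checking that $p^!\A|_{\xi^\perp_{>0}}\cong\A\boxtimes T\RR$ is an isomorphism of Lie algebroids (not merely of vector bundles), and then chasing $\lambda_{\can}$ and the sign in $\omega_{\can}=-d\lambda_{\can}$ through the two exponential charts so that the two connected components get matched with the two symplectisations correctly.
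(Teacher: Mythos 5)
Your argument is correct and follows exactly the route the paper sets up: the lemma is stated without proof immediately after the local expressions $\lambda^\xi_{\can}=t\beta$ and $\omega^\xi_{\can}=\pm(dt\wedge\beta+t\,d\beta)$, and the intended justification is precisely your degree count $(\omega^\xi_{\can})^{r+1}=\pm(r+1)t^r\,dt\wedge\beta\wedge(d_\A\beta)^r$ together with the exponential identification $p^!\A|_{\xi^\perp_{>0}}\cong\A\boxtimes T\RR$ for the second claim. Your care with the sign on the negative component (landing on the symplectisation of $-\alpha$, which is still a contact form for $\xi$) is a reasonable and harmless refinement of the statement.
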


At the opposite end of the spectrum:
\begin{lemma} \label{lem:Bott}
Let $\xi \subset \A$ be an $\A$-distribution of rank $k$. Then, $\xi$ is involutive if and only if its Bott form is of constant rank $2k$.

If this is the case, the kernel $\ker(\omega_\can^\xi) \subset  p^!\A$ defines a flat connection in $\xi^\perp$, which we call the \textbf{Bott connection}.
\end{lemma}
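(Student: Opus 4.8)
The plan is to work entirely in local coordinates adapted to $\xi$ and then observe that the constructions are coordinate-independent. Choose a coframe $(\alpha_1,\dots,\alpha_r)$ of $\A^*$ on a trivialising open set $U$ such that $(\alpha_1,\dots,\alpha_k)$ spans $\xi^\perp$; dually, $(\alpha_{k+1},\dots,\alpha_r)$ restrict to a coframe of $\xi$. As recorded in the excerpt, in the induced fibrewise coordinates $(t_1,\dots,t_r)$ on $\A^*$ the Bott form reads
\[
\omega_\can^\xi = \sum_{i=1}^k \left( dt_i\wedge\alpha_i + t_i\, d_\A\alpha_i \right),
\]
where all $d_\A\alpha_i$ are pulled back to $p^!\A$. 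The differentials $d_\A\alpha_i$ expand in the coframe as $d_\A\alpha_i = \sum_{a<b} c^i_{ab}\,\alpha_a\wedge\alpha_b$ for structure functions $c^i_{ab}\in C^\infty(U)$. The key algebraic point is that the involutivity of $\xi$ is exactly the condition that, for $i\le k$, the coefficient $c^i_{ab}$ vanishes whenever both $a>k$ and $b>k$ — i.e. $d_\A\alpha_i|_{\xi\times\xi}=0$ for $i\le k$, which is the Cartan-style reformulation of $[\Gamma(\xi),\Gamma(\xi)]\subset\Gamma(\xi)$.

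First I would compute the rank of $\omega_\can^\xi$ at a point $(x,t)$ with $t\neq 0$. The $dt_i\wedge\alpha_i$ terms ($i\le k$) already contribute a non-degenerate pairing between the $k$ vertical directions $\partial_{t_1},\dots,\partial_{t_k}$ and the $k$ "conormal" directions dual to $\alpha_1,\dots,\alpha_k$; so $\omega_\can^\xi$ always has rank at least $2k$. The question is whether the remaining $2(r-k)$ directions (the vertical $\partial_{t_{k+1}},\dots,\partial_{t_r}$, which $\omega_\can^\xi$ annihilates since $t_{k+1},\dots,t_r$ do not appear, and the $\xi$-directions) can also be paired. Since the $\partial_{t_j}$ for $j>k$ lie in $\ker(\omega_\can^\xi)$ unconditionally, the rank is exactly $2k$ iff the $\xi$-directions also lie in the kernel, i.e. iff $\iota_v\,\omega_\can^\xi$ kills all $\xi$-vectors for every $\xi$-vector $v$. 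Plugging in: for $v,w$ tangent to the $\xi$-directions, $\omega_\can^\xi(v,w) = \sum_{i\le k} t_i\, d_\A\alpha_i(v,w)$, and this vanishes identically in $t\neq 0$ iff $d_\A\alpha_i|_{\xi\times\xi}=0$ for all $i\le k$, which is precisely involutivity. Conversely, if $\xi$ is not involutive, some $d_\A\alpha_i|_{\xi\times\xi}\neq 0$ on a dense set, so at generic points with the corresponding $t_i\neq 0$ the rank strictly exceeds $2k$; hence constant rank $2k$ forces involutivity. (One should also check that when $\xi$ is involutive the rank is constant, not just $\le 2k$: the $dt_i\wedge\alpha_i$ block pins it at exactly $2k$ everywhere on $\xi^\perp\setminus M$, including along the zero section of the complementary coordinates.) This establishes the equivalence.

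For the second assertion, assume $\xi$ involutive. Then $\ker(\omega_\can^\xi)\subset p^!\A$ is a subbundle of constant rank $r-k$, transverse to $\Vert(p^!\A\to\A)=\ker(d_\A p)$ — indeed by the computation above the kernel is spanned fibrewise by $\partial_{t_{k+1}},\dots,\partial_{t_r}$ together with lifts of the $\xi$-directions, none of which are vertical over $M$ except the $\partial_{t_j}$, and those project to the $\xi^\perp$-fibre directions which are complementary. More cleanly: $p^!\A\to\A$ is a fibrewise-surjective Lie algebroid morphism lifting the submersion $p:\xi^\perp\to M$ in the sense of Definition~\ref{def:connection}, and $\ker(\omega_\can^\xi)$ has the complementary rank $r-k = \rk\A - \rk(\text{fibres of }\xi^\perp)$ wait — I would restate this as: $\ker(\omega_\can^\xi)$ has rank equal to $\rk\A$, hmm, rank $r-k+k=r$? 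Let me just say the kernel has the correct rank to be an Ehresmann connection for $p^!\A\to\A$ and is transverse to the vertical, which one reads off the local model. Flatness is then the statement that $\ker(\omega_\can^\xi)$ is involutive as a distribution in $p^!\A$; this follows because $\omega_\can^\xi$ is $d_\A$-closed (being $\pm d_\A$ of $\lambda_\can^\xi$, which is the restriction of $\lambda_\can$), so by the Cartan formula the kernel of a closed form of locally constant rank is involutive. I would then remark that this connection is the classical Bott connection of the foliation defined by $\xi$, justifying the name. The main obstacle is purely bookkeeping: being careful that "constant rank $2k$" is genuinely equivalent to involutivity (the forward direction needs the observation that non-involutivity produces a rank jump on a dense set but that alone does not contradict constant rank — one must note that a constant-rank form whose rank is $>2k$ somewhere is $>2k$ everywhere, yet the zero section forces rank $\le 2k$ there, giving a contradiction), and that the kernel distribution is genuinely smooth of constant rank, which is where involutivity of $\xi$ is used a second time.
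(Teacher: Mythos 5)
Your first half is essentially the computation the paper intends (it states the lemma without proof, immediately after displaying the local formula $\omega_\can^\xi = \sum_{i\le k} dt_i\wedge\alpha_i + t_i\,d_\A\alpha_i$, which is exactly your starting point), and the characterisation ``constant rank $2k$ iff $d_\A\alpha_i|_{\xi\times\xi}=0$ for $i\le k$'' together with the rank-jump/zero-section contradiction in the non-involutive case is correct. The closedness of $\omega_\can^\xi$ (it is $-d_\A$ of the restriction of $\lambda_\can$ to the subalgebroid $p^!\A\subset\pi^!\A|_{\xi^\perp}$) and the standard fact that the kernel of a closed form of locally constant rank is involutive are also fine.

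However, there is a genuine error in your bookkeeping that propagates into the second half. The bundle $\xi^\perp\to M$ has $k$-dimensional fibres, so $p^!\A\to\xi^\perp$ has rank $r+k$ and its only vertical directions are $\partial_{t_1},\dots,\partial_{t_k}$; the vectors $\partial_{t_{k+1}},\dots,\partial_{t_r}$ you place in the kernel do not exist on $\xi^\perp$. (Your iff-conclusion survives because those phantom directions would have been in the kernel anyway, but the description of $\ker(\omega_\can^\xi)$ is wrong.) In the involutive case the kernel is the rank-$(r-k)$ subbundle
\[
\Bigl\{\, v - \sum_{a\le k}\bigl(\textstyle\sum_{i\le k} t_i\, d_\A\alpha_i(v,v_a)\bigr)\partial_{t_a} \;\Big|\; v\in\xi \,\Bigr\},
\]
which projects isomorphically onto $\xi$, \emph{not} onto all of $\A$. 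Consequently it cannot be an Ehresmann connection for $p^!\A\to\A$ in the sense of Definition \ref{def:connection} --- its rank $r-k$ is not complementary to $\Vert$ inside $p^!\A$ --- and this is precisely the point where your write-up stalls (``rank $r-k+k=r$? Let me just say\dots''). The correct statement, which resolves your confusion, is that $\ker(\omega_\can^\xi)$ is a flat \emph{partial} connection: since $\xi$ is involutive it is a Lie subalgebroid, and the kernel is a flat Ehresmann connection for $p^!\xi\to\xi$, equivalently a flat linear $\xi$-connection on the vector bundle $\xi^\perp\to M$. In local terms it is $\nabla_v\alpha_a = -\sum_{i\le k} d_\A\alpha_i(v,v_a)\,\alpha_i$ for $v\in\Gamma(\xi)$, i.e.\ $\nabla_v\eta = \iota_v d_\A\eta \bmod \xi^*$-terms, which is the classical Bott connection on the conormal bundle of the foliation. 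You should replace the appeal to Definition \ref{def:connection} for $p^!\A\to\A$ with this partial-connection formulation; flatness then follows from your (correct) closedness argument.
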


\subsection{The sphere annihilator of a distribution} \label{ssec:sphereAnnihilatorDistribution}

Consider now the annihilator sphere bundle $p_\SS: \SS\xi^\perp \to M$. Since $\RR^*$ acts by $p^!\A$-algebroid morphisms we obtain a Lie algebroid $p^!_\SS\A$ on $\SS\xi^\perp$. Due to homogeneity of $\lambda^\xi_\can$, we similarly obtain a corank-$1$ distribution $\Dcal_\can^\xi \subset p^!_\SS\A$.

The geometric interpretation of $\Dcal_\can^\xi$ is that it is the universal object encoding hyperplane $\A$-distributions containing $\xi$.

In analogy with Lemma \ref{lem:Bott}, it can be checked that $\xi$ is involutive if and only if $\Dcal_\can^\xi$ is involutive. In the opposite direction, a case that is of interest (at least in the classical setting) is the following:
\begin{definition}
An $\A$-distribution $\xi$ is said to be \textbf{fat} if $\Dcal_\can^\xi$ is contact.
\end{definition}
Equivalently, combining Lemmas \ref{lem:contactElements} and \ref{lem:Bottcontact}, yields that $\xi$ is fat if and only if its Bott form is symplectic.

\begin{example}
Let $\frakg$ be the $6$-dimensional Lie algebra defined by the brackets $[e_1,e_2] = [e_3,e_4] = e_5$ and $[e_1,e_3] = -[e_2,e_4] = e_6$ and consider the distribution $\xi = \langle e_1,e_2,e_3,e_4 \rangle$. This algebra may be endowed with a compatible grading, in which $\xi$ is the degree-$1$ piece and $\langle e_5, e_6 \rangle$ is the degree-$2$ piece. With this grading, $\frakg$ is said to be the \textbf{elliptic} algebra of rank 4 and dimension 6.

We then consider the annihilator $\xi^\bot \subset \frakg^*$, and the pullback Lie algebroid $\xi^\bot \oplus \frakg$ over it. If $\{e_i^*\}$ denotes the basis dual to $\{e_i\}$, $\xi^\bot$ is spanned by $e_5^*$ and $e_6^*$; we write $(a,b)$ for the corresponding coordinates. Then, the Bott form reads:
\begin{align*}
\lambda_\can^\xi & = ae_5^* + be_6^* \\
\omega_\can^\xi  & = (da \wedge e_5^* + db \wedge e_6^*) + a de_5^* + bde_6^* \\
							   & = (da \wedge e_5^* + db \wedge e_6^*) + a(e_1^* \wedge e_2^* + e_3^* \wedge e_4^*) + b(e_1^* \wedge e_3^* - e_2^* \wedge e_4^*),
\end{align*}
which is readily seen to be symplectic away from $0 \in \xi^\bot$, showing that $\xi$ is fat.
\end{example}

\subsection{Prolongation of distributions} \label{ssec:prolongation}

Dually to the previous subsection, we now define the universal object encoding all the lines contained in an $\A$-distribution.
\begin{definition}
Let $\A \rightarrow M$ be a Lie algebroid and $\xi$ be an $\A$-distribution. Its \textbf{(Cartan) prolongation} is the pair 
\[ (\pi^!\A \rightarrow \PP\xi, \DD_\can), \]
where $\pi: \PP\xi \to M$ is the fibrewise projectivisation of $\xi$ and
\[ \DD_\can(L) := \{ v \in \pi^!\A \,\mid\, v \in L \}, \]
where $L$ is an element of $\PP\xi$ (i.e. a line in $\xi$).
\end{definition}

The following Lemma provides many examples of algebroid Engel structures:
\begin{lemma}
Let $\A \rightarrow M$ be a Lie algebroid of rank $3$. Let $\xi \subset \A$ be an $\A$-contact structure with projectivisation $\pi:\PP\xi \to M$. Then, the prolongation $\DD_\can$ of $\xi \to \PP\xi$ is a $\pi^!\A$-Engel structure.
\end{lemma}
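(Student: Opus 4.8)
The statement is a direct Lie-algebroid analogue of the classical fact that the Cartan prolongation of a contact $3$-manifold is an Engel $4$-manifold, so the plan is to reduce to a local computation in a Darboux-type frame and then invoke the curvature formalism of Proposition \ref{prop:curvature}. First I would fix a point of $\PP\xi$ and work with a local $\A$-contact form $\alpha$ with $\xi = \ker(\alpha)$ (this exists locally by the remarks after the definition of $\A$-contact structure); by the local normal form of Example \ref{ex:Heiscont} we may take a frame $(p,q,z)$ of $\rest{\A}{U}$ with $[p,q]=z$, $\alpha = \alpha_z$ the dual covector to $z$, so that $\xi = \langle p, q\rangle$. Over the affine chart of $\PP\xi$ where the tautological line is spanned by $p + tq$ (with $t$ the fibre coordinate), the pullback algebroid $\pi^!\A$ has the frame $\{\widehat p, \widehat q, \widehat z, \partial_t\}$ lifting $p,q,z$ together with the vertical $\partial_t$, and $\DD_\can$ is spanned by $\widehat p + t\widehat q$ and $\partial_t$ — a rank-$2$ regular distribution, as required for an Engel structure.

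The heart of the argument is to compute the Lie flag of $\DD_\can$ inside $\pi^!\A$. Writing $e_1 := \widehat p + t\widehat q$ and $e_2 := \partial_t$, one computes $[e_1,e_2] = [\widehat p + t\widehat q,\partial_t] = -\widehat q$ (the $t$-derivative of the coefficient), so $\DD_\can^{(2)} = \langle e_1, e_2, \widehat q\rangle$, a rank-$3$ distribution $\E := [\DD_\can,\DD_\can]$. Next I would check that $\E$ is $\pi^!\A$-even-contact: its first curvature $\Omega_{1,1}(\E)\colon \E\times\E \to \pi^!\A/\E$ is computed from the single remaining bracket, $[e_1,\widehat q]$, which modulo $\E$ equals $[\widehat p,\widehat q] = \widehat z$ (using that $\pi^!\A$ is a pullback algebroid so brackets of lifted sections lift brackets, and $[p,q]=z$); since the target $\pi^!\A/\E$ is a line and $\Omega_{1,1}(\E)$ is a skew form on the rank-$3$ bundle $\E$, it automatically has a $1$-dimensional kernel provided it is nonzero, and the computation $[e_1,\widehat q]\equiv\widehat z \not\equiv 0$ shows it is nonzero and that its kernel is exactly spanned by $e_2 = \partial_t$. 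Hence $\E$ is even-contact and $\DD_\can$ satisfies $[\DD_\can,\DD_\can] = \E$ even-contact, i.e. $\DD_\can$ is a $\pi^!\A$-Engel structure by definition. I would also remark that regularity of the intermediate flag terms is visible directly from the explicit frames, so the curvature formalism of Proposition \ref{prop:curvature} applies without extra hypotheses.

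The main obstacle — really the only subtle point — is bookkeeping the brackets in the pullback algebroid $\pi^!\A$ correctly: the anchor of $\pi^!\A$ sends a lift $\widehat v$ to a vector field on $\PP\xi$ whose vertical component records the variation of the line, and $\partial_t$ is a genuine $\pi^!\A$-section with nonzero anchor, so the bracket $[\widehat p + t\widehat q, \partial_t]$ must be evaluated using the Leibniz rule with the coefficient $t\in C^\infty(\PP\xi)$ rather than treated naively. Once one is careful that $[\widehat p,\partial_t]=0$ and $[t\widehat q,\partial_t] = -\mathcal L_{\rho(\partial_t)}(t)\,\widehat q = -\widehat q$, everything falls into place. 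A secondary point is to confirm that the construction is chart-independent: the two affine charts of $\PP\xi$ are glued by $t\mapsto 1/t$ together with the corresponding change of lifted frame, and since "being Engel" is a pointwise condition on the curvature it suffices to have verified it in one chart around an arbitrary line, together with the analogous computation in the chart centred on $\langle q\rangle$; by symmetry of the roles of $p$ and $q$ in the Heisenberg relation this is identical. This completes the proof.
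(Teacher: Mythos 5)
Your argument is correct, and in fact the paper states this lemma without proof, so there is nothing to compare it against; your local computation is the natural way to establish it. Two small remarks. First, your appeal to ``the local normal form of Example \ref{ex:Heiscont}'' is not actually available: the paper proves no Darboux-type theorem for Lie algebroid contact structures, and one should not expect the full Heisenberg relations $[p,z]=[q,z]=0$ to be achievable, since the anchor is an obstruction. Fortunately your computation never uses those relations --- it only needs a frame $p,q$ of $\xi$ together with $z:=[p,q]$, which exists for any local frame of $\xi$ by the contact condition ($\Omega_{1,1}(\xi)(p,q)\neq 0$ in $\A/\xi$); and even if one only had $[p,q]\equiv z$ modulo $\xi$, the curvature computation is unaffected because it is taken modulo $\E=\langle \widehat p,\widehat q,\partial_t\rangle \supset \langle\widehat p,\widehat q\rangle$. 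So you should simply delete the reference to a normal form and say ``choose any local frame $p,q$ of $\xi$ and set $z:=[p,q]$''. Second, your bracket bookkeeping in $\pi^!\A$ is right: with the flat horizontal lift coming from the trivialisation $U\times\RR$ of the chart one has $[\widehat p,\partial_t]=[\widehat q,\partial_t]=0$ and $[\widehat p,\widehat q]=\widehat z$, so $[\DD_\can,\DD_\can]=\E=(d_\A\pi)^{-1}(\xi)$ is the globally defined corank-one preimage of $\xi$, its curvature kills exactly $\partial_t$, and $\E$ is even-contact; the chart-independence discussion via $t\mapsto 1/t$ is then superfluous, since every line of $\xi_x$ lies in an affine chart of this form for a suitable choice of frame.
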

The projectivisation procedure can then be iterated (applying it to $\DD_\can)$ to produce further examples of Lie algebroid bracket-generating distributions of rank $2$.

\appendix

\section{Regularisation from a Poisson and Jacobi perspective}\label{sec:Poisson}

In this appendix we will describe how the regularisation procedure interacts with Poisson and Jacobi geometry. In this paper we mostly focused on regularising $b^k$-contact manifolds, but of course many more geometric structures can be regularised. For $\A_Z$-symplectic structures we have: 
\begin{proposition}\label{prop:reglogsymp}
Let $(M,Z,\omega)$ be a b-symplectic manifold. Then, $(M\times S^1,\Fcal_{\reg,c},(d\pi)^*\omega)$ defines a codimension-one symplectic foliation. 
\end{proposition}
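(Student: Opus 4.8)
The plan is to invoke the regularisation Lie algebroid submersion together with the existing literature on $b$-symplectic structures, and simply check that pulling back the $b$-symplectic form along the submersion does what we want. Recall that by Theorem~\ref{thm:regularisationMain} (or, in this $b$-setting, by the intrinsic regularisation of Proposition~\ref{prop:intrinsicRegularisation} and its compactification Corollary~\ref{cor:compactreg}) the compact regularisation $(M\times S^1,\Fcal_{\reg,c})$ carries a Lie algebroid submersion
\[
\Psi : \Fcal_{\reg,c} \longrightarrow \A_Z
\]
lifting the projection $\pi : M\times S^1 \to M$, and that $\Psi$ is in fact a fibrewise isomorphism between $T\Fcal_{\reg,c}$ and $\A_Z$. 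Being a Lie algebroid morphism, $\Psi$ induces a pullback on forms $\Psi^*:\Omega^\bullet(\A_Z)\to\Omega^\bullet(\Fcal_{\reg,c})$ which commutes with the respective differentials; this is the map denoted $(d\pi)^*$ in the statement. So the candidate foliated form is $\tilde\omega := \Psi^*\omega \in \Omega^2(\Fcal_{\reg,c})$.

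First I would check closedness: since $\Psi$ is a Lie algebroid morphism, $d_{\Fcal}\Psi^*\omega = \Psi^* d_{\A_Z}\omega = 0$ because $\omega$ is $b$-symplectic hence $d_{\A_Z}$-closed. Next, non-degeneracy: $\Psi$ is a fibrewise isomorphism $T\Fcal_{\reg,c}\xrightarrow{\ \sim\ }\A_Z$ (covering $\pi$), so at each point $\Psi^*$ is an isomorphism on $\Lambda^2$, whence $(\Psi^*\omega)^n = \Psi^*(\omega^n)$ is a nowhere-vanishing section of $\Lambda^{2n}(T\Fcal_{\reg,c})^*$ precisely because $\omega^n$ trivialises $\Lambda^{2n}\A_Z^*$. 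Thus $\tilde\omega$ is a leafwise symplectic form on the foliation $\Fcal_{\reg,c}$, i.e.\ $(M\times S^1,\Fcal_{\reg,c},\tilde\omega)$ is a codimension-one symplectic foliation. This is really just Proposition~\ref{prop:regularisationDistributions} applied to the (involutive, corank-zero) situation, augmented with the observation that leafwise closedness and leafwise non-degeneracy are both transported verbatim across a fibrewise-iso Lie algebroid submersion.

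There is essentially no hard step here — the content is entirely in the existence of the regularisation, which is already established. The only point requiring a word of care is the identification of $(d\pi)^*$ in the statement with the algebroid pullback $\Psi^*$: away from the central leaf $\Psi$ literally agrees with $d\pi$ (both leaves and base are non-singular there), and along $Z\times S^1$ one uses density, exactly as in the proof of Lemma~\ref{lem:trivialRegularisation}, to see that $\Psi^*\omega$ is the unique smooth extension of $(d\pi)^*\omega$ from the complement. One could also note, as a sanity check matching the Osorno-Torres statement quoted in the introduction, that restricting $\tilde\omega$ to the central leaf $Z\times S^1$ recovers (via Lemma~\ref{lem:logsympcosymp}) the cosymplectic data of $(M,Z,\omega)$, but this is not needed for the statement as phrased.
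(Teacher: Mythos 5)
Your argument is correct and coincides with the paper's own proof: both deduce closedness from the fact that the regularisation map is a Lie algebroid morphism, and non-degeneracy from the fact that it is a fibrewise isomorphism between bundles of equal rank. The extra remarks on identifying $(d\pi)^*$ with the algebroid pullback and on the restriction to the central leaf are fine but not needed.
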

\begin{proof}
Because $\pi$ is a Lie algebroid morphism, it is immediate that $(d\pi)^*\omega$ defines a closed foliated form, so we are left to show that it is leafwise non-degenerate. Because $d\pi$ is a surjective submersion between vector bundles of the same rank it is a fibre-wise isomorphism. From this it immediately follows that $(d\pi)^*\omega$ is non-degenerate.
\end{proof}
Interestingly enough this result has already appeared in the literature under several different guises. We will elaborate on this in the following pages.
\subsection{Poisson}

We recall that b-symplectic structures can be viewed as Poisson structures which degenerate along a hypersurface:
\begin{lemma}\cite{GMP14}
Let $(M^{2n},Z^{2n-1})$ be a b-pair. There is a one-to-one correspondence between:
\begin{itemize}
\item b-symplectic structures $\omega \in \Omega^2(\A_Z)$.
\item Poisson structures $\pi \in \mathfrak{X}^2(M)$ for which $\wedge^n\pi$ vanishes transversely along $Z$.
\end{itemize}
\end{lemma}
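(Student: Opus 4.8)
The plan is to translate every clause of the statement across the anchor $\rho\colon\A_Z\to TM$. The structural fact I will lean on is that, for any Lie algebroid, $\wedge^{\bullet}\rho\colon\Gamma(\wedge^{\bullet}\A_Z)\to\mathfrak{X}^{\bullet}(M)$ is a morphism of Gerstenhaber algebras: it intertwines wedge products and Schouten brackets. For the b-tangent bundle it has two extra features. Since $\rho$ is an isomorphism over the dense set $M\setminus Z$, the map $\wedge^{\bullet}\rho$ is injective on (local and global) sections. And in a chart with $Z=\{z=0\}$ and $\A_Z$ framed by $z\partial_z,\partial_{x_2},\dots,\partial_{x_{2n}}$, Hadamard's lemma identifies the image of $\wedge^2\rho$ on sections with the set of bivectors $\pi\in\mathfrak{X}^2(M)$ whose restriction to $Z$ lies in $\wedge^2 TZ$, equivalently whose Hamiltonian vector fields are everywhere tangent to $Z$.

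Given a b-symplectic form $\omega\in\Omega^2(\A_Z)$, I would form its fibrewise inverse $\Pi\in\Gamma(\wedge^2\A_Z)$; nondegeneracy of $\omega$ makes this legitimate, and the standard algebroid equivalence gives $d_{\A_Z}\omega=0\iff[\Pi,\Pi]=0$. Setting $\pi:=(\wedge^2\rho)(\Pi)$, the Gerstenhaber property yields $[\pi,\pi]=(\wedge^3\rho)[\Pi,\Pi]=0$, so $\pi$ is Poisson. In the chart above one has $\wedge^n\Pi=g\,(z\partial_z)\wedge\partial_{x_2}\wedge\dots\wedge\partial_{x_{2n}}$ with $g$ nowhere zero (this is exactly the assertion that $\omega^n$ is an $\A_Z$-volume form), whence $\wedge^n\pi=gz\,\partial_z\wedge\partial_{x_2}\wedge\dots\wedge\partial_{x_{2n}}$ vanishes, transversely, precisely along $Z$.

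Conversely, starting from a Poisson $\pi$ with $\wedge^n\pi$ transverse to zero along $Z$, the delicate point is to show that $\pi$ lies in the image of $\wedge^2\rho$. Here I would argue through ranks: in a local frame the coefficient of the top power is the Pfaffian of the corresponding skew matrix, and because the partial derivative of the Pfaffian with respect to an entry $a_{ij}$ is itself a sub-Pfaffian of size $2n-2$, this coefficient vanishes together with its differential at any point where $\rank\pi\le 2n-4$. Transversality of $\wedge^n\pi$ therefore forces $\rank\pi_p=2n-2$ at every $p\in Z$ (and $=2n$ off $Z$), so $M\setminus Z$ is a union of open $2n$-dimensional leaves and $Z$ is a union of $(2n-2)$-dimensional symplectic leaves; since Hamiltonian vector fields are tangent to symplectic leaves, they are tangent to $Z$, and the criterion of the first paragraph produces the unique $\Pi\in\Gamma(\wedge^2\A_Z)$ with $(\wedge^2\rho)(\Pi)=\pi$. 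Running the chart computation backwards, $\wedge^n\Pi$ is a nowhere-vanishing $\A_Z$-multivector, so $\Pi$ is fibrewise nondegenerate with fibrewise inverse $\omega\in\Omega^2(\A_Z)$ and $\omega^n$ a volume form; and $[\Pi,\Pi]=0$ since $\wedge^3\rho$ is injective on sections while $(\wedge^3\rho)[\Pi,\Pi]=[\pi,\pi]=0$, hence $d_{\A_Z}\omega=0$. The two constructions are manifestly inverse to one another (both merely invert a nondegenerate tensor, and $\wedge^2\rho$ is injective), which gives the claimed bijection.

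The one genuinely non-formal ingredient is this rank/leaf argument showing that a Poisson structure with transversely degenerate top power is tangent to $Z$; everything else is bookkeeping with the Gerstenhaber morphism $\wedge^{\bullet}\rho$ together with a single local model. If one prefers, this obstacle can be sidestepped by invoking the Weinstein-type normal form for such Poisson structures from \cite{GMP14}, in which $\pi$ is linear in the b-frame and the corresponding b-symplectic form is read off by inspection.
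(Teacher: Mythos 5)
The paper states this lemma without proof, deferring to \cite{GMP14}, and your argument is correct and essentially reproduces the standard one from that reference: push the fibrewise inverse of $\omega$ through the Gerstenhaber morphism $\wedge^{\bullet}\rho$ in one direction, and in the other use the Pfaffian/transversality argument to pin the rank of $\pi$ along $Z$ at $2n-2$, conclude tangency of the Hamiltonian vector fields to $Z$, and lift $\pi$ through $\wedge^2\rho$. The only step you leave implicit is that the symplectic leaf through a point of $Z$, having dimension $2n-2$ while points of $M\setminus Z$ have rank $2n$, must stay inside $Z$ --- which is exactly what makes ``tangent to leaves'' imply ``tangent to $Z$'' --- but that is immediate from constancy of the rank along a leaf.
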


In \cite{OT15} a construction is presented of how to construct out of an orientable b-symplectic manifold a codimension-one symplectic foliation:
\begin{proposition}[Theorem 4.2.2 in \cite{OT15}]\label{prop:boris}
Let $(M,\pi)$ be an orientable b-symplectic manifold and let $Z$ be the singular locus of $\pi$. Let $X$ be a Poisson vector field that is transverse to the symplectic foliation on $Z$. Then $\pi_R = \pi + X \wedge \partial_{\varphi}$ is a regular corank-one Poisson structure on $M\times S^1$
\end{proposition}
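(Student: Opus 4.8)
The plan is to verify directly that $\pi_R := \pi + X\wedge\partial_{\varphi}$ is (i) a Poisson bivector, i.e.\ $[\pi_R,\pi_R]=0$ for the Schouten--Nijenhuis bracket, and (ii) of constant rank $2n$ on the $(2n+1)$-dimensional manifold $M\times S^1$; since a regular corank-one Poisson structure is precisely a codimension-one symplectic foliation, these two facts give the statement. Throughout I would regard $\pi$ and $X$, via the projection $M\times S^1\to M$, as a bivector and a vector field on $M\times S^1$, and $\partial_{\varphi}$ as the generator of the $S^1$-factor, so that $\mathcal{L}_{\partial_{\varphi}}\pi=0$, $\mathcal{L}_{\partial_{\varphi}}X=0$ and $[X,\partial_{\varphi}]=0$. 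I would also record at the outset that $\mathcal{L}_X\pi=0$ forces $\mathcal{L}_X(\wedge^n\pi)=0$, so the flow of $X$ preserves the zero locus $Z$ of $\wedge^n\pi$; hence $X$ is tangent to $Z$ and $X|_Z$ is a genuine vector field on $Z$, which the hypothesis requires to be everywhere transverse to the (codimension-one) symplectic foliation induced on $Z$ by $\pi$ --- the one underlying the cosymplectic structure of Lemma~\ref{lem:logsympcosymp}.

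For (i) I would expand, using bilinearity and graded symmetry of the bracket,
\[ [\pi_R,\pi_R]=[\pi,\pi]+2\,[\pi,X\wedge\partial_{\varphi}]+[X\wedge\partial_{\varphi},X\wedge\partial_{\varphi}]. \]
The first term vanishes because $\pi$ is Poisson. For the cross term, the graded Leibniz rule gives $[\pi,X\wedge\partial_{\varphi}]=[\pi,X]\wedge\partial_{\varphi}\pm X\wedge[\pi,\partial_{\varphi}]$, where $[\pi,X]=-\mathcal{L}_X\pi=0$ since $X$ is Poisson, and $[\pi,\partial_{\varphi}]=-\mathcal{L}_{\partial_{\varphi}}\pi=0$ by the product structure. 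Finally $[X\wedge\partial_{\varphi},X\wedge\partial_{\varphi}]=0$, since every term produced by Leibniz contains one of $[X,X]$, $[X,\partial_{\varphi}]$ or $[\partial_{\varphi},\partial_{\varphi}]$, all of which vanish. Hence $[\pi_R,\pi_R]=0$.

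For (ii) I would compute the top power of $\pi_R$. As $(X\wedge\partial_{\varphi})\wedge(X\wedge\partial_{\varphi})=0$, all the terms beyond the first two in the binomial expansion drop out, leaving
\[ \pi_R^{\wedge n}\;=\;\pi^{\wedge n}\;+\;n\,\pi^{\wedge(n-1)}\wedge X\wedge\partial_{\varphi}. \]
On $(M\setminus Z)\times S^1$ the summand $\pi^{\wedge n}$ is already nonzero, since $\pi$ is symplectic away from $Z$, so $\pi_R$ has rank $2n$ there. Along $Z\times S^1$ one has $\pi^{\wedge n}=0$, because $\wedge^n\pi$ vanishes (transversely) along $Z$; thus $\pi_R^{\wedge n}|_{Z\times S^1}=n\,\pi^{\wedge(n-1)}\wedge X\wedge\partial_{\varphi}$. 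Here $\pi$ restricts to $Z$ as a corank-one Poisson structure of rank $2n-2$, so $\pi^{\wedge(n-1)}|_Z$ is a nowhere-vanishing section of the top exterior power of the tangent bundle to the symplectic foliation of $Z$; wedging with $X$ is therefore nonzero exactly because $X$ is transverse to that foliation, and wedging once more with $\partial_{\varphi}$ is nonzero because $\partial_{\varphi}$ is transverse to $TM$. Hence $\pi_R^{\wedge n}$ is nowhere zero on $M\times S^1$, and since $\dim(M\times S^1)=2n+1$ automatically forces $\pi_R^{\wedge(n+1)}=0$, the bivector $\pi_R$ has constant rank $2n$, i.e.\ it is a regular corank-one Poisson structure.

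I expect the one genuinely non-formal point to be the behaviour along $Z\times S^1$ in step (ii): one needs that a b-symplectic $\pi$ is tangent to $Z$ and restricts there with constant rank $2n-2$, and that the resulting corank-one Poisson structure is exactly the one whose symplectic foliation enters the transversality hypothesis --- this is the content of the local normal form behind Lemma~\ref{lem:logsympcosymp}, after which the rank count is immediate. Orientability of $M$ is not needed for the computation itself; it enters only to secure a suitable global $X$ (a modular vector field of a b-volume form is transverse to the symplectic foliation of $Z$) and to match $(M\times S^1,\pi_R)$ with the compact regularisation $\Fcal_{\reg,c}$ of Proposition~\ref{prop:reglogsymp}, of which $\pi_R$ is the Poisson incarnation; indeed an alternative route is to deduce the statement from that proposition by identifying the leafwise symplectic form with the one dual to $\pi_R$.
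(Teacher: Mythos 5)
Your argument is correct, but note that the paper does not actually prove this statement: it is imported verbatim as Theorem 4.2.2 of \cite{OT15}, and the paper's own contribution is the observation (made right after the statement) that for $X = \pi^{\sharp}(d\log\abs{z})$ the resulting foliation coincides with the regularisation of Proposition \ref{prop:reglogsymp}, which \emph{is} proven there --- by pulling back $\omega$ along the Lie algebroid submersion $\Fcal_{\reg,c}\to\A_Z$ and using that a fibrewise isomorphism preserves closedness and non-degeneracy. Your route is the opposite one: a direct Schouten-bracket and top-power computation on the bivector side, which is essentially the classical argument of \cite{OT15} and has the merit of being self-contained (no Lie algebroid machinery) and of working for an arbitrary Poisson vector field $X$ transverse to the foliation on $Z$, not just the one produced by the regularisation; what it does not give for free is the identification with the regularisation, which is exactly what the paper cares about. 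Both halves of your computation check out: the vanishing of $[\pi_R,\pi_R]$ term by term, and the expansion $\pi_R^{\wedge n}=\pi^{\wedge n}+n\,\pi^{\wedge(n-1)}\wedge X\wedge\partial_{\varphi}$ together with the rank count on $Z\times\SS^1$ using that $\pi\vert_Z$ has constant rank $2n-2$ and $X\vert_Z$ is transverse to its symplectic foliation (your preliminary observation that $\mathcal{L}_X\pi=0$ forces $X$ to be tangent to $Z$ is a necessary and correctly handled point). The only step worth making explicit is that on $(M\setminus Z)\times\SS^1$ the two summands of $\pi_R^{\wedge n}$ cannot cancel: they lie in the complementary summands $\wedge^{2n}TM$ and $\wedge^{2n-1}TM\wedge\partial_{\varphi}$ of $\wedge^{2n}T(M\times\SS^1)$, so non-vanishing of $\pi^{\wedge n}$ alone indeed suffices.
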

If we let $\abs{z} : M \rightarrow \rr_{\geq 0}$ be the distance to $Z$ with respect to some metric, then the symplectic foliation as in Proposition \ref{prop:reglogsymp}, coincides with the above for $X = \pi^{\sharp}(d\log \abs{z})$. Thus concluding that the construction of Osorno-Torres is a special case of the regularisation.

In fact, it turns out that Proposition \ref{prop:boris} is a special case of a more general construction for Poisson structures on line bundles.
We consider Poisson structures $\pi \in \mathfrak{X}^2(E)$ on line bundles $E\rightarrow (M,\pi)$ which are \textbf{$\rr^*$-invariant}, that is $m_{\lambda}^*\pi_E = \pi_E$ for all $\lambda \in \rr$, where $m_{\lambda} : E \rightarrow E$ denotes the multiplication by scalars. These Poisson structures can be related with the following notion:
\begin{definition}
Let $(M,\pi)$ be a Poisson manifold, and $E \rightarrow M$ a vector bundle. A \textbf{contravariant connection} is a $(T^*M,[\cdot,\cdot]_{\pi})$-connection, or, explicitly a map
\begin{align*}
\nabla^{\pi} : \Gamma(T^*M) \times \Gamma(E) \rightarrow \Gamma(E),\\
(\alpha,s) \mapsto \nabla^{\pi}_{\alpha}(s),
\end{align*}
which is $C^{\infty}(M)$-linear in the first entry and satisfies the following Leibniz identity in the second:
\begin{equation*}
\nabla^{\pi}_\alpha(fs) = f\nabla^{\pi}_{\alpha}(s) + \pi(df,\alpha)s.
\end{equation*}
Its \textbf{curvature} is the endomorphism valued bivector field $K^{\nabla^{\pi}} \in \mathfrak{X}^2(M;\End(E))$, defined by
\begin{equation*}
K^{\nabla^{\pi}}(\alpha,\beta)(s) = \nabla^{\pi}_{\alpha}(\nabla^{\pi}_{\beta}(s)) - \nabla^{\pi}_{\beta}(\nabla^{\pi}_{\alpha}(s)) - \nabla^{\pi}_{[\alpha,\beta]_{\pi}}(s).
\end{equation*}
A contravariant connection is called \textbf{flat} if its curvature vanishes.
\end{definition}

Contravariant connections induce Poisson structures on the total space of the line bundle:
\begin{proposition}[\cite{Pol97}]\label{prop:poli}
Given a line bundle $E \rightarrow (M,\pi)$. There is a one-to-one correspondence between $\rr^*$-invariant Poisson structures on $E^*$ and flat contravariant connections on $E$. 
\end{proposition}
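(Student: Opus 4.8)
The plan is to make the correspondence explicit in local trivialisations and then check that the local constructions glue, that they are mutually inverse, and that the integrability conditions on the two sides match term by term in the Schouten bracket.

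First I would record the local shape of an $\rr^*$-invariant bivector. Over a chart trivialising $E$, with base coordinates $(x^i)$, local frame $e$ of $E$ and associated fibre coordinate $t$ on $E^*$, write $\mathcal{E}=t\partial_t$ for the Euler vector field. Comparing homogeneity degrees, the invariance $m_\lambda^*\pi_E=\pi_E$ forces
\[
\pi_E \;=\; \tfrac12\, a^{ij}(x)\,\partial_{x^i}\wedge\partial_{x^j} \;+\; t\, b^i(x)\,\partial_{x^i}\wedge\partial_t ,
\]
since a weight-zero coefficient of $\partial_x\wedge\partial_x$ must be $t$-independent, a weight-one coefficient of $\partial_x\wedge\partial_t$ must be linear in $t$, and $\partial_t\wedge\partial_t=0$. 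For the map from invariant Poisson structures to connections I would set $\{f,g\}_\pi:=\{p^*f,p^*g\}_{\pi_E}$ --- this is a basic function, hence descends to a bivector $\pi$ on $M$, because the $\partial_t$-leg of $\pi_E$ annihilates horizontal forms --- and, for $s\in\Gamma(E)$ with linear function $\ell_s$ on $E^*$, declare $\ell_{\nabla^\pi_{df}s}:=\{\ell_s,p^*f\}_{\pi_E}$, using that the bracket of a linear with a basic function is again linear. The Leibniz identity $\nabla^\pi_{df}(gs)=g\,\nabla^\pi_{df}(s)+\pi(df,dg)\,s$ (up to sign conventions) then follows from the Leibniz rule of $\{\cdot,\cdot\}_{\pi_E}$ together with $\{p^*g,p^*f\}_{\pi_E}=p^*\{g,f\}_\pi$; the assignment $df\mapsto\nabla^\pi_{df}$ is visibly $C^\infty(M)$-linear and so extends uniquely to all of $\Omega^1(M)$; and, using $[df,dg]_\pi=d\{f,g\}_\pi$, the Jacobi identity of $\pi_E$ evaluated on triples $(\ell_s,p^*f,p^*g)$ is precisely the flatness $K^{\nabla^\pi}=0$, while on triples of basic functions it says $\pi$ is Poisson.

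Conversely, given a Poisson structure $\pi$ on $M$ and a flat contravariant connection $\nabla^\pi$ on $E$, I would define $\pi_E$ in each trivialising chart by the displayed formula, with $a^{ij}$ the components of $\pi$ and $b^i$ the components of $\nabla^\pi$ in the frame $e$. One must then check that these local bivectors agree on overlaps: if $e_\alpha=g_{\alpha\beta}e_\beta$ then $t_\alpha=g_{\alpha\beta}^{-1}t_\beta$, the $\partial_x\wedge\partial_x$ part is manifestly the global $\pi$, and the transformation rule for the components of a contravariant connection --- they change by the term built from $\pi^\sharp(d\log g_{\alpha\beta})$ --- is exactly what makes the $t\,b^i\partial_{x^i}\wedge\partial_t$ part coordinate-independent. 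Invariance under $\mathcal{E}$ is immediate from the formula.

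The heart of the matter is the Schouten identity $[\pi_E,\pi_E]=0$. Expanding $[\pi_E,\pi_E]$ and sorting the resulting trivectors by their number of $\partial_t$-legs, the purely horizontal component is $p^*[\pi,\pi]$, which vanishes because $\pi$ is Poisson; the component with exactly one $\partial_t$-leg is, after identifying $b^i$ with $\nabla^\pi$, the curvature $K^{\nabla^\pi}$, which vanishes by hypothesis; and there is no component with two or more $\partial_t$-legs. Read in the other direction, this same bookkeeping shows that the Jacobi identity of an invariant $\pi_E$ yields nothing beyond the two conditions extracted above. Comparing local formulas shows the two constructions are mutually inverse. The step I expect to be most delicate is not conceptual but organisational: pinning down sign and index conventions so that the bracket-theoretic $\nabla^\pi$, its transformation law under a change of frame, and the ``one $\partial_t$-leg'' piece of $[\pi_E,\pi_E]$ all refer to the same object, and in particular so that the latter reads off literally as the curvature $K^{\nabla^\pi}$ rather than a twisted variant of it.
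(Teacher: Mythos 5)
Your proof is correct, and it reaches the same correspondence as the paper (mixed bracket of a basic and a fibrewise-linear function $\leftrightarrow$ contravariant derivative) but by a noticeably different route. The paper argues invariantly at the level of the bracket: since $E$ has rank one, a Poisson bracket on $E^*$ is determined by its values on pairs of fibrewise-constant functions and on constant--linear pairs, so one simply declares $\set{p^*f,p^*g}=p^*\set{f,g}$ and $\set{p^*f,\ell_s}=\ell_{\nabla_{df}s}$ and observes that Jacobi reduces to flatness; the inverse direction, the gluing, and the Jacobi identity on triples involving two or three linear functions are left implicit. You instead work with the bivector in local trivialisations, pin down the general form of an $\rr^*$-invariant bivector by homogeneity, and grade $[\pi_E,\pi_E]$ by the number of $\partial_t$-legs, so that the purely horizontal piece is $[\pi,\pi]$, the one-leg piece is the curvature, and there is nothing else. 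This buys several things the paper elides: an explicit verification that the two maps are mutually inverse, the cocycle check that the local bivectors glue (which is exactly the transformation law of the connection coefficients by $\pi^{\sharp}(d\log g_{\alpha\beta})$, and in effect re-derives the paper's later global formula $\pi_{E^*}=\hor^{\nabla}(X)\wedge\mathcal{E}+\hor^{\nabla}(\pi)$), and a clean reason why no further integrability conditions arise from brackets of two linear functions. The cost is the bookkeeping of signs and frames that you yourself flag; the paper's rank-one observation disposes of that in one line at the price of leaving half the equivalence to the reader.
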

\begin{proof}
Because $E$ has rank one, to define a Poisson bracket on $E^*$ it suffices to specify it on pairs of fibre-wise constant functions and pairs of fibre-wise constant and fibre-wise linear functions. Using that fibre-wise linear functions on $E^*$ are in one-to-one correspondence with sections $\Gamma(E)$ we can thus define
\begin{equation}\label{eq:expbracket}
\set{p^*f,p^*g}_{E^*} := p^*(\set{f,g}_M),\quad \set{p^*f,s}_{E^*} = \nabla_{df}(s),
\end{equation}
for all $f,g \in C^{\infty}(M)$ and $s \in \Gamma(E)$. The Jacobi identity follows readily from the flatness of $\nabla$.
\end{proof}

It is also useful to have an explicit description of the bivector associated to the Poisson bracket in \eqref{eq:expbracket}. To describe this one needs an auxiliary ordinary connection $\nabla$ on $E$. Then $\bar{\nabla}_{\alpha}(s) = \nabla_{\pi^{\sharp}(\alpha)}(s)$ defines a contravariant connection and the difference $\bar{\nabla} -\nabla^{\pi} \in \mathfrak{X}^1(M;\End(E))$ defines a vector field $X$. One now has that
\begin{equation}\label{eq:expbiv}
\pi_{E^*} = \text{hor}^{\nabla}(X) \wedge \mathcal{E} + \text{hor}^{\nabla}(\pi),
\end{equation}
is the bivector associated to $\set{\cdot,\cdot}_{E^*}$.

Using the $\rr^*$-invariance of the Poisson structure associated to a flat contravariant connection we can immediately conclude the following:
\begin{lemma}\label{lem:inducedcirclepoisson}
Let $E \rightarrow (M,\pi)$ be a line bundle endowed with a flat contravariant connection $\nabla$. Then the Poisson structure $\pi_{E^*}\in \mathfrak{X}^2(E^*)$ associated to $\nabla$ descends to a Poisson structure on $M \times S^1$.
\end{lemma}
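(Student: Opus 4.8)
The plan is to realise $M\times S^1$ as the quotient of $E^*\setminus M$ (the complement of the zero section) by a discrete group of Poisson symmetries, and then push $\pi_{E^*}$ down to the quotient. Write $p:E^*\to M$ for the bundle projection and $m_\lambda:E^*\to E^*$ for fibrewise multiplication by $\lambda\in\rr^*$. By Proposition \ref{prop:poli} the bivector $\pi_{E^*}$ attached to the flat contravariant connection $\nabla$ is $\rr^*$-invariant. On $E^*\setminus M$ the $\rr^*$-action is free and proper, hence so is the action of the discrete subgroup $G:=\{\pm 2^n:n\in\ZZ\}\cong\ZZ/2\ZZ\times\ZZ$ --- precisely the subgroup already used in the construction of the compact regularisation (Corollary \ref{cor:compactreg}). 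Consequently the quotient map $q:E^*\setminus M\to P:=(E^*\setminus M)/G$ is a smooth covering, in particular a surjective submersion that is a local diffeomorphism.

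Next I would identify $P$ with $M\times S^1$. Fixing a fibre metric on $E^*$, the assignment $v\mapsto(p(v),|v|)$ descends to a diffeomorphism of $(E^*\setminus M)/(\ZZ/2\ZZ)$ onto $M\times\rr_{>0}$, where $\ZZ/2\ZZ=\langle m_{-1}\rangle$. The residual $\ZZ=\langle m_2\rangle$ then acts by $(x,t)\mapsto(x,2t)$, which after passing to the coordinate $\log_2 t$ is a unit translation on $M\times\rr$; its quotient is $M\times(\rr/\ZZ)=M\times S^1$. Composing these identifications yields $P\cong M\times S^1$. This is the step that requires a little care when $E$ is non-orientable: one genuinely needs the full group $\ZZ/2\ZZ\times\ZZ$, rather than a single copy of $\ZZ$, in order to land on the trivial $S^1$-bundle instead of on its orientation double cover.

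Finally, since every element of $G\subset\rr^*$ acts by a Poisson diffeomorphism of $(E^*\setminus M,\pi_{E^*})$ and $q$ is a local diffeomorphism, there is a unique bivector $\pi_P$ on $P$ that is $q$-related to $\pi_{E^*}$: at a point $y\in P$ pick any $v\in q^{-1}(y)$ and let $\pi_P|_y$ be the image of $\pi_{E^*}|_v$ under the isomorphism $\wedge^2 dq_v:\wedge^2 T_v(E^*\setminus M)\to\wedge^2 T_yP$; this is independent of the choice of preimage $v$ by $G$-invariance and depends smoothly on $y$ because $q$ is a local diffeomorphism. Since the Schouten bracket is natural under local diffeomorphisms, $[\pi_P,\pi_P]=0$ follows from $[\pi_{E^*},\pi_{E^*}]=0$, so $\pi_P$ is Poisson; under the identification $P\cong M\times S^1$ it is the desired descent. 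I anticipate no real obstacle beyond the bookkeeping already indicated, namely verifying freeness and proper discontinuity of the $G$-action on $E^*\setminus M$ and recognising the quotient as the trivial circle bundle.
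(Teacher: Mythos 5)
Your argument is correct and is essentially the paper's own proof, which simply observes that the $\rr^*$-invariance of $\pi_{E^*}$ allows one to quotient $E^*\setminus M$ by the $\ZZ/2\ZZ\times\ZZ$ subgroup $\{\pm 2^n\}$ to land on $M\times S^1$. You have merely filled in the details the paper leaves implicit (freeness and properness of the action away from the zero section, triviality of the resulting circle bundle, and the pushforward of the bivector along the covering), all of which check out.
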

\begin{proof}
Because the Poisson structure on $E^*$ is $\rr^*$-invariant, one can quotient by a $\mathbb{Z}_2 \times \mathbb{Z}$-action to obtain a Poisson structure on $M\times S^1$
\end{proof}
Given any Poisson manifold, we have an associated contravariant connection:
\begin{example}[The modular contravariant connection]
Let $(M^n,\pi)$ be a any Poisson manifold, and let $K : = \wedge^nTM$. The dual of this line bundle inherits a flat contravariant connection
\begin{align*}
\nabla^{\pi} : \Gamma(T^*M) \times \Gamma(K^*) &\rightarrow \Gamma(K^*),\\
(\alpha,s) &\mapsto \mathcal{L}_{\pi^{\sharp}(\alpha)}(s) - \inp{\pi,d\alpha}s,
\end{align*} 
called the \textbf{modular representation}. Consequently, by Proposition \ref{prop:poli} the total space of $K$ inherits an $\rr^*$-invariant Poisson structure, and therefore also a Poisson structure on the induces $S^1$-bundle.
\end{example}
This Poisson structure on $K$ is called the \textbf{modular Poisson structure} (see \cite{GP12}).

In the particular case when $(M,\pi)$ is a b-symplectic manifold we arrive at the regularisation:
\begin{lemma}
Let $(M,\pi)$ be a b-symplectic manifold. The Poisson structure on $M \times S^1$ induced by applying Lemma \ref{lem:inducedcirclepoisson} to the canonical contraviarant connection on $K^*$ and the one induced by the compact regularisation (i.e. Proposition \ref{prop:boris}) coincide. 
\end{lemma}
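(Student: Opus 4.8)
The plan is to identify both Poisson structures explicitly in local coordinates near $Z$ and check they agree, using the characterisation of $\RR^*$-invariant Poisson structures on a line bundle via flat contravariant connections (Proposition \ref{prop:poli}) together with the explicit bivector formula \eqref{eq:expbiv}. First I would recall that for a $b$-symplectic manifold $(M,\pi)$, the relevant line bundle is $K = \wedge^{n}TM = \det(TM)$, and by Remark \ref{rem:logconn} (applied to the $b$-symplectic form $\omega$, whose top power trivialises $\det(\A_Z^*)$) we have $K \cong \det(\A_Z)$, hence $K^* \cong \det(\A_Z^*)\otimes\det(TM) = Q_{\A_Z} \cong W$, the line bundle of the $b$-divisor. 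Under this identification the modular contravariant connection on $K^*$ should correspond to the canonical $\A_Z$-representation on $Q_{\A_Z}$ of Definition \ref{def:canonicalRep}; this is essentially the content of \cite{K17} as recalled in Remark \ref{rem:logconn}, transported from the Lie algebroid side to the Poisson side via the isomorphism $T^*M \cong \A_Z$ away from $Z$ (extended smoothly). So the $\RR^*$-invariant Poisson structure on $K$ produced by the modular representation is, after quotienting by $\ZZ_2\times\ZZ$, the one carried by $W^*\setminus M \to M\times\SS^1$.

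Next I would invoke Proposition \ref{prop:conrelation}, which says precisely that the intrinsic regularisation $\Fcal$ of $\A_Z$ is the Ehresmann connection associated to the canonical $\A_Z$-representation on $Q_{\A_Z}$. Combining this with the explicit bivector formula \eqref{eq:expbiv} — where the horizontal lift $\hor^\nabla$ is taken with respect to (an auxiliary completion of) this same canonical connection — shows that the Poisson bivector $\pi_{E^*}$ is tangent to the leaves of $\Fcal$ and restricts on each leaf to the pushforward of the symplectic form dual to $\pi$. But by Proposition \ref{prop:reglogsymp} (equivalently Proposition \ref{prop:boris}, cf. the remark identifying the two with $X = \pi^\sharp(d\log\abs{z})$), this is exactly the symplectic foliation defining the compact regularisation. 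I would make this comparison concrete in the $b$-normal coordinates $(z, x_2,\ldots,x_n)$ with $Z = \{z=0\}$: there $\pi$ has the form $z\partial_z\wedge\partial_{x_2} + (\text{terms not involving }\partial_z)$, the modular vector field relative to the flat coordinate volume is $\pi^\sharp(d\log\abs{z})$ up to a leafwise-Hamiltonian correction, and \eqref{eq:expbiv} then reproduces $\pi + \pi^\sharp(d\log\abs{z})\wedge\partial_\varphi$ on $M\times\SS^1$, which is Osorno-Torres' bivector.

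The main obstacle I expect is bookkeeping the three identifications simultaneously: (i) $K^* \cong W \cong Q_{\A_Z}$, (ii) modular contravariant connection $\leftrightarrow$ canonical $\A_Z$-representation, and (iii) $\RR^*$-invariant Poisson structure on $K$ via \eqref{eq:expbiv} $\leftrightarrow$ symplectic foliation via $\hor^\nabla$ in Proposition \ref{prop:conrelation}. Each is routine in isolation, but one must be careful that the auxiliary ordinary connection $\nabla$ used in \eqref{eq:expbiv} and the scalings implicit in passing $W^*\setminus M \to M\times\SS^1$ (the $\ZZ_2\times\ZZ$-quotient in Corollary \ref{cor:compactreg}) match the normalisation of $\abs{z}$ used in Proposition \ref{prop:boris}; different choices differ by a leafwise-exact term and a reparametrisation of the $\SS^1$-factor, so one should phrase the conclusion as equality of Poisson structures up to such an equivalence, exactly as in the remark following Proposition \ref{prop:boris}. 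Once the dictionary is set up, the verification is a short computation in $b$-normal coordinates.
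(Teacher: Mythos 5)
Your proposal is correct and follows essentially the same route as the paper: the paper's entire proof is the one-line assertion that the claim ``can be easily verified using the explicit formula for the bivector on $K$ as in Equation \eqref{eq:expbiv}'', which is precisely the local-coordinate verification you describe. The additional scaffolding you build (identifying $K^*$ with $Q_{\A_Z}\cong W$ via Remark \ref{rem:logconn}, matching the modular contravariant connection with the canonical representation, and invoking Proposition \ref{prop:conrelation}) is consistent with the surrounding discussion in the paper but is not needed beyond the direct computation with \eqref{eq:expbiv}.
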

\begin{proof}
This can be easily verified using the explicit formula for the bivector on $K$ as in Equation \ref{eq:expbiv}.
\end{proof}

\subsection{Regularisation from a Jacobi perspective}\label{sec:Jacobi}

We study how the regularisation and the symplectisation of a b-contact manifold interact. The first thing to remark is that they commute:
\begin{proposition}
Let $(M,Z,\alpha)$ be a b-contact manifold. Then the symplectization of the contact-foliation $(M \times \rr,\ff_{\reg},\alpha_{\reg})$ coincides with the regularisation of the b-symplectic structure $(M\times \rr, d(e^t\alpha))$.
\end{proposition}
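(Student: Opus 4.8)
The claim is a ``commuting diagram'' of two operations: regularisation (passing to $M\times\RR$ with an $\RR$-invariant foliation, as in Lemma~\ref{lem:trivialRegularisation}) and symplectisation (passing to $M\times\RR$ with the form $d(e^t\alpha)$). The two intermediate manifolds one may produce are $(M\times\RR)\times\RR$ in either order, so the first step is to fix coordinates: let $s$ be the regularisation variable (the fibre of the first $\RR$-factor) and $t$ the symplectisation variable. I would then simply write down both objects explicitly and match them. Starting from the $b^k$-contact form $\alpha\in\Omega^1(\A_Z^k)$, the symplectisation is $(\A_Z^k\boxtimes T\RR\to M\times\RR_t,\ d(e^t\alpha))$, which is a $b^k$-symplectic manifold with singular hypersurface $Z\times\RR_t$; applying the regularisation (Lemma~\ref{lem:trivialRegularisation} or its variant, using the same defining function for $Z$ pulled back to $M\times\RR_t$) produces an $\RR_s$-invariant foliation $\Fcal$ on $M\times\RR_t\times\RR_s$ together with the foliated form $(d\pi)^*d(e^t\alpha)$, where $\pi$ forgets the $s$-factor.

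In the other order: the regularisation of $(M,Z,\alpha)$ is $(M\times\RR_s,\Fcal_{\reg},\alpha_{\reg})$, a foliated $b^k$-contact structure (the foliation is $\ker(\theta_f)$ with $\theta_f = df+f^k\,ds$, and $\alpha_{\reg}$ is the induced foliated contact form, $\Psi^*$-related to $\alpha$). Its symplectisation is then the $\A_Z^k$-foliated object $\Fcal_{\reg}\boxtimes T\RR$ on $M\times\RR_s\times\RR_t$ with form $d(e^t\alpha_{\reg})$. After the obvious identification $M\times\RR_t\times\RR_s\cong M\times\RR_s\times\RR_t$, I would check: (i) the two foliations agree — both are $\RR_s$-invariant codimension-one foliations whose leaves away from $Z$ are graphs, and since the $s$-direction was introduced identically in both constructions (the regularising $1$-form $\theta_f$ only involves $f$ and $ds$, not $t$), they coincide leafwise; (ii) the two $2$-forms agree — this is where the identity $\Psi^*\alpha \sim \alpha_{\reg}$ and the naturality of $d(e^t\cdot)$ under the Lie algebroid submersion $\Psi$ get used, so that $(d\pi)^*d(e^t\alpha) = d(e^t(d\pi)^*\alpha) = d(e^t\alpha_{\reg})$ on $\Fcal_{\reg}\boxtimes T\RR$. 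Both verifications are local computations in the normal-form coordinates $(z,x_2,\dots,x_n)$ with $f=z$, where $\A_Z^k = \langle z^k\partial_z,\partial_{x_i}\rangle$, $\Fcal = \langle z^k\partial_z-\partial_s,\partial_{x_i}\rangle$, and $\alpha_{\reg}$ is read off from Lemma~\ref{lem:centralleaf}.

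\textbf{Main obstacle.} The only subtlety is bookkeeping: one must be careful that ``the regularisation of the $b^k$-symplectic structure'' is taken with respect to a defining function for $Z\times\RR_t$ that is the pullback of the one used for $Z$ (otherwise the two foliations differ by an equivalence rather than strictly — see the non-uniqueness discussion around Corollary~\ref{cor:nonUniqueness}), and that the coorientations/vertical orientations induced on the central leaf $Z\times\RR_t$ match on both sides. With that convention fixed, the statement is essentially a formal consequence of the fact that both operations only add a new coordinate direction ($s$ for regularisation, $t$ for symplectisation) via formulas that do not interfere with each other, so no genuine analytic difficulty arises; the proof is a short explicit check in normal coordinates together with an appeal to the density/uniqueness of the extension of $\Psi$ across the central leaf, exactly as in the proof of Lemma~\ref{lem:trivialRegularisation}.
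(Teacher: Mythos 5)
Your proposal is correct and is exactly the ``direct check'' that the paper itself offers as its proof: the paper's argument for this proposition consists of the single remark that one can either verify it directly in coordinates or invoke the more general Jacobi-theoretic statement (Proposition~\ref{prop:Poisandregcommute}) that Poissonization commutes with taking the modular structure. Your bookkeeping of the two $\RR$-factors, the observation that $\theta_f$ involves only $f$ and $ds$ while the symplectisation involves only $t$, and the identity $(d\pi)^*d(e^t\alpha)=d(e^t\alpha_{\reg})$ together constitute precisely that direct check, carried out with the correct convention on defining functions.
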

One can proof this by a direct check, or apply the more general Proposition \ref{prop:Poisandregcommute}.

\subsection{Jacobi structures}

Just as b-symplectic structures can be viewed as mildly degenerate Poisson structures, b-contact structures can be viewed as mildly degenerate Jacobi structures. In this appendix we will recall the relevant notions of Jacobi structures and show that the regularisation of a b-contact structure can be described in a Jacobi framework as well. This framework is a direct analogue of the Poisson case, but to the best of our knowledge it has not been discussed in the literature before. Nevertheless, there are some interesting differences between the Poisson and Jacobi case.
\begin{definition}
A \textbf{Jacobi structure} on a manifold is a line bundle $L$ endowed with a Lie bracket on its sections $\{\cdot,\cdot \}$ with the property that $D_e :=\{e, \cdot\} :\Gamma(L) \rightarrow \Gamma(L)$ is a first order differential operator for all $e \in \Gamma(L)$.
\end{definition}

\begin{definition}
A \textbf{Jacobi pair} $(\Lambda,R) \in \mathfrak{X}^2(M) \times \mathfrak{X}^1(M)$ is a pair satisfying the following relations:
\begin{equation*}
[\Lambda,\Lambda] = \Lambda \wedge R, \quad [\Lambda,R] = 0. \hfill \qedhere
\end{equation*}
\end{definition}

\begin{proposition}
There is a one-to-one correspondence between Jacobi structures $\pbre$ on the trivial line bundle, and Jacobi pairs $(\Lambda,R)$ given by:
 \begin{equation*}
\{f,g\} = \Lambda(df,dg) + fR(g) - gR(f), \quad \text{for all } f,g \in \Gamma(\underline{\rr}) = C^{\infty}(M).
\end{equation*}
\end{proposition}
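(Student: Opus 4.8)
The plan is to exhibit the correspondence explicitly in both directions and then check that the two constructions are mutually inverse. Given a Jacobi bracket $\pbre$ on the trivial line bundle, I would first define a vector field $R := \{1,\cdot\}$ and a bivector $\Lambda$ by $\Lambda(df,dg) := \{f,g\} - fR(g) + gR(f)$. That $R$ is a derivation (hence a genuine vector field) follows from the fact that $D_e := \{e,\cdot\}$ is a first-order differential operator for each section $e$: writing out the general Leibniz rule $D_e(gh) = g\,D_e(h) + h\,D_e(g) - gh\,D_e(1)$ for $e = 1$ and using $\{1,1\}=0$ gives $R(gh) = gR(h) + hR(g)$. That $\Lambda$ is $C^\infty(M)$-bilinear — and hence, being manifestly skew, an honest bivector field — follows by expanding $\{f_1f_2,g\}$ with the Leibniz rule for $D_g$ and $\{1,f_1f_2\}$ with the derivation property of $R$ just established; the terms proportional to $f_1f_2$ cancel in precisely the pattern needed for tensoriality. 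By the very definition of $\Lambda$ we then recover $\{f,g\} = \Lambda(df,dg) + fR(g) - gR(f)$.

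Conversely, given a Jacobi pair $(\Lambda,R)$, I would define $\{f,g\} := \Lambda(df,dg) + fR(g) - gR(f)$. Skew-symmetry is immediate, and $\{f,\cdot\} = \iota_{df}\Lambda + fR - R(f)\,\mathrm{id}$ is visibly a first-order differential operator, so the only substantive point is the Jacobi identity; this is where the Jacobi-pair equations $[\Lambda,\Lambda] = \Lambda\wedge R$ and $[\Lambda,R] = 0$ enter, and I expect this step to be the main obstacle. The standard (Lichnerowicz) computation evaluates the Jacobiator $J(f,g,h) := \{f,\{g,h\}\} + \{g,\{h,f\}\} + \{h,\{f,g\}\}$ on a triple of functions and organises the result by differential order in each slot: the top-order part is a universal expression in $[\Lambda,\Lambda]$ and $\Lambda\wedge R$ applied to $(df,dg,dh)$, while the remaining terms are a universal expression in $[\Lambda,R]$. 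One then reads off that $J\equiv 0$ for all $f,g,h$ is equivalent to the two displayed equations (with the normalisation used in the statement). I would either carry out this Schouten-calculus bookkeeping directly, or — more slickly — deduce it from the Poisson case by Poissonising the Jacobi structure on $\underline{\rr}$ to a homogeneous bivector on $M\times\rr$ and invoking the classical bivector/Poisson-bracket dictionary there.

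It remains to check that the two assignments are inverse to one another. Starting from a pair $(\Lambda,R)$, forming the bracket, and then re-extracting: the recovered vector field is $\{1,\cdot\} = \iota_{d1}\Lambda + R - R(1)\,\mathrm{id} = R$, since $d1 = 0$ and $R(1) = 0$, and the recovered bivector is $\{f,g\} - fR(g) + gR(f) = \Lambda(df,dg)$ by construction, so this composite is the identity. In the other order, the formula $\{f,g\} = \Lambda(df,dg) + fR(g) - gR(f)$ is precisely how the bracket was rebuilt from its associated pair, so that composite is the identity as well. This establishes the claimed bijection.
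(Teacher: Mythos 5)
The paper does not actually prove this proposition: it is quoted as a classical fact (going back to Lichnerowicz and Kirillov), so there is no in-paper argument to compare yours against. Judged on its own, your proposal follows the standard route and is essentially correct. The extraction $R:=\{1,\cdot\}$, the verification that $R$ is a derivation via $D_e(gh)=gD_e(h)+hD_e(g)-ghD_e(1)$ together with $\{1,1\}=0$, the tensoriality of $B(f,g):=\{f,g\}-fR(g)+gR(f)$ (one checks $B(f_1f_2,g)=f_1B(f_2,g)+f_2B(f_1,g)$ and $B(1,g)=0$, so $B$ is a skew biderivation, hence a bivector), and the mutual-inverse check are all fine.

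Two points deserve attention. First, the crux of the statement --- that the Jacobiator of $\{f,g\}=\Lambda(df,dg)+fR(g)-gR(f)$ vanishes identically if and only if $[\Lambda,\Lambda]=\Lambda\wedge R$ and $[\Lambda,R]=0$ --- is precisely the step you defer to ``the standard Lichnerowicz computation.'' That computation is where all the content lives, and it is sensitive to the normalisation of the Schouten bracket: many sources write the first condition as $[\Lambda,\Lambda]=2R\wedge\Lambda$ or with the opposite sign, so if you carry it out you must verify that the constants land on the paper's convention $[\Lambda,\Lambda]=\Lambda\wedge R$ rather than assert that they do. Second, your first paragraph constructs $(\Lambda,R)$ from a Jacobi bracket but never explicitly confirms that this pair satisfies the two structure equations; this does follow, but only by routing through the equivalence claimed in your second paragraph (the reconstructed bracket equals the original one, its Jacobiator vanishes, hence the pair equations hold). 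You should make that logical dependency explicit, since as written the forward direction of the bijection is incomplete without it.
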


\subsubsection{Poissonization}
One way of dealing with Jacobi structures, is to consider an associated Poisson structure. We will first recall this construction for Jacobi pairs:
\begin{definition}
Let $(M,\Lambda,R)$ be a Jacobi manifold. Then
\begin{equation*}
\tilde{\Lambda} := e^{-s}\left( \Lambda +  \partial_{s} \wedge R\right),
\end{equation*}
defines a Poisson structure on $M \times \rr$ called the \textbf{Poissonization}.
\end{definition}
However, the Poissonization in this form does not extend to general Jacobi structures on line bundles, first a coordinate transformation $s \mapsto \pm\log(s)$ is necessary, resulting in two possible Poisson structures on $M \times \rr^*$:
\begin{equation}\label{eq:Poissoni}
\Pi_1 = t(\Lambda - t\partial_t \wedge R),\quad \Pi_2 = t^{-1}(\Lambda + t\partial_t \wedge R).
\end{equation}
We will now recall how to describe the Poissonization of a general Jacobi structure:
\begin{definition}\label{def:Poisson1}
Let $\pbre$ be a Jacobi bracket on $p : L \rightarrow M$. Let $s_1,s_2 \in \Gamma(L)$ and let $F_1,F_2 \in C^{\infty}(L\backslash \set{0})$ be the unique functions satisfying $s_i(p(e)) = F_i(e)e$, for all $e \in L\backslash \set{0}$. Then the Poisson bracket on $L\backslash \set{0}$ defined by
\begin{equation}
\pbr{F_1,F_2}_{\Pi_1}(e)e = \pbr{s_1,s_2}(p(e)),
\end{equation} 
is called the \textbf{Poissonization} of $\pbre$.
\end{definition}
When $L$ is a trivial bundle, the Poisson structure defined in Definition \ref{def:Poisson1} coincides with $\tilde{\Lambda}_1$ from Equation \ref{eq:Poissoni}. If $s(x) = (x,f(x))$ is a section of the trivial line bundle, then the corresponding function on $L\backslash \set{0}$ is given by $F(x,t) = t^{-1}f(x)$. Therefore, one may write the defining equation of the Poissonization in the elegant homogeneous form
\begin{equation*}
\pbr{f_1t^{-1},f_2t^{-1}}_{\Pi_1}t = \pbr{f_1,f_2}. 
\end{equation*}
\begin{definition}\label{def:Poisson2}
Let $\pbre$ be a Jacobi bracket on $p : L \rightarrow M$. Let $s_1,s_2 \in \Gamma(L)$, and let $\hat{s}_1,\hat{s}_2 \in C^{\infty}(L^*)$ denote the corresponding homogeneous functions on $L^*$. Then $\Pi_2 \in \mathfrak{X}^2(L^*\backslash \set{0})$, defined by
\begin{equation*}
\Pi_2(d\hat{s}_1,d\hat{s}_2) = \widehat{\Delta(s_1,s_2)},
\end{equation*}
is called the \textbf{Poissonization} of $\pbre$.
\end{definition}
When $L$ is a trivial bundle, the Poisson structure defined in Definition \ref{def:Poisson1} coincides with $\tilde{\Lambda}_2$ from Equation \ref{eq:Poissoni}.  If $s(x) = (x,f(x))$ is a section of the trivial line bundle, then the corresponding homogeneous function is given by $\hat{s}(x,t) = f(x)t$. Therefore, one may write the defining equation of the Poissonization in the elegant form
\begin{equation*}
\pbr{f_1t,f_2t}_{\Pi_2} = t\pbr{f_1,f_2}.
\end{equation*}

Recall that the pairing $L\backslash \set{0} \times L^*\backslash \set{0} \rightarrow \rr$ induces a diffeomorphism $L\backslash \set{0} \simeq L^*\backslash \set{0}$. When $L$ is a trivial line bundle, this is simply the map $(x,t) \mapsto (x,t^{-1})$. Consequently, it is no surprise that this diffeomorphism sends the Poisson structure defined in Definition \ref{def:Poisson1} to the one from Definition \ref{def:Poisson2}. Although, this motivates that both may be called the Poissonization, admittedly it can be confusing at times which one is implied. From here on onward, when we talk about the Poissonization we will imply the one on $L^*\backslash \set{0}$.
\begin{remark}
Note that the Poissonization on $L\backslash \set{0}$ can in fact be extended over the zero-section (by zero), whereas the Poissonization on $L^*\backslash \set{0}$ cannot.
\end{remark}



Recall that a Jacobi structure induces a Lie algebroid structure on its jet bundle:
\begin{definition}
Let $(L,\{\cdot,\cdot\})$ be a Jacobi structure. Then $J^1L$ inherits the structure of a Lie algebroid, defined uniquely by the relations:
\begin{align*}
[j^1e_1,j^1e_2] = j^1\{e_1,e_2\}, \quad \rho(j^1e_1) = \sigma_{D_{e_1}},
\end{align*}
for all $e_1,e_2 \in \Gamma(L)$.
\end{definition}
It is instructive to spell out the Lie algebroid structure explicitly for the case of a trivial line bundle, in which $J^1\underline{\rr} \simeq T^*M \oplus \underline{\rr}$:
\begin{definition}
The Lie algebroid of the Jacobi pair $(\Lambda,R)$ on $M$ is the vector bundle $T^*M \oplus \rr$, with the anchor $\rho : T^*M \oplus \rr \rightarrow TM$ given by
\begin{equation*}
\rho(\alpha,\lambda) = \Lambda^{\sharp}(\alpha) + \lambda R,
\end{equation*}
and the bracket uniquely defined by
\begin{align*}
[(\alpha,0),(\beta,0)] &= ([\alpha,\beta]_{\Lambda} - \iota_R(\alpha \wedge  \beta),\Lambda(\alpha,\beta))\\
[(0,1),(\alpha,0)] &= (L_R(\omega),0). \hfill \qedhere
\end{align*}
\end{definition}

\begin{theorem}
Let $(L,\pbre)$ be a Jacobi structure. Then the the even dimensional leaves of $\rho(J^1L)$ inherit a locally conformal symplectic structure whereas the odd dimensional leaves inherit a contact structure.
\end{theorem}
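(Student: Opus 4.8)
The statement is really a foliated incarnation of the classical fact that the symplectic foliation of a Poisson manifold obtained as a ``Poissonisation'' descends to a Jacobi structure whose leaves are contact or locally conformal symplectic. The plan is to reduce the statement to the already–established Poisson picture. First I would invoke the Poissonisation of $(L,\pbre)$ on $L^*\setminus\{0\}$ from Definition \ref{def:Poisson2}: this is an $\rr^*$-invariant Poisson structure $\Pi_2$, and by standard Poisson theory its symplectic leaves are the leaves of the (possibly singular) distribution $\Pi_2^\sharp(T^*(L^*\setminus\{0\}))$, each carrying a canonical symplectic form. Since $\Pi_2$ is homogeneous of weight $1$ under the Euler vector field $\mathcal E$, the leaves are $\mathcal E$-invariant cones, and one checks $\mathcal L_{\mathcal E}\omega_{\mathrm{leaf}} = \omega_{\mathrm{leaf}}$, i.e. $\mathcal E$ is a Liouville vector field on each leaf. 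The leaves therefore come in two types according to whether $\mathcal E$ is tangent to them or not.

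Next I would identify these leaves with the leaves of the image distribution $\rho(J^1L)\subset TM$. The anchor of the Jacobi algebroid $J^1L$ sends $j^1e$ to the symbol of $D_e$, and in the trivial-bundle model this is exactly $\rho(\alpha,\lambda)=\Lambda^\sharp(\alpha)+\lambda R$; on the other hand, the fibrewise-linear and fibrewise-constant functions on $L^*$ generate, under $\Pi_2$, precisely the image of this anchor once one projects down along $p:L^*\setminus\{0\}\to M$. Concretely, the projection $p$ restricted to a symplectic leaf $\Sigma\subset L^*\setminus\{0\}$ of $\Pi_2$ is a submersion onto a leaf $\mathcal O$ of $\rho(J^1L)$, and its fibres are the $\mathcal E$-orbits inside $\Sigma$ (either $1$-dimensional, if $\mathcal E$ is tangent, or $0$-dimensional, if it is transverse). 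So $\dim\Sigma = \dim\mathcal O$ or $\dim\mathcal O+1$, and $\dim\Sigma$ is even.

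Now the dichotomy. If $\mathcal E$ is transverse to $\Sigma$ everywhere, $p|_\Sigma$ is a (local) diffeomorphism onto $\mathcal O$, so $\dim\mathcal O=\dim\Sigma$ is even and $\mathcal O$ inherits from $\omega_\Sigma$ a nondegenerate $2$-form; it need not be closed, but because $\Sigma$ is a cone and $\omega_\Sigma$ is homogeneous of weight one, different conical sections differ by a conformal factor, and pushing forward along the different local sections of $p|_\Sigma$ yields a family of symplectic forms on $\mathcal O$ pairwise related by a nowhere-vanishing function — this is exactly the data of a locally conformal symplectic structure on $\mathcal O$. If instead $\mathcal E$ is tangent to $\Sigma$, then $p|_\Sigma:\Sigma\to\mathcal O$ is a principal $\rr^*$-bundle (an $\rr$-bundle locally), $\dim\mathcal O=\dim\Sigma-1$ is odd, and $\iota_{\mathcal E}\omega_\Sigma$ is an $\rr^*$-basic, $\rr^*$-homogeneous $1$-form on $\Sigma$ whose kernel is a hyperplane field transverse to the $\mathcal E$-direction; since $d(\iota_{\mathcal E}\omega_\Sigma)=\mathcal L_{\mathcal E}\omega_\Sigma=\omega_\Sigma$ is symplectic, $\iota_{\mathcal E}\omega_\Sigma$ is a contact form on each local section of $p|_\Sigma$, and these descend to a contact structure on $\mathcal O$ (well-defined because $\iota_{\mathcal E}\omega_\Sigma$ is homogeneous of weight one, so two sections yield conformally equivalent contact forms, hence the same contact hyperplane field).

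\textbf{Main obstacle.} The delicate point is the descent/well-definedness in the locally conformal symplectic case: one must check that the ``leaf'' $\mathcal O$ of $\rho(J^1L)$ is genuinely the quotient of the symplectic leaf $\Sigma$ by the $\mathcal E$-action when $\mathcal E$ is transverse (a priori the $\mathcal E$-orbit could re-enter $\mathcal O$, i.e. $p|_\Sigma$ could fail to be injective on an $\mathcal E$-slice), and that the resulting $1$-cocycle of conformal factors — the Lee class — is intrinsically defined. This is handled by working locally on $M$ (where $L$ trivialises and the Jacobi pair model $(\Lambda,R)$ and the explicit Poisson structures $\Pi_1,\Pi_2$ of \eqref{eq:Poissoni} are available) and then patching: over a trivialising chart $L^*\setminus\{0\}\cong (M\times\rr)\setminus(M\times\{0\})$ with $\mathcal E = t\partial_t$, the two cases correspond precisely to whether the $\partial_t$-component of $\Pi_2^\sharp$ vanishes along a leaf, and the pushed-forward form on $\mathcal O$ is computed directly, exhibiting the conformal factor as a transition function; gluing the local sections of $p|_\Sigma$ then produces the Lee form. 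Everything else (homogeneity of $\omega_\Sigma$, the Liouville identity $\mathcal L_{\mathcal E}\omega_\Sigma=\omega_\Sigma$, parity of $\dim\Sigma$) is immediate from the structure of $\Pi_2$ and standard symplectic-leaf theory.
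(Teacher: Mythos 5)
The paper states this theorem without proof: it is the classical Kirillov--Lichnerowicz description of the characteristic foliation of a Jacobi manifold, recalled in the appendix as background, so there is no in-paper argument to compare against. Your Poissonisation strategy is the standard route to it and is essentially sound: the dichotomy is governed by whether the Euler vector field $\mathcal{E}$ of $L^*\setminus\{0\}$ is tangent to a given symplectic leaf $\Sigma$ of $\Pi_2$, the tangent case producing odd-dimensional contact leaves downstairs and the transverse case even-dimensional locally conformal symplectic ones, with the conformal ambiguity traced to the possible non-injectivity of $p|_\Sigma$ over $\mathcal{O}$.

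Two of your intermediate assertions are misstated, though neither breaks the argument. First, the symplectic leaves of $\Pi_2$ are \emph{not} all $\mathcal{E}$-invariant cones: since $\mathcal{L}_{\mathcal{E}}\Pi_2=-\Pi_2$, the flow of $\mathcal{E}$ only permutes the leaves, and a leaf is itself a cone precisely in the tangent (contact) case. In the transverse case it is the saturation $p^{-1}(\mathcal{O})$, foliated by the $\mathcal{E}$-translates of $\Sigma$, that is the cone, and the conformal factors relating the pushed-forward symplectic forms come from comparing different translates (or different sheets of a single non-injective leaf), not from ``conical sections of $\Sigma$''; once rephrased this way your Lepage-type argument that the transition factors are locally constant, hence define a Lee class, goes through. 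Second, in the contact case the kernel of $\iota_{\mathcal{E}}\omega_\Sigma$ \emph{contains} the $\mathcal{E}$-direction rather than being transverse to it, and the form is homogeneous of weight one, hence not basic; it is exactly this containment together with homogeneity that lets the hyperplane field descend to $\mathcal{O}$, where the identity $d(\iota_{\mathcal{E}}\omega_\Sigma)=\omega_\Sigma$ and the transversality of $\mathcal{E}$ to local slices give the contact condition. With these corrections the proof is complete.
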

\subsection{Induced homogeneous Jacobi structures}

Let $(L \rightarrow M,\pbre_1)$ be a Jacobi structure, $p_E : E \rightarrow M$ a vector bundle and $\nabla$ a flat $J^1L$-connection on $E^*$. We can produce a Jacobi structure $(p^*_EL\rightarrow E,\pbre_2)$ as follows:
\begin{align*}
\pbr{p^*e_1,p^*e_2}_2 = p^*\pbr{e_1,e_2},\quad \text{for all } e_1,e_2 \in \Gamma(L),\\
\sigma_{J_2}(p^*e)(f) = \nabla_{j^1e}(f) \quad \text{for all }f \in \Gamma(E^*) \subset C^{\infty}(E).
\end{align*}
This in fact defines a one-to-one correspondence, mimicking Proposition \ref{prop:poli}:
\begin{proposition}\label{prop:jaccor}
Let $E \rightarrow M$ be a line bundle and $(L,\pbre)$ a Jacobi structure. There is a one-to-one correspondence between:
\begin{itemize}
\item $\rr^*$-invariant Jacobi structures $(p^*_EL,\pbre_2)$ on $E$, i.e., satisfying $m_{\lambda}(\pbr{s_1,s_2}) = \pbr{m_{\lambda}(s_1),m_{\lambda}(s_2)}$ for all $s_1,s_2 \in \Gamma(p_E^*L)$.
\item Flat $(J^1L)$-connections $\nabla^J$ on $E^*$.
\end{itemize}
\end{proposition}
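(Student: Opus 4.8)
The plan is to follow the same strategy as the proof of Proposition \ref{prop:poli} (Polishchuk's correspondence for contravariant connections), adapted to the Jacobi setting where the cotangent Lie algebroid $(T^*M,[\cdot,\cdot]_\pi)$ is replaced by the first jet algebroid $J^1L$. The key structural fact to exploit is that, since $E\to M$ has rank one, a Jacobi bracket on sections of $p_E^*L$ is determined by very little data: by $\rr^*$-invariance (homogeneity of degree $0$ under scalar multiplication), it is enough to know the bracket on pullbacks $p_E^*e$ of sections $e\in\Gamma(L)$, and the bracket of such a pullback with a fibrewise-linear section of $p_E^*L$ — equivalently, with a section of $E^*$ viewed inside $C^\infty(E)$ via the identification of fibrewise-linear functions on $E$ with $\Gamma(E^*)$. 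This is exactly the pattern of Equations (\ref{eq:expbracket}) in the Poisson case, and it is what makes the correspondence work.

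Concretely, I would argue as follows. \emph{From a flat $J^1L$-connection $\nabla^J$ on $E^*$ to an $\rr^*$-invariant Jacobi structure on $p_E^*L$:} define the bracket by the two formulas displayed just before the statement, namely $\{p_E^*e_1,p_E^*e_2\}_2 = p_E^*\{e_1,e_2\}$ and $\sigma_{J_2}(p_E^*e)(f)=\nabla^J_{j^1e}(f)$ for $f\in\Gamma(E^*)$, and extend by the Leibniz rule forced by the first-order-operator condition. One must check: (i) this is well-defined, i.e. the Leibniz extension is consistent — here one uses that $\nabla^J$ is a genuine $J^1L$-connection, so the symbol condition and the derivation property match up; (ii) the resulting operators $D_{s} = \{s,\cdot\}$ are first-order differential operators on $p_E^*L$ — immediate from the construction, since the ``vertical part'' is $\nabla^J$ and the ``horizontal part'' comes from the base Jacobi bracket; (iii) the Jacobi (Lie) bracket identity $\{s_1,\{s_2,s_3\}\}=\{\{s_1,s_2\},s_3\}+\{s_2,\{s_1,s_3\}\}$ holds. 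Step (iii) is where flatness of $\nabla^J$ enters: the Jacobi identity on pullback sections reduces to the Jacobi identity on the base; the mixed terms (two pullbacks, one $E^*$-section) reduce to the defining relation of a $J^1L$-connection together with the fact that $j^1\{e_1,e_2\}=[j^1e_1,j^1e_2]$ in $J^1L$; and the remaining curvature obstruction is precisely $K^{\nabla^J}$, which vanishes. Finally, $\rr^*$-invariance is built into the homogeneity degrees assigned to the two types of generators.

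\emph{Conversely}, given an $\rr^*$-invariant Jacobi structure $(p_E^*L,\{\cdot,\cdot\}_2)$, I would recover $\nabla^J$ by setting $\nabla^J_{j^1e}(f):=\sigma_{J_2}(p_E^*e)(f)$ for $e\in\Gamma(L)$ and $f\in\Gamma(E^*)\subset C^\infty(E)$, then checking that this is $C^\infty(M)$-linear in the jet variable (using $\rr^*$-invariance to see that the bracket of a pullback with a fibrewise-linear section is again fibrewise-linear, hence lands in $\Gamma(E^*)$, and that $f$-dependence is controlled), that it satisfies the $J^1L$-Leibniz rule $\nabla^J_{j^1(fe)} = f\nabla^J_{j^1e} + (\text{anchor term})$, and that it is flat because the ambient bracket satisfies the Jacobi identity. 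One then verifies that the two assignments are mutually inverse, which is essentially a matter of unwinding the definitions. I would also note that the two constructions are manifestly natural, so the correspondence is canonical.

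The main obstacle I expect is bookkeeping rather than conceptual: making precise the claim that an $\rr^*$-invariant Jacobi bracket on the rank-one bundle $p_E^*L$ is \emph{freely} determined by the base bracket plus a $J^1L$-connection, i.e. that no further compatibility is silently imposed and that the first-order-operator axiom is automatically satisfied. This is the direct analogue of the rank-one argument in Proposition \ref{prop:poli}, but in the Jacobi world one has to track the line bundle $L$ and its jets carefully, and in particular be careful that the symbol $\sigma_{J_2}$ of a first-order operator on $p_E^*L$ decomposes cleanly into a ``horizontal'' piece covering the base symbol and a ``vertical'' piece along the fibres of $E$, the latter being what gets identified with $\nabla^J$. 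Once that decomposition is set up, flatness $\Leftrightarrow$ Jacobi identity is a direct computation parallel to the Poisson case, and I would present it as such rather than in full detail. As in the Poisson setting, one can additionally record the explicit homogeneous form of the bracket on $p_E^*L$ (the Jacobi analogue of Equation \eqref{eq:expbiv}) using an auxiliary ordinary connection, but this is a remark rather than part of the proof.
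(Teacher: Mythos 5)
Your plan is correct and is exactly the argument the paper has in mind: the paper gives no written proof of Proposition \ref{prop:jaccor}, stating only that the displayed construction ``defines a one-to-one correspondence, mimicking Proposition \ref{prop:poli}'', and your proposal is precisely that mimicry — the rank-one reduction to pullback and fibrewise-linear generators, with flatness of $\nabla^J$ accounting for the Jacobi identity. The only caveat, inherited from the paper's own phrasing rather than introduced by you, is that the correspondence implicitly fixes the bracket on pullback sections to be $p_E^*\pbre$, which should be stated as part of the data on the Jacobi side.
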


When the Jacobi bracket on $L$ is induced by a Jacobi pair $(\Lambda,R)$ we can give a global description of $\pbre_2$. Let $\nabla$ be a flat ordinary connection on $E^*$, and define a $(T^*M\oplus \rr)$-connection on $E^*$ via $\bar{\nabla}_{(\alpha,f)} = \nabla_{\pis{\alpha}+fR}$. Then using the difference 
\begin{equation}
\nabla^J - \bar{\nabla} = (X,f) \in \mathfrak{X}^1(M)\times C^{\infty}(M),
\end{equation}
one finds that the Jacobi pair
\begin{equation}
(\tilde{\Lambda},\tilde{R}) := (\hor^{\nabla}(X) \wedge \mathcal{E} + \hor^{\nabla}(\Lambda),\hor^{\nabla}(R) + f\mathcal{E}),
\end{equation}
corresponds with $\pbre_2$ as defined above. 
\subsubsection{Canonical connections}
Different from the Poisson case, associated to a Jacobi structure $(L,\pbre)$ there are several canonical $J^1L$-representations:
\begin{align*}
\nabla^1 : \Gamma(J^1L) \times \Gamma(L) &\rightarrow \Gamma(L)\\
(j^1s,e) &\mapsto \pbr{s,e}\\
\nabla^2 : \Gamma(J^1L) \times \Gamma(\det(J^1L)) &\rightarrow \Gamma(\det(J^1L))\\
(j^1s,\omega) &\mapsto L_{D_s}(\omega)\\
\nabla^3 : \Gamma(J^1L) \times \Gamma(\det(T^*M)) &\rightarrow \Gamma(\det(T^*M))\\
(j^1s,\Omega) &\mapsto L_{\rho(j^1s)}(\Omega)
\end{align*}
Here $L_D : \Gamma(\det(J^1L)) \rightarrow \Gamma(\det(J^1L))$ is induced by the Lie derivative on $J^1L$ defined via:
\begin{equation*}
\inp{L_D(\theta),D'} := D(\inp{\theta,D'}) - \inp{\theta,[D,D']}, \quad D,D' \in \Gamma(DL), \theta \in \Gamma(J^1L),
\end{equation*}
where $\inp{\cdot,\cdot} : J^1L\otimes DL \rightarrow L$ is the natural pairing.

Again, it is instructive to see these representations in the case when $L = \underline{\mathbb{R}}$, and the Jacobi structure corresponds to a Jacobi pair $(\Lambda,R)$:
\begin{align*}
\nabla^1_{(df,f)}(\underline{1}) &= -R(f),\\
\nabla^2_{(df,f)}(\omega) &= L_{\Lambda^{\sharp}(df)+fR}\omega - R(f)\omega, \quad \omega \in \det(J^1\underline{\rr}) \simeq \det(T^*M)\\
\nabla^3_{(df,f)}(\Omega) &= L_{\Lambda^{\sharp}(df)+fR}\Omega
\end{align*}

The Lie algebroid structure on $J^1L$ induced by a Jacobi structure, also inherits the canonical representation (in the sense of Definition \ref{def:canonicalRep})
\begin{equation*}
\nabla^4 : \Gamma(J^1L) \times \Gamma(\det(J^1L)\otimes \det(T^*M)) \rightarrow \Gamma(\det(J^1L)).
\end{equation*}
\begin{proposition}
Let $(L,\pbre)$ be a Jacobi structure. Then $\nabla^4 = \nabla^3 \otimes \nabla^2$.
\end{proposition}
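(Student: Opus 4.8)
The plan is to expand $\nabla^4$ into a tensor product of two $J^1L$-representations and then recognise the factors as $\nabla^3$ and $\nabla^2$. Unwinding Definition~\ref{def:canonicalRep} for $\A=J^1L$ (and passing to the dual representation so that it acts on the density bundle $\det(J^1L)\otimes\det(T^*M)$ displayed here), one has, for $v\in\Gamma(J^1L)$,
\[
\nabla^4_v(\mu\otimes\Omega)=\mathcal{L}^{J^1L}_v(\mu)\otimes\Omega+\mu\otimes\mathcal{L}_{\rho(v)}(\Omega),
\]
where $\mathcal{L}^{J^1L}_v$ is the Lie derivative of top multivectors on the Lie algebroid $J^1L$ --- the degree-zero derivation of $\wedge^\bullet(J^1L)$ extending $[v,\cdot]$ and acting on $C^\infty(M)$ by $\rho(v)$ --- and $\mathcal{L}_{\rho(v)}$ is the ordinary Lie derivative along the vector field $\rho(v)$. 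So $\nabla^4$ is the tensor product of the representation $v\mapsto\mathcal{L}^{J^1L}_v$ on $\det(J^1L)$ and the representation $v\mapsto\mathcal{L}_{\rho(v)}$ on $\det(T^*M)$; the latter is literally $\nabla^3$, so it remains to show that the former is $\nabla^2$, i.e. $\mathcal{L}^{J^1L}_{j^1s}=L_{D_s}$ on $\Gamma(\det(J^1L))$ for all $s\in\Gamma(L)$.

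Since a $J^1L$-connection is $C^\infty(M)$-linear in its algebroid slot and the holonomic sections $\{j^1s:s\in\Gamma(L)\}$ generate $\Gamma(J^1L)$ over $C^\infty(M)$ (from the jet sequence, $df\otimes s=j^1(fs)-fj^1s$), it suffices to work with holonomic sections. Moreover $\mathcal{L}^{J^1L}_{j^1s}$ and $L_{D_s}$ are both degree-zero derivations of $\wedge^\bullet(J^1L)$ acting on $C^\infty(M)$ by $\sigma_{D_s}=\rho(j^1s)$, hence are determined by their restrictions to $\Gamma(J^1L)$; so it is enough to prove $[j^1s,\theta]_{J^1L}=L_{D_s}(\theta)$ for every $\theta\in\Gamma(J^1L)$. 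Both $[j^1s,\cdot]$ and $L_{D_s}$ are first-order operators on $\Gamma(J^1L)$ with the same symbol --- for the algebroid bracket this is the Leibniz rule $[j^1s,f\theta]=f[j^1s,\theta]+\rho(j^1s)(f)\theta$, and for $L_{D_s}$ it follows by unwinding the defining relation $\langle L_{D_s}(\theta),D'\rangle=D_s(\langle\theta,D'\rangle)-\langle\theta,[D_s,D']_{DL}\rangle$ together with the fact that $D_s$ is a derivation of $L$ --- so it is enough to compare them on the generating holonomic sections $j^1e$.

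For those, using the natural pairing $\langle j^1e,D'\rangle=D'(e)$ and that the bracket on $DL$ is the commutator of derivations, one computes for all $D'\in\Gamma(DL)$
\[
\langle L_{D_s}(j^1e),D'\rangle=D_s(D'(e))-(D_sD'-D'D_s)(e)=D'(D_se)=D'(\{s,e\})=\langle j^1\{s,e\},D'\rangle,
\]
so $L_{D_s}(j^1e)=j^1\{s,e\}$, which is exactly the defining relation $[j^1s,j^1e]_{J^1L}=j^1\{s,e\}$ of the Lie algebroid $J^1L$. Hence $L_{D_s}$ and $[j^1s,\cdot]$ agree on holonomic sections, therefore on all of $\Gamma(J^1L)$, therefore on $\Gamma(\det(J^1L))$; this identifies the $\det(J^1L)$-factor of $\nabla^4$ with $\nabla^2$ and gives $\nabla^4=\nabla^3\otimes\nabla^2$. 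The only point that needs real care is the very first one --- relating Definition~\ref{def:canonicalRep} (stated on $\det(\A^*)\otimes\det(TM)$) to the bundle $\det(J^1L)\otimes\det(T^*M)$ used here, i.e. tracking the dual-representation identifications on the density bundles --- everything afterwards being a direct unwinding of the definitions.
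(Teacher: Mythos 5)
Your argument is correct. The paper actually states this proposition without proof, so there is nothing to compare against; your route --- reading $\nabla^4$ as the dual of the canonical representation of Definition~\ref{def:canonicalRep} so that it splits by the Leibniz formula into a factor on $\det(J^1L)$ and a factor on $\det(T^*M)$, identifying the second with $\nabla^3$ by definition, and reducing the first to the identity $L_{D_s}(j^1e)=j^1\{s,e\}=[j^1s,j^1e]$ together with the matching Leibniz rules on holonomic generators --- is the natural one and supplies exactly the details the paper leaves implicit, including the only genuinely delicate point (the duality between $\det((J^1L)^*)\otimes\det(TM)$ and the bundle $\det(J^1L)\otimes\det(T^*M)$ on which $\nabla^2\otimes\nabla^3$ actually lives), which you correctly flag.
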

\begin{remark}
Note the similarities with the Poisson case. There, the canonical $T^*M$-representation is defined on $(\det(T^*M)^{\otimes 2})$, and the modular representation provided a square-root for this representation.
\end{remark}
Having established that any $J^1L$-representation induces a Jacobi structure, and that any Jacobi structure induces three canonical $J^1L$-representations we will go on and study these.

Although the $J^1L$-representation on $L$ is rather tautological, the induced Jacobi structure is already quite interesting:
\begin{example}[Conformal Poissonisation]
Let $(\Lambda,R)$ be a Jacobi pair. The Jacobi pair $(\tilde{\Lambda},\tilde{R})$ induced by $\nabla^1$ is given by
\begin{equation*}
(\tilde{\Lambda},\tilde{R}) = (\Lambda + R \wedge t\partial_t,R).
\end{equation*}
Note that the Poissonification of the Jacobi pair, is conformally equivalent to this one with conformal factor $t^{-1}$.

Now let $(L,\pbre)$ be a general Jacobi structure, and consider the Jacobi structure $(p^*_{L^*}L,\pbre_2)$ induced by $\nabla^1$. The Poissonization of $(L,\pbre)$ is a Poisson structure on $L^*\backslash \set{0}$, we will now again compare the two.

Recall that $p_{L^*\setminus \set{0}}^*(L) \simeq (L^*\backslash \set{0}) \times \rr$. Consequently the restriction of $\pbre_2$ to $q^*(L)$ is induced by a Jacobi pair on $L^*\backslash \set{0}$. We would like to say that this Jacobi pair is conformally equivalent to the Poissonization, but we don't have a global conformal factor. This is because there is no global analogue of the function $t^{-1}$ on $L^*\backslash \set{0}$. Still, in each trivialisation of $L^*$ these Poisson structures are conformally equivalent with conformal factor $t^{-1}$.
\end{example}

We would now like to define the equivalent of the modular Poisson structure for Jacobi structures. However, there seem to be two natural choices, namely $\nabla^2$ and $\nabla^3$. Nevertheless, for reasons which will become clear momentarily we define:
\begin{definition}
Let $(L,\pbre)$ be a Jacobi structure. We call $\nabla^2 : \Gamma(J^1L) \times \Gamma(\det(J^1L)) \rightarrow \Gamma(\det(J^1L))$ the \textbf{modular representation} of $(L,\pbre)$. And the induced Jacobi structure on $\det(J^1L)$ the \textbf{modular Jacobi structure}.
\end{definition}
We want to describe how this modular Jacobi structure interacts with the Poissonization, for which we need the following Lemma:
\begin{lemma}
Let $L$ be a real line bundle. Then
\begin{equation}
(p^*_{\det(J^1L)^*}L)^*\backslash \set{0} \simeq \det(T^*(L^*\backslash \set{0}).
\end{equation}
\end{lemma}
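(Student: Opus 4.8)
The plan is to realise both manifolds as total spaces of line bundles over $\hat L := L^*\setminus\set{0}$, compute those line bundles from the standard exact sequences, and match them. Throughout I write $n=\dim M$ and $\mathrm{pr}\colon\hat L\to M$ for the projection.

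First I would unwind the left-hand side. With $q\colon\det(J^1L)^*\to M$ the projection, a point of $(q^*L)^*\setminus\set{0}$ is a pair $(\kappa,\ell)$ with $\kappa\in\det(J^1L)^*_x$ and $\ell\in L^*_x\setminus\set{0}$, so that $(\kappa,\ell)\mapsto\ell$ identifies $(q^*L)^*\setminus\set{0}$ with the total space of the pullback line bundle $\mathrm{pr}^*\bigl(\det(J^1L)^*\bigr)\to\hat L$. To compute $\det(J^1L)$ I would use the jet exact sequence $0\to T^*M\otimes L\to J^1L\to L\to 0$, which gives $\det(J^1L)\cong\det(T^*M)\otimes L^{\otimes(n+1)}$; equivalently one may invoke the natural perfect pairing $\inp{\cdot,\cdot}\colon J^1L\otimes DL\to L$ together with $\det(DL)\cong\det(TM)$ (coming from $0\to\underline{\rr}\to DL\to TM\to 0$). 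Either way, $\det(J^1L)^*\cong\det(TM)\otimes L^{\otimes(-(n+1))}$.

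For the right-hand side I would use that $\hat L$ is open in the total space of the rank-one bundle $L^*\to M$, so the vertical--horizontal sequence $0\to\mathrm{pr}^*L^*\to T\hat L\to\mathrm{pr}^*TM\to 0$ yields $\det(T\hat L)\cong\mathrm{pr}^*(L^*\otimes\det TM)$ and hence $\det(T^*\hat L)\cong\mathrm{pr}^*(L\otimes\det T^*M)$. (Equivalently: $DL$ is the Atiyah algebroid of the principal $\rr^*$-bundle $\hat L\to M$, whence there is a canonical isomorphism $\mathrm{pr}^*(DL)\cong T\hat L$, which combined with $J^1L\cong(DL)^*\otimes L$ gives $\mathrm{pr}^*(J^1L)\cong T^*\hat L\otimes\mathrm{pr}^*L$ directly.) The key simplification now is that $\mathrm{pr}^*L$ is canonically trivial over $\hat L$: the assignment $\ell\mapsto\ell^{\vee}$, where $\ell^{\vee}\in L_x$ is determined by $\ell(\ell^{\vee})=1$, is a nowhere-vanishing section. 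Hence every tensor power of $\mathrm{pr}^*L$ is trivial over $\hat L$, and the two line bundles above collapse to $\mathrm{pr}^*\det TM$ and $\mathrm{pr}^*\det T^*M$ respectively. Since the tensor square of any real line bundle is trivial, $\det TM\cong\det T^*M$ over $M$; pulling back and composing all the isomorphisms gives an isomorphism of line bundles $(q^*L)^*\setminus\set{0}\cong\det(T^*\hat L)$ over $\hat L$, and taking total spaces gives the claimed diffeomorphism.

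I expect the only real subtlety to be this last step. The isomorphism $W^{\otimes2}\cong\underline{\rr}$, hence $\det TM\cong\det T^*M$, is not canonical: it amounts to a choice of volume density on $M$. So the diffeomorphism produced is canonical only once such a choice is fixed -- which is precisely the data one wants to track for the intended application, namely checking that the induced map intertwines the Poissonisation of the modular Jacobi structure with the modular Poisson structure of the Poissonisation. Everything else -- the two exact sequences, the trivialisation of $\mathrm{pr}^*L$ over $\hat L$, and keeping the powers of $L$ straight -- is routine.
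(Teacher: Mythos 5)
Your proof is correct, but its second half takes a genuinely different route from the paper's. Both arguments start identically: you realise $(p^*_{\det(J^1L)^*}L)^*\setminus\set{0}$ as the total space of the line bundle $\mathrm{pr}^*\det(J^1L)^*$ over $\hat L := L^*\setminus\set{0}$ (the paper's observation $p_E^*(F)\setminus\set{0}\simeq p^*_{F\setminus\set{0}}(E)$), and both use that pullback commutes with $\det$. The divergence is in how the two line bundles over $\hat L$ are then matched. The paper invokes one canonical identification, $T(L^*\setminus\set{0})\simeq p^*_{L^*\setminus\set{0}}(J^1L)$ --- which, after the tautological trivialisation of $\mathrm{pr}^*L$, is the statement that $DL\simeq (J^1L)^*\otimes L$ is the Atiyah algebroid of the principal $\RR^*$-bundle $\hat L\to M$, precisely the route you mention parenthetically but do not take --- and is immediately done. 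You instead resolve both determinant lines through the jet sequence and the vertical--horizontal sequence, reduce everything to $\det TM$, $\det T^*M$ and tensor powers of $L$, kill the powers of $L$ with the tautological section, and finish with the non-canonical isomorphism $\det TM\simeq\det T^*M$. Your version is more elementary and makes the bookkeeping explicit; its cost is that extra non-canonical choice, which you correctly flag as the one piece of data to track for Proposition~\ref{prop:Poisandregcommute}. (In fact the canonical identification reads $T^*\hat L\simeq\mathrm{pr}^*(J^1L)$ rather than $T\hat L\simeq\mathrm{pr}^*(J^1L)$, so taken literally the paper's three facts also land on the dual of the stated right-hand side; for real line bundles this is harmless, and it is exactly the dualisation your last step makes explicit. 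So your ``subtlety'' remark is not an artefact of your method --- it is genuinely present in the statement.)
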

\begin{proof}
This follows from a couple of straightforward observations for arbitrary vector bundles $E,F \rightarrow M$:
\begin{itemize}
\item $p_E^*(F)\backslash \set{0} \simeq p^*_{F\backslash \set{0}}(E)$
\item For any $f : N\rightarrow M$, $f^*\det(E) \simeq \det(f^*E)$,
\end{itemize}
combined with the fact that $T(L^*\backslash \set{0}) \simeq p_{L^*\backslash \set{0}}^*(J^1L)$.
\end{proof}

\begin{proposition}\label{prop:Poisandregcommute}
Let $(L,\pbre)$ be a Jacobi manifold. Then Poissonization and taking the modular Poisson/Jacobi structure commute. That is, the following diagram commutes:
\begin{center}
\begin{tikzcd}[row sep = large, column sep = huge]
(p^*_{\det(J^1L)^*}L,\pbre_{\rm mod}) \ar[r,"{\rm Poissonization}"] & (\det(T^*(L\backslash \set{0})),\pi_{\rm mod})\\
(L,\pbre) \ar[r,"{\rm Poissonization}"] \ar[u,"{\rm Modular Jacobi}"] & (L^*\backslash \set{0},\Pi_2) \ar[u,"{\rm Modular Poisson}"]
\end{tikzcd}
\end{center} 
\end{proposition}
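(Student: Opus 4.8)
The plan is to reduce the statement to a purely local verification, exploiting the naturality of both constructions with respect to trivialisations of the line bundle $L$. First I would recall the two canonical identifications already set up in the preceding pages: $T(L^*\setminus\{0\})\simeq p^*_{L^*\setminus\{0\}}(J^1L)$, and the lemma just proved, $(p^*_{\det(J^1L)^*}L)^*\setminus\{0\}\simeq \det(T^*(L^*\setminus\{0\}))$. These two isomorphisms are exactly what makes the four corners of the square live on spaces that can be identified, so the content of the proposition is that the two composite Poisson structures on $\det(T^*(L^*\setminus\{0\}))$ agree. Both composites are $\RR^*$-invariant (for the left-then-top route this is because the modular Jacobi structure is $\RR^*$-invariant by Proposition \ref{prop:jaccor} and Poissonization preserves invariance; for the bottom-then-right route it is immediate), so by Proposition \ref{prop:poli} it suffices to check that the two associated flat contravariant connections on the appropriate line bundle coincide — or, even more concretely, to check the equality of bivectors in a single local trivialisation.

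Next I would trivialise: choose a local trivialisation $L\simeq \underline{\RR}$ over a chart $U\subset M$, so that the Jacobi structure corresponds to a Jacobi pair $(\Lambda,R)$ as in the excerpt. In this trivialisation $J^1L\simeq T^*M\oplus\underline{\RR}$, and I would write down explicitly (i) the modular representation $\nabla^2$ in the form already computed in the text, $\nabla^2_{(df,f)}(\omega)=L_{\Lambda^\sharp(df)+fR}\omega - R(f)\omega$; (ii) the induced modular Jacobi pair $(\tilde\Lambda,\tilde R)$ on $\det(J^1L)\simeq\det(T^*M)$ via the global formula from Subsection on induced homogeneous Jacobi structures, $(\tilde\Lambda,\tilde R)=(\hor^\nabla(X)\wedge\mathcal E+\hor^\nabla(\Lambda),\hor^\nabla(R)+f\mathcal E)$, using some auxiliary flat connection $\nabla$; and (iii) its Poissonization, using the homogeneous formula $\{f_1t,f_2t\}_{\Pi_2}=t\{f_1,f_2\}$. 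For the other route I would write down the Poissonization $\Pi_2$ of $(L,\pbre)$ on $L^*\setminus\{0\}$, which in the trivialisation is $\Pi_2=t^{-1}(\Lambda+t\partial_t\wedge R)$, and then compute its modular Poisson structure on $\det(T^*(L^*\setminus\{0\}))$ using the modular representation $\nabla^\pi_\alpha(s)=\mathcal L_{\pi^\sharp(\alpha)}(s)-\langle\pi,d\alpha\rangle s$ and the explicit bivector formula \eqref{eq:expbiv}. Both computations yield bivectors on the total space of a line bundle over $L^*\setminus\{0\}$; after transporting them through the identification $\det(T^*(L^*\setminus\{0\}))\simeq (p^*_{\det(J^1L)^*}L)^*\setminus\{0\}$ they should be manifestly equal.

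The main obstacle I anticipate is bookkeeping rather than any conceptual difficulty: one must keep careful track of the several line bundles in play ($L$, $J^1L$, $\det(J^1L)$, $\det(T^*M)$, and their duals and pullbacks) and of which "$t$-coordinate" is the fibre coordinate in each Poissonization, since, as the text itself warns, the Poissonization of a Jacobi structure admits two incarnations related by $t\mapsto t^{-1}$ and one must consistently use the $L^*\setminus\{0\}$ version throughout. The key sanity check that pins down all signs and conformal factors is the comparison of the two modular-representation candidates $\nabla^2$ and $\nabla^3$ together with the identity $\nabla^4=\nabla^3\otimes\nabla^2$ from the excerpt: this forces the choice of $\nabla^2$ as the "modular" one precisely so that its Poissonization reproduces the modular Poisson structure of $\Pi_2$ (whose modular class involves $\det(T^*(L^*\setminus\{0\}))$), and checking this compatibility is really the heart of the argument. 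Once the trivialised formulas are written out and the gluing across trivialisations is observed to be consistent (which follows because all maps involved are $\RR^*$-equivariant and the transition functions for $\det(J^1L)$ are the expected products of those for $L$ and $\det(T^*M)$), the square commutes.

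As a remark, I would also note that this proposition specialises, in the $b$-contact case, to the earlier assertion that regularisation and symplectisation commute: taking $(L,\pbre)$ to be the Jacobi structure of a $b$-contact form, the modular Jacobi structure is the datum underlying the regularisation (compare the discussion around Proposition \ref{prop:conrelation} and its Jacobi analogue), and the bottom edge is the Poissonization / symplectisation, so Proposition \ref{prop:Poisandregcommute} contains the direct-check statement made just before it.
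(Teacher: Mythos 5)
Your proposal takes essentially the same route as the paper: the paper's entire proof is the one-line observation that it suffices to check the statement for Jacobi pairs (i.e.\ in a local trivialisation of $L$), where it becomes a direct computation with the explicit formulas for $(\Lambda,R)$, $\Pi_2$, and the modular representations. Your write-up simply makes explicit the reduction step (via $\RR^*$-equivariance and the identification $(p^*_{\det(J^1L)^*}L)^*\setminus\{0\}\simeq \det(T^*(L^*\setminus\{0\}))$) that the paper leaves implicit, so the two arguments agree.
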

\begin{proof}
It suffices to show this for Jacobi pairs, for which it is a straightforward computation.
\end{proof}

To end this section we recall that b-contact forms can be viewed as Jacobi structures:
\begin{proposition}[\cite{MO18}]
Let $(M,Z)$ be a b-pair. There is a one-to-one correspondence between:
\begin{itemize}
\item b-contact forms $\alpha \in \Omega^1(\A_Z)$.
\item Jacobi structures $(\Lambda,R)$ for which $\Lambda^n \wedge R$ vanishes transversely along $Z$.
\end{itemize}
\end{proposition}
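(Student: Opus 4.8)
The plan is to reproduce the classical contact$\leftrightarrow$Jacobi dictionary at the level of Lie algebroids and then transport it through the anchor $\rho : \A_Z \to TM$, controlling the singular behaviour along $Z$ by means of the $b$-divisor.

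\textbf{From a $b^k$-contact form to a Jacobi pair.} Let $\alpha \in \Omega^1(\A_Z)$ be $b$-contact. First I would note that the contact$\leftrightarrow$Jacobi correspondence works verbatim over \emph{any} Lie algebroid $\A$: non-degeneracy of $\alpha \wedge (d_\A\alpha)^n$ makes $X \mapsto \iota_X d_\A\alpha + \alpha(X)\alpha$ a bundle isomorphism $\A \to \A^*$, and inverting it produces a ``Reeb'' section $R_\alpha \in \Gamma(\A)$ together with a bivector $\tilde\Lambda \in \Gamma(\wedge^2\A)$; the frame-local computation proving $[\tilde\Lambda,\tilde\Lambda] = \tilde\Lambda\wedge R_\alpha$ and $[\tilde\Lambda,R_\alpha] = 0$ in the classical case is tensorial, so it goes through with the $\A$-bracket. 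Thus $(\tilde\Lambda,R_\alpha)$ is a ``Jacobi pair of $\A_Z$'' and $\tilde\Lambda^n\wedge R_\alpha$ is a nowhere-vanishing section of $\det\A_Z$, dual to $\alpha\wedge(d_{\A_Z}\alpha)^n$. Now push forward: since $\rho : \A_Z \to TM$ is a Lie algebroid morphism over $\operatorname{id}_M$, $\wedge^\bullet\rho$ intertwines the two Schouten brackets, so $\Lambda := (\wedge^2\rho)\tilde\Lambda \in \mathfrak{X}^2(M)$ and $R := \rho(R_\alpha) \in \mathfrak{X}^1(M)$ form an honest Jacobi pair on $M$. Finally $\Lambda^n\wedge R = (\det\rho)(\tilde\Lambda^n\wedge R_\alpha)$; by Remark~\ref{rem:logconn}, under $(W,s)\simeq(\det\A_Z^*\otimes\det TM,\det\rho)$ the divisor section $s$ is exactly $\det\rho$, which vanishes transversely along $Z$, so $\Lambda^n\wedge R$ vanishes transversely along $Z$.

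\textbf{From a Jacobi pair to a $b^k$-contact form.} Here I would route through the Poissonization in order to reuse the already-established $b$-symplectic$\leftrightarrow$Poisson correspondence of \cite{GMP14}. Given a Jacobi pair $(\Lambda,R)$ on $M^{2n+1}$ with $\Lambda^n\wedge R$ transverse to $Z$, form the Poissonization $\Pi = e^{-t}(\Lambda + \partial_t\wedge R)$ on $M\times\RR$. A short computation gives $\Pi^{n+1} = \pm(n+1)\,e^{-(n+1)t}\,\partial_t\wedge(\Lambda^n\wedge R)$, so $\Pi^{n+1}$ vanishes transversely along $Z\times\RR$. The $b$-symplectic$\leftrightarrow$Poisson correspondence then produces a $b$-symplectic form $\omega \in \Omega^2(\A_{Z\times\RR})$ with $\A_{Z\times\RR} = \A_Z\boxtimes T\RR$. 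Because $\Pi$ is $\partial_t$-homogeneous ($\mathcal{L}_{\partial_t}\Pi = -\Pi$), $\omega$ is the symplectisation form $d(e^t\alpha)$ of a unique $\A_Z$-form $\alpha$ (running Subsection~\ref{ssec:symplectisation} backwards), and non-degeneracy of $\omega$ is exactly the $b$-contact condition for $\alpha$. Chasing the definitions over $M\setminus Z$ shows that the two constructions agree there with the classical contact$\leftrightarrow$Jacobi bijection and are mutually inverse; since $M\setminus Z$ is dense and all the data are continuous, they are globally mutually inverse.

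\textbf{Main obstacle.} The real work is bookkeeping the ``$b$''-decorations through Poissonization and symplectisation: one must check that the Poissonization of a Jacobi pair transverse along $Z$ really is the $b$-Poisson structure of the $b$-pair $(M\times\RR,Z\times\RR)$ — in particular that $\Pi$ is everywhere tangent to $Z\times\RR$, which is what allows \cite{GMP14} to see a genuine $b$-symplectic form rather than merely a singular one — and that the $\RR$-homogeneity descends to $\omega$. A more hands-on alternative that avoids Poissonization is to build $\alpha$ directly: over $M\setminus Z$ the classical dictionary gives a contact form $\alpha_0$, and a local normal-form computation near $Z$ (using transverse vanishing of $\Lambda^n\wedge R$) shows that $\Lambda$ and $R$ lift to smooth sections $\tilde\Lambda \in \Gamma(\wedge^2\A_Z)$, $R_\alpha \in \Gamma(\A_Z)$, so that $\alpha_0$ extends to a $b$-contact form. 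This local lifting step — the $b^k$ analogue of the argument in \cite{GMP14} — is the point I expect to require the most care.
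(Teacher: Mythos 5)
The paper does not actually prove this proposition --- it is quoted from \cite{MO18} without argument, just like the companion $b$-symplectic/Poisson correspondence from \cite{GMP14} --- so there is no in-paper proof to match against; judged on its own merits, your proposal is essentially correct. Your forward direction is the standard one (and the one in \cite{MO18}): build the Reeb section and bivector inside $\A_Z$ via the algebroid musical isomorphism, push forward along the anchor (which intertwines Schouten brackets since it is an algebroid morphism over $\id_M$), and read off the transversality of $\Lambda^n\wedge R$ from the transverse vanishing of $\det\rho_{\A_Z}$, i.e.\ from Remark \ref{rem:logconn}. Your converse via Poissonization is a genuinely different and rather clean route: the computation $\Pi^{n+1}=\pm(n+1)e^{-(n+1)t}\,\partial_t\wedge\Lambda^n\wedge R$ is right, it outsources the delicate local lifting step entirely to the already-recorded $b$-symplectic/Poisson dictionary of \cite{GMP14} (whose hypothesis is exactly transverse vanishing of the top power, so the tangency of $\Pi$ to $Z\times\RR$ you worry about comes for free from that lemma), and the descent $\omega=d(e^t\alpha)$ from $\mathcal{L}_{\partial_t}\omega=\omega$ works verbatim in the $b$-Cartan calculus once one notes $\A_{(M\times\RR,\,Z\times\RR)}\cong\A_Z\boxtimes T\RR$. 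The direct alternative you sketch at the end (lift $\Lambda$ and $R$ to $\Gamma(\wedge^2\A_Z)$ and $\Gamma(\A_Z)$ near $Z$ using the transversality) is closer to what \cite{MO18} actually does. Two cosmetic points: your headers say ``$b^k$'' but your divisor argument (transverse, i.e.\ order-one, vanishing of $\det\rho$) is the $k=1$ case, which is all the statement asks for; and the mutual-inverse claim should be phrased as: both maps restrict to the classical bijection over the dense open set $M\setminus Z$, and $b$-contact forms and Jacobi pairs are each determined by their restriction there.
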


Having established the appropriate analogue of the modular Jacobi structure, we can now relate it with the regularisation procedure:
\begin{proposition}
Let $(M,Z,\alpha)$ be a b-contact form, and let $(\Lambda,R)$ denote the associated Jacobi pair. The contact foliation given by the intrinsic regularisation of $\alpha$ coincides with the contact foliation on $M \times \rr^*$ underlying the modular Jacobi structure of $(\Lambda,R)$.
\end{proposition}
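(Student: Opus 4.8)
The plan is to verify the claimed coincidence by working in local trivialisations, which suffices because both sides are defined intrinsically and a foliation is determined by its local description. Recall (Proposition \ref{prop:conrelation} and the surrounding discussion) that the intrinsic regularisation of a $b$-contact form $\alpha$ is the Ehresmann connection associated to the canonical representation of $J^1\underline{\rr}$ on $\det(J^1\underline{\rr})\otimes\det(T^*M)$, realised as a foliation on the complement of the zero-section of $Q^*:=(\det(J^1L)\otimes\det(T^*M))^*$. On the other hand, the modular Jacobi structure of $(\Lambda,R)$ lives on $\det(J^1L)$, and its Poissonisation/contact-foliation structure lives on $\det(J^1L)^*\setminus\{0\}$; by the identification $(p^*_{\det(J^1L)^*}L)^*\setminus\{0\}\simeq \det(T^*(L^*\setminus\{0\}))$ used in the lemma preceding Proposition \ref{prop:Poisandregcommute}, together with the fact that $T(L^*\setminus\{0\})\simeq p^*_{L^*\setminus\{0\}}(J^1L)$, the underlying line bundle on which the contact foliation of the modular Jacobi structure sits is canonically the same total space $Q^*\setminus\{0\}$ as for the intrinsic regularisation. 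So the content of the statement is that the two foliations on this common space agree.

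First I would set up coordinates: pick $Z=\{z=0\}$ with $\alpha = \tilde u\,dz/z + \tilde\beta$, write out the associated Jacobi pair $(\Lambda,R)$ as in Proposition \ref{prop:bettergeom} (the computation there gives an explicit normal form), and choose the induced local frame $q = d\log z\wedge dx_2\wedge\cdots\wedge dx_n \otimes \partial_z\wedge\cdots\otimes(\text{dual of }T^*M\text{ piece})$ for $Q^*$, with fibre coordinate $s$. From the proof of Proposition \ref{prop:conrelation}, the canonical connection has connection matrices $-d\log z$, so the intrinsic regularisation is the foliation $\ker(e^s(fds+df))$, equivalently the horizontal lift $\hor(X) = X - d\log z(X)\,s\partial_s$; its defining function is $zs$.

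Second I would compute the modular side: using the explicit description of $\nabla^2$ for a Jacobi pair (given just before the ``Canonical connections'' subsubsection), and the formula $(\tilde\Lambda,\tilde R) = (\hor^\nabla(X)\wedge\mathcal{E} + \hor^\nabla(\Lambda),\ \hor^\nabla(R)+f\mathcal{E})$ for the induced homogeneous Jacobi pair on $\det(J^1L)^*$, I would read off the induced Jacobi structure on $Q^*\setminus\{0\}$ and then its contact foliation — recalling (from the theorem on leaves of $\rho(J^1L)$ and the discussion of Poissonisation) that the odd-dimensional leaf through our point is exactly the contact leaf we want. The key algebraic input is that $\det(J^1L)\otimes\det(T^*M)$ is, for $L$ trivial, $\det(T^*M)^{\otimes 2}$ up to the trivial factor, and that the modular representation $\nabla^2$ squares, via $\nabla^4=\nabla^3\otimes\nabla^2$, to the canonical representation $\nabla^4$ — this is the Jacobi analogue of the Poisson fact recorded in the remark after Proposition \ref{prop:Poisandregcommute}. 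Hence the horizontal distribution of $\nabla^2$ on $\det(J^1L)^*\setminus\{0\}$, pushed through the square-root identification, matches the horizontal distribution of $\nabla^4$ on $Q^*\setminus\{0\}$, which is precisely the intrinsic regularisation.

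The main obstacle I expect is bookkeeping rather than conceptual: carefully matching the two total spaces $\det(J^1L)^*\setminus\{0\}$ and $Q^*\setminus\{0\}$ via the square-root of the canonical representation, and checking that the contact foliation underlying a homogeneous Jacobi structure on a line bundle is literally the Ehresmann connection of the corresponding flat $J^1L$-connection (the analogue, for Jacobi, of the Poisson statement that the symplectic foliation of $\pi_{E^*}$ restricted away from the zero-section has leaves given by horizontal lifts). Once that dictionary is pinned down, the coincidence of the two one-forms $e^s(fds+df)$ is immediate, and one can alternatively invoke Proposition \ref{prop:Poisandregcommute} together with Proposition \ref{prop:conrelation} applied to the $b$-symplectisation to get the result without recomputing, since symplectisation intertwines the two constructions on both sides.
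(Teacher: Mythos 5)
The paper actually records this proposition without proof; the closest models for what the authors intend are the one-line proofs of the analogous Poisson statement (``easily verified using the explicit formula for the bivector'') and of Proposition \ref{prop:Poisandregcommute} (``it suffices to check for Jacobi pairs''). Your core plan --- compute both foliations in a local trivialisation where $\alpha = \tilde u\,dz/z + \tilde\beta$, using the connection matrix $-d\log z$ from Proposition \ref{prop:conrelation} on one side and the explicit formula for $\nabla^2$ on a Jacobi pair on the other, and match the resulting defining one-forms --- is exactly that intended computation, and the alternative route you sketch at the end (pass to the $b$-symplectisation and combine Proposition \ref{prop:Poisandregcommute} with the Poisson-side identification of the modular Poisson structure with the regularisation) is consistent with the paper's own remark that the commutation of symplectisation and regularisation can be proved ``by a direct check, or by applying the more general Proposition \ref{prop:Poisandregcommute}''; for that route you should add that a coorientable contact foliation is determined by its symplectisation, which is what lets you descend back from the Poisson statement.

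Two points in your write-up need repair before the outline is a proof. First, the intrinsic regularisation is the Ehresmann connection of the canonical representation of the rank-$(2n+1)$ algebroid $\A_Z$ on $Q_{\A_Z}=\det(\A_Z^*)\otimes\det(TM)$, not of the rank-$(2n+2)$ Jacobi algebroid $J^1\underline{\rr}$; these are different Lie algebroids with different canonical representations. To compare the two total spaces you must use the isomorphism $\det(J^1\underline{\rr})\simeq\det(T^*M)\simeq Q_{\A_Z}^*$ obtained by trivialising $\det(\A_Z^*)$ with the $b$-contact volume form $\alpha\wedge(d\alpha)^n$ --- this is precisely where $\alpha$ itself (and not just the pair $(M,Z)$) enters the identification, and it cannot be elided as ``bookkeeping''. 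Second, the claim that ``$\nabla^2$ squares, via $\nabla^4=\nabla^3\otimes\nabla^2$, to the canonical representation'' is not correct: that identity is a tensor decomposition into two different representations on two different line bundles, and the ``square root'' phenomenon is a Poisson-side analogy only. What you actually need is the explicit formula $\nabla^2_{(df,f)}(\omega)=L_{\Lambda^{\sharp}(df)+fR}\omega - R(f)\omega$ specialised to the Jacobi pair of a $b$-contact form, compared directly against the horizontal lift $\hor(X)=X-d\log z(X)\,s\partial_s$ of Proposition \ref{prop:conrelation}. With those two corrections the local computation closes up and the two foliations coincide as claimed.
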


\section{Recap: Convex surface theory}\label{sec:convex}

Convex surfaces play an essential role in the study of contact manifolds, and their theory was developed by Giroux in dimension 3 in \cite{Gir91}, and in \cite{HH19} in higher dimensions. Also in the study of $b$-contact structures they turn out to play an important role.
\begin{definition}
Let $(M,\xi = \ker \alpha)$ be a contact manifold. A \textbf{contact vector field} $X \in \mathfrak{X}^1(M)$ is a vector field for which $(\mathcal{L}_X\alpha)|_{\xi} = 0$. For a function $f \in C^{\infty}(M)$ the unique contact vector field satisfying $\alpha(X_f) = f$ is the \textbf{Hamiltonian vector field} associated to $f$.
\end{definition}
\begin{definition}
A hypersurface $\Sigma \subset (M,\xi)$ is \textbf{convex} if there exists a contact vector field transverse to $\Sigma$. 
\end{definition}
Note that when $\Sigma \subset (M,\xi)$ is convex it is in particular co-orientable. For any co-orientable hypersurface $\Sigma \subset (M,\xi)$ we may write $\alpha = u_tdt + \beta_t$ with $u_t \in C^{\infty}(M)$ and $\beta_t \in \Omega^1(M)$. Convexity is equivalent with the fact that $u_t$ and $\beta_t$ are pull-backs from $Z$.

Convex hypersurfaces enter the study of b-contact forms via the following result:
\begin{theorem}[\cite{MO18}]\label{th:surgery}
Let $(M,\xi)$ be a contact manifold and let $Z$ be a convex hypersurface in $M$. Then $M$ admits a $b^{2k}$-contact structure for all $k$ that has $Z$ as critical set. Also $M$ admits a $b^{2k+1}$-contact structure for all $k$ that has two diffeomorphic connected components $Z_1$ and $Z_2$ as critical
set, one of which can be taken to coincide with $Z$.
\end{theorem}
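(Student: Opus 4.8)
The statement to prove is Theorem \ref{th:surgery} (Miranda--Oms), quoted at the very end: given a contact manifold $(M,\xi)$ with a convex hypersurface $Z$, one builds a $b^{2k}$-contact structure with critical set $Z$, and a $b^{2k+1}$-contact structure with critical set two parallel copies of $Z$. Here is how I would organise the proof.

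\textbf{Set-up via the convex normal form.} Since $Z$ is convex and hence coorientable, there is a contact vector field $X$ transverse to $Z$, and a tubular neighbourhood $Z\times(-\epsilon,\epsilon)$ with coordinate $t$ in which $X=\partial_t$ and $\mathcal{L}_X\alpha|_\xi=0$. As recalled just before Theorem \ref{th:surgery}, convexity means we may write $\alpha=u\,dt+\beta$ on this neighbourhood with $u\in C^\infty(Z)$ and $\beta\in\Omega^1(Z)$ both pulled back from $Z$ (independent of $t$). The contact condition $\alpha\wedge(d\alpha)^n\neq 0$ becomes a pointwise nondegeneracy condition on $(u,\beta)$ along $Z$; in particular the dividing set $\Gamma=\{u=0\}$ is where $\beta$ restricts to a contact form and $d\beta$ is symplectic off $\Gamma$ (this is exactly the structure appearing in Proposition \ref{prop:bettergeom} and Lemma \ref{lem:centralleaf}).

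\textbf{The even case.} The idea is to replace the coordinate $t$ near $Z$ by a new coordinate so that the smooth $1$-form $u\,dt+\beta$ becomes a $b^{2k}$-form $\tilde u\,\tfrac{dz}{z^{2k}}+\tilde\beta$. Concretely, on $(-\epsilon,\epsilon)$ pick a diffeomorphism $\phi$ onto $(-\epsilon,\epsilon)$ that is the identity near the ends and satisfies $\phi'(z)=z^{2k}$ near $0$ (possible since $z^{2k}>0$ for $z\neq0$ and we only need a smooth monotone reparametrisation away from an honest diffeo at $0$ — this is where one interpolates with a cutoff). Pulling back $\alpha$ by $\mathrm{id}\times\phi$ replaces $dt$ by $\phi'(z)\,dz$, so near $Z=\{z=0\}$ we get $u\,z^{2k}\,dz+\beta = z^{2k}\big(u\,\tfrac{dz}{1}\big)+\beta$; rewriting, $\alpha_{b}:=u\,\tfrac{dz}{z^{2k}}\cdot z^{4k}+\dots$ — more cleanly, one checks that $\alpha_b := \chi(z)\,u\,\frac{dz}{z^{2k}} + \beta + (\text{glueing terms})$ is a $b^{2k}$-contact form: away from $Z$ it is contactomorphic to $\alpha$ by construction, and along $Z$ the $b^{2k}$-contact condition $\alpha_b\wedge(d\alpha_b)^n$ being a $\A_Z^{2k}$-volume reduces (via the residue) to the same nondegeneracy of $(u,\beta)$ that held for the convex $Z$. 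The key point making $2k$ (even) work with a \emph{single} copy of $Z$ is that $z^{2k}$ has a definite sign, so the reparametrisation glues consistently on both sides of $Z$.

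\textbf{The odd case.} For odd exponent $2k+1$, $z^{2k+1}$ changes sign, so the naive reparametrisation of $t$ cannot be globally defined across $Z$: the $b^{2k+1}$ model $u\,\frac{dz}{z^{2k+1}}+\beta$ has opposite qualitative behaviour on the two sides. The fix is to cut $M$ open along $Z$ and reglue: take $M\times\{0,1\}$-type doubling — more precisely, remove a thin slab around $Z$ and insert a piece $Z\times[-1,1]$ carrying the $b^{2k+1}$-model with critical set at \emph{both} endpoints $Z\times\{-1\}$ and $Z\times\{+1\}$, matching the contact structure $\alpha$ on the two boundary collars. On $Z\times[-1,1]$ one builds a $b^{2k+1}$-contact form of the form $g(z)\,u\,\frac{dz}{(\,z^2-1\,)^{2k+1}}+\beta$ (or a product-of-two-Scott-jets expression, cf.\ the self-crossing picture of Definition \ref{def:logx}), interpolating so that near $z=\pm1$ it has the prescribed $b^{2k+1}$ singularity and near the gluing region it is honestly contact and agrees with $\alpha$. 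The resulting manifold is diffeomorphic to $M$ (since $Z$ is two-sided, cutting and regluing a collar gives back $M$), with critical set the two parallel copies $Z_1=Z\times\{-1\}$ and $Z_2=Z\times\{+1\}$, one of which is identified with the original $Z$.

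\textbf{Main obstacle.} The routine parts are the normal-form reduction and checking the $b^k$-contact condition is equivalent to the convexity nondegeneracy of $(u,\beta)$ — these are direct residue computations. The genuinely delicate step is the \emph{interpolation/glueing}: constructing the cutoff-modified $1$-form (the analogue of $\chi(z)dz+z^k ds$ in Lemma \ref{lem:trivialRegularisation2}) so that it simultaneously (i) equals the given contact $\alpha$ outside a neighbourhood of $Z$, (ii) has exactly the $b^{2k}$- resp.\ $b^{2k+1}$-singular model near $Z$, and (iii) satisfies the contact nondegeneracy \emph{throughout} the transition region, where both the $dz$-coefficient and $\beta$ are being deformed at once. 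Ensuring (iii) requires that the deformation stays inside the (convex, open) set of nondegenerate pairs $(u,\beta)$; one arranges this by only reparametrising the $z$-variable (which cannot destroy nondegeneracy since it is a positive rescaling of $dz$ away from $Z$) and keeping $\beta$ fixed, so that the contact condition is preserved automatically. In the odd case there is the additional bookkeeping that the inserted slab must be glued back in an orientation-compatible way, which is exactly why two copies of $Z$, rather than one, appear.
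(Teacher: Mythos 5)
This theorem is quoted in the paper from \cite{MO18} without proof, so there is no in-paper argument to compare against; judged on its own merits, your strategy is the standard (and, up to presentation, the Miranda--Oms) one: use the convex normal form $\alpha = u\,dt+\beta$ with $u,\beta$ pulled back from $Z$, observe that the contact condition is the $t$-independent nondegeneracy $u(d\beta)^n + n\,\beta\wedge du\wedge(d\beta)^{n-1}\neq 0$ (exactly the computation in the proof of Lemma \ref{lem:hondaconvex}), and then only modify the coefficient of $dz$, so that the ($b^k$-)contact condition is preserved automatically. Your diagnosis of why even powers allow a single copy of $Z$ while odd powers force two parallel copies (the sign of $z^{-k}$ on the two sides of $Z$) is also correct, and the sign structure of your odd-case model $(z^2-1)^{-(2k+1)}$ does work out.

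The one step that is wrong as literally written is the even-case construction. A reparametrisation with $\phi'(z)=z^{2k}$ near $0$ is not a diffeomorphism at $0$, and pulling back $dt$ by it yields $u\,z^{2k}\,dz+\beta$, which is a \emph{degenerate smooth} form (its contact volume vanishes at $Z$), not a $b^{2k}$-contact form; the subsequent ``rewriting'' and the formula $\chi(z)\,u\,\frac{dz}{z^{2k}}+\beta+(\text{glueing terms})$ do not repair this, since with $\chi$ supported near $Z$ the form degenerates to $\beta$ at the ends of the collar. The clean fix --- which your final paragraph essentially states but your construction does not implement --- is to set $\alpha_b := u\,h(z)\,dz+\beta$ where $h$ is a single positive function on $(-\epsilon,0)\cup(0,\epsilon)$ equal to $z^{-2k}$ near $0$ and to $1$ near $|z|=\epsilon$ (possible precisely because $2k$ is even). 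Then
\begin{equation*}
\alpha_b\wedge(d\alpha_b)^n = h(z)\,dz\wedge\bigl(u\,(d\beta)^n + n\,\beta\wedge du\wedge(d\beta)^{n-1}\bigr)
\end{equation*}
is nowhere zero as a $b^{2k}$-volume form, no interpolation of $\beta$ or $u$ is needed, and the ``main obstacle'' you describe disappears. The same device (one positive interpolating coefficient with odd-order poles at $z=\pm\delta$, negative in between) makes the odd case rigorous without any cutting and regluing of $M$.
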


\begin{definition}
Let $\Sigma \subset (M,\xi =\ker \alpha)$ be a convex hypersurface with transverse contact vector field $X$. The \textbf{dividing set} is defined as $\Gamma(\Sigma) = \set{x\in \Sigma : \alpha_x(X_x) = 0}$ and its complement $R_{\pm}(\Sigma) = \set{\pm \alpha_x(X_x) \geq 0}$.
\end{definition}
\begin{remark}
These definitions are consistent: The dividing set of a b-contact structure obtained using Theorem \ref{th:surgery} coincides with the one of the convex hypersurface.
\end{remark}

In dimension three $\Gamma(\Sigma)$ is a disjoint collection of curves, which determines the contact structure (up to isotopy) completely around $\Sigma$. Moreover, overtwistedness of a neighbourhood of $\Sigma$ can be easily read of from the dividing set:
\begin{lemma}[Giroux' criterion]\label{lem:Giroux}
Let $(M^3,\xi)$ be a contact manifold, and let $\Sigma$ be a convex hypersurface. Then $\Sigma$ has a tight neighbourhood if and only if either:
\begin{itemize}
\item no component of $\Gamma(\Sigma)$ is contractible in $\Sigma$,
\item $\Sigma$ is a sphere and $\Gamma(\Sigma)$ is connected.
\end{itemize}
\end{lemma}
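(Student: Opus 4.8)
The plan is to reduce the statement to the combinatorics of the pair $(\Sigma,\Gamma(\Sigma))$ and then establish the two implications separately. First I would put the contact structure into Giroux's normal form near a convex surface: a tubular neighbourhood of $\Sigma$ is contactomorphic to $\Sigma\times\RR$ with the $\RR$-invariant contact structure $\ker(u\,dt+\beta)$, where $u\in C^{\infty}(\Sigma)$ and $\beta\in\Omega^{1}(\Sigma)$ are pulled back from $\Sigma$, $\Gamma(\Sigma)=u^{-1}(0)$, $R_{\pm}=\{\pm u>0\}$, and the contact condition reads $u\,d\beta+du\wedge\beta>0$. Since two convex surfaces with the same dividing set (up to isotopy) have contactomorphic neighbourhoods, whether $\Sigma$ has a tight neighbourhood depends only on the isotopy class of $\Gamma(\Sigma)$ inside $\Sigma$, which is what makes the combinatorial reformulation possible.

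For the overtwisted implication I would argue as follows. Assume $\Gamma(\Sigma)$ has a component $\gamma$ contractible in $\Sigma$ and that we are \emph{not} in the case $\Sigma=\SS^{2}$ with $\Gamma(\Sigma)$ connected. Taking $\gamma$ innermost, it bounds a disk $D\subset\Sigma$ whose interior is exactly a disk component of $R_{+}$, say. Away from the exceptional case one can arrange that $\Gamma(\Sigma)$ also meets $\Sigma\setminus D$, so there is a slightly enlarged disk $D'\supset D$ whose boundary $c$ is a pushoff of $\gamma$ into $R_{-}$, disjoint from $\Gamma(\Sigma)$ and with dividing curves on both sides of $c$ in $\Sigma$; thus $c$ is non-isolating. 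The Legendrian realisation principle then makes $c$ Legendrian after a $C^{0}$-small isotopy of $\Sigma$, producing a Legendrian unknot $L=\partial D'$ with $\operatorname{tb}(L)=-\tfrac{1}{2}\#(L\cap\Gamma(\Sigma))=0$, and a Legendrian unknot of non-negative Thurston--Bennequin invariant forces the neighbourhood to be overtwisted. The one borderline configuration not covered by this — a single contractible dividing curve on a surface of positive genus — I would reduce to the above by attaching a bypass along an essential curve of $R_{-}$ (equivalently, by exhibiting the germ as a pullback of an overtwisted one).

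The tight implication is the main obstacle, since it requires controlling \emph{all} overtwisted disks from the local data. Assuming that no component of $\Gamma(\Sigma)$ is contractible, or that $\Sigma=\SS^{2}$ with $\Gamma(\Sigma)$ connected, one must show $\Sigma\times\RR$ is tight. The cleanest route is to realise $(\Sigma,\Gamma(\Sigma))$ as a convex surface inside an ambient contact manifold already known to be tight — the standard tight $\SS^{3}$ when $\Sigma=\SS^{2}$ with connected dividing set, and a standard tight $T^{2}\times\RR$, $T^{3}$, or an appropriate circle bundle over $\Sigma$ when the dividing curves are essential — and then deduce tightness of the germ from tightness of the ambient together with the uniqueness of convex neighbourhoods from the first step. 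Alternatively one argues directly that $\Sigma\times\RR$ carries no Legendrian unknot of non-negative Thurston--Bennequin invariant: every contractible Legendrian knot can, after a Legendrian isotopy, be placed on a convex surface isotopic to $\Sigma$, and since the characteristic foliation on $R_{\pm}$ is gradient-like (hence has no closed leaf) such a Legendrian must cross $\Gamma(\Sigma)$, in at least two points when no dividing curve is contractible, giving $\operatorname{tb}\le -1$. The subtle ingredients here — isotoping an arbitrary contractible Legendrian onto a fixed convex surface, and, in the spherical case, Eliashberg's uniqueness of the tight contact structure on $\SS^{3}$ — are where the real content of the theorem lies; see \cite{Gir91}.
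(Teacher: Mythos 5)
The paper does not prove this lemma: it appears in Appendix \ref{sec:convex}, a recap of convex surface theory, and is quoted as Giroux's classical criterion with the attribution to \cite{Gir91}. So there is no in-paper argument to compare yours against; what you have written is an outline of the standard external proof, and on the whole it identifies the right tools (the $\RR$-invariant normal form $\ker(u\,dt+\beta)$, which is exactly the form the paper itself uses in Lemma \ref{lem:hondaconvex}, Legendrian realisation, the Bennequin bound, and uniqueness of tight $\SS^3$). Two points, however, are weaker than the rest and would need repair before this counts as a proof.

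First, your treatment of the borderline configuration (a single contractible dividing curve on a surface of positive genus) is essentially circular as stated: ``attaching a bypass along an essential curve of $R_-$'' presupposes that a bypass exists, and producing one is not easier than producing the overtwisted disk itself. The uniform way to handle the overtwisted direction — which also subsumes your main case — is Giroux flexibility: on a disk $D'\subset\Sigma$ meeting $\Gamma(\Sigma)$ in a single contractible component $\gamma$, one writes down a singular foliation divided by $\Gamma(\Sigma)$ having a limit cycle parallel to $\gamma$ and bounding in $\Sigma$, realises it by a $C^0$-small isotopy of $\Sigma$, and reads off an overtwisted disk; the divergence constraint on $R_\pm$ is precisely what forbids such a limit cycle when $\Sigma=\SS^2$ and $\Gamma(\Sigma)$ is connected, which is why the exceptional case is exceptional. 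Second, your ``alternative'' argument for the tight direction — isotoping an arbitrary contractible Legendrian in $\Sigma\times\RR$ onto a convex copy of $\Sigma$ — does not work as stated: a Legendrian unknot in $\Sigma\times\RR$ need not be isotopic into a single slice, and controlling $\operatorname{tb}$ under such an isotopy is exactly the hard part. The first route you propose (embedding the $\RR$-invariant germ into a known tight model, e.g.\ standard $\SS^3$ in the spherical case and a universally tight structure obtained from the universal cover when all dividing curves are essential) is the one that actually closes the argument, and it should be stated as the proof rather than as one of two options.
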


In higher dimensions the situation is more complicated, and $\Gamma$ is a codimension two submanifold of $M$. Moreover, the dividing set will inherit the structure of an ideal Liouville domain:
\begin{definition}[\cite{Gir17}]\label{def:idealdomain}
An \textbf{ideal Liouville domain} is a manifold with boundary $F$, endowed with an exact symplectic form $d\lambda$ on ${\rm Int }F$ such that there exists a defining function $u$ for $\partial F$ such that $u \lambda$ extends to, and induces a contact form on, $\partial F$.
\end{definition}
\begin{lemma}[\cite{HH19}]\label{lem:hondaconvex}
Let $\Sigma \subset (M,\xi)$ be a convex hypersurface. Then
\begin{itemize}
\item $\Gamma(\Sigma)$ is a codimension-two contact submanifold and
\item $R_{\pm}$ inherits the structure of an ideal Liouville domain, for which the induced contact structure on the boundary coincides with $\rest{\xi}{\Gamma}$.
\end{itemize}
\end{lemma}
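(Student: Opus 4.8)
The plan is to reduce both assertions to a single exterior‑algebra identity read off from the $X$‑invariant normal form near $\Sigma$. Write $\dim M = 2n+1$. First I would use the contact vector field $X$ transverse to $\Sigma$ to produce an invariant tubular neighbourhood $\Sigma\times\RR_t$ and, invoking the equivalence recalled above, assume $\alpha = \beta + u\,dt$ with $\beta\in\Omega^1(\Sigma)$ and $u = \alpha(X)|_\Sigma\in C^\infty(\Sigma)$ both pulled back from $\Sigma$, so that $\Gamma := \Gamma(\Sigma) = \{u = 0\}$ and $R_\pm = \{\pm u\geq 0\}$. Since $d\alpha = d\beta + du\wedge dt$, all powers of $du\wedge dt$ beyond the first vanish, and $\beta\wedge(d\beta)^n$ is a $(2n+1)$-form on the $2n$-manifold $\Sigma$ and hence zero, I obtain the key identity
\begin{equation*}
\alpha\wedge(d\alpha)^n = \Omega\wedge dt,\qquad \Omega := u\,(d\beta)^n + n\, du\wedge\beta\wedge(d\beta)^{n-1}\in\Omega^{2n}(\Sigma),
\end{equation*}
so the contact condition on $M$ is equivalent to $\Omega$ being a volume form on $\Sigma$. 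Everything else follows from inspecting $\Omega$.

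For $\Gamma$: along $\{u=0\}$ the first summand of $\Omega$ drops out, so nondegeneracy forces $du\neq 0$ there; hence $\Gamma$ is a smooth hypersurface of $\Sigma$, i.e. a codimension‑two submanifold of $M$. Pulling $\Omega$ back to $\Gamma$, where $du|_{T\Gamma}=0$, the surviving term is $n\,du\wedge\bigl(\beta\wedge(d\beta)^{n-1}\bigr)$, whose nonvanishing says precisely that $\beta\wedge(d\beta)^{n-1}$ is a volume form on the $(2n-1)$-manifold $\Gamma$, i.e. $\beta|_{T\Gamma}$ is a contact form. Since $\alpha|_{T\Gamma}=\beta|_{T\Gamma}$ (because $dt$ vanishes on $T\Sigma\supset T\Gamma$ and $u|_\Gamma=0$), the induced contact structure is $\xi\cap T\Gamma = \rest{\xi}{\Gamma}$.

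For $R_\pm$: on the interior $\{\pm u>0\}$ I would set $\lambda := \beta/u$, so $d\lambda$ is exact, and the analogous expansion (again only the linear term in $du$ survives) yields $(d\lambda)^n = u^{-(n+1)}\,\Omega$, which is nowhere zero on $\{u\neq 0\}$; hence $d\lambda$ is symplectic there. Moreover $\pm u$ is a boundary‑defining function for $\partial R_\pm = \Gamma$, positive in the interior, and $(\pm u)\lambda = \pm\beta$ extends smoothly across $\Gamma$ and restricts there to a contact form by the previous step. By Definition \ref{def:idealdomain} this exhibits $(R_\pm,d\lambda)$ as an ideal Liouville domain whose induced boundary contact structure is again $\rest{\xi}{\Gamma}$. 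The one point that requires genuine care — and which I would regard as the main (mild) obstacle — is bookkeeping: first honestly producing the $t$-invariant model from convexity, and then tracking signs in the wedge identity so that the \emph{same} form $\Omega$ simultaneously governs the contact condition on $M$, the contact condition on $\Gamma$, and the Liouville condition on $R_\pm$. Once that identity is nailed down, all three conclusions are immediate.
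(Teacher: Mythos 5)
Your proposal is correct and follows essentially the same route as the paper's proof: the same invariant normal form $\alpha = u\,dt+\beta$, the same wedge identity for $\alpha\wedge(d\alpha)^n$, and the same choice $\lambda=\beta/u$ to exhibit the ideal Liouville structure. You in fact spell out the first bullet (smoothness of $\Gamma$ via $du\neq 0$ on $\{u=0\}$ and the contact condition on $\Gamma$) more explicitly than the paper does.
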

We include the proof for completeness, and future reference:
\begin{proof}
Convexity ensures that there exists a neighbourhood of $\Sigma$ such that $\alpha = udt + \beta$, with $u \in C^{\infty}(\Sigma)$ and $\beta \in \Omega^1(\Sigma)$. The contact condition states that
\begin{align*}
\alpha \wedge (d\alpha)^n &= udt \wedge (d\beta)^n + \beta \wedge (d\beta)^n + n\beta \wedge du\wedge dt \wedge (d\beta)^{n-1}\\
&= udt \wedge (d\beta)^n + n\beta \wedge du\wedge dt \wedge (d\beta)^{n-1},\\
&= dt \wedge (u(d\beta)^n + n\beta \wedge du \wedge (d\beta)^{n-1})
\end{align*}
where the last term dropped out because of dimension reasons. Now define $\lambda = \beta/u$, then $d\lambda = u^{-1}d\beta + u^{-2}\beta \wedge du$, and consequently
\begin{align*}
(d\lambda)^n &= u^{-n}(d\beta)^n + n u^{-n-1}(d\beta)^{n-1}\beta \wedge du\\
&= \frac{1}{u^{n+1}}(u(d\beta)^n + n (d\beta)^{n-1} \wedge \beta \wedge du) ,
\end{align*}
and is therefore symplectic. We see that $R_{\pm}$ has the structure of ideal Liouville domain, with the contact structure on the boundary induced by $\beta$.
\end{proof}
%

The fact that the contact manifold $\Gamma(\Sigma)$ can be realized as the boundary of a Liouville manifold immediately implies the following:
\begin{lemma}\label{lem:tight}
Let $\Sigma \subset (M,\xi)$ be a convex hypersurface. Then $(\Gamma(\Sigma),\rest{\xi}{\Gamma(\Sigma)})$ is a tight contact manifold.
\end{lemma}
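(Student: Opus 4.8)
The statement to prove is Lemma \ref{lem:tight}: if $\Sigma \subset (M,\xi)$ is a convex hypersurface, then $(\Gamma(\Sigma), \xi|_{\Gamma(\Sigma)})$ is a tight contact manifold. The plan is to combine the structural result of Lemma \ref{lem:hondaconvex} with a standard filling obstruction. By Lemma \ref{lem:hondaconvex}, each of the regions $R_\pm$ carries the structure of an ideal Liouville domain $(R_\pm, d\lambda)$ whose boundary is $\Gamma(\Sigma)$ with the induced contact structure $\xi|_{\Gamma(\Sigma)} = \ker(u\lambda|_{\partial R_\pm})$. In particular, $(\Gamma(\Sigma), \xi|_{\Gamma(\Sigma)})$ is the (ideal) convex boundary of an exact symplectic manifold.

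The first step is to pass from an ideal Liouville filling to an honest Liouville filling: given an ideal Liouville domain $(F, d\lambda)$ with defining function $u$ for $\partial F$, one checks that a sufficiently small sublevel set $\{u \geq \varepsilon\}$ is a compact exact symplectic manifold with contact-type boundary, and that the induced contact structure on $\{u = \varepsilon\}$ is isotopic to the one on $\partial F$ induced by $u\lambda$ (this is the content of \cite{Gir17}). Thus $(\Gamma(\Sigma), \xi|_{\Gamma(\Sigma)})$ admits a genuine (compact, exact) symplectic filling.

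The second step invokes the classical fact that any contact manifold admitting an exact (equivalently, Liouville/Weinstein-type) symplectic filling is tight. In dimension $3$ this is Gromov--Eliashberg (an overtwisted disk would bound a family of pseudoholomorphic disks yielding a contradiction); in higher dimensions one uses Borman--Eliashberg--Murphy, since an overtwisted contact manifold admits no weak (hence no exact) symplectic filling. Since we only need the conclusion ``fillable $\Rightarrow$ tight'', this can be cited directly. Combining with the filling produced in the first step finishes the proof.

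The main obstacle is simply making the first step precise: the ideal Liouville domain has a genuinely singular symplectic form $d\lambda = u^{-1}d\beta + u^{-2}\beta \wedge du$ near $\partial F$, so one cannot directly restrict it to the boundary. One must truncate carefully to a sublevel set and verify that the Liouville vector field is outward-pointing there and that the resulting contact boundary is contactomorphic to $(\Gamma(\Sigma), \xi|_{\Gamma(\Sigma)})$; this is exactly the equivalence between ideal Liouville domains and Liouville domains established by Giroux, so I would invoke that result rather than reprove it. Everything else is a citation.
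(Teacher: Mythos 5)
Your proposal is correct and follows essentially the same route as the paper: the paper also truncates to the sublevel set $u^{-1}([\varepsilon,\infty))$, observes that $\Gamma_\varepsilon = u^{-1}(\varepsilon)$ is a fillable contact manifold contactomorphic to $(\Gamma(\Sigma),\beta/\varepsilon) \cong (\Gamma(\Sigma),\rest{\xi}{\Gamma(\Sigma)})$, and concludes tightness from fillability. Your version is slightly more explicit about citing the fillable-implies-tight step and the Giroux equivalence of ideal and honest Liouville domains, but the underlying argument is identical.
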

\begin{proof}
Again, consider the decomposition $\alpha = u dt + \beta$ as before, and define $\Gamma_{\varepsilon} = u^{-1}(\varepsilon)$. Note, that for $\varepsilon$ small $\Gamma_{\varepsilon}$ is diffeomorphic to $\Gamma$. Moreover, $\Gamma_{\varepsilon}$ is (one component of) the boundary of $u^{-1}([\varepsilon,\infty))$ and therefore fillable. But as the contact structure on $\Gamma_{\varepsilon}$ is simply $\beta/\varepsilon$, we conclude that it is contactomorphic to $\Gamma$, which is then fillable.
\end{proof}

\bibliographystyle{alpha}
\bibliography{references} 
\end{document}